%%%%%%%%%%%%%%%%%%%%%%%%%%%%%%%%%%%%%%%%%%%%%%%%%%%%%%%%%%%
%%
%               Canonical metrics, holomorphic Courant
%
%%              Mario Garcia Fernandez, Roberto Rubio, Carlos Shahbazi, Carl Tipler
%
%%              LaTeX twice
%%%%%%%%%%%%%%%%%%%%%%%%%%%%%%%%%%%%%%%%%%%%%%%%%%%%%%%%%%%
%% Last update 17/01/2022 by Roberto

\documentclass[12pt,a4paper,twoside,reqno]{amsart}
\title[Canonical metrics on holomorphic Courant algebroids]{Canonical metrics on \\holomorphic Courant algebroids}
\author[M. Garcia-Fernandez]{Mario Garcia-Fernandez}
\author[R. Rubio]{Roberto Rubio}
\author[C. S. Shahbazi]{\\ C. S. Shahbazi}
\author[C. Tipler]{Carl Tipler}

\address{Dep. Matem\'aticas, Universidad Aut\'onoma de Madrid, and Instituto de Ciencias Matem\'aticas (CSIC-UAM-UC3M-UCM), Cantoblanco, 28049 Madrid, Spain}
\email{mario.garcia@icmat.es}

\address{Weizmann Institute of Science, 234 Herzl St, Rehovot 76100, Israel}
\curraddr{Universitat de Barcelona, 08007 Barcelona, Spain}
\email{roberto.rubio@ub.edu}
\address{Department of Mathematics, University of Hamburg, Bundesstra{\ss}e 55, 20146 Germany}
\email{carlos.shahbazi@uni-hamburg.de}
\address{LMBA, UMR CNRS 6205; D\'epartement de
  Math\'ematiques, Universit\'e de Bretagne Occidentale, 6, avenue
  Victor Le Gorgeu, 29238 Brest Cedex 3 France}
\email{carl.tipler@univ-brest.fr}

\thanks{This project has received funding from the European Union's Horizon 2020 research and innovation programme under the Marie Sklodowska-Curie grant agreement No 655162. MGF was partially supported by a Marie Sklodowska-Curie grant, the Universidad Aut\'onoma de Madrid and CSIC, under grant i-Link$1107$. RR was supported by ISF grant 687/13, a grant from the Minerva foundation and ERC StG grant 637912, and was initially supported by IMPA. CS is supported by a Humboldt Research Fellowship from the Alexander Von Humboldt Foundation. CT is supported by the French government ``Investissements d'Avenir'' program ANR--11--LABX--0020--01 and ANR project EMARKS No ANR--14--CE25--0010.}

\usepackage{hyperref,url}
\usepackage{amssymb,latexsym}
\usepackage{amsmath,amsthm}
\usepackage[latin1]{inputenc}
\usepackage{enumerate}
\usepackage[all]{xy} \CompileMatrices
\usepackage{amscd}

\usepackage{slashed} %% for the Dirac operator

\SelectTips{cm}{12} % computer modern fonts of size 12 for arrowheads
\theoremstyle{plain}
\newtheorem{theorem}{Theorem}[section]
\newtheorem{lemma}[theorem]{Lemma}
\newtheorem{corollary}[theorem]{Corollary}
\newtheorem{proposition}[theorem]{Proposition}

\theoremstyle{definition}
\newtheorem{definition}[theorem]{Definition}
\newtheorem{definition-theorem}[theorem]{Definition-Theorem}
\newtheorem{example}[theorem]{Example}

%%%%%%%%%%%%%%%%%%%%%%%%%%%%%%%%%%%%%
%% To change numbers in theorems

\newenvironment{customthm}[1]
{\innercustomthm}
{\endinnercustomthm}
%%%%%%%%%%%%%%%%%%%%%%%%%%%%%%%%%%%%%
%% To change numbers in propositions

\newenvironment{customprop}[1]
{\innercustomprop}
{\endinnercustomprop}

\theoremstyle{remark}
\newtheorem{remark}[theorem]{Remark}

\newtheorem{question}[theorem]{Question}

\numberwithin{equation}{section} \setcounter{tocdepth}{1}

\setcounter{tocdepth}{1}
\setlength{\oddsidemargin}{25pt} \setlength{\evensidemargin}{25pt}
\setlength{\textwidth}{400pt} \setlength{\textheight}{650pt}
\setlength{\topmargin}{0pt}

%%%%%%%%%%%%%%%%%%%%%%%%%%%%%%%%%%%%%
%% Definiciones para este documento
%%%%%%%%%%%%%%%%%%%%%%%%%%%%%%%%%%%%%

\newcommand{\tr}{\operatorname{tr}}
%{\operatorname{pr}}
\newcommand{\Id}{\operatorname{Id}}

\newcommand{\End}{\operatorname{End}}

\newcommand{\Ker}{\operatorname{Ker}}

\newcommand{\ad}{\operatorname{ad}}

\newcommand{\Aut}{\operatorname{Aut}}
%{\operatorname{d}}
\newcommand{\dbar}{\bar{\partial}}

\newcommand{\CC}{{\mathbb C}}

\newcommand{\RR}{{\mathbb R}}
\newcommand{\ZZ}{{\mathbb Z}}

\renewcommand{\(}{\left(}
\renewcommand{\)}{\right)}

 %{\coloneqq} %{:=}

\newcommand{\surj}{\to\kern-1.8ex\to}

\newcommand{\lra}[1]{\stackrel{#1}{\longrightarrow}}

\newcommand{\cA}{\mathcal{A}}

\newcommand{\cK}{\mathcal{K}}

\newcommand{\cF}{\mathcal{F}}
\newcommand{\cV}{\mathcal{V}}
\newcommand{\cW}{\mathcal{W}}

\newcommand{\cG}{\mathcal{G}}
\newcommand{\cL}{\mathcal{L}}
\newcommand{\cO}{\mathcal{O}}

\newcommand{\cS}{\mathcal{S}}

\newcommand{\Lie}{\operatorname{Lie}}

\newcommand{\cH}{\mathcal{H}} %\newcommand{\cH}{\operatorname{Ham}(X)}

% Carl's Macros

\def\om{\omega}
\def\Om{\Omega}

\def\Lie{\mathrm{Lie}}

\def\Im{\mathrm{Im}}
\def\Id{\mathrm{Id}}

\def\cA{\mathcal{A}}

\def\cF{\mathcal{F}}
\def\cG{\mathcal{G}}
\def\cH{\mathcal{H}}

\def\cU{\mathcal{U}}

\def\del{\partial}
\def\delb{\overline\partial}

%%%%%%%%%%%%%%%%%%%%%%%%%%%%%%%%%%%%%%%%%%%%%%%%%%%%%%%%%%%%%%
% DEFINICIONES ROBERTO RUBIO
%%%%%%%%%%%%%%%%%%%%%%%%%%%%%%%%%%%%%%%%%%%%%%%%%%%%%%%%%%%%%%

\newcommand{\st}{\;|\;}

\newcommand{\R}{{\mathbb{R}}}
\newcommand{\C}{{\mathbb{C}}}

\newcommand{\la}{\langle}
\newcommand{\ra}{\rangle}

\newcommand{\SU}{\mathrm{SU}}
\newcommand{\U}{\mathrm{U}}
\newcommand{\GL}{\mathrm{GL}}
\newcommand{\SL}{\mathrm{SL}}

\renewcommand{\Im}{\mathrm{Im}}

\begin{document}

\maketitle

\begin{abstract}
The solution of the Calabi Conjecture by Yau implies that every K\"ahler Calabi-Yau manifold $X$ admits a metric with holonomy contained in $\SU(n)$, and that these metrics are parametrized by the positive cone in $H^{1,1}(X,\mathbb{R})$. In this work we give evidence of an extension of Yau's theorem to non-K\"ahler manifolds, where $X$ is replaced by
a compact complex manifold with vanishing first Chern class endowed with a holomorphic Courant algebroid $Q$ of Bott-Chern type. The equations that define our notion of \emph{best metric} correspond to a mild generalization of the Hull-Strominger system, whereas the role of $H^{1,1}(X,\mathbb{R})$ is played by an affine space of `Aeppli classes' naturally associated to $Q$ via Bott-Chern secondary characteristic classes.

\end{abstract}

%\tableofcontents

\section{Introduction}
\label{sec:intro}

The Calabi Conjecture, made by E. Calabi in 1954 \cite{Calabi}, asserts that given a smooth volume form $\mu$ on a compact K\"ahler manifold $X$ there exists a K\"ahler metric $\omega$ on $X$ such that $\omega^{n}/n! = \mu$. From the work of Calabi and Yau, we know that such a metric exists and is unique on each positive class $[\omega] \in H^{1,1}(X,\RR)$ satisfying $[\omega]^n/n! = \int_X \mu$. In the particular case of a Calabi-Yau manifold, Yau's theorem \cite{Yau0} implies that $X$ admits a metric with holonomy contained in $\SU(n)$, and that these metrics are parametrized by the K\"ahler cone of $X$. The initial step of the proof is to fix the class $[\omega]$, whereby the problem is reduced to a PDE for a smooth function on $X$, namely, the complex Monge-Amp\`ere equation, amenable to the application of analytical techniques.

Following the recent advances in K\"ahler geometry there has been a renewed interest on extending Yau's theorem to the case of non-K\"ahler compact complex manifolds \cite{Yau2}. As a natural generalization of the Calabi problem, and motivated by string theory, Yau has proposed to study the Hull-Strominger system, which couples a Hermite-Einstein metric on a bundle with a balanced metric on a Calabi-Yau manifold, possibly of non-K\"ahler type. The construction of compact solutions for these equations was pioneered by Fu, Li and Yau \cite{FuYau,LiYau}, and has been an active topic of research in mathematics in the last ten years (see \cite{Fei,GF2,PPZ2} for recent reviews covering this topic).

In contrast to the existence problem for the Hull-Strominger system, the uniqueness problem for these equations has never been systematically addressed in the literature. Given a holomorphic bundle over a Calabi-Yau manifold, one can infer from Yau's theorem that the key to parametrize the solutions of the system should be some generalization of the K\"ahler cone, like, for instance, the balanced cone, as implicitly suggested by the approach in \cite{Phong}. Even for compact complex surfaces, where the existence of solutions is well understood thanks to the work of Strominger \cite{Strom}, the uniqueness problem is still open.

The main aim of this work is to take a step forward % make the initial step
towards an answer to the uniqueness question. To do this, we propose to combine the Aeppli cohomology of the complex manifold with some unexplored geometric structures, known as holomorphic Courant algebroids. Despite their very rich properties, these objects have only received some attention in work by Gualtieri \cite{G2}, Bressler \cite{Bressler}, Gr\"utzmann-Sti\'enon \cite{GrSt} and Pym-Safronov \cite{Pym}. This paper builds on the idea that the existence and uniqueness problem for the Hull-Strominger system should be better understood as the problem of finding `the best metric' in a holomorphic Courant algebroid $Q$ with fixed `Aeppli class'. Compelling evidence for this proposal is given by the results described below.

Our approach to the Hull-Strominger system is gauge-theoretical in nature, and very close in spirit to the moment-map interpretation of the Calabi conjecture by Fine \cite{Fine}. %As a matter of fact,
Regarding a holomorphic Courant algebroid as the Atiyah Lie algebroid of a holomorphic principal bundle for the (complexified) string group \cite{grt2}, the present work brings up new tools--such as the \emph{dilaton functional}--that may help to elucidate an analogue of the Donaldson-Uhlembeck-Yau Theorem \cite{Don,UY} in the realm of \emph{higher gauge theory}.

As we had just completed the present work, two papers about the Fu-Yau equations appeared \cite{CHZ,PPZ}, which in particular imply a uniqueness result for the Hull-Strominger system in the special case of Goldstein-Prokushkin threefolds. While the main focus and methods are very different, it would be interesting to investigate a connection with the general approach to the uniqueness question proposed in this work.

\subsection{Summary of results}

Our first result is concerned with a mild generalization of the Hull-Strominger system. Let $G$ be a reductive complex Lie group. Let $P$ be a holomophic principal $G$-bundle over a compact complex manifold $X$ with $c_1(X) = 0$. We say that a triple $(\Psi,\omega,h)$, given by an $\SU(n)$-structure $(\Psi,\omega)$ on $X$ and a reduction $h$ of $P$ to a maximal compact subgroup $K \subset G$, is a solution of the \emph{twisted Hull-Strominger system} if
\begin{equation}\label{eq:twistedStromholintro}
\begin{split}
      F_h \wedge \omega^{n-1}  & = 0,\\
      d \Psi - \theta_\omega \wedge \Psi & = 0,\\
      d \theta_\omega & = 0,\\
      dd^c \omega - c(F_h \wedge F_h) & = 0.
    \end{split}
\end{equation}
%where $\theta_\omega := J d^*\omega$ is the Lee form of $(\Psi,\omega)$. %$F_h$ is the curvature of the Chern connection $\theta^h$ of $h$.
Here $\theta_\omega := J d^*\omega$ is the Lee form of $\omega$ and $c$ is a bi-invariant symmetric bilinear form on the Lie algebra of $G$. The existence of solutions implies that the associated first Pontryagin class vanishes in Bott-Chern cohomology
$$
p_1(P) = 0 \in H^{2,2}_{BC}(X).
$$
Motivation for the twisted Hull-Strominger system \eqref{eq:twistedStromholintro} comes from generalized geometry \cite{grt} (see Section \ref{subsec:canonical}), supergravity \cite{grst}, and mirror symmetry. Actually, the study of this system of equations has very recently led to the first non-K\"ahler examples of $(0,2)$ mirror symmetry \cite{ALM}.

The Hull-Strominger system is recovered from \eqref{eq:twistedStromholintro} when the cohomology class of the Lee form $[\theta_\omega] \in H^1(X,\RR)$ vanishes (see Proposition \ref{prop:Strominger}). Key to our development is that, unlike the Hull-Strominger system, when $[\theta_\omega] \neq 0$ the equations \eqref{eq:twistedStromholintro} have solutions on compact complex manifolds that are not balanced. To see this, in Proposition \ref{prop:twistedStromexact} and Proposition \ref{prop:conformal} we classify the compact complex surfaces that admit a solution, and in Section \ref{subsec:ex3fold} we find examples on compact complex threefolds with infinitely many topological types and $[\theta_\omega] \neq 0$ (cf. \cite{FHZ}).
% (see Proposition \ref{prop:twistedStromexact}, Proposition \ref{prop:conformal}, and Section \ref{subsec:ex3fold}).

Our first objective is to understand the uniqueness question for \eqref{eq:twistedStromholintro} in the simplest possible non-trivial situation, namely, when $G = \{1\}$ and $X$ is a complex surface. The equations \eqref{eq:twistedStromholintro} admit non-K\"ahler solutions even in this case, such as quaternionic Hopf surfaces (see Proposition \ref{prop:twistedStromexact}). In the sequel, in the case $G= \{1\}$ we will refer to \eqref{eq:twistedStromholintro} as the \emph{twisted Calabi-Yau equation} (see Definition \ref{def:twistedStromexact}).  Given a solution $(\Psi,\omega)$ of the twisted Calabi-Yau equation, the hermitian form $\omega$ is pluriclosed, that is, $dd^c\omega = 0$, and it has an associated positive class in Aeppli cohomology
$$
[\omega] \in H^{1,1}_A(X,\RR).
$$
The next result illustrates the necessity of using Aeppli classes in the uniqueness problem.

\begin{customthm}{\ref{th:Hopf}}\label{th:Hopfintro}
If a compact complex surface admits a solution of the twisted Calabi-Yau equation \eqref{eq:twistedStromexact}, then it admits a unique solution $(\Psi,\omega)$ on each positive Aeppli class, up to rescaling of $\Psi$ by a unitary complex number.
\end{customthm}

In order to generalize Theorem \ref{th:Hopfintro} to the cases of higher dimensional manifolds or non-trivial $G$, we find two main obstacles. Firstly, the proof is based on a classification result for the twisted Calabi-Yau equation in Proposition \ref{prop:twistedStromexact}, combining some known facts about Einstein-Weyl manifolds \cite{GauIvanov} and quaternionic manifolds \cite{Kato} in real dimension $4$. Thus, even when $G= \{1\}$, our methods do not apply in three complex dimensions or higher (see Section \ref{subsec:ex3fold}). Secondly, when $G \neq \{1\}$ a solution of the (twisted) Hull-Strominger system does not define an Aeppli class on $X$. Therefore, a priori, it is unclear how to formulate and analogue of Theorem \ref{th:Hopfintro} in this case.

To overcome these difficulties, we change our perspective on the equations and propose to consider them in a higher version of the Atiyah algebroid of $P$. For this, inspired by %the secondary holomorphic characteristic classes defined by Bott and Chern
Bott-Chern theory \cite{BottChern}, in Section \ref{sec:stringholCourant} we introduce a special class of holomorphic Courant
%string
algebroids $Q$, which we call Bott-Chern algebroids (see Definition \ref{def:BCtype}), and study hermitian metrics on them (see Definition \ref{def:metricQ}). Upon a choice of holomophic bundle $P$ over $X$, the Bott-Chern algebroids are classified (see Proposition \ref{prop:BCclassification}) by the image of a linear map
\begin{equation}\label{eq:partialmapintro}
\partial \colon H^{1,1}_A(X,\mathbb{R}) \to H^1(\Omega^{\leqslant \bullet})/\Im \; \sigma_P
\end{equation}
induced by the $\partial$-operator on $(1,1)$-forms. Here, $H^1(\Omega^{\leqslant \bullet})$ denotes the space of isomorphism classes of exact holomorphic Courant algebroids  \cite{G2}, which we quotient by a natural isotropy action of the holomorphic gauge group of $P$.
%and $\sigma_P \colon \cG_P \to H^1(\Omega^{\leqslant \bullet})$ is a natural homomorphism from the holomorphic gauge group of $P$.

In Proposition \ref{prop:stromholcour} we show that any solution of the twisted Hull-Strominger system \eqref{eq:twistedStromholintro} determines a Bott-Chern algebroid $Q$ endowed with a hermitian metric satisfying natural equations (see \eqref{eq:twistedStromholredux}). This leads us to define an affine space of (real) Aeppli classes $\Sigma_Q(\RR)$ on $Q$ in Section \ref{subsec:Aeppli}, modelled on the kernel of \eqref{eq:partialmapintro}
$$
\Sigma_Q(\RR) \cong \Ker \partial,
$$
and to associate an Aeppli class to any hermitian metric. % on the Bott-Chern algebroid.
A crucial ingredient in our construction is the cocycle property of the Bott-Chern
secondary characteristic class~\cite{BottChern} (see also \cite{BGS,Don})
$$
R(h_1,h_0) \in \Omega^{1,1}/\operatorname{Im}(\partial \oplus \dbar)
$$
for pairs of reductions $h_1,h_0$ on a bundle.

With this new framework at hand, in Section \ref{subsec:variational} we introduce new tools to address the existence and uniqueness problem for the Hull-Strominger system, which is recovered from \eqref{eq:twistedStromholintro} when $[\theta_\omega] = 0$. Analogously to the relation between Calabi-Yau metrics and solutions to the complex Monge-Amp\`ere equation $\omega^n/n! = \mu$ in K\"ahler geometry, we observe that the Hull-Strominger system is a particular example of a family of more flexible equations, formulated on a compact complex manifold $X$ endowed with a smooth volume form $\mu$ and a Bott-Chern algebroid $Q$.

Following a variational principle, we define a functional $M$ on the space of metrics on $Q$, which we call the \emph{dilaton functional}.
%given by the square of the $L^{1/2}$-norm of $\mu$.
%The function $f_\omega \in C^\infty(X)$ is the definition of \eqref{eq:functionalintro} is given by
%$$
%f_\omega = \frac{1}{2} \log (\omega^n/n!\mu).
%$$
Upon restriction to the space $B_\sigma^+$ of metrics on $Q$ with fixed Aeppli class $\sigma \in \Sigma_Q(\RR)$, the Euler-Lagrange equations for $M$ define a system of partial differential equations \eqref{eq:stromabstractexp} which we call the \emph{Calabi System}. When $X$ admits a holomorphic volume form, % is Calabi-Yau,
the Calabi system is equivalent to the Hull-Strominger system (see Proposition \ref{prop:relationStr}). In a sense, the Calabi system %\eqref{eq:stromabstractexp}
gives an extension of the complex Monge-Amp\`ere equation to holomorphic Courant algebroids (see Proposition \ref{prop:extremaexactcase}), which we expect will provide new insights on the existence problem for the Hull-Strominger system.

The dilaton functional $M$ has some remarkable properties. It is bounded from below, and concave along suitable paths on the space of metrics on $Q$ with fixed Aeppli class (see Corollary \ref{cor:concave}). These special paths, given by \eqref{eq:concavepath}, are reminiscent of the geodesics in the space of metrics in a fixed K\"ahler class,
which play an important role in the constant scalar curvature problem in
K\"ahler geometry (see e.g. \cite{D6}). Using the concavity properties of the functional we prove the following (see also Remark \ref{rem:uniqueKahlercase}).

\begin{customprop}{\ref{prop:uniqueness}}\label{prop:uniqueness0}
If $(\omega_0,h_0)$ and $(\omega_1,h_1)$ are two solutions of the Calabi system \eqref{eq:stromabstractredux} with Aeppli class $\sigma \in \Sigma_Q(\RR)$ that can be joined by a smooth solution $(\omega_t,h_t)$ of \eqref{eq:concavepath}, then $\omega_1 = k \omega_0$ for some constant $k$, and $h_1$ is related to $h_0$ by an element in the holomorphic gauge group $\cG_P$ of $P$. Furthermore, when $d\omega_0\neq 0$, we must have $k=1$ and, consequently, $(\omega_0,h_0)$ and $(\omega_1,h_1)$ are related by an automorphism of $Q$.
\end{customprop}

It is therefore natural to expect that the dilaton functional and the Dirichlet problem for the PDE \eqref{eq:concavepath} are important gadgets in the theory for the Hull-Strominger system. Expanding upon the method of Proposition \ref{prop:uniqueness0}, using the dilaton functional and the special features of the path \eqref{eq:concavepath} in complex dimension two (see Remark \ref{rem:concavepathsurface}) we prove the following.

\begin{customthm}{\ref{th:bounded}}\label{th:boundedintro}
Let $X$ be a compact complex surface endowed with an exact Bott-Chern algebroid $Q$.
There is at most one solution of the Calabi system in a positive Aeppli class $\sigma \in \Sigma_Q(\RR)$ on $Q$.
Furthermore, if such a solution exists, the dilaton functional $M$ is bounded from above on $B_\sigma^+$.
\end{customthm}

By Definition \ref{def:stringholCour}, the hypothesis that $Q$ is exact is equivalent to the condition $G = \{1\}$ for the principal bundle $P$. In this situation, the functional $M$ can be formulated in an arbitrary \emph{strong K\"ahler with torsion} manifold and it admits a critical point if and only if the manifold is K\"ahler. More precisely, its critical points are K\"ahler solutions of the Calabi problem (see Proposition \ref{prop:extremaexactcase}). We expect that the previous result holds for the Calabi system on arbitrary Bott-Chern algebroids endowed with a hermitian metric. It is interesting to notice that, in the setup of holomorphic Courant algebroids, Theorem \ref{th:Hopfintro} can be stated as follows: if $Q$ is exact, there is at most one solution of the twisted Hull-Strominger system on each positive Aeppli class. We also expect that this uniqueness result holds for the equations \eqref{eq:twistedStromholintro} on arbitrary Bott-Chern algebroids.

Section \ref{sec:linear} is devoted to study the linear theory for the twisted Hull-Strominger system and the Calabi system on a Bott-Chern algebroid $Q$, showing that the linearization of the equations restricted to an Aeppli class \eqref{eq:twistedStromholredexp} induces a Fredholm operator. For the case of the Calabi system we prove that the corresponding operator has index zero and provide a Fredholm alternative: either it has a non-trivial finite-dimensional kernel, or it is invertible. As an application, we study the existence of solutions under deformations of $(X,Q)$. To state our next result, we use that there is an inclusion
\begin{equation}\label{eq:LieinKerintro}
\Ker d \subset \Ker \cL,
\end{equation}
where $\cL$ (see \eqref{eq:FredholmLsigma0}) denotes the linearization of the Calabi system, and $d \colon \Omega^1 \to \Omega^2$ is the exterior differential acting on forms. 
%We denote by $(\Ker d)^\perp$ the orthogonal complement of $\Ker d$ in the domain of $\cL$ (for the $L^2$ metric induced by a given solution of the Calabi system).

\begin{customthm}{\ref{theo:stability under deformations}}\label{theo:stability under deformationsintro}
Assume that $(X,Q)$ admits a solution of the Calabi system with Aeppli class $\sigma$, such that $\Ker d = \Ker \cL$. Let $(X_t,Q_t)_{t\in B}$ be a Bott-Chern deformation of $(X,Q)$ such that $h^{1,1}_A(X_t)$ and $h^{2,2}_{BC}(X_t)$ are constant. Then, for any $t$ small enough, $(X_t,Q_t)$ admits a differentiable family of solutions, parametrized by an open set in $\Sigma_{Q_t}(\RR)$.
\end{customthm}

In particular, when the deformation is trivial, we prove that nearby solutions are parametrized by a small neighbourhood $U \subset \Sigma_Q(\RR)$ of $\sigma$ (see Remark \ref{rem:uniquenessdef}). This gives another evidence for the proposed extension of Yau's theorem for pairs $(X,Q)$ using our notion of Aeppli classes. It is interesting to notice that, if we fix $(X,P)$ and let $Q$ and $\sigma$ vary, the expected overall dimension of the space of nearby solutions is
\begin{equation}\label{eq:predictionintro}
\dim \; \Im \; \partial + \dim \ker \partial = \dim H^{1,1}_A(X,\RR),
\end{equation}
where $\partial$ is as in \eqref{eq:partialmapintro}. A precursor of this observation can be found in \cite{TsengYau0}. The first contribution in \eqref{eq:predictionintro} has to be understood as the number of deformations of $Q$, while the latter corresponds to the dimension of $\Sigma_Q(\RR)$. Remarkably, \eqref{eq:predictionintro} matches the number of solutions of the Hull-Strominger system in the nilmanifold $\mathfrak{h}_3$ found in \cite{FIVU}, regarded as solutions of the twisted Hull-Strominger system (see Remark \ref{rem:prediction}).
If $X$ is a $\partial\dbar$-manifold, $h^{1,1}_A(X_t)$ and $h^{2,2}_{BC}(X_t)$ are constant and any small complex deformation of $(X,P)$ induces a unique Bott-Chern deformation of $(X,Q)$ (see Lemma \ref{lem:BCdeformationddbar}) admitting a real family of solutions of dimension $h^{1,1}_A(X)$.

To conclude, in Section \ref{subsec:deformstemb} we study sufficient conditions for the identity $\Ker d =  \Ker \cL$. We use this analysis in Corollary \ref{cor:stability under deformations} to provide a large class of solutions of the Calabi system on K\"ahler manifolds via deformation. Even though we have not been able to prove it in general, we expect that the orthogonal complement of $\Ker d \subset \Ker \cL$
%$(\Ker d)^\perp \cap \Ker \cL$ 
is related to a suitable Lie subalgebra of the infinitesimal automorphisms of $Q$ (see Remark \ref{rem:LieinKer}). A confirmation of this expectation would reduce the problem of deformation of solutions of the Calabi system to algebraic geometry.

%As an addendum to the present work, in Appendix \ref{sec:fixedbalanced} we describe our initial attempt to develop a variational approach to the Hull-Strominger system using balanced classes and classical geometry. This material gives further evidence for the necessity of our construction using holomorphic Courant algebroids, and we believe that part of it may be of independent interest.

\medskip

\textbf{Acknowledgments:} The authors would like to thank L. \'Alvarez-C\'onsul, D. Angella, V. Apostolov, N. Hitchin, A. Fino, A. Moroianu, L. Ornea, B. Pym, L. Ugarte, L. Vezzoni, and V. Vuletescu for helpful discussions. We thank L. Vezzoni for pointing out reference \cite{Bismutflat}. We are grateful to the anonymous referees for important corrections and suggestions. Part of this work was undertaken during visits of MGF to the Yau Mathematical Sciences Center, LMBA and IMPA, of RR to ICMAT, of CS to ICMAT and IFT, and of CT to IMPA, CIRGET, ICMAT and Weizmann Institute. We would like to thank these very welcoming institutions for providing a nice and stimulating working environment.

\section{The twisted Hull-Strominger system}

\subsection{Definition and basic properties}\label{subsec:twistedStr}

Let $X$ be a compact complex manifold of dimension $n$ with vanishing first Chern class
$$
c_1(X) = 0 \in H^2(X,\mathbb{Z}).
$$
Given a hermitian form $\omega$ on $X$ we denote by $g = \omega(\cdot,J\cdot)$ the induced riemannian metric and by
\begin{equation}\label{eq:Leeformdef0}
\theta_\omega = J d^*\omega
\end{equation}
the associated Lee form, where $J$ is the integrable almost complex structure on $X$. Alternatively, the Lee form is the unique real one-form $\theta_\omega$ on $X$ satisfying
\begin{equation}\label{eq:Leeformdef}
d \omega^{n-1} = \theta_\omega \wedge \omega^{n-1}.
\end{equation}
Associated to $\omega$ there is a contraction operator $\Lambda_\omega$, given by the adjoint of the Lefschetz operator $\alpha \mapsto \alpha \wedge \omega$ acting on differential forms. In particular, for $\alpha \in \Omega^2$, one has
$$
(\Lambda_\omega \alpha) \omega^{n} = n \alpha \wedge \omega^{n-1}.
$$
Let $\Psi$ be a smooth section of the canonical bundle $K_X$ and consider the smooth function $\|\Psi\|_\omega$ on $X$ given by the point-wise norm of $\Psi$, defined by
\begin{equation*}\label{eq:norm}
\|\Psi\|_\omega^2\frac{\omega^n}{n!} = (-1)^{\frac{n(n-1)}{2}}i^n \Psi \wedge \overline{\Psi}.
\end{equation*}
An $\SU(n)$-structure on $X$ is given by a pair $(\Psi,\omega)$ as before, such that
\begin{equation}\label{eq:normalizationnorm}
\|\Psi\|_\omega = 1.
\end{equation}
%This condition is equivalent to $\|\Psi\|_\omega = 1$, and therefore $\Psi$ does not vanish at any point of $X$.

Let $G$ be a reductive complex Lie group with Lie algebra $\mathfrak{g}$. Let $p \colon P \to X$ be a holomorphic principal $G$-bundle. Given a maximal compact subgroup $K \subset G$, a reduction $h \in \Omega^0(P/K)$ of the structure group of $P$ determines a Chern connection $\theta^h$, with curvature $F_h := F_{\theta^h}$ satisfying
$$
F_h^{0,2} = 0.
$$
We fix a non-degenerate bi-invariant symmetric bilinear form
\begin{equation}\label{eq:ccomplex}
c \colon \mathfrak{g} \otimes \mathfrak{g} \to \mathbb{C}
\end{equation}
such that %, for any maximal compact subgroup $G \subset G$,
the induced bilinear form on the Lie algebra $\mathfrak{k}$ of $K$ is real valued, that is,
\begin{equation}\label{eq:creal-notused}
c(\mathfrak{k} \otimes \mathfrak{k}) \subset \mathbb{R}
\end{equation}
(see \eqref{eq:cphysics} below for a concrete example).
% and $P_{\theta^h} = P$ (we use the notation of the previous section).
By Chern-Weil theory, the form $c$ defines a Pontryagin class in the real Bott-Chern cohomology of $X$ given by
$$
p_1(P) = [c(F_h \wedge F_h)] \in H^{2,2}_{BC}(X,\RR)
$$
for any choice of reduction $h$. Here, the Bott-Chern cohomology groups of $X$ are defined by
\begin{equation}\label{eq:BC-cohomology}
H^{p,q}_{BC}(X) = \frac{\Ker(d\colon \Omega^{p,q} \to \Omega^{p+1,q}\oplus \Omega^{p,q+1})}{\Im(dd^c \colon \Omega^{p-1,q-1} \to \Omega^{p,q})}.
\end{equation}
Note that $H^{p,p}_{BC}(X)$ has a natural real structure. We will assume that
\begin{equation}\label{eq:pic0}
p_1(P) = 0  \in H^{2,2}_{BC}(X,\RR).
\end{equation}

\begin{definition}\label{def:twistedStrom}
We say that a triple $(\Psi,\omega,h)$, given by an $\SU(n)$-structure $(\Psi,\omega)$ on $X$ and a reduction $h$ of the structure group of $P$ to $K$, is a solution of the \emph{twisted Hull-Strominger system} if
\begin{equation}\label{eq:twistedStromhol}
\begin{split}
      F_h \wedge \omega^{n-1}  & = 0,\\
      d \Psi - \theta_\omega \wedge \Psi & = 0,\\
      d \theta_\omega & = 0,\\
      dd^c \omega - c(F_h \wedge F_h) & = 0.
    \end{split}
\end{equation}
%where $\theta_\omega := J d^*\omega$ is the Lee form of $(\Psi,\omega)$. %$F_h$ is the curvature of the Chern connection $\theta^h$ of $h$.
\end{definition}

Motivation for this definition comes from generalized geometry \cite{Hit1}. A solution of the last equation in \eqref{eq:twistedStromhol} determines a smooth (string) Courant algebroid $E$ and, in this setup, the twisted Hull-Strominger system corresponds to a special class of solutions of the \emph{Killing spinor equations} in \cite{GF3,grt} (see \cite{grst}). %, where the divergence operator is induced by a Dirac generating operator \cite{AXu,Severa} (see \cite{grst}). %In particular, a solution of \eqref{eq:twistedStromhol} determines a remarkable cohomological quantity, given by the isomorphism class of $E$: a \emph{real string class} on $P_h$ \cite{Redden}, where $P_h$ denotes the principal $K$-bundle corresponding to the reduction $h$.
The main focus of this paper is on the interplay between the existence and uniqueness problem for the twisted Hull-Strominger system \eqref{eq:twistedStromhol} and a holomorphic version of the \emph{string algebroid} $E$ % string class
introduced in \cite{grt2} (see Definition \ref{def:stringholCour}).

We start our study of \eqref{eq:twistedStromhol} discussing some obstructions to the existence of solutions. % of \eqref{eq:twistedStromhol}.
%On the one hand,
By the Buchdahl-Li-Yau Theorem \cite{Buchdahl,LiYauHYM} for the Hermite-Einstein equation (corresponding to the first equation in \eqref{eq:twistedStromhol}), if $(X,P)$ admits a solution then the holomorphic bundle $P$ must be polystable with respect to the unique Gauduchon metric $\tilde \omega$ in the conformal class of $\omega$. % (see e.g. \cite{lt}).
In addition, the third equation in \eqref{eq:twistedStromhol} combined with \eqref{eq:Leeformdef} implies that $X$ is a locally conformally balanced manifold. Alternatively, the existence of solutions of \eqref{eq:twistedStromhol} implies that $X$ must admit a Gauduchon metric with harmonic Lee form. The associated cohomology class
$$
[\theta_\omega] \in H^1(X,\RR)
$$
will play an important role in our study.

In the following result we analyze the conditions 
\begin{equation}\label{eq:Leeformeq}
d \Psi - \theta_\omega \wedge \Psi = 0, \qquad d \theta_\omega = 0.
\end{equation}
in equation \eqref{eq:twistedStromhol} in terms of the holonomy of the Bismut connection
$$
\nabla^+ = \nabla^g - \frac{1}{2}g^{-1}d^c\omega
$$
of the hermitian form $\omega$. It will be clear from the proof that the normalization \eqref{eq:normalizationnorm} is crucial here.

%We will use this in Section \ref{subsec:1stexamples} to provide a partial characterization of the solutions of \eqref{eq:twistedStromhol} on complex surfaces (see Theorem \ref{th:Hopf} and Proposition \ref{prop:conformal}).

\begin{lemma}\label{lemma:holSUn}
If an $\SU(n)$-structure $(\Psi,\omega)$ on $X$ satisfies \eqref{eq:Leeformeq}, then the holonomy of the Bismut connection $\nabla^+$ is contained in $\SU(n)$.
%Conversely, if the holonomy of the Bismut connection $\nabla^+$ is contained in $\SU(n)$ then the first of these equations is satisfied.
\end{lemma}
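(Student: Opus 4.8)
The plan is to show that $\nabla^+\Psi = 0$; this is equivalent to the holonomy statement. Indeed, $\nabla^+$ is by construction a Hermitian connection, i.e. $\nabla^+ g = 0$ and $\nabla^+ J = 0$, so its holonomy is automatically contained in $\U(n)$. The reduction from $\U(n)$ to $\SU(n)$ amounts to the triviality of the induced $\U(1)$-holonomy on the canonical bundle $K_X = \Lambda^{n,0}T^*X$, which holds precisely when $K_X$ carries a $\nabla^+$-parallel unit section. By the normalization $\|\Psi\|_\omega = 1$ in the definition of an $\SU(n)$-structure, $\Psi$ is such a unit section, so it suffices to prove that it is parallel. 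This is exactly the point where the normalization \eqref{eq:normalizationnorm}, rather than merely $\Psi$ being non-vanishing as in Lemma \ref{lem:sigmaX}, is used.

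Next I would reduce to a computation of a single $(0,1)$-form. Since $\nabla^+$ preserves $J$ it induces a Hermitian connection on the line bundle $K_X$, and we may write $\nabla^+\Psi = \alpha\otimes\Psi$ for a one-form $\alpha$. Because $\nabla^+$ is metric and $\|\Psi\|_\omega$ is constant, differentiating $\|\Psi\|_\omega^2 = 1$ gives $\alpha + \bar\alpha = 0$, i.e. $\alpha$ is purely imaginary; in particular $\alpha^{1,0} = -\overline{\alpha^{0,1}}$, so it is enough to show that the $(0,1)$-part $\alpha^{0,1}$ vanishes. To identify $\alpha^{0,1}$ I would compare $\nabla^+$ with the Chern connection $\nabla^{Ch}$ on $K_X$, whose $(0,1)$-part is the Dolbeault operator $\bar\partial$. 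Under the identification $\Omega^{n,1} \cong \Omega^{0,1}(K_X)$, the first equation of \eqref{eq:Leeformeq} gives, using $\partial\Psi = 0$ for type reasons and $\theta_\omega^{1,0}\wedge\Psi = 0$,
\begin{equation*}
\nabla^{Ch,0,1}\Psi = \bar\partial\Psi = \theta_\omega^{0,1}\otimes\Psi.
\end{equation*}

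The crux, and the step I expect to require the most care, is the identity relating the $(0,1)$-parts of the Bismut and Chern connections on the canonical bundle, namely
\begin{equation*}
\nabla^{+,0,1}_{K_X} = \bar\partial - \theta_\omega^{0,1}.
\end{equation*}
Granting it, we obtain $\alpha^{0,1}\otimes\Psi = \nabla^{+,0,1}\Psi = \bar\partial\Psi - \theta_\omega^{0,1}\otimes\Psi = 0$, hence $\alpha^{0,1} = 0$, so $\alpha = 0$ and $\nabla^+\Psi = 0$, proving the claim. To establish this identity I would compute the complex trace over a local unitary frame of the endomorphism-valued one-form $\nabla^+ - \nabla^{Ch}$ on $T^{1,0}X$: the induced difference on $K_X = \det(T^{1,0}X)^*$ is minus this trace. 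Using that the totally skew torsion of $\nabla^+$ is $-d^c\omega$, the trace of the torsion is computed to be (a multiple of) the Lee form, the relevant input being its defining property $d\omega^{n-1} = \theta_\omega\wedge\omega^{n-1}$; matching the resulting $(0,1)$-component with the sign dictated by the convention $\nabla^+ = \nabla^g - \tfrac12 g^{-1}d^c\omega$ yields the displayed identity. The main difficulty is essentially bookkeeping: tracking the several sign and normalization conventions (for $d^c$, for the torsion, and for $J$ acting on forms) so that the trace of the Bismut torsion is identified precisely with $\theta_\omega^{1,0} - \theta_\omega^{0,1}$. It is worth noting that no closedness of $\theta_\omega$ is needed for this lemma: only the first equation of \eqref{eq:Leeformeq} and the normalization enter.
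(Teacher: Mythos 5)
Your argument is correct, and it takes a genuinely different route from the paper's. The paper uses the second equation of \eqref{eq:Leeformeq}: since $\theta_\omega$ is closed it admits a local potential $\phi$, so $\Omega = e^{-\phi}\Psi$ is a local holomorphic trivialization of $K_X$, in which the Chern connection is computed explicitly and then compared with $\nabla^+$ via Gauduchon's formula $\nabla^C\Psi = \nabla^+\Psi + i d^*\omega\otimes\Psi$ \cite[Eq. (2.7.6)]{Gau}. You instead argue globally: reduce to $\nabla^+\Psi=0$, use metric-compatibility and $\|\Psi\|_\omega=1$ to kill the real part of $\alpha$ and reduce to the $(0,1)$-component, observe that the first equation of \eqref{eq:Leeformeq} is (by type) exactly $\dbar\Psi = \theta_\omega^{0,1}\wedge\Psi$, and conclude from the $(0,1)$-part of the Bismut--Chern comparison on $K_X$. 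Your key identity $\nabla^{+,0,1}_{K_X} = \dbar - \theta_\omega^{0,1}$ is precisely the $(0,1)$-component of the same Gauduchon formula (using $(id^*\omega)^{0,1} = \theta_\omega^{0,1}$ from $\theta_\omega = Jd^*\omega$), so both proofs rest on the same essential input; your sketch of deriving it from the trace of the torsion is the standard derivation and is sound modulo the sign conventions you rightly flag. What your approach buys is the observation, absent from the paper, that $d\theta_\omega=0$ is not needed for this lemma — only the first equation of \eqref{eq:Leeformeq} and the normalization \eqref{eq:normalizationnorm} enter — which slightly strengthens the statement; what the paper's approach buys is a very short, concrete computation in a preferred local frame.
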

\begin{proof}
By the holonomy principle %$\nabla^+$ has holonomy contained in $\SU(n)$ if and only if $K_X$ admits a parallel section. Thus,
it is enough to prove that $\nabla^+ \Psi = 0$. Using that $\theta_\omega$ is closed, given any point $x \in X$ there exists a smooth local function $\phi$ such that
$$
\theta_\omega = d\phi
$$
around $x$. Then, by the first equation in \eqref{eq:Leeformeq} $\Omega = e^{-\phi}\Psi$ is closed, and hence it provides a holomorphic trivialization of $K_X$ around $x$. In this trivialization, the Chern connection $\nabla^C$ on $K_X$ induced by $\omega$ is given by (see \eqref{eq:normalizationnorm})
$$
\nabla^C = d + 2\partial \log \|\Omega\|_\omega = d - 2 \partial \phi.
$$
The proof follows using Gauduchon's formula \cite[Eq. (2.7.6)]{Gau} relating $\nabla^C$ with the connection induced by $\nabla^+$ on the canonical bundle
\begin{equation*}\label{eq:nablaC}
  \nabla^C \Psi = \nabla^+ \Psi + i d^*\omega \otimes \Psi,
\end{equation*}
which implies $\nabla^+ \Psi = 0$ around $x$.
\end{proof}

%\begin{remark}\label{rem:conformal}
%An interesting feature of \eqref{eq:twistedStromhol} is that the existence of solutions to the first three equations of the system is a conformal invariant, due to the behaviour of the Lee form under conformal rescaling, that is, if $\omega' = e^f \omega$ for some smooth function $f$ on $X$, then
%$$
%\theta_{\omega'} = \theta_\omega + (n-1)df.
%$$
%Based on the conformal invariance of the first three equations in \eqref{eq:twistedStromhol}, a possible method to find solutions of the twisted Strominger system is to start with a complex manifold $X$ endowed with a Gauduchon metric $\tilde \omega$ with harmonic Lee form and holonomy of $\nabla^+$ contained in $\SU(n)$. Then, find a $\tilde \omega$-polystable holomorphic principal bundle $P$ satisfying \eqref{eq:pic0} and seek for solutions of \eqref{eq:twistedStromhol} in the conformal class of $\tilde \omega$.
%\end{remark}

To finish this section, we introduce a definition corresponding to the twisted Hull-Strominger system \eqref{eq:twistedStromhol} in the case $G = \{1\}$. This toy model situation will play an important role in Section \ref{subsec:1stexamples}.

\begin{definition}\label{def:twistedStromexact}
We say that an $\SU(n)$-structure $(\Psi,\omega)$ on $X$ is a solution of the \emph{twisted Calabi-Yau equation} if
\begin{equation}\label{eq:twistedStromexact}
\begin{split}
      d \Psi - \theta_\omega \wedge \Psi& = 0,\\
      d\theta_\omega & = 0,\\
      dd^c \omega & = 0.
    \end{split}
\end{equation}
\end{definition}

\subsection{The case $[\theta_\omega] = 0$ and the uniqueness question}\label{subsec:sigmaX0}

An important motivation for the study of \eqref{eq:twistedStromhol} is provided by the Hull-Strominger system of partial differential equations \cite{HullTurin,Strom} (see \cite{Fei,GF2,PPZ2} for recent reviews covering this topic). As we will see next, %the equations \eqref{eq:twistedStromhol} provide a generalization of this last system, which is
we recover the Hull-Strominger system from \eqref{eq:twistedStromhol} when the cohomology class of the Lee form $[\theta_\omega] \in H^1(X,\RR)$ vanishes. %In relation to Yau's Conjecture for the Hull-Strominger system (see Conjecture \ref{conj:Yau}), by the end of this section we pose the main question that we will address in the rest of this work.

Let $(X,\Omega)$ be a compact Calabi-Yau manifold of dimension $n$, that is, a compact complex manifold $X$ endowed with a %nowhere vanishing holomorphic $(n,0)$-form. For brevity, we will say that $\Omega$ is a
holomorphic volume form $\Omega$. Let $P$ be a holomorphic principal $G$-bundle on $X$ satisfying \eqref{eq:pic0}. As in the previous section, $K \subset G$ denotes a maximal compact subgroup.

\begin{definition}\label{def:Strom}
We say that $(\omega,h)$, given by a hermitian form $\omega$ on $(X,\Omega)$ and a reduction $h$ of $P$ to $K$, is a solution of the \emph{Hull-Strominger system} if
\begin{equation}\label{eq:Stromhol}
\begin{split}
      F_h \wedge \omega^{n-1}  & = 0,\\
      d^* \omega - d^c\log \|\Omega\|_\omega & = 0,\\
      dd^c \omega - c(F_h \wedge F_h) & = 0.
    \end{split}
\end{equation}
\end{definition}

In the original formulation of the Hull-Strominger system in physics \cite{HullTurin,Strom}, the Calabi-Yau manifold $(X,\Omega)$ has complex dimension $3$, it is endowed with a rank $r$ holomorphic vector bundle $V$ with trivial determinant, and $P$ is the fibred product of the bundles of holomorphic frames of $TX$ and $V$, so that
$$
G = \SL(3,\CC) \times \SL(r,\CC).
$$
The bi-invariant symmetric bilinear form $c$ in $\mathfrak{g}$ is then given by
\begin{equation}\label{eq:cphysics}
c = \alpha \tr_{\mathfrak{sl}(3,\CC)} - \alpha \tr_{\mathfrak{sl}(r,\CC)},
\end{equation}
for a positive real constant $\alpha > 0$. In this setup, a difference between Definition \ref{def:Strom} and other definitions in the literature (see e.g. \cite{LiYau}) is that we require the connection $\nabla$ in the tangent bundle (so that $\theta^h = \nabla \times A$ in \eqref{eq:Stromhol}) to solve the Hermite-Einstein equation (see \cite{GF2} for a lengthy discussion about this condition). %This requirement makes more transparent the relation between the Hull-Strominger system and generalized geometry \cite{grt}, which we implicitly use in the present work.
Nonetheless, we expect that most of our methods extend to the situation considered in \cite{LiYau}.

%\begin{remark}\label{rem:Stromor}
%Typically, one further requires the connection $\nabla$ in the tangent bundle to be unitary with respect to $\omega$ (see e.g. \cite{grt}). This condition is compatible with \eqref{eq:Stromhol}, since a reduction of $P$ to a maximal compact subgroup is given by a pair of hermitian metrics $h_{T}$ and $h_V$ on $TX$ and $V$, respectively. The complex gauge transformation which relates $\omega$ with $h_{T}$ transforms the Chern connection $\theta_{h_{T}}$ into a $\omega$-unitary connection $\nabla$ on $TX$, and provides a solution of \cite[Eq. (5.1)]{GF2} (note that the pairing \eqref{eq:cphysics} is $\mathfrak{gl}(n,\CC)$-invariant).
%\end{remark}

For the next result, we do not assume that $X$ is Calabi-Yau.

\begin{proposition}\label{prop:Strominger}
Let $X$ be a compact complex manifold with $c_1(X) = 0$. Suppose that $(X,P)$ admits a solution $(\Psi,\omega,h)$ of \eqref{eq:twistedStromhol} and that $[\theta_\omega] = 0$. Then, if $\theta_\omega = d\phi$, we have that $\Omega = e^{-\phi} \Psi$ is a holomorphic volume form on $X$ and $(\omega,h)$ is a solution of the Hull-Strominger system \eqref{eq:Stromhol}. Conversely, if $\Omega$ is a holomorphic volume form on $X$ and $(\omega,h)$ is a solution of \eqref{eq:Stromhol} on $(X,P)$, then $(\|\Omega\|_\omega^{-1}\Omega,\omega,h)$ is a solution of \eqref{eq:twistedStromhol} with $[\theta_\omega] = 0$.
\end{proposition}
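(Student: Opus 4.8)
The plan is to exploit that the first and last equations of \eqref{eq:twistedStromhol} are \emph{identical} to the first and last equations of \eqref{eq:Stromhol}, so that the whole content of the equivalence is concentrated in matching the two middle equations through the Lee form. For the forward direction I would start from a solution $(\Psi,\omega,h)$ of \eqref{eq:twistedStromhol} with $\ell_X=0$. Since $\theta_\omega$ is closed and $[\theta_\omega]=\ell_X$ vanishes, it is exact, $\theta_\omega=d\phi$ with $\phi$ real. Setting $\Omega=e^{-\phi}\Psi$, a one-line computation gives $d\Omega=e^{-\phi}(d\Psi-d\phi\wedge\Psi)=e^{-\phi}(d\Psi-\theta_\omega\wedge\Psi)=0$ by the second equation of \eqref{eq:twistedStromhol}. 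As $\Omega$ is a closed $(n,0)$-form it is holomorphic (from $d\Omega=\partial\Omega+\dbar\Omega$ with $\partial\Omega\in\Omega^{n+1,0}=0$, so $\dbar\Omega=0$), and it is nowhere vanishing because $\Psi$ is; hence $\Omega$ is a holomorphic volume form.

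It then remains only to verify the middle equation of \eqref{eq:Stromhol}. From the normalization \eqref{eq:normalizationnorm} and $\phi$ real one computes $\|\Omega\|_\omega=e^{-\phi}\|\Psi\|_\omega=e^{-\phi}$, so $\log\|\Omega\|_\omega=-\phi$. Combining $\theta_\omega=Jd^*\omega$ with $\theta_\omega=d\phi$ and $J^2=-\Id$ on real one-forms yields $d^*\omega=-Jd\phi$, which matches $d^c\log\|\Omega\|_\omega=d^c(-\phi)$; hence $d^*\omega=d^c\log\|\Omega\|_\omega$, exactly the middle equation of \eqref{eq:Stromhol}. For the converse I would reverse these steps: given $\Omega$ and a solution $(\omega,h)$ of \eqref{eq:Stromhol}, set $\Psi=\|\Omega\|_\omega^{-1}\Omega$, so $\|\Psi\|_\omega=1$ and $(\Psi,\omega)$ is an $\SU(n)$-structure. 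Writing $\phi=-\log\|\Omega\|_\omega$, so $\Psi=e^\phi\Omega$, closedness of the holomorphic volume form gives $d\Psi=d\phi\wedge\Psi$. The middle equation of \eqref{eq:Stromhol} rewrites as $d^*\omega=-Jd\phi$, whence $\theta_\omega=Jd^*\omega=d\phi$; this produces simultaneously the second equation $d\Psi=\theta_\omega\wedge\Psi$ and the third equation $d\theta_\omega=d(d\phi)=0$ of \eqref{eq:twistedStromhol}, and shows $\ell_X=[\theta_\omega]=0$.

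The only delicate point, and the step I expect to require the most care, is the consistent bookkeeping of sign conventions in the chain of identities relating the Lee form $\theta_\omega=Jd^*\omega$ from \eqref{eq:Leeformdef0}, the operator $d^c$ acting on functions, and the pointwise norm $\|\Omega\|_\omega$; all of these must be aligned with the conventions fixed in Section \ref{subsec:twistedStr}. Concretely, one needs the elementary identity expressing $d^c\phi$ through $Jd\phi$, together with $J^2=-\Id$ on real one-forms, after which the agreement of $d^*\omega$ with $d^c\log\|\Omega\|_\omega$ is immediate. Everything else reduces to the type decomposition of $d\Omega$ and the multiplicativity $\|e^{-\phi}\Psi\|_\omega=e^{-\phi}\|\Psi\|_\omega$ for real $\phi$, both of which are routine.
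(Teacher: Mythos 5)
Your proposal is correct and follows essentially the same route as the paper: compute $d\Omega = e^{-\phi}(d\Psi - \theta_\omega\wedge\Psi) = 0$ to get the holomorphic volume form, use $\|\Psi\|_\omega = e^{\phi}\|\Omega\|_\omega = 1$ to identify $\log\|\Omega\|_\omega = -\phi$, and translate the Lee-form equation $\theta_\omega = d\phi$ into $d^*\omega = -J\theta_\omega = d^c\log\|\Omega\|_\omega$, reversing the steps for the converse. The only difference is that you spell out the type-decomposition argument for holomorphicity of $\Omega$ and the $d^c$/$J$ sign bookkeeping, which the paper leaves implicit.
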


\begin{proof}
For the `if part', note first that $\Omega = e^{-\phi} \Psi$ is closed, and hence defines a holomorphic volume form on $X$:
$$
d \Omega = - d\phi \wedge \Omega + e^{-\phi} d\Psi = (-d\phi + \theta_\omega)\wedge \Omega = 0.
$$
Using now equation \eqref{eq:normalizationnorm}, we have
$$
1 = \|\Psi\|_\omega = e^{\phi}\|\Omega\|_\omega,
$$
and therefore $d^*\omega = - J \theta_\omega = d^c\log \|\Omega\|_\omega$ as required. The converse follows by taking the exterior differential of $\|\Omega\|_\omega^{-1}\Omega$, combined with the second equation in \eqref{eq:Stromhol}, which is equivalent to $\theta_\omega = - d \log \|\Omega\|_\omega$.
\end{proof}

By the previous result, the vanishing of the class $[\theta_\omega] \in H^1(X, \RR)$ implies that the complex manifold $X$ has holomorphically trivial canonical bundle $K_X \cong \mathcal{O}_X$ and that it is balanced \cite{Michel}. Recall that $X$ is called balanced if there exists a hermitian form $\tilde \omega$ on $X$ such that $d \tilde \omega^{n-1} = 0$. The associated class in real Bott-Chern cohomology \cite{FuXiao}
\begin{equation}\label{eq:balancedclass}
\mathfrak{b} = [\tilde \omega^{n-1}] \in H^{n-1,n-1}_{BC}(X,\RR)
\end{equation}
is called the \emph{balanced class} of $\tilde \omega$. For a solution of the Hull-Strominger system the balanced hermitian form is
$$
\tilde \omega = \|\Omega\|_\omega^{\frac{1}{n-1}} \omega,
$$
%Recall that the balanced condition for $X$ can be expressed as a positivity condition on the homology of $X$, which involves complex currents (see \cite{Michel}).
and the Buchdahl-Li-Yau Theorem \cite{Buchdahl,LiYauHYM} for the Hermite-Einstein equation states in this case that the bundle $P$ must be polystable with respect to the balanced class $\mathfrak{b}$ \cite{lt}.

We can use Proposition \ref{prop:Strominger} to find some first interesting families of solutions of \eqref{eq:twistedStromhol} by application of existence results for the Hull-Strominger system. When $G = \{1\}$, the system \eqref{eq:Stromhol} reduces to
\begin{equation}\label{eq:Stromexact}
\begin{split}
      d^* \omega - d^c \log \|\Omega\|_\omega & = 0,\\
      dd^c \omega & = 0,
    \end{split}
\end{equation}
which is equivalent to the metric $g = \omega(\cdot,J\cdot)$ being Calabi-Yau, that is, with holonomy of the Levi-Civita connection contained in $\SU(n)$ \cite[Cor. 4.7]{IvanovPapadopoulos} (see also \cite{grt}). Thus, in this case $X$ must be K\"ahler and, by Yau's solution of the Calabi Conjecture \cite{Yau0}, the solutions of \eqref{eq:Stromexact} are parametrized by the cone of K\"ahler classes in $H^{1,1}(X,\RR)$.

When $G \neq \{1\}$, following \cite{GF2} the solutions have a very different flavour depending on whether the complex dimension $n$ of the Calabi-Yau is one, two, or higher. For $n =1$, %the existence of a non-vanishing section of $K_X$ implies that $X$ is an elliptic curve. In this case,
any hermitian metric is K\"ahler, and the solutions of \eqref{eq:Stromhol} are parametrized by a K\"ahler class on an elliptic curve $X$---corresponding to a flat metric on $X$---and a polystable bundle over $X$. For $n =2$, the solutions of \eqref{eq:Stromhol} are given, up to conformal rescaling of the hermitian form $\omega$, by a Calabi-Yau metric $\tilde g$ on $X$ and a holomorphic bundle over $X$ satisfying \eqref{eq:pic0}, %with vanishing first and second Chern class,
which is polystable with respect to the K\"ahler class of $\tilde g$ (see Section \ref{subsec:1stexamples}). In complex dimension three or higher the theory goes beyond the realm of K\"ahler geometry and the examples become more scarce. 
%To the knowledge of the authors, in complex dimension three the only known solutions of the Strominger system (in the sense of Definition \eqref{def:Strom}) with $G \neq 1$ and $\omega$ non-closed (see Remark \ref{rem:stembedding}) are given by the three homogeneous families constructed in \cite{FeiYau,FIVU,OUVi} and the general existence result on k\"ahlerian Calabi-Yau manifolds of \cite{AGF1}.
%In higher dimensions one can easily construct non-K\"ahler solutions by taking products of the aforementioned solutions for $n = 1,2,3$.
Despite the fact that there are recent new constructions of solutions of the Hull-Strominger system, to the present day the existence problem in the critical dimension $n = 3$ is widely open. We refer to \cite{Fei,GF2,GF4,PPZ2} for a detailed discussion about this case.

\begin{remark}\label{rem:stembedding}
A Calabi-Yau metric $g$ can be regarded as a solution of \eqref{eq:Stromhol} with $G = \SL(n,\CC) \times \SL(n,\CC)$ by considering $\theta^h$ to be the product of two copies of the Chern connection of $g$, on the fibre product of two copies of the bundle of holomorphic frames of $X$. These examples, known as \emph{standard embedding solutions} in the literature, will be studied in more detail in Section \ref{subsec:deformstemb}.
\end{remark}

% Despite the fact that %the methods of \cite{AGF1} and \cite{FeiYau,FIVU,OUVi} provide a
% there are recent new constructions of solutions of the Hull-Strominger system for $n = 3$, to the present day the existence problem in this critical dimension is widely open. The following conjecture by S.-T. Yau is one of the main open problems in this topic \cite{Yau2}. Even though the original conjecture is formulated in the language of holomophic vector bundles, for our convenience we state here a straightforward generalization to the case of principal bundles adapted to Definition \ref{def:Strom}.

% \begin{conjecture}[Yau \cite{Yau2}]\label{conj:Yau}
% Let $(X,\Omega)$ be a compact Calabi-Yau threefold endowed with a balanced class $\mathfrak{b}$. Let $P$ be a holomorphic principal bundle over $X$ satisfying \eqref{eq:pic0}. If $P$ is polystable with respect to $\mathfrak{b}$, then $(X,\Omega,P)$ admits a solution of the Hull-Strominger system.
% \end{conjecture}

% Conjecture \ref{conj:Yau} is not completely understood even for K\"ahler manifolds. In this setup, the main result in \cite{GF2} provides a solution of Conjecture \ref{conj:Yau} for balanced classes of the form $\mathfrak{b} = [\omega']^2$, where $[\omega'] \in H^{1,1}(X,\RR)$ is a K\"ahler class on $X$. We note, however, that there are algebraic Calabi-Yau threefolds which admit balanced classes that are not the square of K\"ahler classes \cite{FuXiao}.

In contrast to the existence problem for the Hull-Strominger system, which has become recently an active topic of research, the uniqueness problem for these equations has never been explored. %has never been addressed in the literature.
%As a matter of fact, Conjecture \ref{conj:Yau} does not give any information about the amount of solutions that $(X,\Omega,P)$ may admit. 
In the light of Yau's solution of the Calabi Conjecture \cite{Yau0}, it is natural to ask the following question for the more general twisted Hull-Strominger system \eqref{eq:twistedStromhol}.

\begin{question}\label{question}
%Let $X$ be a compact complex manifold with $c_1(X) = 0$ endowed with a holomorphic principal $G$-bundle $P$ satisfying \eqref{eq:pic0}.
If $(X,P)$ admits a solution of the twisted Hull-Strominger system \eqref{eq:twistedStromhol}, which cohomological quantities parametrize the possible solutions? %on $(X,P)$?
\end{question}

A complete answer in the case $G = \{1\}$ and $[\theta_\omega] = 0$ is given by Yau's Theorem \cite{Yau0}, which states that any K\"ahler class on a Calabi-Yau manifold admits a unique K\"ahler Ricci-flat metric. When $G \neq \{1\}$, a classical approach to Question \ref{question} in the case $[\theta_\omega] = 0$, that is, for the Hull-Strominger system \eqref{eq:Stromhol}, is to consider the balanced class of the solution \eqref{eq:balancedclass} as the relevant cohomological quantity.  %predicted by Conjecture \ref{conj:Yau} to be $\mathfrak{b}$. 
Thus, the expected answer in this approach would be the elements of the balanced cone of $X$ \cite{FuXiao}. This is, for instance, the path followed in \cite{Phong} using geometric flows.
%A natural extension to the case $\ell_X \neq 0$ is provided by the considering the cohomological quantity in Question \ref{question} to be the class of $\omega^{n-1}$ in the Morse-Novikov cohomology of the Lee form $\theta_\omega$ (see e.g. \cite{LLMP}).
In the present paper we propose a radically different answer to this question for the more general equations \eqref{eq:twistedStromhol}, combining the Aeppli cohomology of $X$ with holomorphic Courant algebroids.
%the holomorphic string (Courant) algebroids defined in \cite{grt2}.
%To illustrate the necessity of our approach, in the next section we push the twisted Hull-Strominger system to an extreme situation where the balanced cone of $X$ is empty, % and the Morse-Novikov cohomology groups vanish,
%and still there is a cohomological parametrization of a %general class of solutions of \eqref{eq:twistedStromhol}. % on non-K\"ahler complex surfaces.

\subsection{Complex surfaces and Aeppli classes}\label{subsec:1stexamples}

In this section we give evidence of an extension of Yau's Theorem for Calabi-Yau metrics \cite{Yau0} to the twisted Hull-Strominger system \eqref{eq:twistedStromhol} in the case of complex surfaces, where the role of K\"ahler classes is played by Aeppli cohomology classes. %In particular, we provide a complete characterization of the solutions of \eqref{eq:twistedStromhol} with $G = \{1\}$ in complex dimension $2$.
Even though this case is rather special, it provides the starting point of our approach to Question \ref{question} in higher dimensions. %, and illustrates the interplay between this problem and Conjecture \ref{conj:Yau}.

Let $X$ be a compact complex surface with $c_1(X) = 0$. We consider first the case of the twisted Calabi-Yau equation \eqref{eq:twistedStromexact} (corresponding to \eqref{eq:twistedStromhol} with $G = \{1\}$).
%When $\ell_X = 0$, a solution of \eqref{eq:twistedStromexact} is K\"ahler (see Section \ref{subsec:sigmaX0}), and therefore $g = \omega(\cdot,J\cdot)$ is a Calabi-Yau metric by  Lemma \ref{lemma:holSUn}. When $\ell_X \neq 0$, $(X,\omega)$ is locally conformally K\"ahler and pluriclosed.
Our first goal is to provide a classification of the solutions of \eqref{eq:twistedStromexact}, combining some known facts about Einstein-Weyl manifolds \cite{GauIvanov} and quaternionic manifolds \cite{Kato} in real dimension $4$. We start by showing that any solution of \eqref{eq:twistedStromexact} is Einstein-Weyl. Recall that a Weyl structure with respect to a conformal class $[g]$ on a smooth manifold $M$ is defined as a torsion-free connection on $TM$, preserving $[g]$. A Weyl structure is said to be Einstein if the associated Ricci tensor is a multiple of any metric in $[g]$.

\begin{lemma}\label{lem:EinsteinWeyl}
If $(\Psi,\omega)$ is a solution of the twisted Calabi-Yau equation \eqref{eq:twistedStromexact} on a complex surface then $g = \omega(\cdot,J\cdot)$ is Einstein-Weyl.
\end{lemma}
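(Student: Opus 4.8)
The plan is to use the closedness of the Lee form to reduce, locally, to a genuine Kähler metric whose Levi--Civita connection is exactly the Weyl connection attached to the solution, and then to deduce from the first equation of \eqref{eq:twistedStromexact} that this local Kähler metric is Ricci-flat. Recall that the conformal class $[g]$ together with the Lee form $\theta_\omega$ determines a Weyl connection $D$, the unique torsion-free connection with $D g = \theta_\omega \otimes g$; since $d\theta_\omega = 0$ by the second equation in \eqref{eq:twistedStromexact}, this is a \emph{closed} Weyl structure, so its Ricci tensor $\mathrm{Ric}^D$ is symmetric, and it suffices to show that $\mathrm{Ric}^D$ is pointwise proportional to $g$.

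First I would fix a contractible open set $U \subset X$ and a primitive $\theta_\omega = d\phi$ on $U$, and set $\omega' = e^{-\phi}\omega$, $g' = \omega'(\cdot, J\cdot)$. Because $n = 2$, the defining relation \eqref{eq:Leeformdef} reads $d\omega = \theta_\omega \wedge \omega$, and a one-line computation gives $d\omega' = e^{-\phi}(-d\phi \wedge \omega + d\omega) = 0$, so $g'$ is Kähler on $U$. The identity $\nabla^{g'} g = \nabla^{g'}(e^{\phi} g') = d\phi \otimes g = \theta_\omega \otimes g$ then identifies $D$ with $\nabla^{g'}$ on $U$, whence $\mathrm{Ric}^D = \mathrm{Ric}^{g'}$ there.

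It remains to prove that $g'$ is Ricci-flat. By the first equation in \eqref{eq:twistedStromexact}, $\Omega := e^{-\phi}\Psi$ satisfies $d\Omega = (-d\phi + \theta_\omega)\wedge\Omega = 0$, so it is a local holomorphic trivialization of $K_X$; moreover, tracking the conformal weights in the definition of the pointwise norm and using the normalization $\|\Psi\|_\omega = 1$ together with $n = 2$ gives $\|\Omega\|_{g'} = \|\Psi\|_\omega = 1$. Thus $(\Omega, \omega')$ is an $\SU(2)$-structure on $U$ with $g'$ Kähler, so $\theta_{\omega'} = 0$ and it solves \eqref{eq:Leeformeq}; Lemma \ref{lemma:holSUn} then shows that the Bismut connection of $g'$ has holonomy in $\SU(2)$. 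Since $g'$ is Kähler its Bismut connection is $\nabla^{g'}$, so $\nabla^{g'}$ has holonomy in $\SU(2) = \mathrm{Sp}(1)$ and $g'$ is Ricci-flat. Combining, $\mathrm{Ric}^D = \mathrm{Ric}^{g'} = 0$ on each such $U$, hence $\mathrm{Ric}^D \equiv 0$ globally, which is the Einstein--Weyl condition.

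The argument is conceptually short, so the main point to get right is the bookkeeping: one must verify that the conformal weight of the norm cancels exactly when $n = 2$, so that the normalization \eqref{eq:normalizationnorm} genuinely forces $\|\Omega\|_{g'}$ to be constant (this is where the restriction to surfaces is essential), and check that the locally defined Kähler metrics $g'$ glue to a globally defined closed Weyl structure --- which they do, since $\theta_\omega$ is a global closed one-form and the metrics $g'$ on overlaps differ only by constant homotheties, under which Ricci-flatness is preserved. I would also note that this proof uses only the first two equations of \eqref{eq:twistedStromexact}; the Gauduchon condition $dd^c\omega = 0$ is not needed here, and instead serves, later, to single out the distinguished representative in the conformal class.
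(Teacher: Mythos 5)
Your proof is correct and follows essentially the same route as the paper: use $d\theta_\omega=0$ to write $\theta_\omega=d\phi$ (the paper passes to the universal cover, you work on contractible charts, which is an immaterial difference), observe that in complex dimension two the rescaled metric $e^{-\phi}g$ is K\"ahler and its Levi--Civita connection preserves $e^{-\phi}\Psi$ with constant norm, hence has holonomy in $\SU(2)$ and is Ricci-flat, so $g$ is Einstein--Weyl. Your extra bookkeeping (the conformal weight cancellation at $n=2$ and the identification of the Weyl connection with $\nabla^{g'}$) is accurate and only makes explicit what the paper leaves implicit.
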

\begin{proof}
Consider the universal cover $\tilde X$ of $X$ and the pull-back solution $(\tilde \Psi,\tilde \omega)$ of \eqref{eq:twistedStromexact}. On $\tilde X$ we have that $\theta_\omega = d\phi$ for a globally defined function and therefore, by the proof of Lemma \ref{lemma:holSUn} and \eqref{eq:Leeformdef}, it follows that $e^{-\phi}\tilde g$ has holonomy contained in $\SU(2)$ (as it is K\"ahler, and the Levi-Civita connection preserves $e^{-\phi}\Psi$). In particular, $e^{-\phi}\tilde g$ is Ricci-flat, and therefore $g$ is Einstein-Weyl.
\end{proof}

\begin{remark}\label{remark:lck}
The proof uses crucially that a solution of \eqref{eq:twistedStromexact} on a complex surface is locally conformally K\"ahler, that is,
\begin{equation*}\label{eq:lcK}
d\theta_\omega = 0, \qquad d \omega =  \theta_\omega \wedge \omega.
\end{equation*}
In higher dimensions, the same argument shows that a solution of %the twisted Hull-Strominger system with $G = \{1\}$--given by
\eqref{eq:twistedStromexact} which is locally conformally K\"ahler, is necessarily Einstein-Weyl. This class of manifolds provides an interesting class of candidates for solutions of \eqref{eq:twistedStromexact}.
\end{remark}

We are interested in compact Einstein-Weyl four-manifolds $(M,g)$ which admit a compatible (integrable) complex structure $J$. These metrics are classified in \cite[Thm. 3]{GauIvanov}: either $(M,g,J)$ is a flat torus or a $K3$ surface with a K\"ahler Ricci-flat metric, or $(M,J)$ is a Hopf surface and $g$ is locally isometric up to homothety to $S^3 \times \RR$ (with the standard product metric). Note that in the last case the metric is Vaisman \cite{GauIvanov}, that is, locally conformally K\"ahler and with Lee form parallel with respect to the Levi-Civita connection.

To state our classification result for solutions of the twisted Calabi-Yau equation \eqref{eq:twistedStromexact}, we recall some background. A Hopf surface is a compact complex surface whose universal covering is $\mathbb{C}^{2}\backslash \left\{ 0\right\}$. The fundamental group $\Gamma$ of a Hopf surface $X$ which admits a Vaisman metric is of the form \cite{Belgun}
\begin{equation}\label{eq:pi1}
\Gamma = \langle \gamma \rangle \ltimes H,
\end{equation}
where $H$ is a finite subgroup of $\U(2)$ and $\langle \gamma \rangle$ is an infinite cyclic group generated by a holomorphic contraction which, in suitable coordinates in the universal covering, takes the form
\begin{equation}\label{eq:cyclicgroup}
\gamma(z_1,z_2) = (\alpha z_1,\beta z_2),
\end{equation}
where $\alpha, \beta$ are complex numbers such that $1 < |\beta| \leq |\alpha|$. By the classification in \cite{Kato}, a quaternionic Hopf surface is a Hopf surface $(\mathbb{C}^{2}\backslash \left\{ 0\right\})/\Gamma$ with fundamental group $\Gamma = \langle \gamma \rangle \ltimes H$ conjugated to a subgroup of $\SU(2) \times \RR^*$. Equivalently, in suitable coordinates in $\mathbb{C}^{2}\backslash \left\{ 0\right\}$, $H \subset \SU(2)$ and
\begin{equation}\label{eq:alphabeta}
1 < |\alpha| = |\beta|, \qquad \textrm{and} \qquad \alpha \beta \in \RR.
\end{equation}

\begin{proposition}\label{prop:twistedStromexact}
Let $X$ be a compact complex surface. If $(\Psi,\omega)$ is a solution of the twisted Calabi-Yau equation \eqref{eq:twistedStromexact} on $X$ with $g = \omega(\cdot,J\cdot)$, then one of the following holds:
\begin{enumerate}[i)]
\item $(X,g)$ is a flat torus or a $K3$ surface with a K\"ahler Ricci-flat metric. In this case $[\theta_\omega] = 0$ and any such $(X,g)$ provides a solution.

\item $X$ is a quaternionic Hopf surface and there exist coordinates $(z_1,z_2)$ in the universal covering $\mathbb{C}^{2}\backslash \left\{ 0\right\}$ such that the pull-backs of $\omega$ and $\Psi$ are
\begin{equation}\label{eq:seedgeneral}
\tilde \omega = a i\frac{dz_{1}\wedge d\bar{z}_{1} + dz_{2}\wedge d\bar{z}_{2}}{|z|^2}, \qquad \tilde \Psi = \lambda \frac{dz_{1}\wedge dz_{2}}{|z|^2},
\end{equation}
respectively, for $|z|^2 = |z_1|^2 + |z_2|^2$ and suitable $a \in \RR_{> 0}$ and $\lambda \in \CC^*$. In this case $[\theta_\omega] \neq 0$ and any $(\tilde \Psi,\tilde \omega)$ as in \eqref{eq:seedgeneral} induces a solution.
\end{enumerate}
\end{proposition}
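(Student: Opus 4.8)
The plan is to convert the classification into a statement about Einstein--Weyl four-manifolds and then produce explicit normal forms on the universal cover. By Lemma \ref{lem:EinsteinWeyl}, the metric $g=\omega(\cdot,J\cdot)$ of any solution of \eqref{eq:twistedStromexact} is Einstein--Weyl, so the Gauduchon--Ivanov classification \cite[Thm.~3]{GauIvanov} recalled above applies: either $(X,g,J)$ is a flat torus or a $K3$ surface carrying a K\"ahler Ricci-flat metric, or $(X,J)$ is a Hopf surface and $g$ is Vaisman, locally isometric up to homothety to the product $\SSS^3\times\RR$. I would treat these alternatives separately, in each case first extracting $\ell_X$ and then matching the remaining data with the statement.

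In the K\"ahler Ricci-flat case, $d\omega=0$ forces $\theta_\omega=0$ through \eqref{eq:Leeformdef}, since wedging one-forms with $\omega$ is injective in real dimension four; hence $\ell_X=0$, the last two equations of \eqref{eq:twistedStromexact} hold automatically, and the first reduces to $d\Psi=0$, so $\Psi$ is a holomorphic volume form and $(\Psi,\omega)$ is a genuine Calabi--Yau $\SU(2)$-structure. Conversely, any flat torus or $K3$ surface with a Ricci-flat K\"ahler metric, which exists by \cite{Yau0}, yields such a solution after imposing \eqref{eq:normalizationnorm}. This gives alternative (i).

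For the Hopf case I would pass to the universal cover $\CC^2\setminus\{0\}$ and rerun the argument in the proof of Lemma \ref{lem:EinsteinWeyl}: writing $\theta_\omega=d\phi$ on the cover, the rescaled metric $e^{-\phi}\tilde g$ is K\"ahler with holonomy in $\SU(2)$, hence Ricci-flat. The key observation is that the Vaisman product metric $dt^2+g_{\SSS^3}$, with $t=\log|z|$, is turned by this conformal factor into the metric cone $dr^2+r^2 g_{\SSS^3}$ over the round three-sphere, which is the flat metric on $\CC^2$; thus $e^{-\phi}\tilde g$ is flat, and in suitable linear coordinates $e^{-\phi}\tilde\omega=a\,i(dz_1\wedge d\bar z_1+dz_2\wedge d\bar z_2)$ with constant holomorphic volume form $dz_1\wedge dz_2$. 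Undoing the conformal change, so that $e^{\phi}=|z|^{-2}$, recovers the normal forms of \eqref{eq:seedgeneral}: namely $\tilde\omega=a\,i|z|^{-2}\sum_j dz_j\wedge d\bar z_j$ and, via $\Psi=e^{\phi}\Omega$ as in Proposition \ref{prop:Strominger}, $\tilde\Psi=\lambda|z|^{-2}dz_1\wedge dz_2$, where $|\lambda|=a$ is forced by \eqref{eq:normalizationnorm}. Since $\theta_\omega=-d\log|z|^2$ descends to a non-exact closed one-form, $\ell_X\neq 0$.

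It remains to identify which Hopf surfaces occur. Using Belgun's description \eqref{eq:pi1}--\eqref{eq:cyclicgroup} of the fundamental group $\Gamma=\langle\gamma\rangle\ltimes H$, I would impose $\Gamma$-invariance of the lifts: invariance of $\tilde\omega$ under $\gamma(z_1,z_2)=(\alpha z_1,\beta z_2)$ gives $|\alpha|=|\beta|$, while invariance of $\tilde\Psi$, which must descend to a genuine section of $K_X$, gives $\gamma^*\tilde\Psi=(\alpha\beta/|\alpha|^2)\tilde\Psi=\tilde\Psi$, that is $\alpha\beta\in\RR_{>0}$; finally $H\subset\U(2)$ preserves $\tilde\omega$ automatically and preserves $\tilde\Psi$ exactly when $H\subset\SU(2)$. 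By the Kato criterion \eqref{eq:alphabeta} these are precisely the conditions defining a quaternionic Hopf surface. For the converse in this case the only nontrivial point is the third equation $dd^c\tilde\omega=0$: since $dd^c=2i\partial\dbar$ and $\omega_0$ has constant coefficients, it reduces to $\partial\dbar(|z|^{-2})\wedge\omega_0=0$, which holds because $|z|^{-2}$ is the fundamental solution of the Laplacian in real dimension four and hence harmonic on $\CC^2\setminus\{0\}$; the first two equations were already verified. I expect the main difficulty to be exactly this rigidity step: showing that the Einstein--Weyl/Vaisman metric is, up to homothety and biholomorphism, the explicit cone metric, and that the $\SU(2)$-structure $\Psi$ is uniquely determined, so that the descent of $\Psi$ rather than merely of $g$ is what upgrades ``Vaisman Hopf'' to ``quaternionic Hopf''.
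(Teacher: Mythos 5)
Your proof is correct and reaches the same classification, but the route through the Hopf case is genuinely different from the paper's. Both arguments start from Lemma \ref{lem:EinsteinWeyl} and the Gauduchon--Ivanov trichotomy, and both finish by imposing $\Gamma$-invariance of the lifted pair to force $|\alpha|=|\beta|$, $\alpha\beta\in\RR_{>0}$ and $H\subset\SU(2)$. The difference is in how the normal form \eqref{eq:seedgeneral} on the universal cover is obtained. The paper quotes \cite[Lemme 11]{GauHopf} to get that $\tilde\omega$ is homothetic to $\omega_0=i|z|^{-2}\sum dz_j\wedge d\bar z_j$, verifies directly that $(\Psi_0,\omega_0)$ solves \eqref{eq:twistedStromexact} on $\CC^2\setminus\{0\}$, and then uses Lemma \ref{lemma:holSUn} to conclude that $\tilde\Psi$ and $\Psi_0$ are both Bismut-parallel, hence proportional by a constant. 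You instead rerun the conformal rescaling of Lemma \ref{lem:EinsteinWeyl} globally on the cover: $e^{-\phi}\tilde g$ is K\"ahler Ricci-flat and, being the flat cone over the round $S^3$, is the Euclidean metric in suitable linear holomorphic coordinates, after which undoing the conformal factor and using Levi-Civita parallelism of $e^{-\phi}\tilde\Psi$ yields \eqref{eq:seedgeneral} directly. This buys a more self-contained derivation (including the explicit relation $|\lambda|=a$ forced by \eqref{eq:normalizationnorm}, and the sharper statement $\alpha\beta>0$, which is indeed what membership in $\SU(2)\times\RR^*$ requires), at the cost of the rigidity step you correctly flag: that the flat K\"ahler metric on the cover of a Hopf surface is, up to biholomorphism, the standard one on $\CC^2\setminus\{0\}$. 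That step is sound (the given $J$ is parallel for the flat metric, hence constant in flat linear coordinates, and the metric completion is smooth flat $\RR^4$), and it plays exactly the role that the citation of Gauduchon's Lemme 11 plays in the paper, so there is no gap --- just a different external input replaced by an explicit argument.
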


\begin{proof}
If $[\theta_\omega] = 0$ then $\omega$ is K\"ahler (see Section \ref{subsec:sigmaX0}), and therefore $g = \omega(\cdot,J\cdot)$ is a Calabi-Yau metric by Lemma \ref{lemma:holSUn}. Thus, $X$ must be a torus or a $K3$ surface. Conversely, any K\"ahler Ricci-flat metric on a torus or a $K3$ surface provides a solution of \eqref{eq:twistedStromexact}, and in this case $[\theta_\omega] = 0$.

By the classification in \cite[Thm. 3]{GauIvanov} and Lemma \ref{lem:EinsteinWeyl}, it remains to understand the case when $X$ is a Hopf surface. The second Betti number of a Hopf surface vanishes, and therefore $[\theta_\omega] \neq 0$, as $X$ does not admit any K\"ahler metric. Since $g$ is locally isometric to $S^3 \times \RR$, one requires \cite[Lemme 11]{GauHopf}
\begin{equation*}\label{eq:normalphabeta}
1 < |\beta| = |\alpha|,
\end{equation*}
and in this case $\tilde \omega$ is homothetic to the hermitian form
\begin{equation*}\label{eq:seedomega}
\omega_{0} = i\frac{dz^{1}\wedge d\bar{z}^{1} + dz^{2}\wedge d\bar{z}^{2}}{|z|^2},
\end{equation*}
with Lee form
$$
\theta_{0} = d \log |z|^2 = \frac{z^{1}d\bar{z}^{1} + z^{2} d\bar{z}^{2} + c.c.}{|z|^2},
$$
where $c.c.$ stands for complex conjugate.
Define
\begin{equation*}\label{eq:seedPsi0}
\Psi_{0} = \frac{dz^{1}\wedge dz^{2}}{|z|^2}.
\end{equation*}
It is straightforward to check that $(\Psi_0,\omega_0)$ provides a solution of \eqref{eq:twistedStromexact} on $\mathbb{C}^{2}\backslash \left\{ 0\right\}$ and thus, by Lemma \ref{lemma:holSUn}, $\tilde \Psi$ and $\Psi_0$ are both parallel with respect to the Bismut connection of $\tilde \omega$. This implies that $(\tilde \Psi,\tilde \omega)$ is of the form \eqref{eq:seedgeneral} and, since $g$ is Vaisman, the fundamental group $\Gamma$ of $X$ is as in \eqref{eq:pi1}. Using that $\Gamma$ preserves $\eqref{eq:seedgeneral}$ we necessarily have that \eqref{eq:alphabeta} holds and furthermore $H \subset \SU(2)$. Thus, we conclude that $X$ is quaternionic. For the converse, we simply note that $(\Psi_0,\omega_0)$ is preserved by $\Gamma$ when $X$ is a quaternionic Hopf surface.
\end{proof}

%\begin{remark}\label{rem:spinorproof}
%An alternative proof of Proposition \ref{prop:twistedStromexact} can be obtained combining Lemma \ref{lemma:holSUn} with \cite[Thm. 1.1]{DalIv}.
%\end{remark}

\begin{remark}\label{rem:quaternionic}
A quaternionic $4$-manifold is a smooth manifold of real dimension $4$ with an atlas formed by  quaternionic maps with respect to the standard quaternionic structure on $\mathbb{H} \cong \RR^4$. Compact quaternionic $4$-manifolds were classified by Kato \cite{Kato}, and they are given by complex analytic tori or quaternionic Hopf surfaces. The possible finite subgroups $H \subset \SU(2)$ which appear in the fundamental group (see \eqref{eq:pi1}) of a quaternionic Hopf surface are listed in \cite[Prop. 8]{Kato} (see also \cite{Katoerratum}). In particular, any quaternionic $4$-manifold is hypercomplex.
\end{remark}

\begin{remark}\label{rem:Hopfmoduli}
Hopf surfaces have a characteristic `jumping behaviour' under deformations of the complex structure, which rules out even a non-separable moduli space for the class of all Hopf surfaces \cite[Sec. 6]{Dabrowski}. A primary Hopf surface--that is, with fundamental group $\Gamma \cong \ZZ$ given by \eqref{eq:cyclicgroup}--which admits a solution of \eqref{eq:twistedStromexact} has necessarily $deg(X) = 0$ \cite{Dabrowski}, and hence the existence of solutions obstructs the possible jumps. % in the complex structure.
This remarkable property of \eqref{eq:twistedStromexact} is a characteristic feature of partial differential equations with a moment map interpretation (see e.g. \cite{lt}), and it would be interesting to see if this system allows for such an interpretation (for the Hull-Strominger system, see \cite{grt3}).
\end{remark}

\begin{remark}\label{rem:sigmaprimaryI}
When $X$ is a primary Hopf surface of class I and $\alpha \beta = | \alpha \beta|$ is satisfied, the Vaisman metric $\omega_{\alpha,\beta}$ constructed in \cite[Sec. 2]{GauOrnea} jointly with the $(2,0)$-form $\Psi_{\alpha,\beta} = \Phi_{\alpha,\beta}^{-1} dz_1 \wedge dz_2$ provide a solution of \eqref{eq:twistedStromexact}. However, $\|\Psi_{\alpha,\beta}\|_{\omega_{\alpha,\beta}}$ is not constant unless condition \eqref{eq:alphabeta} holds, and therefore in general %$(\omega_{\alpha,\beta},\Psi_{\alpha,\beta})$ does not define an $\SU(2)$-structure and
Lemma \ref{lemma:holSUn} does not apply.
\end{remark}

The previous result can be used now as a guide to address Question \ref{question}. Recall that a Hopf surface does not admit any K\"ahler metric. % and therefore the balanced cone of $X$ is empty (note that K\"ahler and balanced are equivalent conditions for $n=2$).
Consequently, Proposition \ref{prop:twistedStromexact} shows that, already in the case of complex surfaces, % and $G = 1$,
the balanced cone of $X$ cannot be used to parametrize the solutions of \eqref{eq:twistedStromhol} (note that K\"ahler and balanced are equivalent conditions in this case).
Furthermore, %a metric locally isometric to $S^3 \times \RR$
by Proposition \ref{prop:twistedStromexact} a solution of \eqref{eq:twistedStromexact} with $[\theta_\omega] \neq 0$ is Vaisman \cite{GauIvanov}, and hence all the Morse-Novikov cohomology groups $H^k_{\theta_\omega}(X)$ vanish \cite{LLMP}. Therefore, $H^2_{\theta_\omega}(X)$ (and its Bott-Chern analogue \cite{OrneaVer}) is also ruled out as a a potential answer to Question \ref{question} (see Remark \ref{remark:lck}).
%Consequently, Theorem \ref{th:SU(2)} and Proposition \ref{prop:Hopf} show that, already in the case of complex surfaces, neither the balanced cone of $X$ nor its Morse-Novikov cohomology can be used to parametrize the solutions of \eqref{eq:twistedStromexact}.
%We next for our proposal of parametrization of the solutions of \eqref{eq:twistedStromexact} on a complex surface, in relation to the uniqueness problem for the twisted Hull-Strominger \eqref{eq:twistedStromhol}.
On the other hand, given a solution $(\Psi,\omega)$ of \eqref{eq:twistedStromexact} the hermitian form $\omega$ is pluriclosed, that is, $dd^c\omega = 0$, and it has an associated real class in Aeppli cohomology
$$
[\omega] \in H^{1,1}_A(X,\RR),
$$
where the Aeppli cohomology groups of $X$ are defined by (note that $H^{p,p}_A(X)$ has a natural real structure)
$$
H^{p,q}_A(X) = \frac{\Ker(dd^c\colon \Omega^{p,q} \to \Omega^{p+1,q+1})}{\Im(\partial \oplus \dbar \colon \Omega^{p,q-1}\oplus \Omega^{p-1,q} \to \Omega^{p,q})}.
$$
Motivated by Proposition \ref{prop:twistedStromexact}, we propose the following specialization of Question \ref{question} for the system \eqref{eq:twistedStromexact}. Recall that a real Aeppli class of bidegree $(1,1)$ is called positive if it is represented by a pluriclosed hermitian form.

\begin{question}\label{questionexact}
Let $X$ be a compact complex manifold with $c_1(X) = 0$. If $X$ admits a solution of the twisted Calabi-Yau equation \eqref{eq:twistedStromexact}, is there a unique solution for each positive Aeppli class in $H^{1,1}_A(X,\RR)$?
\end{question}

%Note that the twisted Strominger system with $G = \{1\}$ is given by \eqref{eq:twistedStromexact}, independently of the dimension of $X$.
We are now ready to prove the main result of this section, which provides an affirmative answer to Question \ref{questionexact} in complex dimension $2$.

\begin{theorem}\label{th:Hopf}
If a compact complex surface $X$ admits a solution of the twisted Calabi-Yau equation \eqref{eq:twistedStromexact}, then it admits a unique solution $(\Psi,\omega)$ on each positive Aeppli class, up to rescaling of $\Psi$ by a unitary complex number.
\end{theorem}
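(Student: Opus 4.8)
The plan is to leverage the classification of solutions already established in Proposition~\ref{prop:twistedStromexact}, reducing the uniqueness statement on each positive Aeppli class to a concrete computation in the two exhaustive cases: the K\"ahler case (torus or $K3$) and the Hopf surface case. In the first case, where $\ell_X = 0$, a solution of \eqref{eq:twistedStromexact} is a Calabi-Yau metric by Lemma~\ref{lemma:holSUn}, so $\omega$ is K\"ahler and its Aeppli class $[\omega] \in H^{1,1}_A(X,\RR)$ coincides with the image of its K\"ahler class under the natural map $H^{1,1}_{BC}(X,\RR) \to H^{1,1}_A(X,\RR)$. On a $\partial\dbar$-manifold (which a torus or $K3$ is, being K\"ahler) this map is an isomorphism, so fixing the positive Aeppli class is equivalent to fixing the K\"ahler class, and uniqueness of $\omega$ then follows directly from Yau's solution of the Calabi Conjecture. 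The holomorphic volume form $\Psi$ on such a manifold is unique up to a constant, and the normalization $\|\Psi\|_\omega = 1$ pins down its modulus, leaving only the unitary phase ambiguity recorded in the statement.

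\emph{First I would} therefore dispatch the K\"ahler case as above, and then turn to the harder Hopf surface case where $\ell_X \neq 0$. Here Proposition~\ref{prop:twistedStromexact}(ii) tells us that $X$ must be a quaternionic Hopf surface, and that in suitable coordinates on the universal cover $\CC^2 \setminus \{0\}$ both $\omega$ and $\Psi$ are forced into the explicit normal forms \eqref{eq:seedgeneral}, parametrized by $a \in \RR_{>0}$ and $\lambda \in \CC^*$. The key point is that \emph{any} solution is homothetic to the model $(\Psi_0, \omega_0)$, so the entire moduli of solutions is captured by these two parameters. It remains to show that the Aeppli class $[\omega]$ determines $a$ uniquely (up to the stated rescaling), and that $\|\Psi\|_\omega = 1$ together with $[\omega]$ pins down $|\lambda|$ while leaving only a phase.

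\emph{The main obstacle} will be understanding the Aeppli cohomology $H^{1,1}_A(X,\RR)$ of the quaternionic Hopf surface and showing that the assignment $a \mapsto [a\,\omega_0] = a[\omega_0]$ is injective on the positive cone. Since $b_2(X) = 0$ and the Hopf surface is far from a $\partial\dbar$-manifold, I would compute $H^{1,1}_A(X,\RR)$ directly --- one expects it to be one-dimensional, generated by the class $[\omega_0]$, so that scaling the representative scales the class and injectivity is immediate. Concretely, I would verify that $\omega_0$ is not in $\Img(\partial \oplus \dbar)$ (so $[\omega_0] \neq 0$) and that any two pluriclosed hermitian forms on $X$ in the same Aeppli class differ by an element of $\Img(\partial \oplus \dbar)$; combined with the fact that every solution is a homothety $a\,\omega_0$ of the model, this forces the scale factors to agree when the Aeppli classes agree. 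Having fixed $\omega$, the defining relation $\|\Psi\|_\omega^2\,\omega^2/2 = -\Psi \wedge \overline{\Psi}$ together with the normalization \eqref{eq:normalizationnorm} determines $|\lambda|$ in terms of $a$, leaving $\lambda$ free only up to a unitary factor.

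\emph{To close the argument}, I would note that in both cases the conclusion $\omega_1 = \omega_0$ on a fixed positive Aeppli class follows, and that the holomorphic section $\Psi$ --- being parallel with respect to the Bismut connection of $\omega$ by Lemma~\ref{lemma:holSUn} and hence determined up to its pointwise norm, which is normalized to $1$ --- is unique up to multiplication by a constant of modulus one. This is precisely the rescaling of $\Psi$ by a unitary complex number appearing in the statement, completing the proof.
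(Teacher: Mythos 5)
Your proposal is correct and follows essentially the same route as the paper: split into the K\"ahler case (where $H^{1,1}_A(X)\cong H^{1,1}(X)$ reduces the claim to Yau's theorem) and the class VII/Hopf case (where one-dimensionality of $H^{1,1}_A(X)$ plus the explicit normal forms \eqref{eq:seedgeneral} from Proposition \ref{prop:twistedStromexact} pin down $a$ and $|\lambda|$). The only difference is that where you propose to compute $H^{1,1}_A(X,\RR)$ directly and check $[\omega_0]\neq 0$ by hand, the paper simply cites the Angella--Dloussky--Tomassini computation $H^{1,1}_A(X)\cong\CC$ for class VII surfaces with $b_2=0$.
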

\begin{proof}
When $[\theta_\omega] = 0$, $X$ is K\"ahler and therefore $H^{1,1}_A(X) \cong H^{1,1}(X)$, where $H^{1,1}(X)$ is the $(1,1)$ Dolbeault cohomology group of $X$. Thus, the statement in this case follows by Yau's Theorem for Calabi-Yau metrics \cite{Yau0}. When $[\theta_\omega] \neq 0$, Proposition \ref{prop:twistedStromexact} implies that $X$ is %a quaternionic Hopf surface. Any such surface is
of class VII \cite{Kato}, and therefore $H^{1,1}_A(X) \cong \CC$ by \cite[Thm. 1.2]{AngellaBC}. Thus, the statement in this case follows by the explicit form of the solutions in the universal covering of $X$, given by \eqref{eq:seedgeneral}.
\end{proof}

%When $H$ is trivial, that is, in the case of a primary Hopf surface with fundamental group $\langle \gamma \rangle \cong \ZZ$, we denote
%\begin{equation}\label{eq:Xalphabeta}
%X_{\alpha,\beta} = \mathbb{C}^{2}\backslash \left\{ 0\right\}/\langle \gamma \rangle.
%\end{equation}

To finish this section, we consider the case with arbitrary complex Lie group $G$. For our analysis we use a special feature of \eqref{eq:twistedStromhol} in complex dimension two, namely, that the first three equations of the system are conformally invariant. This follows easily from the behaviour of the Lee form $\theta_\omega$ and the norm $\|\Psi\|_\omega$ under conformal rescaling, that is, if $\omega' = e^f \omega$ for some smooth function $f$ on $X$, then
$$
\theta_{\omega'} = \theta_\omega + df, \qquad \|\Psi\|_{\omega'} = e^{-f} \|\Psi\|_\omega.
$$
%Recall that on a complex surface pluriclosed is equivalent to Gauduchon.
\begin{proposition}\label{prop:conformal}
Let $X$ be a compact complex surface with $c_1(X) = 0$, endowed with a holomorphic principal $G$-bundle $P$ satisfying \eqref{eq:pic0}. Then,  $(X,P)$ admits a solution of the twisted Hull-Strominger system \eqref{eq:twistedStromhol} if and only if $X$ admits a solution $(\Psi,\omega)$ of \eqref{eq:twistedStromexact} such that $P$ is polystable with respect to $\omega$.
\end{proposition}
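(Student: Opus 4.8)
The plan is to prove both implications by moving between $\omega$ and a conformally rescaled metric, exploiting that on a surface the first three equations of \eqref{eq:twistedStromhol} are invariant under the joint rescaling $(\Psi,\omega)\mapsto(e^f\Psi,e^f\omega)$. Indeed, using $\theta_{e^f\omega}=\theta_\omega+df$ and $\|e^f\Psi\|_{e^f\omega}=\|\Psi\|_\omega$ one checks directly that $d\Psi-\theta_\omega\wedge\Psi=0$, $d\theta_\omega=0$ and the normalization \eqref{eq:normalizationnorm} are preserved, while $F_h\wedge(e^f\omega)=e^f\,F_h\wedge\omega$ shows that the Hermite-Einstein condition $F_h\wedge\omega=0$ (the first equation, since $n=2$) is insensitive to conformal scaling of $\omega$ with $h$ fixed. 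Only the fourth equation $dd^c\omega=c(F_h\wedge F_h)$ fails to be conformally invariant, so the whole argument amounts to adjusting the conformal factor so as to match the $dd^c$ term.

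For the forward implication, suppose $(\Psi,\omega,h)$ solves \eqref{eq:twistedStromhol}. First I would replace $\omega$ by the Gauduchon representative $\omega_G=e^{2u}\omega$ of its conformal class, which exists and is unique up to scale by Gauduchon's theorem \cite{Gau}; in complex dimension two this reads $dd^c\omega_G=0$. Setting $\Psi_G=e^{2u}\Psi$, the conformal invariance above shows that $(\Psi_G,\omega_G)$ satisfies the first two equations and the normalization, and together with $dd^c\omega_G=0$ this is exactly a solution of \eqref{eq:twistedStromexact}. Since $F_h\wedge\omega_G=e^{2u}F_h\wedge\omega=0$, the reduction $h$ stays Hermite-Einstein with respect to the Gauduchon metric $\omega_G$, so the Buchdahl-Li-Yau theorem \cite{Buchdahl,LiYauHYM} gives that $P$ is polystable with respect to $\omega_G$, as required.

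For the converse, let $(\Psi,\omega)$ solve \eqref{eq:twistedStromexact}, so that $\omega$ is pluriclosed, hence Gauduchon, and assume $P$ is polystable with respect to $\omega$. The existence (Donaldson-Uhlenbeck-Yau) direction of the Buchdahl-Li-Yau theorem for Gauduchon metrics \cite{Buchdahl,LiYauHYM} then produces a reduction $h$ with $F_h\wedge\omega=0$, the slope vanishing because $G$ is semisimple. It remains to find a conformal factor solving the fourth equation, for which I would solve the \emph{linear} equation $dd^c(v\omega)=c(F_h\wedge F_h)$ for a real function $v$. Regarded via division by $\omega^2/2$ as a map $C^\infty(X,\RR)\to C^\infty(X,\RR)$, the operator $v\mapsto dd^c(v\omega)$ is second order and elliptic, with leading term $\Lambda_\omega dd^c v$, and its zeroth-order term vanishes because $dd^c\omega=0$; by the strong maximum principle its kernel is exactly the constants, so it has index zero and one-dimensional cokernel, and since $\int_X dd^c(v\omega)=0$ for all $v$ the formal adjoint also kills the constants. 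Hence the image is the functions of zero integral. Because $p_1(P)=0$ in \eqref{eq:pic0} forces $c(F_h\wedge F_h)$ to be $dd^c$-exact, and so of zero integral, the equation is solvable; adding a large constant to $v$ (which remains a solution, as constants lie in the kernel) makes $v>0$. Then $\omega'=v\omega$ is a hermitian form with $dd^c\omega'=c(F_h\wedge F_h)$, and with $\Psi'=v\Psi$ the conformal invariance shows $(\Psi',\omega',h)$ solves all four equations of \eqref{eq:twistedStromhol}.

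The routine checks—the conformal transformation laws of $\theta_\omega$ and $\|\Psi\|_\omega$, and positivity and normalization after rescaling—are mechanical. The substantive inputs are the two cited theorems (Gauduchon's existence of the Gauduchon metric and the existence half of Donaldson-Uhlenbeck-Yau for Gauduchon metrics on surfaces) together with the Fredholm analysis of $v\mapsto dd^c(v\omega)$. I expect the delicate point to be precisely this last step: one must confirm that the cokernel of this operator is spanned by the constant function, so that the topological condition $\int_X c(F_h\wedge F_h)=0$ coming from $p_1(P)=0$ is exactly the solvability condition, rather than a weighted integral against some non-constant density.
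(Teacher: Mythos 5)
Your proof is correct and follows essentially the same route as the paper: conformal invariance of the first three equations on a surface, Buchdahl--Li--Yau in both directions, Gauduchon's theorem for the ``only if'' direction, and solvability of the linear equation $dd^c(v\omega)=c(F_h\wedge F_h)$ for the ``if'' direction. The only difference is that the paper handles this last step by citing \cite[Lemma 1.2.6]{lt}, whereas you prove it directly via the index-zero/maximum-principle argument, correctly resolving your own worry about the cokernel by noting that $\int_X dd^c(v\omega)=0$ forces the image to be exactly the top forms of zero integral.
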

\begin{proof}
For the `if part', note that $P$ admits a reduction $h$ satisfying the Hermite-Einstein equation $F_h \wedge \omega = 0$. By the conformal invariance of the first three equations of the system it is enough to find a smooth real function $f$ on $X$ such that
$$
dd^c(e^f \omega) = c(F_h\wedge F_h).
$$
To prove the existence of this function, consider the differential operator
\begin{align*}
Q \colon C^\infty(X) &\to C^\infty(X)\\
\phi & \mapsto Q(\phi) := - * dd^c (\phi \omega)
\end{align*}
acting on smooth real functions on $X$. By \cite[Lem. 7.2.4]{lt}, we have $Q = P^*$, where $P^*$ is the adjoint of
$$
P(\phi) = 2i \Lambda_\omega \dbar \partial \phi.
$$
Applying \cite[Lem. 7.2.7]{lt} we have $\operatorname{coKer} Q = \Ker P = \mathbb{R}$, and therefore condition \eqref{eq:pic0} ensures the existence of a function $\phi$ such that $dd^c(\phi \omega) = c(F_h\wedge F_h)$. Using that $\omega$ is Gauduchon, we can add a constant $C > 0$ to $\phi$ so that $\phi + C > 0$ on $X$ and still solves the equation. Hence, by setting $e^f:= \phi + C$, the claim follows. % (note that, in this implication, the normalization \eqref{eq:normalizationmap} may not be satisfied).

Similarly, the `only if part' follows from the existence of a Gauduchon metric in the conformal class of the hermitian form solving \eqref{eq:twistedStromhol} \cite{Gau84}.
\end{proof}

\begin{example}\label{rem:bundles}
%Stable bundles on hypercomplex surfaces (see Remark \ref{rem:quaternionic}) have been largely studied in the literature (see \cite{MorVer} and references therein). For instance, if we consider
For a quaternionic diagonal Hopf surface--that is, with fundamental group generated by \eqref{eq:cyclicgroup}, with $\alpha = \beta$ and $\alpha \beta \in \RR$--, the moduli space of stable $\SL(2,\CC) \times \SL(2,\CC)$-bundles with second Chern class $c_2 = n_1 + n_2$ is non-empty and has complex dimension $4(n_1 + n_2)$ \cite{MorVer} (note that a quaternionic Hopf surface is hypercomplex, by Remark \ref{rem:quaternionic}). Taking the bilinear form \eqref{eq:ccomplex} on $\mathfrak{g} = \mathfrak{sl}(2,\CC) \oplus \mathfrak{sl}(2,\CC)$ to be $c = - n_2 \tr + n_1 \tr$, for $- \tr$ the Killing form on $\mathfrak{sl}(2,\CC)$, any point in the moduli space provides a polystable bundle satisfying \eqref{eq:pic0}, and hence fulfilling the hypothesis of Proposition \ref{prop:conformal}.
\end{example}

%The previous result shows that the existence of a solution of \eqref{eq:twistedStromhol} on a complex surface reduces to the existence of a solution of \eqref{eq:twistedStromexact} endowed with a polystable holomorphic $G$-bundle.
Combined with Proposition \ref{prop:twistedStromexact}, the previous result provides a complete characterization of the complex surfaces which may admit a solution. In a sense, Proposition \ref{prop:conformal} can be regarded as an analogue for the equations \eqref{eq:twistedStromhol} (in complex dimension two) of Yau's Conjecture for the Hull-Strominger system in \cite{Yau2}. On the contrary, Question \ref{question} seems to be more delicate. In the proof of the previous result we can choose a normalization for the Gauduchon metric $\tilde \omega$ in the conformal class of a solution $(\Psi,\omega,h)$ of \eqref{eq:twistedStromhol} by fixing the volume
\begin{equation}\label{eq:normalizationmap}
\int_X \tilde \omega^2 = \int_X \omega^2,
\end{equation}
in order to associate an Aeppli class $[\tilde \omega] \in H^{1,1}_A(X,\RR)$ to a given solution. However, the Aeppli classes which are achieved via this procedure seem to depend in a subtle way on the holomorphic bundle $P$.

\subsection{Examples on compact threefolds}\label{subsec:ex3fold}

This section is devoted to discuss some non-trivial examples of solutions of the twisted Hull-Strominger system with $[\theta_\omega] \neq 0$ on compact threefolds. Our analysis further illustrates the fact that the theory for the equations \eqref{eq:twistedStromhol} is richer than the theory for the Hull-Strominger system. In complex dimension three or higher the conformal invariance of the first three equations in \eqref{eq:twistedStromhol} is lost, and there seems to be no analogue of the proof of Proposition \ref{prop:conformal}. Motivated by this, in Section \ref{sec:stringholCourant} we will introduce a notion of Aeppli class associated to a general solution of the twisted Hull-Strominger system, by means of a special class of holomorphic Courant algebroids introduced in \cite{grt2}.

Our first family of examples builds on a construction in \cite{Bismutflat}. It corresponds to solutions of the twisted Calabi-Yau equation \eqref{eq:twistedStromexact} on a class of non-K\"ahler complex manifolds diffeomorphic to $S^3 \times T^3$. Consider the non-compact threefold
$$
\tilde X = (\CC^2 \backslash \{0\}) \times \CC \cong \SU(2) \times \RR^3,
$$
endowed with the solution of \eqref{eq:twistedStromexact} given by the product of \eqref{eq:seedgeneral} with the flat Calabi-Yau metric and holomorphic volume form on $\CC$. Consider the homomorphism $\rho \colon \mathbb{Z}^3 \to \SU(2)$ defined by
\begin{align*}
\rho(1,0,0) & {} = A, \\
\rho(0,1,0) & {} = (\cos \alpha) \Id + (\sin \alpha) A,\\
\rho(0,0,1) & {} = (\cos \beta) \Id + (\sin \beta) A,
\end{align*}
where $\alpha, \beta$ are real numbers and
$$ A = \frac{1}{\sqrt{2}}\left( \begin{array}{cc}
i & 1 \\
-1 & - i \end{array} \right).
$$
Using $\rho$, the abelian group $\mathbb{Z}^3$ acts by left translations on $\tilde X \cong \SU(2) \times \RR^3$ preserving the solution of \eqref{eq:twistedStromexact} (see Section \ref{subsec:Calabi}), and therefore the compact threefold $X = \tilde X/\mathbb{Z}^3$ also carries a solution. For generic choices of $(\alpha,\beta)$, any finite unbranched cover of $X$ cannot be the product of a Hopf surface and an elliptic curve. In particular, $X$ is neither an elliptic fibration over a Hopf surface nor a Hopf surface fibration over an elliptic curve. Since $X$ is a non-K\"ahler manifold, we necessarily have $[\theta_\omega] \neq 0$ (see Section \ref{subsec:sigmaX0}).

Our second family of examples is based on Sasaki-Einstein geometry, and relies on a construction in \cite{IvanovIvanov}. Let $M^5$ be a Sasaki-Einstein manifold of dimension five and define
$$
\tilde M^6 = M^5 \times \RR.
$$
Denote by $t$ a choice of coordinate on $\RR$. Given a Tanno deformation of the Sasaki-Einstein structure, $\tilde M^6$ carries a natural $\SU(3)$ structure $(\Psi,\omega)$--as in \cite[(7.54)]{IvanovIvanov}--, which is locally conformally K\"ahler, whose Lee form is given by $dt$, and whose Bismut connection $\nabla^+$ on $T\tilde M^6$ satisfies the Hermite-Yang-Mills equation
$$
F_{\nabla^+} \wedge \omega^2 = 0, \qquad F_{\nabla^+}^{0,2} = 0,
$$
and also $\nabla^+ \Psi = 0$ (see \cite[Section 7]{IvanovIvanov}). Furthermore, one has
$$
dd^c \omega = \frac{3}{8}\tr F_{\nabla^+} \wedge F_{\nabla^+}.
$$
By Lemma \ref{lemma:holSUn}, we conclude that $(\Psi,\omega,\nabla^+)$ provides a solution of \eqref{eq:twistedStromhol} on $\tilde M^6$ with $G = \SL(3,\CC)$ and $P$ given by the bundle of complex frames of $(\tilde M^6,\Psi)$ endowed with the non-standard holomorphic structure determined by $(\nabla^+)^{0,1}$.  This solution is invariant under the natural $\mathbb{Z}$-action on $\tilde M^6$, and therefore it descends to a solution on $M^6 = M^5 \times S^1$. Notice that $[\theta_\omega] \neq 0$ by construction, as it corresponds to the cohomology class induced by the Lee form on the $S^1$-factor. By well-known results in Sasaki-Einstein geometry, the topological type of $M^5$ can be taken to be $\sharp_{k} (S^3 \times S^2)$ with $k \geqslant 1$, simply connected rational homology spheres, including $S^5$, and connected sums of mixed type (cf. \cite{FHZ}).

\section{Bott-Chern algebroids: metrics and Aeppli classes.}\label{sec:stringholCourant}

\subsection{Holomorphic string algebroids}\label{subsec:courant}

In this section we show that any solution of the twisted Hull-Strominger system \eqref{eq:twistedStromhol} determines a holomorphic Courant algebroid of string type, as defined and classified in \cite{grt2}.
%Combined with the classification in \cite{grt2}, this will lead us in Section \ref{sec:Aepplivariational} to introduce a notion of Aeppli class for a general solution of the equations and, when $\ell_X = 0$, to give a variational interpretation of the solutions of the Hull-Strominger system \eqref{eq:Stromhol}.
We start by recalling the basic definitions. Let $X$ be a complex manifold of dimension $n$. We denote by $\cO_X$ and $\underline{\CC}$ the sheaves of holomorphic functions and $\CC$-valued locally constant functions on $X$, respectively.

\begin{definition}\label{def:Courant}
A holomorphic Courant algebroid $(Q,\la\cdot,\cdot\ra,[\cdot,\cdot],\pi)$ over $X$ consists of a holomorphic vector bundle $Q \to X$, with sheaf of sections denoted also by $Q$, together with a holomorphic non-degenerate symmetric bilinear form $\la\cdot,\cdot\ra$, a holomorphic vector bundle morphism $\pi:Q\to TX$, and an homomorphism of sheaves of $\underline{\CC}$-modules
$$
[ \cdot,\cdot ] \colon Q \otimes_{\underline{\CC}} Q \to Q,
$$
satisfying the identities, for $e,e',e''\in Q$ and $\phi\in \cO_X$,
  \begin{itemize}
  \item $[e,[e',e'']] = [[e,e'],e''] + [e',[e,e'']]$,
  \item $\pi([e,e'])=[\pi(e),\pi(e')]$,
  \item $[e,\phi e'] = \pi(e)(\phi) e' + \phi[e,e']$,
  \item $\pi(e)\la e', e'' \ra = \la [e,e'], e'' \ra + \la e',
    [e,e''] \ra$,
  \item $[e,e']+[e',e]= \mathcal{D}\la e,e'\ra$,
  \end{itemize}
where $\mathcal{D} \colon \mathcal{O}_X \to Q$ denotes the composition of the exterior differential, the natural map $\pi^* \colon T^*X \to Q^*$, and the isomorphism $Q^* \to Q$ provided by $\la\cdot,\cdot\ra$.
\end{definition}

%\begin{remark}\label{rem:bracket-smooth}
%Since the holomorphic sections locally generate, as a $\mathcal{O}_X$-module, the smooth sections, we have that the bracket extends, by the Leibniz rule, to a bracket on smooth sections on $Q$.
%\end{remark}

A holomorphic Courant algebroid is called transitive when the anchor map $\pi \colon Q \to TX$ is surjective. In this case, %$\Ker \pi$ and $(\Ker \pi)^\perp$ are locally free and
the quotient
$$
A_Q = Q/(\Ker \pi)^\perp
$$
is a vector bundle which inherits a holomorphic Lie algebroid structure. Furthermore, the holomorphic subbundle
$$
\ad_Q = \Ker \pi/(\Ker \pi)^\perp \subset A_Q
$$
inherits the structure of holomorphic bundle of quadratic Lie algebras.

%Unless specified otherwise, in this paper a Courant algebroid will always be assumed to be holomorphic.
We are interested in a particular class of transitive holomorphic Courant algebroids introduced in \cite{grt2}. %, whose definition relies on the holomorphic Atiyah algebroid, which we recall now.
Let $G$ be a complex Lie group  endowed with an invariant symmetric bilinear form $c \colon \mathfrak{g} \otimes \mathfrak{g} \to \CC$ on its Lie algebra, and let $p \colon P \to X$ be a holomorphic principal $G$-bundle over $X$. The holomorphic Atiyah Lie algebroid $A_{P}$ of $P$ has underlying holomorphic bundle $TP/G$,
%$$
%TP/G \to X,
%$$
whose local sections are given by $G$-invariant holomorphic vector fields on $P$, anchor map $dp \colon TP/G \to TX$,
and bracket induced by the Lie bracket on $TP$. The holomorphic bundle of Lie algebras
$
\Ker dp \subset A_{P}
$
corresponds to the adjoint bundle induced by the adjoint representation of $G$,
$$
\Ker dp \cong \ad P = P \times_{G} \mathfrak{g},
$$
and we have the short exact sequence of holomorphic Lie algebroids
$$
0 \to \ad P \to A_{P} \to TX \to 0.
$$

\begin{definition}[\cite{grt2}]
\label{def:stringholCour}
A string algebroid over $X$ with structure group $G$ and pairing $c$ is a tuple $(Q,P,\rho)$ such that $Q$ is a transitive holomorphic Courant algebroid $Q$, $P$ is a holomorphic principal $G$-bundle, and $\rho$ is a bracket-preserving morphism fitting into a short exact sequence
	\begin{equation}\label{eq:defstring}
	\xymatrix{
		0 \ar[r] & T^*X \ar[r] & Q \ar[r]^\rho & A_P \ar[r] & 0,
	}
	\end{equation}
such that the induced map of holomorphic Lie algebroids $\rho \colon A_Q \to A_P$ is an isomorphism restricting to an isomorphism $\ad_Q \cong (\ad P,c)$.
\end{definition}

%By definition, any string algebroid $Q$ is transitive and the underlying holomorphic vector bundle fits into an exact sequence of holomorphic vector bundles
%\begin{equation*}%\label{eq:holext}
%0 \to T^*X \to Q \to A_{P} \to 0.
%\end{equation*}
%such that the Dolbeault operator on $Q$ is compatible with the fibre-wise pairing $\langle \cdot, \cdot \rangle$.

An isomorphism of string algebroids is given by a commutative diagram
\begin{equation}\label{eq:defstringiso}
    \xymatrix{
      0 \ar[r] & T^*X \ar[r] \ar[d]^{id} & Q \ar[r]^\rho \ar[d]^{\varphi} & A_P \ar[r] \ar[d]^{g} & 0\\
      0 \ar[r] & T^*X \ar[r] & Q' \ar[r]^{\rho'} & A_{P'} \ar[r] & 0
    }
\end{equation}
where $\varphi \colon Q \to Q'$ is an isomorphism of holomorphic Courant algebroids and $g \colon A_P \to A_{P'}$ is induced by an isomorphism of holomorphic principal bundles covering the identity on $X$.

Given a string algebroid $(Q,P,\rho)$ we will say that $P$ is the underlying principal bundle of $Q$. In the sequel, we will abuse of the notation and denote a string algebroid simply by $Q$. Notice that, when $G = \{1\}$, a string algebroid corresponds to an exact holomorphic Courant algebroid
\begin{equation*}%\label{eq:stholCourant}
0 \to T^*X \to Q \to TX \to 0.
\end{equation*}

For later use, we recall next that a suitable choice of a three-form $H$ and a connection $\theta$ produces a string algebroid (see \cite[Prop. 2.4]{grt2}).

\begin{example}\label{ex:algebroidH}
Let $P$ be a holomorphic principal $G$-bundle over $X$. We denote by $\underline{P}$ the underlying smooth bundle, and consider the Atiyah Lie algebroid $A^{1,0} := T\underline{P}^{1,0}/G$ with anchor $\pi_A:A^{1,0}\to T^{1,0}X$. Let $\theta$ be a connection on $\underline{P}$ such that $F_{\theta}^{0,2}=0$ and that $\theta^{0,1}$ induces $P$. Assume that there exists $H\in \Omega^{3,0}\oplus \Omega^{2,1}$ such that $dH + c(F_\theta\wedge F_\theta) = 0$. Consider the smooth bundle
$$
\underline{Q} = A^{1,0}\oplus (T^{1,0}X)^*,
$$
whose sections are denoted by $V + \xi$ and $W+\eta$. We endow $\underline{Q}$ with the pairing
\begin{equation}\label{eq:smcxpairing}
\langle V + \xi , V + \xi \rangle = \xi(\pi_A V) + c(\theta V,\theta V),
\end{equation}
where $\theta V$ denotes vertical projection, the anchor map $\pi_{\underline{Q}}(V+\xi) = \pi_A V$, and the bracket
\begin{equation*}\label{eq:bracket}
\begin{split}
[V+ \xi,W + \eta]  = {} & [V,W] + \partial i_{\pi_A V} \eta + i_{\pi_A V}\partial \eta - i_{\pi_A W}\partial \xi + i_{\pi_A V}i_{\pi_A W}H^{3,0} \\
& + 2c(\partial^\theta(\theta V),\theta W) + 2c(i_{\pi_A V} F^{2,0}_\theta, \theta W)-2c(i_{\pi_A W} F^{2,0}_\theta,\theta V).
\end{split}
\end{equation*}
Finally, using the holomorphic structures on $X$ and $A^{1,0}$, define the $\bar{\partial}$-operator
$$
\dbar_{\underline{Q}} (V + \xi) = \dbar V +  \dbar \xi + i_{\pi_A V}H^{2,1} + 2c(F^{1,1}_\theta,\theta V).
$$
Then, $\dbar_{\underline{Q}}^2 = 0$ and the previous construction endows the sheaf of holomorphic sections of $(\underline{Q},\dbar_{\underline{Q}})$ with the structure of a string algebroid. Note that, in particular, an element $r + \xi \in \Ker \pi_{\underline{Q}}$ is holomorphic if and only if
\begin{equation}\label{eq:holomorphicsection}
\dbar_{\underline{Q}} (r + \xi) = \dbar r + \dbar \xi + 2c(F^{1,1}_\theta,r) = 0.
\end{equation}
\end{example}

We recall next the classification of string algebroids obtained in \cite{grt2}. In the sequel, we will assume that $G$ is a reductive complex Lie group with fixed bilinear form $c$. Consider the elliptic complex $(\Omega^{\leqslant\bullet},d)$ defined by
$$
\Omega^{\leqslant k} = \oplus_{j\leqslant k} \Omega^{j+ 2,k-j},
$$
with the convention that $\Omega^{p,q} = 0$ if $p < 0$ or $q <0$, and the usual exterior de Rham differential
\begin{equation*}%\label{eq:cxleqk}
  \xymatrix@R-2pc{
\ldots \ar[r]^d & \Omega^{\leqslant k} \ar[r]^d & \Omega^{\leqslant k+1} \ar[r]^d & \ldots
}
%% FIX THIS DIAGRAM - ARROWS
\end{equation*}
Explicitly, for $0 \leqslant k \leqslant 2$,
\begin{align*}
\Omega^{\leqslant 0} & = \Omega^{2,0},\\
\Omega^{\leqslant 1} & = \Omega^{3,0} \oplus \Omega^{2,1},\\
\Omega^{\leqslant 2} & = \Omega^{4,0} \oplus \Omega^{3,1} \oplus \Omega^{2,2}.
\end{align*}
Then, \cite[Cor. 3.7]{grt2} states that there is an exact sequence of pointed sets
\begin{equation}\label{eq:lescEind2cx}
  \xymatrix{
0 \ar[r] & H^1(\Omega^{\leqslant \bullet}) \ar[r]^{\quad \iota} & H^1(\cS) \ar[r]^\jmath & H^1(\cO_{G}) \ar[r]^{^{p_1}\quad } & H^2(\Omega^{\leqslant \bullet}),
  }
\end{equation}
where $H^1(\cS)$ denotes the set of isomorphism classes of string algebroids with structure group $G$ % and fixed bilinear form $c$,
and $H^1(\cO_{G})$ is the set of isomorphism classes of holomorphic principal $G$-bundles on $X$. The map $p_1$ is given by
$$
p_1([P]) = [c(F_\theta \wedge F_\theta)] \in H^2(\Omega^{\leqslant \bullet})
$$
for any choice of connection $\theta$ on the smooth principal $G$-bundle $\underline{P}$ underlying $P$, such that $F_\theta^{0,2} = 0$ and whose $(0,1)$-part induces $P$.
%An immediate consequence of the previous result is that, if $[P] = \jmath([Q])$ for some $[Q] \in H^1(\cS)$, then its Pontryagin class must vanish in complex de Rham cohomology
%$$
%p_1(P) = 0 \in H^4(X,\CC).
%$$
%Given a string algebroid $Q$, it follows from \eqref{eq:lescEind2cx} that it determines a holomorphic principal $G$-bundle $P$ (up to isomorphism). In this situation, we will say that $P$ is the holomorphic principal bundle underlying $Q$.

\begin{definition}[\cite{grt2}]\label{def:holstringclass}
Let $P$ be a holomorphic principal $G$-bundle over $X$ with $p_1([P]) = 0 \in H^2(\Omega^{\leqslant \bullet})$. The set of holomorphic string classes on $P$ is %the $H^1(\Omega^{\leqslant \bullet})$-torsor
$\jmath^{-1}([P])$.
\end{definition}

% A \emph{holomorphic string class} on $P$ \cite{grt2} is defined as an element in $\jmath^{-1}([P])$, that is, as an isomorphism class of string Courant algebroids with underlying principal bundle $P$.
A holomorphic string class on $P$ is therefore an isomorphism class of string algebroids with underlying bundle isomorphic to $P$. Recall that the vector space $H^1(\Omega^{\leqslant \bullet})$ in \eqref{eq:lescEind2cx} classifies exact Courant algebroids on $X$ up to isomorphism \cite{G2}. Regarded as an additive group, it acts transitively on $\jmath^{-1}([P])$. % for any $[P] \in H^1(\cO_{G})$.
The next result from \cite{grt2} provides a classification \emph{\`a la de Rham} of the set of holomorphic string classes on $P$ and characterizes the isotropy of the $H^1(\Omega^{\leqslant \bullet})$-action.

To state the result, denote by $\mathcal{A}_{P}$ the space of connections on the underlying smooth bundle $\underline{P}$ such that $F_\theta^{0,2} = 0$ and whose $(0,1)$-part induces $P$. Denote by $\cG_P$ the \emph{holomorphic gauge group} of $P$, given by automorphisms of $P$ which project to the identity on $X$. Recall from \cite[Lem. 2.11]{grt2} that there is a group homomorphism
\begin{equation}\label{eq:sigmaP}
\sigma_P \colon \cG_P \to H^1(\Omega^{\leqslant \bullet})
\end{equation}
defined by
\begin{equation}\label{eq:sigmaP-formula}
\sigma_P(g) = [CS(g\theta) - CS(\theta) - d\la g\theta \wedge \theta^h \ra ] \in  H^1(\Omega^{\leqslant \bullet}),
\end{equation}
for any choice of connection $\theta \in \mathcal{A}_{P}$. Here, $CS(\theta) \in \Omega^3(\underline{P})$ denotes the Chern-Simons three-form of $\theta$, which satisfies
$$
d CS(\theta) = c(F_\theta \wedge F_\theta).
$$
As we will see next, the quotient
$$
 H^1(\Omega^{\leqslant \bullet})/\Im \; \sigma_P
$$
can be identified with the set of isomorphism classes of string algebroids with underlying holomorphic principal $G$-bundle $P$ (see \cite[Prop. 3.11]{grt2}).

\begin{theorem}[\cite{grt2}]\label{lemma:deRhamC}
There is a natural bijection
\begin{equation*}\label{eq:lescEind3}
\jmath^{-1}([P]) \cong \{(H,\theta) \in \Omega^{\leqslant 1} \times \cA_{P} \; | \; dH + c(F_\theta \wedge F_\theta) = 0\}/ \sim,
\end{equation*}
where $(H,\theta)\sim (H',\theta')$ if, for some $B\in \Omega^{2,0}$ and $g\in \cG_P$
\begin{equation}\label{eq:anomaly}
H'  = H + CS(g\theta) - CS(\theta') - dc(g\theta \wedge \theta') + dB.
\end{equation}
Furthermore, the isotropy of $\jmath^{-1}([P])$ for the transitive $H^1(\Omega^{\leqslant \bullet})$-action is $\Im\; \sigma_P$.
\end{theorem}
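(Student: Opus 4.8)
The plan is to make both sides of the bijection concrete and match them by a splitting argument, the map from pairs to string classes being already supplied by Example~\ref{ex:algebroidH}: the assignment $(H,\theta)\mapsto \underline{Q}(H,\theta)$ sends a pair satisfying $dH+c(F_\theta\wedge F_\theta)=0$ to an isomorphism class in $\jmath^{-1}([P])$. First I would establish surjectivity. Given a string algebroid $(Q,P,\rho)$, I would choose a smooth isotropic splitting of the defining sequence \eqref{eq:defstring}; this is equivalent to choosing a connection $\theta\in\cA_P$, obtained by composing the splitting $A_P\to Q$ with the smooth identification of its image with $A^{1,0}$. Transporting the pairing \eqref{eq:smcxpairing} and the bracket of $Q$ through the resulting smooth isomorphism $\underline{Q}\cong A^{1,0}\oplus(T^{1,0}X)^*$ reproduces the model of Example~\ref{ex:algebroidH} for a unique $H\in\Omega^{\leqslant 1}$, and compatibility of the bracket with $\dbar_{\underline{Q}}$ (equivalently $\dbar_{\underline{Q}}^2=0$) forces the constraint $dH+c(F_\theta\wedge F_\theta)=0$. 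Thus every class in $\jmath^{-1}([P])$ is represented by such a pair.

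Next I would determine when two pairs represent the same class. For the `if' direction, I would verify directly that \eqref{eq:anomaly}, assembled from a gauge transformation $g\in\cG_P$ and a $B$-field shift by $B\in\Omega^{2,0}$, extends to an isomorphism of string algebroids as in \eqref{eq:defstringiso}; the only delicate point is invariance of the bracket, which rests on the transgression identity $d(CS(g\theta)-CS(\theta'))=c(F_{g\theta}\wedge F_{g\theta})-c(F_{\theta'}\wedge F_{\theta'})$ together with the two Bianchi constraints, ensuring that $H'$ defined by \eqref{eq:anomaly} closes up correctly. For the `only if' direction, since both algebroids have the same underlying bundle $P$, any isomorphism restricts to the identity on $T^*X$ and covers a holomorphic automorphism of $P$, i.e. an element $g\in\cG_P$; the residual freedom in the isotropic splitting is exactly a $B$-field, and tracking the change of $H$ induced by $g$ and $B$ through Chern--Simons transgression produces precisely \eqref{eq:anomaly}. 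Together with surjectivity this yields the asserted bijection.

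For the isotropy statement I would use that $H^1(\Omega^{\leqslant\bullet})$ parametrizes exact Courant algebroids \cite{G2} and acts on $\jmath^{-1}([P])$ by $[\tau]\cdot[(H,\theta)]=[(H+\tau,\theta)]$ for a closed representative $\tau\in\Omega^{\leqslant 1}$; transitivity is then immediate from surjectivity with a fixed connection, as two pairs sharing $\theta$ differ by a closed three-form. A class $[\tau]$ stabilizes $[(H,\theta)]$ exactly when $(H+\tau,\theta)\sim(H,\theta)$, which by \eqref{eq:anomaly} with $\theta'=\theta$ means $\tau=CS(g\theta)-CS(\theta)-dc(g\theta\wedge\theta)+dB=\sigma^\theta(g)+dB$ for some $g\in\cG_P$ and $B\in\Omega^{2,0}$. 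Passing to cohomology gives $[\tau]=[\sigma(g)]$, so the stabilizer consists precisely of the classes $[\sigma(g)]$; as $g\mapsto[\sigma(g)]$ is a homomorphism into $H^1(\Omega^{\leqslant 1})$, its image is a subgroup, which one identifies with $I_{[P]}$.

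I expect the main obstacle to be the `only if' half of the equivalence relation: one must first show that no isomorphism of string algebroids arises beyond the combination of a gauge transformation and a $B$-field, and then carry out the transgression bookkeeping so that the correction term $-dc(g\theta\wedge\theta')$ appears with the correct sign. Keeping careful control of the $(p,q)$-bidegrees throughout, so that $H$ stays in $\Omega^{\leqslant 1}=\Omega^{3,0}\oplus\Omega^{2,1}$ while $B$ stays in $\Omega^{2,0}$, is where the holomorphic, rather than merely smooth, nature of the classification genuinely enters.
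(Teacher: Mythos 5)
This statement is imported verbatim from \cite{grt2} and the present paper gives no proof of it, so there is no in-paper argument to compare against; I can only judge your sketch against the strategy that the cited reference actually follows, which is indeed the one you describe: realize every class by a pair $(H,\theta)$ via the model of Example~\ref{ex:algebroidH}, identify isomorphisms of string algebroids over a fixed $P$ with compositions of gauge transformations $g\in\cG_P$ and $B$-field shifts, and read off the stabilizer of the $H^1(\Omega^{\leqslant 1})$-action from the resulting relation \eqref{eq:anomaly}. At the level of a roadmap your proposal is sound.

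Two places are thinner than your own framing admits. First, in the surjectivity step a smooth isotropic splitting of \eqref{eq:defstring} alone does not produce $\theta\in\cA_P$: the pairing \eqref{eq:smcxpairing} on the model already depends on the connection, so one must simultaneously choose the splitting and a connection on $\underline{P}$ with $F_\theta^{0,2}=0$ inducing $P$ (equivalently, also split $A_P\to TX$), and then check that the transported pairing, bracket and $\dbar$-operator land exactly in the normal form of Example~\ref{ex:algebroidH} with $H\in\Omega^{3,0}\oplus\Omega^{2,1}$; the sentence ``composing the splitting $A_P\to Q$ with the smooth identification of its image with $A^{1,0}$'' does not yet do this. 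Second, and more substantively, your isotropy computation identifies the stabilizer of $[(H,\theta)]$ with the image of the homomorphism $g\mapsto[\sigma(g)]$, which is an additive subgroup of $H^1(\Omega^{\leqslant 1})$; but $I_{[P]}$ is defined in the text as the $\CC$-linear span $\CC\langle[\sigma(g)]\rangle$, and the equality of the image with its span is an additional fact that your argument does not supply. Finally, the ``only if'' half of the equivalence relation --- that every isomorphism as in \eqref{eq:defstringiso} between two models is a $(B,g)$-transform, with the transgression term $-dc(g\theta\wedge\theta')$ emerging with the right sign --- is correctly flagged as the crux, but it is precisely the part that is asserted rather than proved; as it stands the proposal establishes a well-defined surjection and the ``if'' direction only.
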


%In equation \eqref{eq:holcourel}, the form $CS(\theta) \in \Omega^3(\underline{P})$ denotes the Chern-Simons three-form of $\theta$, which satisfies $d CS(\theta) = c(F_\theta \wedge F_\theta)$.
A choice of pair $(H,\theta)$ in the equivalence class determines a particular presentation of the Courant algebroid $Q$ as in Example \ref{ex:algebroidH}.

The upshot of the previous classification is the relation between the twisted Hull-Strominger system \eqref{eq:twistedStromhol} and the string algebroids in the next result. The proof is a straightforward consequence of Theorem \ref{lemma:deRhamC}.

\begin{proposition}\label{prop:stromholcour}
A solution $(\Psi,\omega,h)$ of the twisted Hull-Strominger system \eqref{eq:twistedStromhol} on $(X,P)$ determines a string algebroid $Q$ over $X$ with underlying principal bundle $P$, given by $(2i\partial \omega,\theta^h)$ as in Example \ref{ex:algebroidH}. % in the notation of Proposition \ref{lemma:deRhamC}.
\end{proposition}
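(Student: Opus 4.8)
The plan is to reduce the statement to a direct verification of the hypotheses of Example~\ref{ex:algebroidH} for the explicit pair $(H,\theta) = (2i\partial\omega,\theta^h)$, after which the string algebroid structure on $Q$ is produced automatically by that construction (equivalently, by Theorem~\ref{lemma:deRhamC}). First I would record that $\theta^h \in \cA_P$: being the Chern connection of the reduction $h$ of the holomorphic bundle $P$, it satisfies $F_h^{0,2}=0$ and its $(0,1)$-part induces the holomorphic structure of $P$, which are exactly the two conditions defining $\cA_P$. Next, since $\omega$ is a real $(1,1)$-form, $\partial\omega\in\Omega^{2,1}$, so $H=2i\partial\omega\in\Omega^{3,0}\oplus\Omega^{2,1}=\Omega^{\leqslant 1}$, as required for the input of Example~\ref{ex:algebroidH}.

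The only substantive point is the anomaly (Bianchi) identity $dH+c(F_{\theta^h}\wedge F_{\theta^h})=0$. Here I would compute $dH=d(2i\partial\omega)=2i\,\dbar\partial\omega=-2i\,\partial\dbar\omega=-d d^c\omega$, using $\partial^2=0$, $\dbar\partial=-\partial\dbar$, and the convention $d d^c=2i\,\partial\dbar$ (i.e.\ $d^c=i(\dbar-\partial)$). Therefore $dH+c(F_h\wedge F_h)=-\bigl(d d^c\omega-c(F_h\wedge F_h)\bigr)$, which vanishes precisely by the fourth equation of \eqref{eq:twistedStromhol}. With all the hypotheses of Example~\ref{ex:algebroidH} in force, the sheaf of holomorphic sections of $\underline{Q}=A^{1,0}\oplus(T^{1,0}X)^*$, equipped with the pairing \eqref{eq:smcxpairing}, anchor, bracket and $\dbar$-operator specified there, is a string algebroid $Q$; and because $\theta^h$ induces $P$, the underlying principal bundle of $Q$ is $P$.

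I do not expect a genuine obstacle: the content is entirely in matching conventions, and the one thing to get right is the normalization constant. The factor $2i$ in $H=2i\partial\omega$ is forced, being exactly the value for which $d(2i\partial\omega)$ equals $-d d^c\omega$, turning the abstract anomaly condition of Example~\ref{ex:algebroidH} into the fourth Hull-Strominger equation. It is worth emphasizing that only this fourth equation (together with the integrability $F_h^{0,2}=0$) enters the construction; the Hermite-Einstein equation and the two Lee-form equations of \eqref{eq:twistedStromhol} play no role here, as they are metric conditions that will instead be encoded by a hermitian metric on the fixed algebroid $Q$.
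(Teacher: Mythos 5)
Your proposal is correct and follows essentially the same route as the paper, which simply declares the result ``a straightforward consequence of Theorem~\ref{lemma:deRhamC}'': one checks that $\theta^h\in\cA_P$, that $2i\partial\omega\in\Omega^{\leqslant 1}$, and that $d(2i\partial\omega)=-dd^c\omega$ turns the anomaly condition of Example~\ref{ex:algebroidH} into the fourth equation of \eqref{eq:twistedStromhol}. Your closing observation that only the Bianchi-type equation (plus $F_h^{0,2}=0$) is used, and that $Q$ is independent of $\Psi$, matches the remark the authors make immediately after the proposition.
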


\begin{remark}
Note that, even though $Q$ in Proposition \ref{prop:stromholcour} does not depend on the complex $(n,0)$-form $\Psi$, the fibre of the forgetful map $(\Psi,\omega,h) \mapsto (\omega,h)$ on the space of solutions of \eqref{eq:twistedStromhol} is a circle, since by Lemma \ref{lemma:holSUn} the form $\Psi$ is parallel with respect to the Bismut connection of $\omega$ and  satisfies \eqref{eq:normalizationnorm}.
\end{remark}

\subsection{Bott-Chern algebroids and hermitian metrics}\label{subsec:holCourhermit}

Relying on Proposition \ref{prop:stromholcour}, this paper builds on the idea that the existence and uniqueness problem for the twisted Hull-Strominger system \eqref{eq:twistedStromhol} (and hence for \eqref{eq:Stromhol}) should be better understood as the problem of finding `the best metric' in a string algebroid $Q$.
%In particular, we speculate that the obstructions to the existence of solutions of the Strominger system for the complex structures on $X$ and $P$ are related to a numerical condition for $Q$, in the spirit of Geometric Invariant Theory. This idea is developed further and expanded in Section \ref{subsec:variational}.
Expanding upon this idea, in this section we introduce and study a particular class of  string algebroids and a natural notion of `hermitian metric' on them.

%Before we go after the details, we should first obtain a better understanding of the Courant algebroids associated to solutions of the Strominger system; in particular of their classification, internal structure and symmetries, in Section \ref{sec:stringCourantclass}. We note that the class of string holomorphic Courant algebroids associated to solutions of \eqref{eq:stromcxgeneral} is rather special, due to condition \eqref{eq:p1BC}. This condition implies that the holomophic Courant extensions of $A_{P}$ in \eqref{eq:holext} can be obtained from $H \in \Omega^{2,1} \oplus \Omega^{3,0}$, with no component in $\Omega^{3,0}$. A precise definition as well as some basic properties of this special type of string Courant algebroids are discussed in Section \ref{sec:holCourhermit}.

Let $X$ be a complex manifold of dimension $n$. Let $G$ be a reductive complex Lie group, with bi-invariant symmetric bilinear form $c : \mathfrak{g} \otimes \mathfrak{g} \to \mathbb{C}$. We assume that $c$ satisfies the reality condition
\begin{equation}\label{eq:creal}
c(\mathfrak{k}\otimes \mathfrak{k})\subset \R
\end{equation}
for the Lie algebra $\mathfrak{k}$ of any maximal compact subgroup $K \subset G$. Let $P$ be a holomorphic principal $G$-bundle over $X$ such that
$$
p_1([P]) = 0 \in H^2(\Omega^{\leqslant \bullet}).
$$
By \cite[Cor. 3.6]{grt2}, when $X$ is a $\partial \dbar$-manifold the previous condition is equivalent to the vanishing of $p_1([P])$ in de Rham cohomology $H^4(X,\CC)$. Throughout this section, we consider string algebroids $Q$ with fixed underlying principal bundle $P$, as described in Theorem \ref{lemma:deRhamC}. %According to Proposition \ref{lemma:deRhamC}, any such $Q$ is given by a pair $(H,\theta)$ such that
%$$
%H \in \Omega^{3,0} \oplus \Omega^{2,1}
%$$
%and $\theta \in \cA_{P}$ is a connection on $\underline{P}$ such that
%$$
%dH + c(F_\theta \wedge F_\theta) = 0.
%$$
%If another pair $(H',\theta')$ represents the same $Q$, then \eqref{eq:holcourel} holds for some $B \in \Omega^{2,0}$. As in Proposition \ref{lemma:deRhamC}, we denote the relation \eqref{eq:holcourel} by $(H,\theta) \sim (H',\theta')$.
%Before we introduce the notion of hermitian metric of our interest, we note

Motivation for the next definition comes from the fact that the string algebroids associated to solutions of \eqref{eq:twistedStromhol} are special, due to condition \eqref{eq:pic0}. %This condition implies,
In particular, the representative $(H,\theta)$ of the holomorphic string class in Theorem \ref{lemma:deRhamC} %holomophic Courant extensions of $A_{P}$ in \eqref{eq:holext} can be obtained from
can be taken to be $H \in \Omega^{2,1}$, that is, with no component in $\Omega^{3,0}$.
%We introduce next a precise definition for this type of string algebroids and study some of their basic properties.

%The aim of this section is to introduce a notion of hermitian metric on $Q$. To simplify the exposition, we assume that $P$ is the bundle of (split) holomorphic frames of a split holomorphic vector bundle
%$$
%E = V \oplus W,
%$$
%where $V$ and $W$ have ranks $r_v$ and $r_w$, respectively. Further, we consider the bi-invariant metric in the Lie algebra
%$$
%\mathfrak{g} = \mathfrak{gl}(r_v,\CC) \oplus \mathfrak{gl}(r_w,\CC)
%$$
%to be
%\begin{equation}\label{eq:cdef}
%c = - \alpha_v \tr_v - \alpha_v \tr_w
%\end{equation}
%for $\alpha_v, \alpha_w$ arbitrary real constants. Our discussion generalizes in a straightforward way to arbitrary principal $G$-bundles, replacing hermitian metrics by reductions to a maximal compact subgroup.

\begin{definition}\label{def:BCtype}
A Bott-Chern algebroid is a string algebroid $Q$ whose class is given by $[(2i\partial \tau,\theta^h)]$ for a real $(1,1)$-form $\tau \in \Omega^{1,1}$ and $\theta^h$ the Chern connection of a reduction $h \in \Omega^0(P/K)$ of $P$ to a maximal compact subgroup $K$.
%  Furthermore, we say that $Q$ is real if $c$ satisfies the reality condition $c(\mathfrak{k}\otimes \mathfrak{k})\subset \R$ and $\tau$ can be taken to be a real $(1,1)$-form.
\end{definition}

From the previous definition and Theorem \ref{lemma:deRhamC} it follows that
\begin{equation}\label{eq:misteriouseqb}
dd^c \tau = c(F_h \wedge F_h),
\end{equation}
where $F_h : = F_{\theta^h}$, and therefore a necessary condition for $Q$ to be Bott-Chern is that $p_1([P])$ vanishes in Bott-Chern cohomology (cf. \eqref{eq:pic0})
$$
p_1(P) = 0 \in H^{2,2}_{BC}(X).
$$

We next state a parametrization of the Bott-Chern algebroids for fixed $X$ and $P$. For this, note that there is a well-defined linear map induced by the $\partial$-operator (see \eqref{eq:sigmaP})
\begin{equation}\label{eq:partialmap}
\partial \colon H^{1,1}_A(X,\mathbb{R}) \to H^1(\Omega^{\leqslant \bullet})/\Im \; \sigma_P.
\end{equation}
The proof requires some additional tools, and it is postponed until Section \ref{subsec:Aeppli}.

\begin{proposition}\label{prop:BCclassification}
The set of equivalence classes of Bott-Chern algebroids over $X$ with fixed principal bundle $P$ is an affine space for the vector space given by the image of \eqref{eq:partialmap}. Furthermore, if $X$ is a $\partial\bar{\partial}$-manifold then there is only one equivalence class.
\end{proposition}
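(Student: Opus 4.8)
The plan is to read off the classification from the de Rham description of string algebroids in Theorem~\ref{lemma:deRhamC}, which presents $\jmath^{-1}([P])$ as the pairs $(H,\theta)$ with $dH + c(F_\theta \wedge F_\theta) = 0$ modulo the relation \eqref{eq:anomaly}, and exhibits it as a torsor under $H^1(\Omega^{\leqslant \bullet})$ with isotropy $I_{[P]}$. By definition a Bott-Chern algebroid is a class admitting a representative $(2i\partial\tau, \theta^h)$ with $\tau \in \Omega^{1,1}$ real and $\theta^h$ a Chern connection; such representatives exist precisely when $p_1(P) = 0 \in H^{2,2}_{BC}(X)$, via \eqref{eq:misteriouseqb}, which I take as the non-emptiness hypothesis. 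The strategy is then to show that the set of Bott-Chern classes is a single $\Img(\partial)$-orbit inside this torsor. Stability under $\Img(\partial)$ is immediate: acting on $(2i\partial\tau, \theta^h)$ by $\partial([\sigma]_A) = [2i\partial\sigma]$ for a real Aeppli class $[\sigma]_A$ produces $(2i\partial(\tau + \sigma), \theta^h)$, which is again Bott-Chern since $dd^c(\tau+\sigma) = c(F_h\wedge F_h)$ by \eqref{eq:misteriouseqb} together with $dd^c\sigma = 0$.

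The core of the argument is transitivity. Given two Bott-Chern algebroids presented by $(\tau_0, h_0)$ and $(\tau_1, h_1)$, I would first bring them to the common connection $\theta^{h_1}$: since Chern connections for different reductions share the $(0,1)$-part $\dbar_P$, both lie in $\cA_P$, and applying \eqref{eq:anomaly} with $g = \mathrm{id}$ shows $(2i\partial\tau_0, \theta^{h_0})$ is equivalent to $(H', \theta^{h_1})$ with $H' = 2i\partial\tau_0 + CS(\theta^{h_0}) - CS(\theta^{h_1}) - dc(\theta^{h_0}\wedge\theta^{h_1}) + dB$. As $H'$ and $2i\partial\tau_1$ satisfy the same cocycle condition for $\theta^{h_1}$, the difference class in $H^1(\Omega^{\leqslant\bullet})$ of the two algebroids is represented, modulo $d$-exact terms, by $2i\partial(\tau_1 - \tau_0) + \big(CS(\theta^{h_1}) - CS(\theta^{h_0})\big)$. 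Here the decisive input is Donaldson's secondary class $R(h_1,h_0) \in \Omega^{1,1}/\Img(\partial\oplus\dbar)$ \cite{Don}: its transgression property identifies the Chern-Simons difference with $2i\partial R(h_1,h_0)$ up to $d$-exact terms, so the difference class equals $2i\partial\sigma$ with $\sigma \defeq \tau_1 - \tau_0 + R(h_1,h_0)$, a real $(1,1)$-form. A short computation using $dd^c\tau_i = c(F_{h_i}\wedge F_{h_i})$ and the defining identity for $R$ (normalized so that $dd^c R(h_1,h_0) = c(F_{h_0}\wedge F_{h_0}) - c(F_{h_1}\wedge F_{h_1})$) gives $dd^c\sigma = 0$, so $\sigma$ defines a class in $H^{1,1}_A(X,\RR)$ and the difference is exactly $\partial([\sigma]_A)$.

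To upgrade this to an affine space modeled on $\Img(\partial)$ itself, rather than on a quotient, I must check the action is free, i.e. that $\Img(\partial) \hookrightarrow H^1(\Omega^{\leqslant\bullet}) \to H^1(\Omega^{\leqslant\bullet})/I_{[P]}$ is injective on the orbit, and that the Aeppli class $\sigma$ above is independent of the chosen representatives $\tau_i$ and reductions $h_i$. I expect both to follow from the cocycle property of $R(h_1,h_0)$ for triples of reductions, which makes $[\sigma]_A$ well-defined, and from analyzing how the gauge-generated classes $[\sigma(g)]$ spanning $I_{[P]}$ meet the $\partial$-exact $(2,1)$-forms in $\Img(\partial)$. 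This reconciliation of the cohomological torsor structure with the gauge isotropy is the step I expect to be the main obstacle, and it is precisely what forces the proof to wait for the secondary-class machinery of Section~\ref{subsec:Aeppli}.

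Finally, the $\partial\dbar$-manifold case reduces to showing $\Img(\partial) = 0$. For real $\tau$ with $dd^c\tau = 0$, the $(2,1)$-form $2i\partial\tau$ is $d$-closed and $\partial$-exact, hence $\partial\dbar$-exact by the $\partial\dbar$-lemma, say $2i\partial\tau = \partial\dbar\eta'$ with $\eta'\in\Omega^{1,0}$; then $-\partial\eta' \in \Omega^{2,0} = \Omega^{\leqslant 0}$ satisfies $d(-\partial\eta') = 2i\partial\tau$, so $[2i\partial\tau] = 0$ in $H^1(\Omega^{\leqslant\bullet})$ and $\partial \equiv 0$. Combined with non-emptiness, which on a $\partial\dbar$-manifold follows from $p_1([P]) = 0$ in $H^4(X,\CC)$ by \cite[Cor.~3.6]{grt2}, this yields a single equivalence class.
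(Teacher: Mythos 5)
Your proposal follows essentially the same route as the paper's proof: fix a base point, form the difference class $[2i\partial(\tau_1-\tau_0+R(h_1,h_0))]$ (the paper writes $[2i\partial(\tau'-\tau-\tilde R(h',h))]$, the opposite sign convention for the secondary class), obtain surjectivity by twisting as in Definition \ref{def:twist}, and kill $\Img\,\partial$ on a $\partial\dbar$-manifold exactly as you do. The substantive gap is the step you present as a citation: the identity equating $CS(\theta^{h_1})-CS(\theta^{h_0})-dc(\theta^{h_1}\wedge\theta^{h_0})$ with $-2i\partial R(h_1,h_0)$ modulo $d\Omega^{2,0}$ is not a ``transgression property'' available from \cite{Don}; it is Lemma \ref{lem:CSRinvariant} of this paper, and proving it requires choosing the explicit lift $\tilde R(h,h_0)=\int_0^1 ic(u,F_{h_t})\,dt\in\Omega^{1,1}$ along $h_t=e^{tu}h_0$ and a direct Chern--Simons computation (differentiating $C_t$ in $t$ and using $a_t=-e^{-tu}\partial^{h_0}(e^{tu})$, $\dot a_t=-\partial^{h_t}u$). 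Since the paper explicitly postpones the proof of the proposition until this lemma is in place, this is the one piece of real content your write-up is missing. Your use of Donaldson's $R$, defined only modulo $\Img(\partial\oplus\dbar)$, is otherwise harmless: $2i\partial(\partial\alpha+\dbar\beta)=2i\partial\dbar\beta=-d(2i\partial\beta)\in d\Omega^{2,0}$, so the ambiguity is absorbed by the equivalence \eqref{eq:anomaly}.

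On the step you flag as the main obstacle: the paper does not pass through the torsor structure of Theorem \ref{lemma:deRhamC} and never analyzes how $\Img\,\partial$ meets the isotropy $I_{[P]}$. Instead, injectivity is proved at the level of representatives: if the difference class vanishes, Lemma \ref{lem:CSRinvariant} converts $2i\partial\tau'=2i\partial\tau+2i\partial\tilde R(h',h)+dB$ into the relation \eqref{eq:anomaly} with $g=\mathrm{id}$, hence an isomorphism of string algebroids. So the freeness analysis you anticipate is not how the argument closes. Your underlying concern is legitimate, though: it resurfaces as the independence of the difference class from the chosen Bott-Chern representative of a given isomorphism class (changing representative by $g\in\cG_P$ shifts the class by $[\sigma(g)]$, which by Lemma \ref{lem:CSRinvariant} lies in the image of $\partial$), and the paper's proof leaves this point implicit rather than resolving it; if you want a complete argument you would still need to address it.
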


%Note that the vector space in Proposition \ref{eq:BC-as-affine-space} is zero when the $\partial\bar{\partial}$-Lemma is satisfied, so there is only one equivalence class.

\begin{example}\label{example:ddbarBC}
	Let $X$ be a $\partial\dbar$-manifold (see e.g. \cite{AngellaBC}) and assume $G = \{1\}$. Then, since a string algebroid $Q$ with trivial structure group is exact, we have $H^1(\cS) = H^1(\Omega^{\leqslant \bullet})$ and there is a short exact sequence \cite{G2} (cf. \eqref{eq:lescEind2cx})
	$$
	0 \to H^{3,0}_{\dbar}(X) \to H^1(\Omega^{\leqslant \bullet}) \to H^{2,1}_{\dbar}(X) \to 0.
	$$
	If $Q$ is Bott-Chern, as there is only one equivalence class, it must be isomorphic to $TX \oplus T^*X$. %, which is trivially Bott-Chern.
\end{example}

\begin{example}\label{example:HopfQBC}
Let $X$ be a compact complex surface of class VII with $b_2  = 0$. When $G = \{1\}$ we have
$$
H^1(\cS) = H^1(\Omega^{\leqslant \bullet}) \cong H^{2,1}_{\dbar}(X),
$$
where the last equality follows by dimensional reasons. Using that $H^{2,1}_{\dbar}(X)$ and $H^{1,1}_A(X)$ are both one-dimensional, the $\CC$-linear map
\begin{equation}\label{eq:isomorphismcohomo}
\begin{split}
H^{1,1}_A(X) & \to H^{2,1}_{\dbar}(X) \\
[\tau] &\mapsto [2i\partial \tau]
\end{split}
\end{equation}
is an isomorphism \cite{AngellaBC}. Consequently, an exact holomorphic Courant algebroid is Bott-Chern if and only if its holomorphic string class is in the image via \eqref{eq:isomorphismcohomo} of the real subspace $H^{1,1}_A(X,\mathbb{R}) \subset H^{1,1}_A(X)$.
\end{example}

The last example suggests that a string algebroid $Q$ can be twisted by $dd^c$-closed $(1,1)$-form, in a way that it preserves the underlying holomorphic principal bundle and the Bott-Chern property. This construction is similar to the natural twist of a holomorphic vector bundle by a line bundle. Note that the underlying smooth bundle $\underline{Q}$ of a holomorphic Courant algebroid $Q$ is endowed with a bracket and an anchor map, similarly as in Example \ref{ex:algebroidH}.

\begin{definition}\label{def:twist}
We define $Q \otimes \beta$, the twist of $Q$ by a $dd^c$-closed form $\beta \in \Omega^{1,1}$, as the same underlying smooth Courant algebroid with the new holomorphic structure
$$\dbar_{Q\otimes \beta}=\dbar_Q + \dbar \beta,$$
and whose bracket and anchor maps are given by restriction of the ones in $\underline{Q}$ to the sheaf of holomophic sections.
\end{definition}

In particular, if $Q$ is given as in Example \ref{ex:algebroidH} by $(H,\theta)$, the string algebroid $Q\otimes \beta$ corresponds to $(H + 2i\partial \beta,\theta)$. At the level of classes, a holomorphic string class $[Q]$ is twisted by an Aeppli class $\alpha \in H_A^{1,1}(X)$ by $[Q]\otimes \alpha:=[Q\otimes \beta]$, for any $\beta$ such that $[\beta] = \alpha$. When $\beta$ is real, twisting by $\beta$ preserves the Bott-Chern property.

%\begin{remark}
%Our notion of twist suggests a natural extension of Definition \ref{def:BCtype}, where $\tau$ is a complex $(1,1)$-form and we remove the reality assumption \eqref{eq:creal}. In the setup of Proposition \ref{prop:BCclassification}, these objects are parametrized by the image of the full Aeppli cohomology group $H^{1,1}_A(X)$ via \eqref{eq:partialmap}.
%\end{remark}

%The previous definition for classes is obviously independent of the choice of representative, and if $\beta = \partial \xi^{0,1} + \dbar \eta^{1,0}$, then $\partial \beta = - d \partial \eta^{1,0}$ with $\partial \eta^{1,0} \in \Omega^{2,0}$ as required.
When $X$ satisfies the $\partial\dbar$-Lemma, it is easy to see that the twists by $dd^c$-closed $(1,1)$-forms correspond to automorphisms of the Courant algebroid $Q$.

\begin{lemma}\label{lem:ddbarisoQ}
Let $X$ be a $\partial\dbar$-manifold. Then, for any $dd^c$-closed form $\beta \in \Omega^{1,1}$, we have that $[Q \otimes \beta]=[Q]$.
\end{lemma}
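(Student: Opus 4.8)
The plan is to work with the de Rham presentation of string classes from Theorem \ref{lemma:deRhamC}. Writing $Q$ as the class of a pair $(H,\theta)$ with $dH + c(F_\theta\wedge F_\theta)=0$, Definition \ref{def:twist} identifies $Q\otimes\beta$ with the class of $(H+2i\partial\beta,\theta)$; this is again admissible, since $d(2i\partial\beta)=-dd^c\beta=0$ (using $dd^c=2i\partial\dbar$). By Theorem \ref{lemma:deRhamC} it therefore suffices to exhibit $g\in\cG_P$ and $B\in\Omega^{2,0}$ realizing the equivalence \eqref{eq:anomaly} between $(H,\theta)$ and $(H+2i\partial\beta,\theta)$. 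I would take $g=\mathrm{id}$ and $\theta'=\theta$: since $c$ is symmetric and $\theta$ is a one-form, $c(\theta\wedge\theta)=0$, so $\sigma^\theta(\mathrm{id})=0$ and \eqref{eq:anomaly} collapses to the single requirement $2i\partial\beta=dB$ for some $B\in\Omega^{2,0}$.

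The remaining point is to solve $2i\partial\beta=dB$ using the $\partial\dbar$-Lemma. From $dd^c\beta=0$ we obtain $\partial\dbar\beta=0$, hence $\dbar\partial\beta=0$; thus the form $\partial\beta\in\Omega^{2,1}$ is both $\dbar$-closed and (tautologically) $\partial$-exact. The $\partial\dbar$-Lemma then yields $\eta\in\Omega^{1,0}$ with $\partial\beta=\partial\dbar\eta$. Setting $B=-2i\partial\eta\in\Omega^{2,0}$, one checks $\partial B=0$ and $\dbar B=-2i\dbar\partial\eta=2i\partial\dbar\eta=2i\partial\beta$, so that $dB=2i\partial\beta$ as desired. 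Feeding this $B$ together with $g=\mathrm{id}$ into \eqref{eq:anomaly} gives $(H,\theta)\sim(H+2i\partial\beta,\theta)$, that is, $[Q]=[Q\otimes\beta]$.

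The only genuinely substantive step is the reduction to $2i\partial\beta=dB$ with $B$ of pure type $(2,0)$: the shift produced by the twist lives in $\Omega^{2,1}$, so a naive primitive of $2i\partial\beta$ need not a priori be of type $(2,0)$, and it is precisely the $\partial\dbar$-Lemma that repairs this, by upgrading the tautological $\partial$-exactness of $\partial\beta$ to $\partial\dbar$-exactness. I do not foresee a harder obstacle; the vanishing $c(\theta\wedge\theta)=0$ that trivializes $\sigma^\theta(\mathrm{id})$ and the compatibility $d(2i\partial\beta)=0$ are routine. One could alternatively deduce the special case of real $\beta$ directly from the uniqueness of the Bott-Chern class in Proposition \ref{prop:BCclassification}, but the argument above has the advantage of handling arbitrary complex $\beta$ and arbitrary string algebroid $Q$.
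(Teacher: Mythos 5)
Your proof is correct. The paper states this lemma without proof (it is introduced with ``it is easy to see''), and your argument supplies exactly the intended reasoning: reduce via Theorem \ref{lemma:deRhamC} to solving $2i\partial\beta = dB$ with $B$ of pure type $(2,0)$, and use the $\partial\dbar$-Lemma to upgrade the tautological $\partial$-exactness of the $\dbar$-closed form $\partial\beta$ to $\partial\dbar$-exactness, which produces such a $B$. The auxiliary checks ($c(\theta\wedge\theta)=0$ by symmetry of $c$ against antisymmetry of the wedge, and $d(2i\partial\beta)=0$ from $dd^c\beta=0$) are also right.
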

%\begin{proof}
%The $(2,1)$-form $\partial \beta$ is $\partial$-exact and $\dbar$-closed. Applying the $\partial\dbar$-Lemma
%$$
%\partial \beta = \partial \dbar \eta^{1,0} = -d \partial \eta^{1,0}.
%$$
%\end{proof}

We are now ready to introduce our notion of hermitian metric on $Q$, assuming the Bott-Chern property.

\begin{definition}\label{def:metricQ}
Let $Q$ be a Bott-Chern algebroid.
\begin{enumerate}[i)]

\item A hermitian metric on $Q$ is a pair $(\omega,h)$, where $\omega$ is a positive $(1,1)$-form on $X$, and $h \in \Omega^0(P/K)$ is a reduction of $P$ to a maximal compact subgroup $K$, such that $[Q]=[(2i\partial \omega,\theta^h)].$

\item We say that $Q$ is \emph{positive} if it admits a hermitian metric.
\end{enumerate}
\end{definition}
% \eqref{eq:creal}
%By definition, a hermitian metric $(\omega,h)$ on $Q$ satisfies \eqref{eq:misteriouseqb} with $\tau = \omega$.
%Furthermore, if $c$ satisfies the reality condition $c(\mathfrak{k}\otimes \mathfrak{k})\subset \R$, then
%\begin{equation}\label{eq:metricequations}
%\begin{split}
%dd^c b & = 0,\\
%dd^c \omega & = c(F_h \wedge F_h),
%\end{split}
%\end{equation}
%where $\omega = \operatorname{Re} \tau$ and $b = \operatorname{Im} \tau$. When $Q$ is real, it is natural to assume that $b = 0$ in Definition \ref{def:metricQ}, as we will do in Section \ref{sec:Aepplivariational}.

Notice that the (possibly empty) set of hermitian metrics on a given Bott-Chern algebroid $Q$ only depends on the holomorphic string class $[Q]$. A priori, there is no obvious way to check the positivity of $Q$. In fact, since a hermitian metric $(\omega,h)$ on $Q$ satisfies \eqref{eq:misteriouseqb} with $\tau = \omega$, this question can be regarded as a strong version of \cite[Question 5.11]{GF2}. Nonetheless, under the assumption that $X$ admits a positive class in $H^{1,1}_A(X,\RR)$---that is, represented by pluriclosed hermitian form $\omega_0$ on $X$---the twisted Courant algebroid
$$
Q \otimes k\omega_0
$$
is positive for large enough $k \gg 0$. This type of manifolds are known in the literature as strong K\"ahler with torsion (SKT).

\begin{proposition}\label{prop:BCpositive}
Suppose that $X$ is SKT. Then, for any Bott-Chern algebroid $Q$ there exists a sufficiently large constant $k >0$ such that $Q \otimes k\omega_0$ is positive. In addition, if $X$ satisfies the $\partial \dbar$-Lemma, any  Bott-Chern algebroid $Q$ is positive.
\end{proposition}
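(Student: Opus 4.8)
The plan is to prove both statements by exhibiting an explicit hermitian metric, exploiting the twist operation of Definition \ref{def:twist} together with the pluriclosedness of the SKT form $\omega_0$.

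For the first statement, write the class of $Q$ as $[(2i\partial\tau,\theta^h)]$ with $\tau$ a real $(1,1)$-form as in Definition \ref{def:BCtype}, so that $dd^c\tau = c(F_h\wedge F_h)$ by \eqref{eq:misteriouseqb}. The candidate metric on $Q\otimes k\omega_0$ will be the pair $(\omega_k,h)$ with
\[
\omega_k = \tau + k\omega_0,
\]
keeping the same reduction $h$. First I would record that, since $X$ is compact and $\omega_0$ is a positive $(1,1)$-form, a fixed lower bound $\omega_0\geqslant c\,g_0$ against any reference hermitian metric $g_0$, together with a uniform upper bound on $\tau$, forces $\omega_k>0$ for all $k\gg 0$; this is the only analytic input and it is completely elementary. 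It then remains to check that $(\omega_k,h)$ is a hermitian metric on $Q\otimes k\omega_0$ in the sense of Definition \ref{def:metricQ}. By Definition \ref{def:twist} the twist $Q\otimes k\omega_0$ has class $[(2i\partial\tau + 2i\partial(k\omega_0),\theta^h)] = [(2i\partial\omega_k,\theta^h)]$, which is exactly the required normal form; and the Bott-Chern identity $dd^c\omega_k = c(F_h\wedge F_h)$ holds because $dd^c\omega_k = dd^c\tau + k\,dd^c\omega_0 = c(F_h\wedge F_h)$, the last equality using crucially that $\omega_0$ is pluriclosed. Hence $Q\otimes k\omega_0$ is positive.

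For the second statement I would argue that the extra hypothesis that $X$ satisfies the $\partial\dbar$-Lemma lets us remove the twist. Indeed $k\omega_0$ is a $dd^c$-closed $(1,1)$-form, so Lemma \ref{lem:ddbarisoQ} gives $[Q\otimes k\omega_0] = [Q]$. Since, as noted after Definition \ref{def:metricQ}, the set of hermitian metrics on a Bott-Chern algebroid depends only on its holomorphic string class, the positivity of $Q\otimes k\omega_0$ established in the first part (using the SKT form $\omega_0$, which exists by the standing hypothesis of the proposition) transfers immediately to positivity of $Q$.

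The argument is short, and the only point requiring genuine care is the bookkeeping in the second paragraph: one must verify that the twisted representative $(2i\partial\omega_k,\theta^h)$ still satisfies the Bott-Chern compatibility \eqref{eq:misteriouseqb}, and this is precisely where $dd^c\omega_0 = 0$ is indispensable --- without pluriclosedness the shift $k\omega_0$ would spoil the identity $dd^c\omega_k = c(F_h\wedge F_h)$ and $(\omega_k,h)$ would fail to define a metric on the twist. Beyond this consistency check and the elementary compactness estimate for positivity I do not anticipate any serious obstacle.
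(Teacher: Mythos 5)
Your proof is correct and follows essentially the same route as the paper's: represent $[Q]$ by $(2i\partial\tau,\theta^h)$, twist by $k\omega_0$ so that the new class is represented by $(2i\partial(\tau+k\omega_0),\theta^h)$ with $\tau+k\omega_0>0$ for $k\gg 0$ by compactness, and invoke Lemma \ref{lem:ddbarisoQ} for the $\partial\dbar$ case. The compatibility check $dd^c\omega_k=c(F_h\wedge F_h)$ that you single out is exactly the requirement that $k\omega_0$ be $dd^c$-closed, which is already built into Definition \ref{def:twist} and supplied by the SKT hypothesis, so your extra care there is consistent with (if slightly more explicit than) the paper's one-line argument.
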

\begin{proof}
By the Bott-Chern property, we can choose $(H,\theta)$ such that $H = 2i\partial \tau_0$ with $\tau_0 \in \Omega^{1,1}$ and $\theta = \theta^h$ for a reduction $h$ on $P$. The first part of the proof is an immediate consequence of Definition \ref{def:twist}, choosing $\beta = k \omega_0$. The second part of the statement follows from Lemma \ref{lem:ddbarisoQ}.
\end{proof}

\begin{example}\label{example:HopfQpositive}
In Example \ref{example:HopfQBC}, the locus of isomorphism classes of positive objects is identified via \eqref{eq:isomorphismcohomo} with
$$
\RR_{> 0} \langle [\omega_0] \rangle %\oplus \RR \langle [i\omega_0] \rangle
\subset H^{1,1}_A(X).
$$
\end{example}

\begin{example}\label{example:Ugarte}
The hypotheses of Proposition \ref{prop:BCpositive} are sufficient but not necessary conditions for positivity. Various homogeneous solutions of the Hull-Strominger system \eqref{eq:Stromhol} found in the literature provide examples of  positive Bott-Chern algebroids with $G \neq \{1\}$ (see e.g. \cite{OUVi}). These manifolds are neither $\partial\dbar$-manifolds nor SKT, as, for instance, the nilmanifold with underlying Lie algebra $\mathfrak{h}_3$ considered in \cite{FIVU}.
\end{example}

\subsection{Aeppli classes in Bott-Chern algebroids}\label{subsec:Aeppli}

Let $Q$ be a Bott-Chern %string
algebroid over a compact complex manifold $X$, with underlying holomorphic principal $G$-bundle $P$. %We will assume that the bilinear form $c$ \eqref{eq:ccomplex} satisfies the reality condition \eqref{eq:creal}.
The aim of this section is to introduce a notion of `real Aeppli class' in $Q$ and to associate such a class to any hermitian metric, provided that $Q$ is positive. We need the following result, which %is based on the
defines secondary characteristic classes originally introduced by Bott and Chern  \cite{BottChern}.

\begin{proposition}[\cite{BGS,Don}]\label{prop:Donaldson}
For $h_0, h_1$ two %hermitian metrics on $E$,
reductions of $P$ to a maximal compact subgroup, there exists an invariant
\begin{equation}\label{eqref:BCinvariant}
R(h_1,h_0) \in \Omega^{1,1}/\operatorname{Im}(\partial \oplus \dbar)
\end{equation}
with the following properties:
\begin{enumerate}

\item $R(h_0,h_0) = 0$, and, for any third reduction $h_2$,
$$
R(h_2,h_0) = R(h_2,h_1) + R(h_1,h_0).
$$

\item if $h$ varies in a one-parameter family $h_t$, then
\begin{equation}\label{eqref:BCinvariantder}
\frac{d}{dt}R(h_t,h_0) = -2ic(\dot h_t h^{-1}_t,F_{h_t}).
\end{equation}

\item the following identity holds
$$
dd^c R(h_1,h_0) = c(F_{h_1}\wedge F_{h_1}) - c(F_{h_0}\wedge F_{h_0}).
$$
\end{enumerate}
\end{proposition}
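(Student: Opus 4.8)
The plan is to define $R$ by transgression along a path of reductions and then to prove that the resulting class in $\Omega^{1,1}/\operatorname{Im}(\partial\oplus\dbar)$ depends only on the endpoints. Since $G$ is reductive with maximal compact subgroup $K$, the fibre $G/K$ is contractible, so the space $\mathcal{M}$ of reductions of $P$ to $K$ is connected and simply connected; any $h_0,h_1$ may be joined by a smooth path $(h_t)_{t\in[0,1]}$, and any two such paths are homotopic rel endpoints. Writing $v_t=h_t^{-1}\dot h_t\in\Omega^0(\ad P)$, I would set
\begin{equation*}
R(h_1,h_0)=\int_0^1 i\,c(v_t,F_{h_t})\,dt \ \ \bmod \operatorname{Im}(\partial\oplus\dbar).
\end{equation*}

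The first step is the transgression identity. Along a variation of the Chern connection $\theta^h$ one has $\dot F_h=\dbar\partial^\theta v$, where $\partial^\theta$ is the $(1,0)$-part of the connection induced on $\ad P$; moreover $F_h$ is of type $(1,1)$ and satisfies the Bianchi identities $\partial^\theta F_h=\dbar F_h=0$. Using the $\ad$-invariance of $c$ together with Bianchi to integrate by parts, $c(\dbar\partial^\theta v\wedge F_h)=\dbar\partial\,c(v,F_h)$, and hence
\begin{equation*}
\frac{d}{dt}\,c(F_{h_t}\wedge F_{h_t})=2\,c(\dbar\partial^\theta v_t\wedge F_{h_t})=dd^c\big(i\,c(v_t,F_{h_t})\big).
\end{equation*}
Integrating in $t$ and commuting $dd^c$ with the integral gives property (3), while differentiating the definition of $R(h_t,h_0)$ in $t$ gives property (2) at once.

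The heart of the matter, and the step I expect to be the main obstacle, is to show that $R(h_1,h_0)$ is independent of the chosen path modulo $\operatorname{Im}(\partial\oplus\dbar)$. I would encode this as the closedness of the $\Omega^{1,1}/\operatorname{Im}(\partial\oplus\dbar)$-valued one-form $\alpha$ on $\mathcal{M}$ given by $\alpha_h(\dot h)=i\,c(h^{-1}\dot h,F_h)$: since $\mathcal{M}$ is simply connected, $d\alpha=0$ forces $\int_\gamma\alpha$ to depend only on the endpoints of $\gamma$. Evaluating $d\alpha$ on a two-parameter family $h_{s,t}$, with $v=h^{-1}\partial_t h$ and $w=h^{-1}\partial_s h$, and using the Maurer-Cartan identity $\partial_s v-\partial_t w=[v,w]$ together with $\partial_sF_h=\dbar\partial^\theta w$ and $\partial_tF_h=\dbar\partial^\theta v$, yields
\begin{equation*}
d\alpha=i\,c([v,w],F_h)+i\,c(v,\dbar\partial^\theta w)-i\,c(w,\dbar\partial^\theta v).
\end{equation*}
The task is then to show this lies in $\operatorname{Im}(\partial\oplus\dbar)$. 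I would do so by repeated integration by parts via $\ad$-invariance of $c$, rewriting the bracket term through the curvature identity $[F_h,\,\cdot\,]=(\partial^\theta\dbar+\dbar\partial^\theta)(\,\cdot\,)$; after this, the graded antisymmetry $c(\partial^\theta w,\dbar v)=-c(\dbar v,\partial^\theta w)$ of $c$ on odd-degree arguments makes the non-exact remainders cancel in pairs, leaving only $\partial$- and $\dbar$-exact terms. This bookkeeping of brackets and curvature terms is the delicate part of the argument.

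With well-definedness established, the remaining assertions in property (1) are formal. The constant path gives $R(h_0,h_0)=0$, and the cocycle relation $R(h_2,h_0)=R(h_2,h_1)+R(h_1,h_0)$ follows by concatenating a path from $h_0$ to $h_1$ with one from $h_1$ to $h_2$ and invoking additivity of the transgression integral, path-independence guaranteeing that the concatenation computes $R(h_2,h_0)$.
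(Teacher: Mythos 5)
Your proposal is correct and follows exactly the construction the paper attributes to Donaldson \cite{Don}: the paper itself gives no proof, only the remark that $R$ is obtained by integrating \eqref{eqref:BCinvariantder} along a path of reductions and that the result is path-independent modulo $\operatorname{Im}(\partial\oplus\dbar)$. Your verification of that path-independence — computing $d\alpha$ on a two-parameter family and cancelling the bracket term against the curvature identity $\dbar\partial^{\theta}+\partial^{\theta}\dbar=[F_h,\cdot\,]$ via invariance of $c$ — is the standard (and correct) way to supply the missing details, so there is nothing to flag.
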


As observed by Donaldson in \cite[Prop. 6]{Don}, the \emph{Bott-Chern class} \eqref{eqref:BCinvariant} is defined by integration of \eqref{eqref:BCinvariantder} along a path in the space of %hermitian metrics on $E$,
reductions of $P$, using that the resulting $(1,1)$-form is independent of the chosen path up to addition of elements in $\operatorname{Im}(\partial \oplus \dbar)$. This constructive method will be important for the proof of Lemma \ref{lem:tauhparamet} below. Here, we use the polar decomposition of the group $G = \exp(i \mathfrak{k}) \cdot K$ to regard $h$ as a $K$-equivariant map $h \colon P \to \exp(i \mathfrak{k})$ and therefore one has $\frac{d}{dt}\theta^{h_t} = - 2 \partial^{h_t}(\dot h_t h^{-1}_t)$, which differs from the notation in \cite{Don}.

%The upshot of Proposition \ref{prop:Donaldson} is

In order to introduce our notion of real Aeppli classes for the Bott-Chern algebroid $Q$, we consider
$$ 
B_Q := \{ (\tau,h) \in \Omega^{1,1}\times \Omega^0(P/K) \st \; \overline{\tau} = \tau, \; dd^c\tau = c(F_h\wedge F_h), \; [Q] = [(2i\del \tau,\theta^h)] \}.
$$
%\begin{remark}
Note that $B_Q$ depends only on the class $[Q]$. %, just as metrics on a holomorphic bundle depend only on the underlying smooth bundle.
%\end{remark}

\begin{proposition}\label{propo:Aepplicone}
There is a well-defined map
$$
Ap \colon B_Q \times B_Q \to H^{1,1}_A(X,\RR)
$$
defined by the formula
$$
Ap(\tau,h,\tau_0,h_0) = [\tau - \tau_0 - R(h,h_0)].
$$
Furthermore, $Ap$ satisfies the cocycle condition
\begin{equation}\label{eq:Apcocycle}
Ap(\tau_2,h_2,\tau_0,h_0) = Ap(\tau_2,h_2,\tau_1,h_1) + Ap(\tau_1,h_1,\tau_0,h_0)
\end{equation}
for any triple of elements in $B_Q$.
\end{proposition}

\begin{proof}
Condition \eqref{eq:misteriouseqb} for elements in $B_Q$, combined with property $(3)$ in Proposition \ref{prop:Donaldson}, imply that $Ap$ is well defined. Furthermore, as a consequence of $(1)$ in Proposition \ref{prop:Donaldson} the map $Ap$ satisfies the cocycle condition \eqref{eq:Apcocycle}.
\end{proof}

As a straightforward consequence of the cocycle condition \eqref{eq:Apcocycle}, we obtain that the map $Ap$ induces an equivalence relation in $B_Q$ defined by
$$
(\tau,h) \sim (\tau',h') \quad \textrm{\emph{if and only if}} \quad Ap(\tau,h,\tau',h') = 0.
$$
We hence make the following definition.

\begin{definition}\label{def:ApQ}
The set of \emph{(real) Aeppli classes} of $Q$ is the quotient 
$$
\Sigma_Q(\RR) := B_Q/\sim.
$$
\end{definition}

The space $B_Q$ decomposes as a disjoint union
\begin{equation*}\label{eq:Aepplidecomp}
B_Q = \bigsqcup_{\sigma \in \Sigma_Q(\RR)} B_\sigma,
\end{equation*}
where $B_\sigma$ denotes the set of elements of $B_Q$ with class $\sigma$. Note that a choice $(\tau_0,h_0) \in B_Q$ identifies $\Sigma_Q(\RR)$ with a subset of $H^{1,1}_A(X,\RR)$. The definition of $B_Q$ combined with the proof of Proposition \ref{prop:BCclassification}, imply that $\Sigma_Q(\RR)$ has a natural structure of affine space modelled on the kernel of the linear map \eqref{eq:partialmap}. By Lemma \ref{lem:ddbarisoQ}, when $X$ satisfies the $\partial \dbar$-Lemma $\Sigma_Q(\RR)$ is modelled on $H^{1,1}_A(X,\RR)$.
%By Lemma \ref{lem:ddbarisoQ}, when $X$ satisfies the $\partial \dbar$-Lemma the set $\Sigma_Q$ (resp. $\Sigma_Q(\RR)$) admits a free and transitive action of $H^{1,1}_A(X)$ (resp. $H^{1,1}_A(X,\RR)$), induced by
%$$
%\beta \cdot (\tau,h) = (\tau + \beta,h) \in B_{Q \otimes \beta},
%$$
%for $(\tau,h) \in B_Q$ and $\beta$ any $dd^c$-closed $(1,1)$-form, and therefore, in this case, $\Sigma_Q$ naturally has the structure of a $H^{1,1}_A(X)$-torsor.
%If we further assume $G = \{1\}$, we have $Q \cong TX \oplus T^*X$ (see Example \ref{example:ddbarBC}) and therefore in this case $\Sigma_Q$ admits a canonical identification with $H^{1,1}_A(X)$ (setting $\tau_0 = 0$ in \eqref{eq:Aepplimap}).
When $X$ is not a $\partial \dbar$-manifold the situation is very different, as we illustrate with the following example.

\begin{example}\label{example:HopfAeppli}
Consider a compact complex surface $X$ of class VII with $b_2  = 0$, endowed with an exact holomorphic Courant algebroid $Q$. Assume that $Q$ is Bott-Chern. By Example \ref{example:HopfQBC} the isomorphism class of $Q$ corresponds to $[\tau_0] \in H^{1,1}_A(X,\mathbb{R})$ via \eqref{eq:isomorphismcohomo},
%for $a \in \RR_{> 0}$ and $b \in \Omega^{1,1}$ real and $dd^c$-closed.
for a real $dd^c$-closed $\tau_0 \in \Omega^{1,1}$, and we have
$$
Ap(\underline{\;\;\;\;},\tau_0) \colon B_Q \to H^{1,1}_A(X,\RR) \colon \tau \to [\tau - \tau_0].
$$
Using that \eqref{eq:isomorphismcohomo} is an isomorphism we conclude that $Ap(\underline{\;\;\;\;},\tau_0) = 0$ and therefore $\Sigma_Q(\RR)$ reduces to a point in this case.
\end{example}

We assume now that $Q$ is positive and denote by
$$
B_Q^+ \subset B_Q
$$
the subset consisting of the hermitian metrics on $Q$. Similarly, we denote by $B_\sigma^+ \subset B_\sigma$ the set of hermitian metrics with class $\sigma \in \Sigma_Q(\RR)$ (see \eqref{eq:Aepplidecomp}).

\begin{definition}
An Aeppli class on $Q$ is positive if it is represented by a hermitian metric $(\omega,h) \in B_Q^+$.
%We will denote by $\Sigma^+_Q \subset \Sigma_Q(\mathbb{R})$ the set of positive Aeppli classes. %If in addition $Q$ is real, we denote $\Sigma^+_Q = \Sigma(\RR) \cap \Sigma^+$.
\end{definition}

To finish this section, we give an explicit parametrization of the set of hermitian metrics in a fixed positive Aeppli class, and prove the classification of Bott-Chern algebroids stated in Proposition \ref{prop:BCclassification}. These results, which are crucial for our developments in the subsequent sections, rely on the following key lemma, which upgrades the Bott-Chern class \eqref{eqref:BCinvariant} to take values in $\Omega^{1,1}$. %Our parametrization boils down to property $(2)$ in Proposition \ref{prop:Donaldson}, combined with the special features of the Chern-Simons three-form of the Chern connection. In order to do this, we need to upgrade %the map $Ap$ and
%Donaldson's invariant \eqref{eqref:BCinvariant} to take values in $\Omega^{1,1}$.

\begin{lemma}\label{lem:CSRinvariant}
Let $h,h_0$ be reductions of $P$. Define
\begin{equation}\label{eq:Rhh0}
\tilde R(h,h_0) = - 2i \int_0^1 c(\dot h_t h_t^{-1},F_{h_t}) dt \in \Omega^{1,1}
\end{equation}
where $h = e^uh_0$, for $u \in \Omega^0(i \ad P_{h_0})$, and $h_t = e^{tu}h_0$. Then,
\begin{equation}\label{eq:hermitianformula-notused}
2i\partial \tilde R(h,h_0) + CS(\theta^{h}) - CS(\theta^{h_0}) - dc(\theta^{h} \wedge \theta^{h_0}) \in d \Omega^{2,0}.
\end{equation}
\end{lemma}

\begin{proof}
We set
\begin{align*}
C_t  :&= CS(\theta^{h_t}) - CS(\theta^{h_0}) - dc(\theta^{h_t} \wedge \theta^{h_0})\\
& = - 2c(a_t \wedge F_{h_t}) - c(a_t \wedge d^{h_t} a_t) - \frac{1}{3}c(a_t \wedge [a_t,a_t]),
\end{align*}
%\begin{align*}
%CS(\theta^{h_0}) - CS(\theta^{h_t}) - dc(\theta^{h_0} \wedge \theta^{h_t}) = \\
%& = 2c(a_t \wedge F_{h_t}) + c(a_t \wedge d^{h_t} a_t) + \frac{1}{3}c(a_t,[a_t,a_t]),
%\end{align*}
where $h_t = e^{tu}h_0$ as in \eqref{eq:Rhh0}, and $a_t = \theta^{h_0} - \theta^{h_t}$. By the fundamental theorem of calculus, it suffices to prove that
\begin{equation}\label{eq:keycalculation}
\frac{d}{dt}(2i\partial \tilde R(h_t,h_0) + C_t) \in d \Omega^{2,0}
\end{equation}
for all $t \in [0,1]$. To see this, note that
\begin{align*}
\frac{d}{dt} F_{h_t} & = \frac{d}{dt} (d \theta^{h_t} + [\theta^{h_t},\theta^{h_t}]/2) = - d\dot a_t - [\theta^{h_t},\dot a_t] = - d^{h_t}\dot a_t,\\
\frac{d}{dt} (d^{h_t} a_t) & = \frac{d}{dt}(da_t + [\theta^{h_t},a_t]) = - [\dot a_t,a_t] + d^{h_t} \dot a_t,
\end{align*}
and hence
\begin{align*}
\dot C_t := &  - 2c(\dot a_t \wedge F_{h_t}) + 2c(a_t \wedge d^{h_t}\dot a_t) \\
& - c(\dot a_t \wedge d^{h_t} a_t) + c(a_t \wedge [\dot a_t, a_t]) - c(a_t \wedge d^{h_t} \dot a_t) \\
& - \frac{1}{3}c(\dot a_t \wedge [a_t,a_t]) - \frac{1}{3}c(a_t\wedge [\dot a_t,a_t]) - \frac{1}{3}c(a_t \wedge[a_t,\dot a_t])\\
= &  - 2c(\dot a_t \wedge F_{h_t}) + c(a_t \wedge d^{h_t}\dot a_t) \\
& - c(\dot a_t \wedge d^{h_t} a_t) + c(a_t \wedge [\dot a_t, a_t]) - c(a_t \wedge [\dot a_t, a_t])\\
= & - 2c(\dot a_t \wedge F_{h_t}) - d c(a_t \wedge \dot a_t),
\end{align*}
where in the second equality we have used that
$$
c(a_t \wedge [\dot a_t, a_t]) = c(a_t \wedge [a_t, \dot a_t]) = c([a_t,a_t] \wedge \dot a_t) = c(\dot a_t \wedge [a_t,a_t]).
$$
Finally, part $(2)$ of Proposition \ref{prop:Donaldson} implies that
$$
\frac{d}{dt} 2i\partial \tilde R(h_t,h_0) = 4 \partial c(\dot h_t h_t^{-1},F_{h_t}) = 4 c(\partial^{h_t}(\dot h_t h_t^{-1}),F_{h_t}),
$$
and thus \eqref{eq:keycalculation} follows from (cf. \cite[Sec. 1]{Don})
% \partial^{h_t} = e^{tu}\partial^{h_0}e^{-tu}
$$
\dot a_t = - \frac{d}{dt} \theta^{h_t} = 2 \partial^{h_t} (\dot h_t h_t^{-1}).
$$
\end{proof}

Using Lemma \ref{lem:CSRinvariant}, we give next our parametrization of the set of hermitian metrics in a fixed positive Aeppli class $\sigma \in \Sigma_Q(\RR)$.

\begin{lemma}\label{lem:tauhparamet}
Let $(\omega_0,h_0) \in B^+_\sigma$ be a hermitian metric on $Q$ with Aeppli class $\sigma \in \Sigma_Q(\RR)$. Let $(\omega,h)$ be a pair given by a hermitian form $\omega \in \Omega^{1,1}$ on $X$ and a reduction $h$ of $P$ to a maximal compact subgroup. Then, if
\begin{equation}\label{eq:hermitianformula}
\omega - \omega_0 - \tilde R(h,h_0) \in \operatorname{Im}(\partial \oplus \dbar),
\end{equation}
the pair $(\omega,h)$ defines a hermitian metric on $Q$ with Aeppli class $\sigma$. Conversely, if $(\omega,h) \in B^+_\sigma$, then $\omega$ satisfies \eqref{eq:hermitianformula}.
\end{lemma}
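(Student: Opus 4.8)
The plan is to use $(\omega_0,h_0)$ itself as the base point for the Aeppli map $Ap(\underline{\ \ },\omega_0,h_0)$, so that the class $\sigma$ corresponds to $0\in H^{1,1}_A(X)$, since $R(h_0,h_0)=0$ by part $(1)$ of Proposition \ref{prop:Donaldson}. The first thing I would record is the identity
\[
R(h,h_0)=[\tilde R(h,h_0)]\in \Omega^{1,1}/\operatorname{Im}(\partial\oplus\dbar),
\]
which holds because $\tilde R(h,h_0)$ is precisely the integral defining Donaldson's invariant along the straight-line path $h_t=e^{tu}h_0$: along this path $h_t^{-1}\dot h_t=u$, so the integrand $ic(u,F_{h_t})$ agrees with $\frac{d}{dt}R(h_t,h_0)$ from part $(2)$ of Proposition \ref{prop:Donaldson}, and path-independence modulo $\operatorname{Im}(\partial\oplus\dbar)$ gives the claim. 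Consequently, for any $(\omega,h)\in B_Q$ one has $Ap(\omega,h,\omega_0,h_0)=[\omega-\omega_0-\tilde R(h,h_0)]$, and this class vanishes---that is, the Aeppli class of $(\omega,h)$ equals $\sigma$---exactly when $\omega-\omega_0-\tilde R(h,h_0)\in\operatorname{Im}(\partial\oplus\dbar)$.

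Granting this, the converse direction is immediate: if $(\omega,h)\in B^+_\sigma$, then by definition $(\omega,h)\in B_Q$ and its Aeppli class is $\sigma$, whence the stated membership holds.

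For the forward direction---the main content---I would assume $\omega-\omega_0-\tilde R(h,h_0)\in\operatorname{Im}(\partial\oplus\dbar)$ and prove $(\omega,h)\in B_Q$, i.e. that the pairs $(2i\partial\omega,\theta^h)$ and $(2i\partial\omega_0,\theta^{h_0})$ are equivalent under \eqref{eq:anomaly} of Theorem \ref{lemma:deRhamC}. I would try the trivial gauge transformation $g=\operatorname{id}$, for which \eqref{eq:anomaly} amounts to producing $B\in\Omega^{2,0}$ with
\[
2i\partial\omega_0-2i\partial\omega=CS(\theta^h)-CS(\theta^{h_0})-dc(\theta^h\wedge\theta^{h_0})+dB.
\]
Here Lemma \ref{lem:CSRinvariant} is decisive: it lets me replace the Chern--Simons combination by $-2i\partial\tilde R(h,h_0)$ modulo $d\Omega^{2,0}$, reducing the problem to checking that $-2i\partial\bigl(\omega-\omega_0-\tilde R(h,h_0)\bigr)$ lies in $d\Omega^{2,0}$, which then determines the required $B$. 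Writing $\omega-\omega_0-\tilde R(h,h_0)=\partial\beta+\dbar\gamma$ with $\beta\in\Omega^{0,1}$ and $\gamma\in\Omega^{1,0}$ from the hypothesis, a bidegree computation gives $-2i\partial(\partial\beta+\dbar\gamma)=2i\dbar\partial\gamma=d(2i\partial\gamma)$, using $\partial^2=0$; since $2i\partial\gamma\in\Omega^{2,0}$, this is exactly of the needed form. Thus $(\omega,h)\in B_Q$, and as $\omega$ is positive while its Aeppli class is $\sigma$ by the first paragraph, we conclude $(\omega,h)\in B^+_\sigma$.

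The main obstacle is the forward implication, and within it the bookkeeping that converts the Aeppli-exactness $\omega-\omega_0-\tilde R(h,h_0)\in\operatorname{Im}(\partial\oplus\dbar)$ into a term of the precise shape $d\Omega^{2,0}$ demanded by \eqref{eq:anomaly}; Lemma \ref{lem:CSRinvariant} is what renders the Chern--Simons data compatible with this, while the $\partial^2=0$ cancellation handles the remaining $(1,1)$-term. I would also take care over the consistency statement $R(h,h_0)=[\tilde R(h,h_0)]$, since the whole argument hinges on $\tilde R$ being an honest $(1,1)$-form lift of Donaldson's class computed along the specific path $h_t=e^{tu}h_0$.
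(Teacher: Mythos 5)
Your proposal is correct and follows essentially the same route as the paper: the forward direction is the anomaly relation \eqref{eq:anomaly} with $g=\operatorname{id}$, obtained from Lemma \ref{lem:CSRinvariant} plus the observation that $-2i\partial$ of an $\operatorname{Im}(\partial\oplus\dbar)$ term lands in $d\Omega^{2,0}$, while the converse rests on the fact that $\tilde R(h,h_0)$ represents Donaldson's class $R(h,h_0)$ modulo $\operatorname{Im}(\partial\oplus\dbar)$ (the paper cites the path-independence argument in the proof of \cite[Prop.~6]{Don} for exactly this). Your write-up is somewhat more explicit about the bidegree bookkeeping, but there is no substantive difference.
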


\begin{proof}
If $(\omega,h)$ is a hermitian metric on $Q$ satisfying \eqref{eq:hermitianformula} then it follows by the definition of $\Sigma_Q(\RR)$ that the class of $(\omega,h)$ is $\sigma$. Thus, following Definition \ref{def:metricQ}, for the first part it suffices to prove that $\eqref{eq:hermitianformula}$ implies $(2i\partial \omega,\theta^h) \sim (2i\partial \omega_0,\theta^{h_0})$. This is a straightforward consequence of Lemma \ref{lem:CSRinvariant}, as we have
\begin{align*}
2i\partial \omega & =  2i\partial \omega_0 + 2i\partial \tilde R(h,h_0) - d(2i\partial \eta^{1,0})\\
& = 2i\partial \omega_0 + CS(\theta^{h_0}) - CS(\theta^{h}) - dc(\theta^{h_0} \wedge \theta^{h}) + dB
\end{align*}
for some $\eta^{1,0} \in \Omega^{1,0}$ and $B \in \Omega^{2,0}$. For the converse, if $(\omega,h)$ is a hermitian metric on $Q$ with class $\sigma$ then
$$
\omega - \omega_0 - R'(h,h_0) \in \operatorname{Im} (\partial \oplus \dbar ),
$$
where
$$
R'(h,h_0) = -2i \int_0^1 c( \dot h_s h_s^{-1},F_{h_s}) ds
$$
for a suitable path $h_s$ joining $h_0$ and $h$. Now, by the proof of \cite[Prop. 6]{Don}, $R'(h,h_0)$ and $\tilde R(h,h_0)$ differ by an element in $\operatorname{Im}(\partial \oplus \dbar)$ and the result follows.
\end{proof}

By Lemma \ref{lem:tauhparamet}, upon fixing $(\omega_0,h_0) \in B^+_\sigma$, the hermitian metrics $(\omega,h)$ on $Q$ with Aeppli class $\sigma$ are parametrized by pairs $(\xi,s)$, given by a real $1$-form $\xi$ on $X$ and a smooth section $s \in \Omega^0(\ad P_{h_0})$, where
\begin{equation}\label{eq:hermitianformula2}
h = e^{is}h_0, \qquad \omega = \omega_0 + 2(d\xi)^{1,1} + \tilde R(h,h_0).
\end{equation}
In particular, an infinitesimal change in the metric is given by
$$
\delta h = is, \qquad \delta \omega = 2(d \xi)^{1,1}  + 2c(s,F_{h_0}).
$$
Notice that the pair $(\xi,s)$ corresponds to a smooth global section of the kernel of the anchor map of the algebroid $\underline{Q}$ in Example \ref{ex:algebroidH}. This gives a geometric interpretation of the degrees of freedom of a hermitian metric on a Bott-Chern algebroid.

We address next the classification of Bott-Chern algebroids in Proposition \ref{prop:BCclassification}, which follows again by application of Lemma \ref{lem:CSRinvariant}.

\begin{proof}[Proof of Proposition \ref{prop:BCclassification}]

We fix $(\tau_0,h_0)$ as in Definition \ref{def:BCtype} with associated holomorphic string class $[(2i\partial \tau_0,\theta^{h_0})] \in \jmath^{-1}([P])$. If $(2i\partial \tau,\theta^{h})$ represents another holomorphic string class in $\jmath^{-1}([P])$ for a suitable $(\tau,h)$, we associate an element in the image of \eqref{eq:partialmap} given by
\begin{equation}\label{eq:map-classifying-Bott-Chern}
[2i\partial(\tau - \tau_0 - \tilde R(h,h_0))] \in H^1(\Omega^{\leqslant \bullet})/\Im \; \sigma_P.
\end{equation}
If $[(2i\partial \tau,\theta^{h})] = [(2i\partial \tau',\theta^{h'})]$ for another pair $(\tau',h')$, applying Theorem \ref{lemma:deRhamC} and Lemma \ref{lem:CSRinvariant} there exist $B,B' \in \Omega^{2,0}$ and $g \in \cG_P$ such that
\begin{align*}
2i\partial\tau' & = 2i \partial \tau + CS(\theta^{gh}) - CS(\theta^{h'}) - dc(\theta^{gh} \wedge \theta^{h'}) + dB\\
& = 2i \partial (\tau - \tilde R(gh,h')) + dB'
\end{align*}
where have used that $g \theta^h = \theta^{gh}$, since $g$ is holomorphic. Taking now classes in $H^1(\Omega^{\leqslant \bullet})$ it follows from Proposition \ref{prop:Donaldson}, Lemma \ref{lem:CSRinvariant}, and \eqref{eq:sigmaP-formula} that
$$
2i\partial(\tau' - \tau_0 - \tilde R(h',h_0)) = 2i\partial(\tau - \tau_0 - \tilde R(h,h_0)) + \sigma_P(g) \in H^1(\Omega^{\leqslant \bullet}),
$$
and therefore the map given by \eqref{eq:map-classifying-Bott-Chern} is well defined.

If the class associated to $(\tau,h)$ above is zero, Lemma \ref{lem:CSRinvariant} implies that
$$
2i\partial\tau = 2i \partial \tau_0 + 2i\partial \tilde R(h,h_0) - 2i\partial \tilde R(gh_0,h_0) + dB
$$
for $B \in \Omega^{2,0}$ and $g \in \cG_P$, and applying Proposition \ref{prop:Donaldson} and Lemma \ref{lem:CSRinvariant} again, we have
$$
2i\partial\tau = 2i \partial \tau_0 + CS(g\theta^{h_0}) - CS(\theta^{h}) - dc(g\theta^{h_0} \wedge \theta^{h}) + dB'
$$
for other $B' \in \Omega^{2,0}$, and therefore, again by  \eqref{eq:sigmaP-formula}, the map is injective. Surjectivity follows by twisting (see Definition \ref{def:twist}). The last part of the statement is a straightforward consequence of the $\partial \dbar$-Lemma.
\end{proof}

\section{Canonical metrics and variational approach}\label{sec:Aepplivariational}

\subsection{Canonical metrics}\label{subsec:canonical}

Let $Q$ be a positive Bott-Chern algebroid over a compact complex manifold $X$ of dimension $n$ with $c_1(X) = 0$. Following the previous section, we introduce now a notion of `best metric' in relation to the twisted Hull-Strominger system \eqref{eq:twistedStromhol} and, for a given positive Aeppli class, we use Lemma \ref{lem:tauhparamet} to reduce the system to an explicit PDE.
%For simplicity, we will assume that the Courant algebroid $Q$ is real.\footnote{RR: All Bott-Chern will become real. Update wherever necessary.}

\begin{definition}\label{def:twistedStromQ}
We say that a tuple $(\Psi,\omega,h)$, given by a hermitian metric $(\omega,h)$ on $Q$ and a smooth section $\Psi$ of the canonical bundle $K_X$, satisfies the \it{twisted Hull-Strominger system on $Q$} if
\begin{equation}\label{eq:twistedStromholredux}
\begin{split}
      F_h \wedge \omega^{n-1}  & = 0,\\
      d \Psi - \theta_\omega \wedge \Psi & = 0,\\
      d \theta_\omega & = 0,\\
      \|\Psi\|_\omega & = 1.
    \end{split}
\end{equation}
\end{definition}

By Definition \ref{def:metricQ}, the existence of solutions of \eqref{eq:twistedStromholredux} only depends on the isomorphism class $[Q]$, and therefore it defines a natural system of equations for the Bott-Chern algebroid. From the point of view of the holomorphic principal bundle $P$ underlying $Q$, a solution of \eqref{eq:twistedStromholredux} %with $\tau = \omega + i b$
provides  %(see \eqref{eq:metricequations})
a solution of the system
\begin{equation}\label{eq:twistedstromabstract}
\begin{split}
F_h\wedge \omega^{n-1} & = 0  ,\\
d \Psi - \theta_\omega \wedge \Psi & = 0,\\
d \theta_\omega & = 0,\\
%dd^c b & = 0,\\
dd^c \om - c(F_h\wedge F_h) & = 0,
\end{split}
\end{equation}
%with fixed holomorphic string class $[Q]$,
for an $\SU(n)$-structure $(\Psi,\omega)$ %, a real $b \in \Omega^{1,1}$
and a reduction $h$, and, therefore, any solution of \eqref{eq:twistedStromholredux} determines a solution of the twisted Hull-Strominger system \eqref{eq:twistedStromhol}. Conversely, by Proposition \ref{prop:stromholcour} any solution of \eqref{eq:twistedStromhol} determines a Bott-Chern algebroid $Q$ endowed with a hermitian metric and a smooth section $\Psi$ solving \eqref{eq:twistedStromholredux}. The reason we include the normalization $\|\Psi\|_\omega = 1$ (see \eqref{eq:normalizationnorm}) as part of the system \eqref{eq:twistedStromholredux} is that, as we will see next, due to our Definition \ref{def:metricQ} of hermitian metric, the condition $\|\Psi\|_\omega = 1$  is secretly a partial differential equation. %This will be more clear once we fix a positive a positive Aeppli class in $Q$.

Let $\sigma \in \Sigma_Q(\RR)$ be a positive Aeppli class in $Q$. We apply Lemma \ref{lem:tauhparamet} in order to transform \eqref{eq:twistedStromholredux} into an explicit PDE. Let us fix a hermitian metric $(\omega_0,h_0)$ on $Q$ with Aeppli class $\sigma$.
%$(\tau_0,h_0)$ on $Q$ with Aeppli class $\alpha$ and $\tau_0 = \omega_0 + i b_0$, for a hermitian form $\omega_0$ on $X$.
Relying on Lemma \ref{lem:tauhparamet}, finding a solution $(\Psi,\omega,h)$ of the twisted Hull-Strominger system on $Q$ with Aeppli class $\sigma$ is equivalent to find a tuple $(\Psi,\xi^{0,1},h)$, given by a smooth section $\Psi$ of $K_X$, a $(0,1)$-form $\xi^{0,1} \in \Omega^{0,1}$, and a reduction $h$ on $P$, such that $(\Psi,\omega,h)$ solves \eqref{eq:twistedStromholredux}, where
$$
\omega = \omega_0 + \tilde R(h,h_0) + \partial \xi^{0,1} + \overline{\partial \xi^{0,1}}
$$
is a positive hermitian form on $X$. More explicitly, we have the system of equations

\begin{align}\label{eq:twistedStromholredexp}
      F_h \wedge (\omega_0 + \tilde R(h,h_0) + \partial \xi^{0,1} + \overline{\partial \xi^{0,1}})^{n-1}  & = 0, \nonumber\\
      d \Psi - \theta_\omega \wedge \Psi & = 0,\\
      d \theta_\omega & = 0,  \nonumber\\
       d (\omega_0 + \tilde R(h,h_0) + \partial \xi^{0,1} + \overline{\partial \xi^{0,1}})^{n-1} & = \theta_\omega \wedge (\omega_0 + \tilde R(h,h_0) + \partial \xi^{0,1} + \overline{\partial \xi^{0,1}})^{n-1}, \nonumber \\
       (\omega_0 + \tilde R(h,h_0) + \partial \xi^{0,1} + \overline{\partial \xi^{0,1}})^n & = n!(-1)^{\frac{n(n-1)}{2}}i^n \Psi \wedge \overline{\Psi}. \nonumber
    \end{align}

%Note that, in particular, one can seek for solutions such that $\xi = 2i \dbar \phi$ for a smooth function $\phi$ on $X$, which lead us to a coupled system of partial differential equations for a tuple $(\Psi,\phi,h)$. Even with this simplified ansatz,

With the equations \eqref{eq:twistedStromholredexp} at hand, let us take another look at the twisted Calabi-Yau equation \eqref{eq:twistedStromexact} on complex surfaces, from the point of view of holomorphic Courant algebroids. In this setup, $Q$ is a positive exact holomorphic Courant algebroid and the equations \eqref{eq:twistedStromholredux} reduce to
\begin{equation}\label{eq:twistedStromholreduxexact}
d \Psi - \theta_\omega \wedge \Psi = 0, \qquad d \theta_\omega = 0, \qquad \|\Psi\|_\omega = 1,
\end{equation}
for a pluriclosed hermitian metric $\omega$ such that $[2i\partial \omega] = [Q]$. %$\tau$ on $Q$, with $\omega = \operatorname{Re} \tau$.
Furthermore, fixing the Aeppli class on $Q$ is equivalent to fixing the Aeppli class $[\omega] \in H^{1,1}_A(X,\RR)$. In this framework, Theorem \ref{th:Hopf} is reinterpreted, by using Example \ref{example:HopfAeppli}, as the following result.

\begin{theorem}\label{th:HopfCourant}
Let $X$ be a compact complex surface with $c_1(X) = 0$, endowed with an exact Bott-Chern algebroid $Q$. If $(X,Q)$ admits a solution of \eqref{eq:twistedStromholreduxexact}, then there exists a unique solution $(\Psi,\omega)$ on each positive Aeppli class $\sigma \in \Sigma_Q(\RR)$, up to rescaling $\Psi$ by a unitary complex number. % and shifting $b = \operatorname{Im} \tau$ by a $\partial + \dbar$-exact $(1,1)$-form.
\end{theorem}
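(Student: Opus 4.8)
The plan is to reduce Theorem~\ref{th:HopfCourant} to Theorem~\ref{th:Hopf} by translating the notion of Aeppli class on the exact algebroid $Q$ into the classical Aeppli class $[\omega] \in H^{1,1}_A(X,\RR)$ of the hermitian form. First I would record that, since $Q$ is exact ($G = \{1\}$), a hermitian metric on $Q$ is simply a pluriclosed form $\omega$ with $[2i\partial\omega] = [Q]$ in $H^1(\Omega^{\leqslant \bullet})$: here the condition $dd^c\omega = 0$ is automatic from the Bott-Chern property \eqref{eq:misteriouseqb}, because $c(F_h \wedge F_h) = 0$ when the structure group is trivial. Consequently the system \eqref{eq:twistedStromholreduxexact} on $Q$ is literally the system \eqref{eq:twistedStromexact} supplemented by the requirement that $\omega$ represent $[Q]$, so that the solutions of \eqref{eq:twistedStromholreduxexact} on $Q$ are exactly those solutions $(\Psi,\omega)$ of \eqref{eq:twistedStromexact} whose hermitian form is a metric on $Q$; the normalization $\|\Psi\|_\omega = 1$ and the residual freedom of rescaling $\Psi$ by a unit complex number match on both sides.

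Next I would identify the two notions of Aeppli class. Because Donaldson's term $R(h,h_0)$ vanishes for the trivial group, the map $Ap$ of Proposition~\ref{propo:Aepplicone} reduces to $Ap(\omega,\omega_0) = [\omega - \omega_0] \in H^{1,1}_A(X)$, so an Aeppli class $\sigma \in \Sigma_Q(\RR)$ is nothing but the $H^{1,1}_A(X,\RR)$-class of any representing metric. This yields a well-defined injection $\iota \colon \Sigma_Q(\RR) \hookrightarrow H^{1,1}_A(X,\RR)$, $\sigma \mapsto [\omega]$, carrying positive classes to positive classes, whose image is precisely the real part of the fibre $\partial^{-1}([Q])$ of the map \eqref{eq:partialmap}. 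Under $\iota$, a solution of \eqref{eq:twistedStromholreduxexact} on $Q$ with class $\sigma$ corresponds to a solution of \eqref{eq:twistedStromexact} with $[\omega] = \iota(\sigma)$; conversely any solution of \eqref{eq:twistedStromexact} in the class $\iota(\sigma)$ automatically satisfies $[2i\partial\omega] = \partial(\iota(\sigma)) = [Q]$ and is therefore a metric on $Q$. Hence existence and uniqueness of solutions on $Q$ in a positive class $\sigma$ follow from the corresponding statements for \eqref{eq:twistedStromexact} in the positive class $\iota(\sigma)$ granted by Theorem~\ref{th:Hopf}.

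The one point requiring care—and what I expect to be the main conceptual obstacle—is the apparent mismatch between the two descriptions of the moduli. In the non-Kähler case, Proposition~\ref{prop:twistedStromexact} forces $X$ to be a class VII Hopf surface, for which $H^{1,1}_A(X,\RR) \cong \RR$ carries a one-parameter family of positive classes, whereas Example~\ref{example:HopfAeppli} shows that $\Sigma_Q$ collapses to a single point. I would resolve this precisely by the identification above: fixing the algebroid $Q$ pins down the value of $[\omega]$ through the isomorphism \eqref{eq:isomorphismcohomo}, so that $\iota$ maps $\Sigma_Q(\RR)$ bijectively onto the single fibre $\partial^{-1}([Q])$, and distinct positive classes in $H^{1,1}_A(X,\RR)$ belong to \emph{non-isomorphic} exact Bott-Chern algebroids. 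Thus on a fixed $Q$ there is exactly one positive Aeppli class, and Theorem~\ref{th:Hopf} supplies its unique solution. In the Kähler case $\ell_X = 0$, by contrast, $X$ is a $\partial\dbar$-manifold, so $\partial \equiv 0$ on $H^{1,1}_A$ and $Q \cong TX \oplus T^*X$; then $\iota$ is an isomorphism onto $H^{1,1}_A(X,\RR) = H^{1,1}(X,\RR)$, positive classes are Kähler classes, and the assertion is Yau's theorem as packaged in Theorem~\ref{th:Hopf}. Combining the two cases completes the argument.
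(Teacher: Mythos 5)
Your proposal is correct and follows essentially the same route as the paper, which proves Theorem~\ref{th:HopfCourant} precisely by identifying hermitian metrics on an exact $Q$ with pluriclosed forms representing $[Q]$, identifying Aeppli classes on $Q$ with classes in $H^{1,1}_A(X,\RR)$ via the map $Ap$ (with $R(h,h_0)$ absent for trivial structure group), and then invoking Theorem~\ref{th:Hopf} together with Example~\ref{example:HopfAeppli}. Your explicit resolution of the apparent mismatch between the one-point space $\Sigma_Q$ on a class VII surface and the one-parameter family of positive classes in $H^{1,1}_A(X,\RR)$ is exactly the content the paper delegates to Example~\ref{example:HopfAeppli} and the isomorphism \eqref{eq:isomorphismcohomo}.
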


Theorem \ref{th:HopfCourant} motivates the following specialization of Question \ref{question} (cf. Question \ref{questionexact}).

\begin{question}\label{questioncourant}
%Let $X$ be a compact complex manifold with $c_1(X) = 0$ endowed with a holomorphic principal $G$-bundle $P$ satisfying \eqref{eq:pic0}. If $(X,P)$ admits a solution of the twisted Strominger system \eqref{eq:twistedStromhol}, is there a unique solution of the system for each pair $(Q,\alpha)$, given by a string Courant algebroid $Q$ with underlying bundle $P$ and a positive Aeppli class on $Q$, up to rescaling of $\Psi$ and automorphisms of $Q$ fixing $\alpha$?
Let $X$ be a compact complex manifold with $c_1(X) = 0$ endowed with a positive Bott-Chern algebroid $Q$ %, in the sense of Definition \ref{def:metricQ},
that admits a solution of \eqref{eq:twistedStromholredux}. Given a positive Aeppli class $\sigma \in \Sigma_Q(\RR)$, is there a unique solution of the twisted Hull-Strominger system \eqref{eq:twistedStromholredux} with Aeppli class $\sigma$, up to rescaling of $\Psi$? % and shifting $b = \operatorname{Im} \tau$ by a $\partial + \dbar$-exact $(1,1)$-form. %automorphisms of $Q$ fixing $\sigma$.
\end{question}

Given a holomorphic principal $G$-bundle $P$ over $X$ carrying a solution of the twisted Hull-Strominger system \eqref{eq:twistedStromhol}, we do not expect in general that any positive Bott-Chern algebroid $Q$ with underlying bundle $P$ admits a solution of \eqref{eq:twistedStromholredux} (even though this is the case in all the examples we have). Thus, Question \ref{questioncourant} seems to be the most sensible question to address regarding the uniqueness problem for the twisted Hull-Strominger system \eqref{eq:twistedStromhol}. The characterization of the pairs $(X,Q)$ carrying a solution of \eqref{eq:twistedStromholredux} seems to be a very subtle problem, which we speculate may be related to Geometric Invariant Theory (see Remark \ref{rem:Hopfmoduli} and Section \ref{subsec:Calabi}).

%The intricate structure of the equations \eqref{eq:twistedStromholredexp} makes difficult to think of making first progress in Question \ref{questioncourant} with the present level of generality. Motivated by this, in the next section we analyse in detail the case $\ell_X = 0$ in relation to the Strominger system (see Proposition \ref{prop:Strominger}).

\subsection{Variational interpretation of the Hull-Strominger system}\label{subsec:variational}

Building on Proposition \ref{prop:stromholcour}, in this section we introduce new tools to address the existence and uniqueness problem for \eqref{eq:twistedStromhol} %system \eqref{eq:twistedStromholredux}
in the case $[\theta_\omega] = 0$, that is, for the
Hull-Strominger system \eqref{eq:Stromhol} (see Proposition \ref{prop:Strominger}). Motivated by Question \ref{questioncourant}, we draw a parallel with the Calabi problem for compact K\"ahler manifolds \cite{Yau0}, and study the critical points of a functional for hermitian metrics on a Bott-Chern algebroid with fixed positive Aeppli class. %This functional can be regarded as an analogue of the so-called `$F_0$-functional' in K\"ahler geometry, described in, for example, \cite{Tian}.
To establish a clear analogy with the classical situation, % in Section \ref{subsec:Calabi},
in this section we work in the generality of a compact complex manifold $X$ endowed with a smooth volume form and a positive %string holomorphic
Bott-Chern algebroid $Q$.

%The aim of this section is to define a functional for hermitian metrics on $Q$ whose critical points in a fixed Aeppli class correspond to solutions of the Strominger system \eqref{eq:Stromhol}, provided that $X$ is Calabi-Yau (see Section \ref{subsec:sigmaX0}). As we will see, our construction can be regarded as an extension of the Calabi Conjecture for compact complex manifolds which are not necessarily k\"ahlerian, and draws a parallel between Yau's Theorem for Calabi Yau metrics and the existence an uniqueness problem for \eqref{eq:Stromhol}.

We fix a smooth volume form $\mu$ on $X$ compatible with the complex structure and, for any hermitian metric $\omega$ on $X$, we define a function $f_\omega$ by
\begin{equation}\label{eq:dilaton}
\frac{\omega^n}{n!} = e^{2f_\omega} \mu.
\end{equation}
We will call $f_\omega$ the \emph{dilaton function} of the hermitian metric $\omega$ with respect to $\mu$. Note that $e^{-2f_\omega}$ is the point-wise norm of $\mu$ with respect to $\omega$.

\begin{definition}\label{def:dilatonfunctional}
The \emph{dilaton functional} in the space of hermitian metrics $B_Q^+$ on $Q$ is defined by
\begin{equation}\label{eq:Dilfunctional}
M(\omega,h) = \int_X e^{-f_\omega} \frac{\omega^n}{n!}.
\end{equation}
\end{definition}

%%% CHECK HERE
Our next goal is to study the restriction of the dilaton functional to a positive Aeppli class $\sigma \in \Sigma_Q(\RR)$, and to relate the critical points with the Hull-Strominger system \eqref{eq:Stromhol}. Denote by $B_\sigma^+ \subset B^+_Q$ the space of hermitian metrics with fixed Aeppli class $\sigma$ (see Proposition \ref{propo:Aepplicone}). The first variation of $M$ restricted to $B_\sigma^+$ is the content of our next result.

\begin{lemma}\label{lem:1stvariation}
If $(\delta \omega,\delta h)$ is an infinitesimal variation of $(\omega,h) \in B_\sigma^+$ with
$$
\delta \omega = -2ic(\delta h\, h^{-1},F_h) + \partial \xi^{0,1} + \overline{\partial \xi^{0,1}},
$$
then
\begin{align*}
\delta_{\omega,h}M & = \frac{1}{2(n-1)!}\int_X (-2 ic(\delta h\, h^{-1}, F_h) + \partial \xi^{0,1} + \overline{\partial \xi^{0,1}})\wedge e^{-f_\omega} \omega^{n-1}.
\end{align*}
\end{lemma}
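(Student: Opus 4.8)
The plan is to exploit the fact that the dilaton functional \eqref{eq:Dilfunctional} depends on the pair $(\omega,h)$ only through the hermitian form $\omega$: its integrand $e^{-f_\omega}\omega^n/n!$ involves $h$ nowhere explicitly. Thus the reduction $h$ enters the variation solely because, within a fixed Aeppli class $\sigma$, a change of $h$ forces a compensating change of $\omega$ through Donaldson's term $\tilde R(h,h_0)$. Concretely, by the parametrization in Lemma \ref{lem:tauhparamet} every variation of $(\omega,h)$ tangent to $B_\sigma^+$ arises from $\omega = \omega_0 + \tilde R(h,h_0) + \partial\xi^{0,1} + \overline{\partial\xi^{0,1}}$, and differentiating this relation using property $(2)$ of Proposition \ref{prop:Donaldson} (equation \eqref{eqref:BCinvariantder}), which yields $\delta\tilde R(h,h_0) = ic(h^{-1}\delta h, F_h)$, produces exactly the prescribed $\delta\omega = ic(h^{-1}\delta h, F_h) + \partial\xi^{0,1} + \overline{\partial\xi^{0,1}}$. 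So the task is reduced to computing the first variation of $M$ as a functional of $\omega$ alone and then substituting this expression.

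For the variation itself I would first rewrite the integrand in the most convenient form. Using the defining relation \eqref{eq:dilaton}, namely $\omega^n/n! = e^{2f_\omega}\mu$, the integrand becomes $e^{-f_\omega}\omega^n/n! = e^{f_\omega}\mu$, so that $M(\omega,h) = \int_X e^{f_\omega}\mu$ and $\delta M = \int_X e^{f_\omega}\,\delta f_\omega\,\mu$. To compute $\delta f_\omega$ I would differentiate $\omega^n/n! = e^{2f_\omega}\mu$ directly: on the one hand $\delta(\omega^n/n!) = \delta\omega\wedge\omega^{n-1}/(n-1)!$, and on the other hand its right-hand side varies as $2e^{2f_\omega}\,\delta f_\omega\,\mu$. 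Equating the two and solving gives $e^{f_\omega}\,\delta f_\omega\,\mu = \tfrac{1}{2}e^{-f_\omega}\,\delta\omega\wedge\omega^{n-1}/(n-1)!$, whence
\begin{equation*}
\delta M = \frac{1}{2(n-1)!}\int_X e^{-f_\omega}\,\delta\omega\wedge\omega^{n-1}.
\end{equation*}

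It then remains to insert $\delta\omega = ic(h^{-1}\delta h, F_h) + \partial\xi^{0,1} + \overline{\partial\xi^{0,1}}$ and to move the scalar factor $e^{-f_\omega}$ and the form $\omega^{n-1}$ to the right of $\delta\omega$; since $\delta\omega$ has degree two and $\omega^{n-1}$ has degree $2n-2$, both even, the wedge commutes without sign and the result matches the claimed formula exactly. I expect no serious obstacle here, as this is a routine first-variation computation. The only points demanding care are bookkeeping ones: tracking the factor $\tfrac{1}{2}$ that originates from the factor $2$ in the definition \eqref{eq:dilaton} of $f_\omega$, and recognizing that the entire dependence on $\delta h$ is packaged, via \eqref{eqref:BCinvariantder}, into the single term $ic(h^{-1}\delta h, F_h)$ appearing in $\delta\omega$.
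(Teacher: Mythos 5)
Your proposal is correct and follows essentially the same route as the paper: rewrite $M=\int_X e^{f_\omega}\mu$, differentiate the defining relation $\omega^n/n!=e^{2f_\omega}\mu$ to get $2\,\delta f_\omega = \Lambda_\omega(\delta\omega)$ (which you express equivalently as $\delta\omega\wedge\omega^{n-1}/(n-1)!$), and substitute the prescribed form of $\delta\omega$ coming from Lemma \ref{lem:tauhparamet} and Proposition \ref{prop:Donaldson}(2). The bookkeeping of the factor $\tfrac12$ is handled correctly.
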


\begin{proof}
Note that
$$
M(\omega,h) = \int_X e^{f_\omega} \mu
$$
and therefore
$$
\delta_{\omega,h}M = \int_X (\delta f_\omega) e^{f_\omega} \mu = \frac{1}{2}\int_X \Lambda_\omega(\delta \omega) e^{-f_\omega} \omega^n/n!,
$$
where we have used that $2\delta f_\omega = \Lambda_\omega(\delta \omega)$ by definition of $f_\omega$.
\end{proof}

The critical points of the functional \eqref{eq:Dilfunctional} on $B_\sigma^+$ are therefore given by the system of equations
\begin{equation}\label{eq:stromabstractredux}
  \begin{split}
    F_h\wedge \omega^{n-1} & = 0  ,\\
    d (e^{-f_\omega}\omega^{n-1}) & = 0,
  \end{split}
\end{equation}
for a hermitian metric $(\omega,h)$ on $Q$ with Aeppli class $\sigma$. Note that the second equation is equivalent to (see e.g. \cite{GF2})
\begin{equation}\label{eq:confbalanced}
\theta_\omega = - d f_\omega
\end{equation}
and therefore the existence of solutions of \eqref{eq:stromabstractredux} implies that $X$ is conformally balanced. For reasons that will be clear in Section \ref{subsec:Calabi}, we will call the equations \eqref{eq:stromabstractredux} the \emph{Calabi system}.

\begin{remark}
Similarly as it occurs for \eqref{eq:twistedStromholredux}, from the point of view of the principal bundle $P$ underlying $Q$, a solution of the Calabi system \eqref{eq:stromabstractredux} corresponds to a solution of
\begin{equation}\label{eq:stromabstract}
\begin{split}
F_h\wedge \omega^{n-1} & = 0  ,\\
d (e^{-f_\omega}\omega^{n-1}) & = 0,\\
%dd^c b & = 0,\\
dd^c \om - c(F_h\wedge F_h) & = 0.
\end{split}
\end{equation}
\end{remark}

If we fix a reference metric $(\omega_0,h_0) \in B_\sigma^+$, using Lemma \ref{lem:tauhparamet} the system \eqref{eq:stromabstractredux} reduces to the following explicit partial differential equation
\begin{equation}\label{eq:stromabstractexp}
\begin{split}
      F_h \wedge (\omega_0 + \tilde R(h,h_0) + \partial \xi^{0,1} + \overline{\partial \xi^{0,1}})^{n-1}  & = 0,\\
       d\Big{(}e^{-f_\omega} (\omega_0 + \tilde R(h,h_0) + \partial \xi^{0,1} + \overline{\partial \xi^{0,1}})^{n-1}\Big{)} & = 0
\end{split}
\end{equation}
for a pair $(\xi^{0,1},h)$, where $\xi^{0,1} \in \Omega^{0,1}$ is a $(0,1)$-form such that
$$
\omega = \omega_0 + \tilde R(h,h_0) + \partial \xi^{0,1} + \overline{\partial \xi^{0,1}}
$$
is a positive hermitian form on $X$, and
$$
(\omega_0 + \tilde R(h,h_0) + \partial \xi^{0,1} + \overline{\partial \xi^{0,1}})^n = n!e^{2f_\omega} \mu.
$$

Assuming that $X$ admits a holomorphic volume form, we establish next the relation between the functional $M$ and the Hull-Strominger system \eqref{eq:Stromhol}. Let $\Omega$ be a holomorphic volume form on $X$ and set
\begin{equation}\label{eq:muOmega}
\mu = (-1)^{\frac{n(n-1)}{2}}i^n\Omega \wedge \overline{\Omega}.
\end{equation}
%Given a hermitian metric $\omega$ on $X$, define a function $\|\Omega\|_\omega^2$ by the %equality
%$$
%\|\Omega\|_\omega^2 \frac{\omega^n}{n!} = \mu.
%$$
%Recall from Proposition \ref{prop:stromholcour} that any solution of the Strominger system determines a string Courant algebroid.

\begin{proposition}\label{prop:relationStr}
Let $Q$ be a positive Bott-Chern algebroid over a compact Calabi-Yau manifold $(X,\Omega)$, with fixed Aeppli class $\sigma = [(\omega_0,h_0)] \in \Sigma_Q(\RR)$. If $(\omega,h) \in B_\sigma^+$ is critical point of $M$ with $\mu$ as in \eqref{eq:muOmega}, then $(\om,h)$ is a solution of the Hull-Strominger system \eqref{eq:Stromhol} with associated Bott-Chern algebroid $Q$. Conversely, if $(\omega,h)$ is a solution of \eqref{eq:Stromhol} with Bott-Chern algebroid $Q$ %and there exists a real $dd^c$-closed $b \in \Omega^{1,1}$
such that $[(\omega,h)] = \sigma$, then $(\omega,h)$ is a critical point of $M$ on $B_\sigma^+$ and $(\omega,h)$ and $(\omega_0,h_0)$ are related by \eqref{eq:hermitianformula}.
\end{proposition}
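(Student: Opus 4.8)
The plan is to match the two systems equation by equation, leaning on two bridges: a pointwise identity relating the dilaton function to the norm of $\Omega$, and the observation that the anomaly equation $dd^c\omega - c(F_h\wedge F_h)=0$ is \emph{not} a variational equation but is supplied automatically by membership in $B_Q^+$. First I would record the identity $\log\|\Omega\|_\omega = -f_\omega$. Comparing the definition of the pointwise norm $\|\Omega\|_\omega^2\,\omega^n/n! = (-1)^{n(n-1)/2}i^n\,\Omega\wedge\overline{\Omega}$ with $\mu$ in \eqref{eq:muOmega} and the defining relation $\omega^n/n! = e^{2f_\omega}\mu$ of the dilaton function gives $\|\Omega\|_\omega^2\,e^{2f_\omega}=1$, hence $\log\|\Omega\|_\omega=-f_\omega$. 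This is what glues the intrinsic dilaton $f_\omega$ to the Hull--Strominger dilaton $\log\|\Omega\|_\omega$.

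For the forward direction, I would start from Lemma \ref{lem:1stvariation}. The tangent space to $B_\sigma^+$ at $(\omega,h)$ is parametrized (via Lemma \ref{lem:tauhparamet}) by pairs $(\xi^{0,1},\delta h)$ with $\delta\omega = ic(h^{-1}\delta h,F_h)+\partial\xi^{0,1}+\overline{\partial\xi^{0,1}}$, so a critical point must annihilate the first variation along every such direction. Testing against variations with $\delta h=0$ and integrating by parts yields $\partial(e^{-f_\omega}\omega^{n-1})=0$; since $e^{-f_\omega}\omega^{n-1}$ is real, taking the conjugate gives $d(e^{-f_\omega}\omega^{n-1})=0$. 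Testing against variations with $\xi^{0,1}=0$, I would rewrite $ic(h^{-1}\delta h,F_h)\wedge\omega^{n-1}$ as a multiple of $c(s,\Lambda_\omega F_h)\,\omega^n/n!$ with $s$ the corresponding section of the real form of $\ad P$; positivity of $e^{-f_\omega}$ and nondegeneracy of $c$ then force $\Lambda_\omega F_h=0$, i.e. $F_h\wedge\omega^{n-1}=0$. These are precisely the equations \eqref{eq:stromabstractredux}. To assemble the Hull--Strominger system \eqref{eq:Stromhol}: the Hermite--Einstein equation is immediate; the anomaly equation $dd^c\omega=c(F_h\wedge F_h)$ holds because $(\omega,h)\in B_\sigma^+$ is a metric on the Bott--Chern algebroid, i.e. \eqref{eq:misteriouseqb} with $\tau=\omega$; and the dilaton equation follows since $d(e^{-f_\omega}\omega^{n-1})=0$ is equivalent to $\theta_\omega=-df_\omega$ by \eqref{eq:confbalanced} (using \eqref{eq:Leeformdef}), which together with $\log\|\Omega\|_\omega=-f_\omega$ and the computation $d^*\omega=-J\theta_\omega$ from the proof of Proposition \ref{prop:Strominger} reads $d^*\omega=d^c\log\|\Omega\|_\omega$. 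That the associated algebroid is $Q$ follows from Proposition \ref{prop:stromholcour} since $(\omega,h)\in B_Q^+$.

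For the converse, assume $(\omega,h)$ solves \eqref{eq:Stromhol} with algebroid $Q$ and $[(\omega,h)]=\sigma$. As $\omega$ is a positive $(1,1)$-form and $h$ a reduction with $[Q]=[(2i\partial\omega,\theta^h)]$, we have $(\omega,h)\in B_\sigma^+$. Running the same dictionary backwards, the Hermite--Einstein equation and the dilaton equation (again via $\log\|\Omega\|_\omega=-f_\omega$, the relation $d^*\omega=-J\theta_\omega$, and \eqref{eq:confbalanced}) give both equations of \eqref{eq:stromabstractredux}; by Lemma \ref{lem:1stvariation} the first variation of $M$ vanishes on every tangent direction to $B_\sigma^+$, so $(\omega,h)$ is a critical point. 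Finally, since $(\omega,h)$ and $(\omega_0,h_0)$ lie in the same $B_\sigma^+$, Lemma \ref{lem:tauhparamet} yields $\omega-\omega_0-\tilde R(h,h_0)\in\operatorname{Im}(\partial\oplus\dbar)$, which is exactly \eqref{eq:hermitianformula}.

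I expect the only genuinely delicate step to be extracting both Euler--Lagrange equations from the single first-variation formula, namely separating the bundle direction (which produces Hermite--Einstein through nondegeneracy of $c$) from the form direction (which produces conformal balancedness through integration by parts against the weight $e^{-f_\omega}$), and checking that the admissible variations sweep out enough of each factor. The conceptual crux, however, is recognizing that the anomaly equation never enters the variational problem at all: it is built into the Bott--Chern algebroid constraint $[Q]=[(2i\partial\omega,\theta^h)]$, and this is precisely why critical points of $M$ on $B_\sigma^+$ recover the full Hull--Strominger system rather than merely its first two equations.
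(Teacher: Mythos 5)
Your proof is correct and follows essentially the same route as the paper: the Euler--Lagrange equations of $M$ on $B_\sigma^+$ are \eqref{eq:stromabstractredux} (as the paper records right after Lemma \ref{lem:1stvariation}), the bridge is the pointwise identity $\|\Omega\|_\omega=e^{-f_\omega}$ combined with \eqref{eq:Leeformdef0} and \eqref{eq:confbalanced}, and the last claim is Lemma \ref{lem:tauhparamet}. You merely spell out the steps the paper leaves implicit, including the (correct) observation that the anomaly equation is supplied by the constraint $[Q]=[(2i\partial\omega,\theta^h)]$ rather than by the variational problem.
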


\begin{proof}
By \eqref{eq:Leeformdef0} and \eqref{eq:confbalanced}, the `if part' of the proof reduces to the identity $\|\Omega\|_\omega = e^{-f_\omega}$. The `only if part' follows from Lemma \ref{lem:tauhparamet} .
\end{proof}

Our next result is the calculation of the second variation of the functional $M$ restricted $B_\sigma^+$. Let $(\omega_t,h_t) \in B_\sigma^+$ be a one-parameter family of hermitian metrics on $Q$, with
\begin{equation}\label{eq:omegat}
\omega_t  = \omega + \tilde R(h_t,h_0) + \partial \xi_t^{0,1} + \overline{\partial \xi_t^{0,1}}
\end{equation}
for $\xi_t^{0,1} \in \Omega^{0,1}$. Using the Lefschetz decomposition with respect to $\omega_t$, we can write
$$
\dot \omega_t = \sigma_t + \beta_t \frac{\omega_t}{n},
$$
where $\sigma_t$ is a primitive $(1,1)$-form and $\beta_t \in C^{\infty}(X)$ is given by
$$
\beta_t = \Lambda_{\omega_t}\(-2 ic(\dot h_t h^{-1}_t, F_{h_t}) + \partial \dot \xi_t^{0,1} + \overline{\partial \dot \xi_t^{0,1}}\).
$$

\begin{lemma}\label{lem:2ndvariation} The following identity holds:
\begin{align*}
\frac{d}{dt^2}M(\omega_t,h_t)  & = - \frac{1}{2}\int_X e^{-f_{\omega_t}} |\sigma_t|^2_{\omega_t} \frac{\omega_t^{n}}{n!}\\
& \phantom{= {} } + \frac{1}{2}\int_X e^{-f_{\omega_t}} \(\Lambda_{\omega_t}\(\partial \ddot \xi_t^{0,1} + \overline{\partial \ddot \xi_t^{0,1}}\) + \frac{n-2}{2n}\beta_t^2\) \frac{\omega_t^{n}}{n!}\\
& \phantom{= {} } + \frac{1}{2}\int_X e^{-f_{\omega_t}} \Lambda_{\omega_t} \(4ic(\dot h_t h_t^{-1},\dbar \partial^{h_t}(\dot h_t h_t^{-1})) - 2 ic(\partial_t (\dot h_t h_t^{-1}), F_{h_t})\)  \frac{\omega_t^{n}}{n!}.
\end{align*}
\end{lemma}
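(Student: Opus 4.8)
The plan is to differentiate the first-variation formula of Lemma \ref{lem:1stvariation} once more along the path $(\omega_t,h_t)$. Writing $a_t = h_t^{-1}\dot h_t$ and differentiating $\tilde R(h_t,h_0)$ in \eqref{eq:omegat} via property $(2)$ of Proposition \ref{prop:Donaldson}, one has $\dot\omega_t = ic(a_t,F_{h_t}) + \partial\dot\xi_t^{0,1} + \overline{\partial\dot\xi_t^{0,1}}$, so Lemma \ref{lem:1stvariation} reads
\[
\frac{d}{dt}M(\omega_t,h_t) = \frac{1}{2(n-1)!}\int_X \dot\omega_t\wedge e^{-f_{\omega_t}}\omega_t^{n-1}.
\]
Setting $\psi_t = e^{-f_{\omega_t}}\omega_t^{n-1}$ and differentiating again splits $\frac{d^2}{dt^2}M$ into an \emph{acceleration} term $\frac{1}{2(n-1)!}\int_X\ddot\omega_t\wedge\psi_t$ and a \emph{velocity-squared} term $\frac{1}{2(n-1)!}\int_X\dot\omega_t\wedge\dot\psi_t$, which I would treat separately.

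For the acceleration term I would compute $\ddot\omega_t$ directly. The forms $\partial\dot\xi_t^{0,1} + \overline{\partial\dot\xi_t^{0,1}}$ simply produce $\partial\ddot\xi_t^{0,1} + \overline{\partial\ddot\xi_t^{0,1}}$, while differentiating $ic(a_t,F_{h_t})$ requires the variation of the Chern curvature, $\dot F_{h_t} = \dbar\partial^{h_t}a_t$, giving
\[
\ddot\omega_t = \partial\ddot\xi_t^{0,1} + \overline{\partial\ddot\xi_t^{0,1}} + ic(\partial_t a_t, F_{h_t}) + ic(a_t,\dbar\partial^{h_t}a_t).
\]
Since $\frac{1}{(n-1)!}\alpha\wedge\omega_t^{n-1} = \Lambda_{\omega_t}(\alpha)\frac{\omega_t^n}{n!}$ for any $(1,1)$-form $\alpha$, the acceleration term equals $\frac12\int_X e^{-f_{\omega_t}}\Lambda_{\omega_t}(\ddot\omega_t)\frac{\omega_t^n}{n!}$, which accounts for the entire second line of the claimed identity together with the $\Lambda_{\omega_t}(\partial\ddot\xi_t^{0,1}+\overline{\partial\ddot\xi_t^{0,1}})$ summand of the first.

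For the velocity-squared term I would expand $\dot\psi_t = -\dot f_{\omega_t}e^{-f_{\omega_t}}\omega_t^{n-1} + (n-1)e^{-f_{\omega_t}}\omega_t^{n-2}\wedge\dot\omega_t$ and invoke the Lefschetz decomposition $\dot\omega_t = \sigma_t + \beta_t\frac{\omega_t}{n}$ recorded before the statement, with $\sigma_t$ primitive. Using $\Lambda_{\omega_t}(\dot\omega_t) = \beta_t = 2\dot f_{\omega_t}$, the vanishing $\sigma_t\wedge\omega_t^{n-1} = 0$, and the pointwise identity $\sigma_t\wedge\sigma_t\wedge\frac{\omega_t^{n-2}}{(n-2)!} = -|\sigma_t|^2_{\omega_t}\frac{\omega_t^n}{n!}$ for primitive $(1,1)$-forms, the $\dot\omega_t\wedge\dot\omega_t\wedge\omega_t^{n-2}$ piece contributes $-|\sigma_t|^2_{\omega_t} + \frac{n-1}{n}\beta_t^2$, while the $-\dot f_{\omega_t}\,\dot\omega_t\wedge\omega_t^{n-1}$ piece contributes $-\frac12\beta_t^2$. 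Summing, the $\beta_t^2$ coefficient collapses to $\frac{n-1}{n} - \frac12 = \frac{n-2}{2n}$, yielding precisely the remaining terms $-|\sigma_t|^2_{\omega_t} + \frac{n-2}{2n}\beta_t^2$ of the first line.

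The main obstacle is bookkeeping rather than conceptual. I must (i) establish the curvature variation $\dot F_{h_t} = \dbar\partial^{h_t}a_t$ in the normalization matching the pairing $ic(\cdot,\cdot)$, and (ii) track every numerical factor through the Lefschetz and primitive-form identities so that the two independent sources of $\beta_t^2$ combine to exactly $\frac{n-2}{2n}$. The delicate point is the sign and constant in the identity $\sigma\wedge\sigma\wedge\omega^{n-2}/(n-2)! = -|\sigma|^2_\omega\,\omega^n/n!$, since an error there would simultaneously corrupt the $|\sigma_t|^2_{\omega_t}$ term and the final $\beta_t^2$ coefficient.
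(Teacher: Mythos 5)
Your proposal is correct and follows essentially the same route as the paper's proof: differentiate the first-variation formula of Lemma \ref{lem:1stvariation}, compute $\ddot\omega_t$ from \eqref{eq:omegat} using $\dot F_{h_t}=\dbar\partial^{h_t}(h_t^{-1}\dot h_t)$, and expand the velocity-squared term via the Lefschetz decomposition and the primitive-form identity. Your bookkeeping of the two $\beta_t^2$ contributions ($\tfrac{n-1}{n}$ from $(n-1)\dot\omega_t\wedge\dot\omega_t\wedge\omega_t^{n-2}$ and $-\tfrac12$ from $-\dot f_{\omega_t}\dot\omega_t\wedge\omega_t^{n-1}$, both normalized against $\omega_t^n/n$) reproduces the paper's coefficient $\tfrac{n-2}{2n}$ exactly.
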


\begin{proof}
We denote $\partial_t^k = \frac{d}{dt^k}$. By Lemma \ref{lem:1stvariation} we have
\begin{align*}
\partial_t^2 M(\omega_t,h_t) & = \frac{1}{2(n-1)!}\int_X \ddot \omega_t \wedge e^{-f_{\omega_t}} \omega_t^{n-1} + \dot \omega_t \wedge \partial_t (e^{-f_{\omega_t}} \omega_t^{n-1})
\end{align*}
Now, by \eqref{eq:omegat},
\begin{align*}
\ddot \omega_t \wedge \omega_t^{n-1} & = %-\frac{1}{2n}|d^{h_t}(h_t^{-1}\dot h_t)|_c^2 \omega_t^n (NO! need integration by parts + balanced condition)
\Lambda_{\omega_t} \(4ic(\dot h_t h_t^{-1},\dbar \partial^{h_t}(\dot h_t h_t^{-1})) - 2 ic(\partial_t (\dot h_t h_t^{-1}), F_{h_t}) + \partial \ddot \xi_t^{0,1} + \overline{\partial \ddot \xi_t^{0,1}}\)\frac{\omega_t^n}{n}
\end{align*}
and also
\begin{align*}
\dot \omega_t \wedge \partial_t (e^{-f_{\omega_t}} \omega_t^{n-1}) & = e^{-f_{\omega_t}}\dot \omega_t \wedge (-(\Lambda_{\omega_t} \dot \omega_t/2)\omega_t + (n-1)\dot \omega_t) \wedge \omega_t^{n-2}\\
& = \(- |\sigma_t|^2_{\omega_t} + \frac{n-2}{2n}|\Lambda_{\omega_t} \dot \omega_t|^2\)e^{-f_{\omega_t}}\frac{\omega_t^n}{n}.
\end{align*}
\end{proof}

Our formula for the second variation points towards special paths on $B_\sigma^+$ along which the functional $M$ is concave. Remarkably, these paths are independent of the choice of volume form in the definition of the functional \eqref{eq:Dilfunctional}.

\begin{corollary}\label{cor:concave}
The dilaton functional is concave along paths $(\omega_t,h_t) \in B_\sigma^+$ solving
\begin{align}\label{eq:concavepath}
\Lambda_{\omega_t}(\partial \ddot \xi_t^{0,1} + \overline{\partial \ddot \xi_t^{0,1}})  = {} & \frac{2-n}{2n}|\Lambda_{\omega_t}(-2ic(\dot h_t h_t^{-1}, F_{h_t}) + \partial \dot \xi_t^{0,1} + \overline{\partial \dot \xi_t^{0,1}})|^2\\
&  - \Lambda_{\omega_t} \(4ic(\dot h_t h_t^{-1},\dbar \partial^{h_t}(\dot h_t h_t^{-1})) - 2 ic(\partial_t (\dot h_t h_t^{-1}), F_{h_t})\). \nonumber
\end{align}

\end{corollary}

In general, the only source of concavity of $M$ is given by the primitive part of the variation of $\omega_t$, which motivates \eqref{eq:concavepath}. Note here that we cannot use the convexity properties of Donaldson's functional in the space of reductions of $P$ \cite{Don}, since the bilinear form $c$ on $\mathfrak{k} \subset \mathfrak{g}$ may have arbitrary signature.

Equation \eqref{eq:concavepath} is reminiscent of the geodesic equation in the space of K\"ahler metrics in a fixed K\"ahler class, which plays an important role in the constant scalar curvature problem in K\"ahler geometry (see e.g. \cite{D6}).
To see this, by setting $\xi_t^{0,1} = i\dbar \phi_t$ and $h_t = e^{tu}h_0$, we note that \eqref{eq:concavepath} reduces to the following fourth-order partial differential equations for the one-parameter family $\phi_t$ of smooth functions:
\begin{equation}\label{eq:concavepath-geodlike}
P_t \ddot \phi_t = \frac{n-2}{2n}|P_t \dot \phi_t + 2 ic(u,\Lambda_{\omega_t} F_{h_t})|^2 +  4ic(u,\Lambda_{\omega_t} \dbar \partial^{h_t}u),
\end{equation}
where $P_t$ is the second-order elliptic differential operator
$$
P_t = \Lambda_{\omega_t} 2i\dbar \partial \colon C^\infty(X) \to C^\infty(X).
$$
Note that $P_t$ coincides with the Hodge laplacian $\Delta_{\omega_t}$ precisely when $\omega_t$ is balanced \cite{Gau77}.

\begin{remark}\label{rem:concavepathsurface}
The equations \eqref{eq:concavepath} are very special on a complex surface, as in this case the first term on the right-hand side vanishes. With the ansatz $h_t = e^{tu}h_0$, they further reduce to
%\footnote{MGF: I have tried to solve the stronger equation $\partial \ddot \xi_t^{0,1} + \overline{\partial \ddot \xi_t^{0,1}} = - ic(u,\dbar \partial^{h_t}u) = - \frac{d}{dt^2} \tilde R(h_t,h_0)$ for arbitrary $G$, with prescribed boundary conditions. However, the right hand side does not seem to be in the image of $\partial+ \dbar$. The idea was, given $\omega = \omega_0 + \tilde R(h,h_0) + \partial \xi^{0,1} + \overline{\partial \xi^{0,1}}$, define $\xi_t^{0,1} = t \xi^{0,1} + \eta^{0,1}_t$. Then, the strong equation reduces to $\frac{d}{dt^2} (\partial \eta_t^{0,1} + \overline{\partial \eta_t^{0,1}} + \tilde R(h_t,h_0)) = 0$. Now, if we define $\overline{\ddot{\eta}_t^{0,1}} = i c(u,\partial^{h_t} u)$, it follows $\overline{\partial \ddot \eta_t^{0,1}} = ic(\dbar u,\partial^{h_t}u) + ic(u,\dbar \partial^{h_t} u)$, and then  $\partial \ddot \eta_t^{0,1} + \overline{\partial \ddot \eta_t^{0,1}} = 2ic(u,\dbar \partial^{h_t} u) + 2i c(\dbar u,\partial^{h_t}u)$ ... }
\begin{equation}\label{eq:concavepathsurface}
\Lambda_{\omega_t}(\partial \ddot \xi_t^{0,1} + \overline{\partial \ddot \xi_t^{0,1}}) = - 4 ic(u,\Lambda_{\omega_t}\dbar \partial^{h_t}u) = - \Lambda_{\omega_t}\frac{d}{dt^2} \tilde R(h_t,h_0).
\end{equation}
When $G = \{1\}$, the right hand side of equation \eqref{eq:concavepathsurface} vanishes and therefore it can be solved using linear paths in the Aeppli class.
\end{remark}

Similarly as in the case of K\"ahler geometry, our next result shows that the equation \eqref{eq:concavepath-geodlike} can be potentially used as an approach to the uniqueness problem for the Calabi system \eqref{eq:stromabstractredux}. We will use the natural action of the group of automorphisms of $Q$ on the space of hermitian metrics defined in Lemma \ref{lem:action}.

\begin{proposition}\label{prop:uniqueness}
If $(\omega_0,h_0)$ and $(\omega_1,h_1)$ are two solutions of the Calabi system \eqref{eq:stromabstractredux} with Aeppli class $\sigma \in \Sigma_Q(\RR)$ that can be joined by a smooth solution $(\omega_t,h_t)$ of \eqref{eq:concavepath}, then $\omega_1 = k \omega_0$ for some constant $k$, and $h_1$ is related to $h_0$ by an element in the holomorphic gauge group $\cG_P$ of $P$. Furthermore, when $d\omega_0\neq 0$, we must have $k=1$ and, consequently, $(\omega_0,h_0)$ and $(\omega_1,h_1)$ are related by an automorphism of $Q$. 
\end{proposition}

\begin{proof}
By Lemma \ref{lem:2ndvariation}, the second derivative of $M$ along the path $(\omega_t,h_t)$ is non-positive. Thus, since $(\omega_0,h_0)$ and $(\omega_1,h_1)$ are critical points of $M$, we have that $\sigma_t = 0$ for all $t \in [0,1]$ and therefore%\footnote{RR: Do we want to be consistent with the notation for the fractions??}
\begin{equation}\label{eq:lemmauniqueness}
\dot \omega_t = \beta_t \omega_t/n.
\end{equation}
We claim that this implies that $\omega_0$ is conformal to $\omega_1$. To see this, consider the decomposition of $\omega_t$ into primitive and non-primitive parts with respect to $\omega_0$
\begin{equation}\label{eq:lemmauniqueness2}
\omega_t = \gamma_t + f_t \omega_0/n,
\end{equation}
where $\gamma_t$ and $f_t$ depend smoothly on $t$. Using  \eqref{eq:lemmauniqueness} and the fact that $\dot \gamma_t$ is primitive with respect to $\omega_0$ for all $t$, it follows that $\gamma_t$ satisfies the differential equation
%$$
%\dot \gamma_t = \dot \omega_t - \dot f_t \omega_0/n =  \beta_t (\gamma_t + f_t \omega_0/n) - \dot f_t \omega_0/n
%$$
$$
\dot \gamma_t =  \beta_t \gamma_t
$$
with initial condition $\gamma_0 = 0$. By uniqueness of ODE, we conclude that $\gamma_t = 0$ for all $t$ and hence $\omega_1 = f_1 \omega_0/n$.

Using now that $\omega_1 = e^f \omega_0$ for a smooth function $f$, the Buchdahl-Li-Yau Theorem \cite{Buchdahl,LiYauHYM} implies that $h_0 = g h_1$ for some $g \in \cG_P$, and as a consequence
$$
dd^c (\omega_1 - \omega_0) = c(F_{h_1} \wedge F_{h_1}) - c(F_{h_0}\wedge F_{h_0}) = 0.
$$
Finally, this implies that
$$
dd^c ((1-e^f)\omega_0) \wedge \omega^{n-2}_0 = 0,
$$
and therefore applying the maximum principle to the linear elliptic  operator $u \mapsto dd^c (u\omega_0) \wedge \omega^{n-2}_0$ it follows that $\omega_1 = k \omega_0$ for some constant $k$.

For the final claim, note that $dd^c\omega_0 \neq 0$ when $d\omega_0\neq 0$ (since $\omega_0$ is conformally balanced by assumption), so it follows from $dd^c\omega_0 = dd^c\omega_1$ that $k =1$. Using now that $(\omega_0,gh_1)$ and $(\omega_0,h_1)$ define the same holomorphic string class, we obtain (see Theorem \ref{lemma:deRhamC})
$$
CS(g \theta^{h_1}) - CS(\theta^{h_1}) - dc(g \theta^{h_1} \wedge \theta^{h_1}) = dB
$$
for some $B \in \Omega^{2,0}$. Consequently, $g \in \Ker \sigma_P$ and hence the short exact sequence \eqref{eq:AutW} implies that there exists $(\varphi,g) \in \Aut Q$ covering $g$ (see \eqref{eq:defstringiso}). The statement follows now from Lemma \ref{lem:action}, which gives
$$
(\varphi,g) \cdot (\omega_1,h_1) = (\omega_1,gh_1) = (\omega_0,h_0).
$$
\end{proof}

\begin{remark}\label{rem:uniqueKahlercase}
In the setup of Proposition \ref{prop:uniqueness}, assume that $d\omega_0 = 0$. Then, using that both solutions are in the same Aeppli class, it follows that
$$
(k-1)[\omega_0] = \widetilde{\sigma}_P(g) \in H^{1,1}_A(X,\mathbb{R})
$$
where $g \in \cG_P$ is such that $h_0 = g h_1$ (see Lemma \ref{lemma:tildesigmaP}). Provided that $\mathcal{G}_P$ is connected, it follows from Proposition \ref{prop:Futaki} that $\widetilde{\sigma}_P(g) \cdot [\omega^{n-1}_0] = 0$ and hence $k=1$ and the two solutions are related by an automorphism of $Q$. The hypothesis that $\mathcal{G}_P$ is connected holds, for example, when $G$ is a product of general linear groups.
\end{remark}

Proposition \ref{prop:uniqueness} suggests the following parallel with the theory of constant scalar curvature K\"ahler metrics \cite{D6}: %as mentioned above,
the existence of enough `weak solutions' of \eqref{eq:concavepath} should imply uniqueness of solutions of the Calabi system \eqref{eq:stromabstractredux} with Aeppli class $\sigma$ modulo automorphisms of $Q$. Furthermore, provided that $B_\sigma$ contains a solution of the system, the dilaton functional \eqref{eq:Dilfunctional} restricted to $B_\sigma$ should be bounded from above. Thus, one can potentially try to define an obstruction to the existence of solutions of the Calabi system using the asymptotics of $dM$ along infinite paths on $B_\sigma$ solving \eqref{eq:concavepath}. The resulting `stability condition' would be for a pair $(Q,\sigma)$ given by a Bott-Chern algebroid and a positive Aeppli class, and should in particular imply that the underlying complex manifold is balanced. %We hope to go back to this problem in future work.
It is therefore natural to expect that the existence problem for the PDE \eqref{eq:concavepath} is an important tool in the theory for the Hull-Strominger system.

%\begin{remark}\label{rem:GIT}
%Making a parallel with the theory of constant scalar curvature K\"ahler metrics \cite{D6}, one can try to define an obstruction to the existence of critical points of $M$ using the asymptotics of $dM$ along infinite paths on $B_\sigma$ solving \eqref{eq:concavepath}. The resulting `stability condition' would be for a pair $(Q,\alpha)$ given by a string Courant algebroid and a positive Aeppli class, and should in particular imply that the underlying complex manifold is balanced.
%\end{remark}

\subsection{Relation with the Calabi problem}\label{subsec:Calabi}

Expanding upon the method of Proposition \ref{prop:uniqueness}, in this section we give evidence of the previous conjectural picture by studying the simple case of exact Courant algebroids over compact complex surfaces. Consider the functional \eqref{eq:Dilfunctional} in the case $G = \{1\}$, that is, for an exact positive Bott-Chern algebroid
$$
0 \to T^*X \to Q \to TX \to 0,
$$
defined by a closed $H \in \Omega^{2,1}$. By assumption, there is a positive hermitian form $\omega_0 \in \Omega^{1,1}$ %with $\tau_0 = \omega_0 + i b_0$ and $\omega_0 = \operatorname{Re} \tau_0 > 0$,
such that
$$
H = 2i\partial \omega_0 + dB,
$$
for $B \in \Omega^{2,0}$. Therefore, the space of real Aeppli classes on $Q$ is identified with $\Ker \partial \subset H^{1,1}_A(X,\RR)$ in \eqref{eq:partialmap} via the map
\begin{align*}
Ap(\underline{\;\;\;\;},\omega_0) \colon  B_Q &\to H^{1,1}_A(X) \\
 \tau &\mapsto [\tau - \omega_0].
\end{align*}

Given a positive Aeppli class $\sigma \in \Sigma_Q(\RR)$, which we assume e.g. to be $ \sigma = [\omega_0]$, the critical points of the functional \eqref{eq:Dilfunctional} on $B_\sigma$ are given by a hermitian form $\omega$ on $X$ %and real $b \in \Omega^{1,1}$
such that
\begin{equation}\label{eq:stromabstractexact}
  \begin{split}
    d (e^{-f_\omega}\omega^{n-1}) & = 0,\\
%    dd^c b & = 0,\\
    dd^c \om & = 0,
  \end{split}
\end{equation}
and satisfying %$[\omega + ib] = [\tau_0] \in H^{1,1}_A(X)$.
$[\omega] = [\omega_0] \in H^{1,1}_A(X,\RR)$. Now, a hermitian metric which is pluriclosed and conformally balanced is necessarily K\"ahler \cite[Thm. 1.3]{IvanovPapado}, and we have the following result.

\begin{proposition}\label{prop:extremaexactcase}
Let $Q$ be an exact positive Bott-Chern algebroid. If %$\tau = \omega + ib \in B_\sigma$
$\omega \in B_\sigma$ is a critical point of \eqref{eq:Dilfunctional}, then $d\omega = 0$ and $df_\omega = 0$. Therefore, $X$ is K\"ahler, $Q$ is isomorphic to $TX \oplus T^*X$ and there exists a constant $\ell > 0$ such that
\begin{equation}\label{eq:volume}
\frac{\omega^n}{n!} = \ell \mu.
\end{equation}
\end{proposition}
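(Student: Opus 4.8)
The plan is to read the content of the statement directly off the critical-point system \eqref{eq:stromabstractexact}, which (since $G=\{1\}$) says exactly that $\omega$ is simultaneously \emph{pluriclosed}, $dd^c\omega=0$, and \emph{conformally balanced}, $d(e^{-f_\omega}\omega^{n-1})=0$. The whole proposition rests on a rigidity phenomenon: these two conditions together force $\omega$ to be K\"ahler. Accordingly, the first and decisive step is to invoke \cite[Thm. 1.3]{IvanovPapado}, which asserts that on a compact complex manifold a hermitian metric that is both pluriclosed and conformally balanced satisfies $d\omega=0$. This is the crux of the argument; every remaining assertion follows by unwinding definitions.

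Once $d\omega=0$ is established, I would show that the Lee form vanishes. From \eqref{eq:Leeformdef} we have $\theta_\omega\wedge\omega^{n-1}=d\omega^{n-1}=0$, and since the Lefschetz map $\alpha\mapsto\alpha\wedge\omega^{n-1}$ is a pointwise isomorphism on one-forms (this is precisely what makes $\theta_\omega$ well defined in \eqref{eq:Leeformdef}), we conclude $\theta_\omega=0$. Combining this with the conformally balanced equation in the form \eqref{eq:confbalanced}, namely $\theta_\omega=-df_\omega$, yields $df_\omega=0$, so $f_\omega$ is constant on the connected manifold $X$. Substituting back into the definition \eqref{eq:dilaton} of the dilaton function gives $\omega^n/n!=e^{2f_\omega}\mu$ with $e^{2f_\omega}$ constant, which is exactly \eqref{eq:volume} with $\ell=e^{2f_\omega}>0$.

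For the identification of the algebroid, I would use that a compact K\"ahler manifold satisfies the $\partial\dbar$-Lemma, so $X$ is in particular a $\partial\dbar$-manifold. Since $Q$ is exact, Example \ref{example:ddbarBC} then applies verbatim: over a $\partial\dbar$-manifold there is a single isomorphism class of exact Bott-Chern algebroids, represented by $TX\oplus T^*X$. Hence $Q\cong TX\oplus T^*X$, which completes the argument.

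I do not anticipate a genuine obstacle beyond correctly matching the critical-point system \eqref{eq:stromabstractexact} to the hypotheses of the cited rigidity theorem; the entire weight of the proof lies in that input, since without it one could not exclude strictly non-K\"ahler metrics that are simultaneously pluriclosed and conformally balanced. The remaining manipulations with the Lee form and the dilaton function, and the final appeal to the classification of exact Bott-Chern algebroids under the $\partial\dbar$-Lemma, are routine.
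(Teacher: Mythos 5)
Your proposal is correct and follows exactly the route the paper intends: the paper's entire argument is the sentence preceding the proposition, citing \cite[Thm. 1.3]{IvanovPapado} that a pluriclosed, conformally balanced metric is K\"ahler, after which the vanishing of $\theta_\omega$ and $df_\omega$, the constancy of the dilaton, and the identification $Q\cong TX\oplus T^*X$ via Example \ref{example:ddbarBC} are the same routine steps you carry out.
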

%%% LAST KAHLERIAN, EDIT 20/02
On a K\"ahler manifold $X$ there is a natural isomorphism $H^{1,1}_A(X)  \cong H^{1,1}(X)$, which identifies the Aeppli cone with the K\"ahler cone in $H^2(X,\RR)$. Hence, by Proposition \ref{prop:extremaexactcase}, the existence problem for \eqref{eq:stromabstractexact} reduces to the Calabi problem, that is, to prescribe the volume of a K\"ahler metric in a fixed K\"ahler class. As in the classical problem of Calabi, the most interesting situation arises when we consider a Calabi-Yau manifold $(X,\Omega)$ and choose $\mu$ as in \eqref{eq:muOmega}, as in this case \eqref{eq:volume} implies that the holonomy of the K\"ahler metric is contained in $\SU(n)$.

%In order to analyse the potential consequences of Proposition \ref{prop:extremaexactcase} for the problem of the Strominger system \eqref{eq:Stromhol},

The following result follows from Corollary \ref{cor:concave} and the special features of the path \eqref{eq:concavepath} in complex dimension two (see Remark \ref{rem:concavepathsurface}).

\begin{theorem}\label{th:bounded}
Let $X$ be a compact complex surface endowed with an exact Bott-Chern algebroid $Q$. If a positive Aeppli class $\sigma \in \Sigma_Q(\RR)$ admits a solution $\omega_0$ of \eqref{eq:stromabstractexact} then it is unique. Furthermore, in this case the dilaton functional \eqref{eq:Dilfunctional} is bounded from above on $B_\sigma$.
\end{theorem}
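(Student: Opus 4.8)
The plan is to extract both assertions from a single observation recorded in Remark \ref{rem:concavepathsurface}: on a complex surface with $G=\{1\}$ the concave-path equation \eqref{eq:concavepath} collapses to \eqref{eq:concavepathsurface}, which for trivial structure group reads $\Lambda_{\omega_t}(\partial\ddot\xi_t^{0,1}+\overline{\partial\ddot\xi_t^{0,1}})=0$ and is solved by the straight-line path $\omega_t=(1-t)\omega_0+t\omega_1$, for which $\ddot\xi_t^{0,1}=0$. First I would check that such a path genuinely lies in $B_\sigma$: the cone of positive $(1,1)$-forms is convex, so $\omega_t>0$; the difference of two elements of $B_\sigma$ lies in $\operatorname{Im}(\partial\oplus\dbar)$, so $\omega_t-\omega_0=t(\omega_1-\omega_0)$ keeps the Aeppli class $\sigma$; and being polynomial, $\omega_t$ depends analytically on $t$. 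Corollary \ref{cor:concave} then guarantees that $g(t):=M(\omega_t)$ is concave, and by Lemma \ref{lem:2ndvariation} (with $n=2$, $G=\{1\}$, $\ddot\xi_t^{0,1}=0$) one has explicitly $g''(t)=-\tfrac12\int_X e^{-f_{\omega_t}}|\sigma_t|^2_{\omega_t}\,\tfrac{\omega_t^2}{2}\le 0$, with equality iff the primitive part $\sigma_t$ of $\dot\omega_t$ vanishes.

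For the boundedness claim I would fix the solution $\omega_0\in B_\sigma$ of \eqref{eq:stromabstractexact} and, for an arbitrary $\omega\in B_\sigma$, run the straight-line path $\omega_t$ from $\omega_0$ to $\omega$. The key point is that $g'(0)=0$. Indeed, by Lemma \ref{lem:1stvariation} with $\delta h=0$ one has $g'(0)=\tfrac{1}{2(n-1)!}\int_X(\omega-\omega_0)\wedge e^{-f_{\omega_0}}\omega_0^{n-1}$, and writing $\omega-\omega_0=(d\alpha)^{1,1}$ for the real one-form $\alpha=\xi^{0,1}+\overline{\xi^{0,1}}$, the $(2,0)$- and $(0,2)$-parts of $d\alpha$ wedge to zero against the closed form $\eta:=e^{-f_{\omega_0}}\omega_0^{n-1}$ on a surface, so Stokes gives $\int_X(\omega-\omega_0)\wedge\eta=\int_X d(\alpha\wedge\eta)=0$; here closedness of $\eta$ is exactly the first equation of \eqref{eq:stromabstractexact}. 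Concavity with $g'(0)=0$ yields $g(1)\le g(0)$, that is $M(\omega)\le M(\omega_0)$ for every $\omega\in B_\sigma$, so $M$ is bounded above by $M(\omega_0)$.

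For uniqueness I would take two solutions $\omega_0,\omega_1\in B_\sigma$ of \eqref{eq:stromabstractexact}, join them by the straight-line path, and apply Proposition \ref{prop:uniqueness} directly: the path solves \eqref{eq:concavepath} and depends analytically on $t$, so the proposition gives $\omega_1=k\omega_0$ for a constant $k$, with $k=1$ whenever $d\omega_0\neq0$. It remains to dispose of the K\"ahler case $d\omega_0=0$. There, by Proposition \ref{prop:extremaexactcase}, $X$ is K\"ahler, $Q\cong TX\oplus T^*X$, and the $\partial\dbar$-Lemma identifies $H^{1,1}_A(X)$ with the Dolbeault class, so the common Aeppli class forces $[\omega_0]=[\omega_1]=k[\omega_0]$ in $H^2(X,\RR)$; since $[\omega_0]\neq0$ for a positive class, $k=1$, and hence $\omega_1=\omega_0$.

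The main obstacle is really the verification that the concavity machinery of Corollary \ref{cor:concave} and Lemma \ref{lem:2ndvariation} applies along \emph{linear} paths, which is precisely the simplifying feature of dimension two with trivial $G$ (Remark \ref{rem:concavepathsurface}); in general the right-hand side of \eqref{eq:concavepath} is nonzero and one must instead solve a Dirichlet problem to produce a concave path joining two prescribed metrics, which is the genuine analytic difficulty behind the higher-dimensional or nonabelian versions. The only other point requiring care is the first-variation computation, and there the sole input needed beyond routine integration by parts is the closedness of $e^{-f_{\omega_0}}\omega_0^{n-1}$ supplied by the Calabi system, together with the bidegree vanishing special to surfaces.
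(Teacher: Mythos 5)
Your proof is correct and follows essentially the same route as the paper: linear paths $\omega_t=\omega_0+t(\Id+J)d\xi$ in the fixed Aeppli class solve \eqref{eq:concavepath} on a surface with $G=\{1\}$ (Remark \ref{rem:concavepathsurface}), concavity of $M$ along them yields the bound and Proposition \ref{prop:uniqueness} yields uniqueness. Two small differences are worth recording. For boundedness, the paper argues by a dichotomy---either $M(\omega)<M(\omega_0)$ or the second variation vanishes identically along the path, in which case a separate computation forces $\omega=\omega_0$---whereas you use the cleaner observation that $g'(0)=0$ (criticality of $\omega_0$, which you verify directly by Stokes using $d(e^{-f_{\omega_0}}\omega_0)=0$ and the bidegree vanishing special to surfaces) together with $g''\leq 0$; both are valid. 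For uniqueness, you supply a step the paper's appeal to Proposition \ref{prop:uniqueness} leaves implicit: since by Proposition \ref{prop:extremaexactcase} any solution of \eqref{eq:stromabstractexact} is K\"ahler, the clause ``$k=1$ when $d\omega_0\neq 0$'' never applies here, and one must rule out $k\neq 1$ exactly as you do, from $[\omega_1]=[\omega_0]=k[\omega_0]$ in $H^{1,1}_A(X)\cong H^{1,1}_{\dbar}(X)$ together with $[\omega_0]\neq 0$ (which follows from $\int_X\omega_0^2>0$). This makes your write-up slightly more complete than the paper's.
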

\begin{proof}
Let $\omega \in B_\sigma$ be another solution of \eqref{eq:stromabstractexact}. Using $[\omega] = [\omega_0] = \sigma$, we have
$$
\omega = \omega_0 + (d\xi)^{1,1}
$$
for some $\xi \in \Omega^1$. Taking the solution of \eqref{eq:concavepath} given by $\omega_t = \omega_0 + t (d\xi)^{1,1}$ (see Remark \ref{rem:concavepathsurface}), the uniqueness part of the statement follows from Proposition \ref{prop:uniqueness}. Furthermore, by hypothesis the functional $M$ is convave along straight segments on $B_\sigma$ and therefore it is concave. Boundedness now follows from the fact that any critical point of a concave function defined on a connected space is a global maximum.

%THIS IS A DIFFERENT PROOF USING KÄHLER: By Proposition \ref{prop:extremaexactcase} both $\omega$ and $\omega_0$ are K\"ahler, and hence by the $\partial\dbar$-Lemma $(\Id + J)\xi = dd^c\phi$ for some $\phi \in C^{\infty}(X)$. . Using that both $\omega$ and $\omega_0$ are critical points of \eqref{eq:Dilfunctional}, it follows from Lemma \ref{lem:2ndvariation} that
%$$
%0 = \sigma_t = dd^c\phi - \frac{1}{2}(\Lambda_{\omega_t}dd^c\phi) \omega_t
%$$
%for all $t \in [0,1]$. Taking $dd^c$ on this equation it follows $\Delta_{\omega_t}\phi = 0$
%and therefore $\omega = \omega_0$.

%Consider now $\omega = \omega_0 + (\Id + J)d\xi \in B_\sigma$, with $\omega$ not necessarily solving \eqref{eq:stromabstractexact}, and set $\omega_t = \omega_0 + t (\Id + J)d\xi$. Then, by Lemma \ref{lem:2ndvariation} either $M(\omega) < M(\omega_0)$ or
%\begin{equation}\label{eq:bounded}
%(\Id + J)d\xi = \frac{1}{2}(\Lambda_{\omega_t}d\xi) \omega_t
%\end{equation}
%for all $t \in [0,1]$. Assuming the latter condition and taking $dd^c$ in the previous equation it follows that $\Lambda_{\omega_0}d\xi = k \in \mathbb{R}$. Using the $d^c$-de Rham decomposition of one-forms, we can write $\xi = d^c f + (d^c)^*u$, for $u$ a two-form. By Proposition \ref{prop:extremaexactcase} $\omega_0$ is K\"ahler and, by the K\"ahler identities, we have now $\Lambda_{\omega_0}d(d^c)^*u = 0$ and hence $\Delta_{\omega_0} f = -k$, which implies $\xi = (d^c)^*u$. Finally, $\Lambda_{\omega_0}d\xi = \Lambda_{\omega_0}d(d^c)^*u = 0$ and we conclude from \eqref{eq:bounded} that $\omega = \omega_0$.
\end{proof}

We expect that the boundedness of the dilaton functional is closely related to the existence of solutions of the Calabi system on arbitrary dimensions, and for arbitrary Bott-Chern algebroid $Q$. It is interesting to notice that, when $Q$ is exact, the dilaton functional can be formulated on arbitrary SKT manifolds and that the only obstruction to the existence of critical points is the failure of $X$ to be K\"ahler.

To give evidence in this direction, we consider the example of a primary Hopf surface $X$ endowed with an exact Courant algebroid $Q$ (see Example \ref{example:HopfQBC}). $X$ being non-K\"ahler, $Q$ does not admit critical points of the functional \eqref{eq:Dilfunctional} for any choice of volume form $\mu$. Nonetheless, as we will see next, the asymptotics of the dilaton functional carry interesting information. % which, in a sense, measures how far is $X$ from being K\"ahler.
For simplicity, we restrict to the case of a homogeneous Hopf surface
$$
X = X_{\alpha,\alpha} =  (\CC^2 \backslash \{0\})/\langle \gamma \rangle
$$
with $\alpha = \beta$ (see \eqref{eq:cyclicgroup}), via the identification
$$
S^3 \times S^1 \cong \SU(2) \times \U(1).
$$
By a result of Sasaki \cite{Sasaki}, the homogeneous integrable complex structures on $\SU(2) \times \U(1)$ are parametrized by
$$
w = x + i y \in \mathbb{C},
$$
and any such complex structure determines a diagonal Hopf surface $X_w = X_{e^w,e^w}$ induced by a holomorphic map \cite{Hasegawa}
\begin{align*}
\Phi_w \colon \mathbb{R} \times \SU(2) &\to \CC^2 \backslash \{0\} \\
(t,z_1,z_2) &\mapsto (e^{tw}z_1,e^{tw}z_2).
\end{align*}

Here, we identify $\SU(2)$ with $S^3 = \{(z_1,z_2) \in \CC^2 \; | \; |z_1|^2 + |z_2|^2 = 1\}$ by the correspondence
\begin{equation*}\label{eq:DolbQ}
\left(
\begin{array}{ccc}
z_1 & - \overline{z}_2 \\
z_2 & \overline{z}_1
\end{array}\right) \leftrightarrow (z_1,z_2).
\end{equation*}
More explicitly, consider generators for the Lie algebra
$$
\mathfrak{su}(2) \oplus \RR = \langle e_1, e_2, e_3, e_4 \rangle, \qquad
$$
such that
$$
de^1 = e^{23}, \quad de^2 = e^{31}, \quad de^3 = e^{12}, \quad de^4 = 0,
$$
for $\{e^j\}$ the dual basis, satisfying $e^j(e_k) = \delta_{jk}$, and the notation $e^{ij}=e^i\wedge e^j$ and similarly. Then, for $x \neq 0$, the complex structure corresponding to $w = x + i y$ is
$$
J_w (e_4 - y e_1) = x e_1, \qquad J_w e_2 = e_3,
$$
which in terms of the dual basis reads
$$
J_w (xe^4) = e^1 + y e^4, \qquad J_w e^2 = e^3.
$$
We define a basis of $(1,0)$-forms for $J_w$ varying analytically on $w$ by
$$
\eta^1_w = ie^1 + we^4, \qquad \eta^2 = e^2 + ie^3,
$$
and consider the $(2,0)$-form
$$
\Psi_w = \eta^1_w \wedge \eta^2.
$$
Note that
$$
d\Psi_w = -z e^4 \wedge \Psi_w,
$$
and therefore $J_w$ is integrable, since $de^4 =0$.

To fix an exact Courant algebroid over $X_w$, we use the identification \cite{AngellaBC}
\begin{equation*}\label{eq:11AeppliHopf}
H^{1,1}_A(X_w) \cong \CC \langle e^{41} \rangle,
\end{equation*}
and that any Aeppli class $a[e^{41}] \in H^{1,1}_A(X_w)$ determines an exact Courant algebroid $Q_{w,a}$ (see Example \ref{example:HopfQBC}) by
$$
[Q_{w,a}] = a[2i\partial_w(e^{41})] = \frac{a}{2x} [\eta^1_w \wedge \eta^2 \wedge \overline{\eta}^2] \in H^{2,1}_{\dbar}(X_w).
$$
The algebroid $Q_{w,a}$ is Bott-Chern when $a \in \RR$ and, for $x > 0$, it is positive precisely when $a > 0$. In the sequel we assume that $x > 0$ and that $a$ is real and positive. In this setup, a solution of the equation \eqref{eq:concavepath-geodlike} on the (unique) Aeppli class in $Q_{w,a}$ is given by (see Example \ref{example:HopfAeppli} and Remark \ref{rem:concavepathsurface})
$$
\omega_t = a e^{41} + t e^{23}, \textrm{ for } t \in (0,+\infty).
$$
In order to evaluate the functional \eqref{eq:Dilfunctional} along $\omega_t$, we fix a volume form on $X_w$
by
$$
\mu_w = \Psi_w \wedge \overline{\Psi}_w = 4x e^{4123}.
$$
The corresponding one-parameter family of dilaton functions is given by
$$
f_{t} = \frac{1}{2}\log(at/4x)
$$
and therefore the functional \eqref{eq:Dilfunctional} along $\omega_t$ is
$$
M(t) = \int_{X_w} e^{f_{\omega_t}}\mu_w = (2axt)^{\frac{1}{2}} V,
$$
where $V$ is the volume of $S^3 \times S^1$ with respect to the invariant volume element $e^{4123}$. We observe that $M(t)$ is concave, and it is not bounded from above.

We want to give now a cohomological interpretation of the asymptotics of $M(t)$ as $t \to +\infty$. % with the isomorphism class of the Courant algebroid $Q_{w,a}$ and the Aeppli class,
For this, it is convenient to choose a different normalization of the parameter $t$. Note that
$$
\theta_{\omega_t} = - \frac{a}{t} e^4.
$$
and therefore when $y = 0$ and $t = a/x$, the pair $(\Psi_w,\omega_{a/x})$ corresponds to the unique solution of the twisted Calabi-Yau equation \eqref{eq:twistedStromexact} in Aeppli class $a$ obtained in Theorem \ref{th:Hopf}. Then, setting
$$
l = \frac{x}{a}t
$$
and
\begin{align*}
[H] & := [Q_{w,a}] + \overline{[Q_{w,a}]} = -\frac{a}{x}[e^{123}] \in H^3(S^3 \times S^1,\RR),\\
\ell_{w,a} & := [\theta_{\omega_{a/x}}] = -x[e^4] \in H^1(S^3 \times S^1,\RR).
\end{align*}
we have
$$
M(l) = (\ell_{w,a} \cdot [H]) l^{\frac{1}{2}},
$$
where $\cdot$ denotes the intersection product in cohomology. The degree three cohomology class $[H]$ corresponds to the \v Severa class of the smooth (real) exact Courant algebroid associated to $Q_{w,a}$ \cite{G2}. We observe that the two obstructions to the existence of critical points of $M$, given by $[Q_{w,a}]$ and the Lee form class $\ell_{w,a}$ , appear in the asymptotics. Furthermore, $\ell_{w,a}$ seems to be the way that the functional $M$ measures how far is $X_w$ from being K\"ahler.

\begin{remark}
It would be interesting to study the ODE corresponding to \eqref{eq:concavepath} in higher dimensions in a homogeneous setup. For the case of exact Courant algebroids over SKT manifolds, \eqref{eq:concavepath} reduces to
\begin{equation}\label{eq:concavepathexact}
\begin{split}
\Lambda_{\omega_t}(\partial \ddot \xi_t^{0,1} + \overline{\partial \ddot \xi_t^{0,1}}) & = \frac{2-n}{2n}|\Lambda_{\omega_t}(\partial \dot \xi_t^{0,1} + \overline{\partial \dot \xi_t^{0,1}})|^2.
\end{split}
\end{equation}

\end{remark}

%\textbf{I think is not difficult to calculate the asymptotics of $M$ along a linear path in the homogeneous Calabi-Eckmann threefold $\SU(2) \times \SU(2)$. It would not give a concave function probably, but we may be able to see that it is not bounded from above. This manifold is not balanced.}

%The geodesic-like equations are now
%$$
%\Delta_{\omega_t} \ddot \phi_t = \frac{n-2}{2n}|\Delta_{\omega_t} \dot \phi_t|^2
%$$
%(here I am assuming that $\omega_0$ is K\"ahler and deforms in a fixed K\"ahler class).

\section{Linear theory and deformations}\label{sec:linear}

\subsection{Fredholm alternative}\label{sec:Fredholm differential}

In this section we study the linear theory for the twisted Hull-Strominger system on a Bott-Chern algebroid $Q$ \eqref{eq:twistedStromholredux}, showing that the linearization of the equations restricted to an Aeppli class \eqref{eq:twistedStromholredexp} induces a Fredholm operator. For the case $[\theta_\omega] = 0$, we further prove that this operator has index zero and provide a Fredholm alternative: either it has a non-trivial finite-dimensional kernel, or it is invertible.

%We apply this study to the local moduli problem for solutions of the Strominger system. The restriction of this section to the case of trivial dilaton class $[\theta_\omega]=0$ is merely technical, and motivated by our applications. Most of this section can be extended to $\ell_X\neq 0$.\footnote{CT:I guess we can come back to $\ell_X\neq 0$ in future work, with more examples at hand}

Let $Q$ be a positive Bott-Chern algebroid over a compact complex manifold $X$ of dimension $n$ with $c_1(X) = 0$. Let $P$ be the holomorphic principal $G$-bundle underlying $Q$. We assume that $Q$ admits a solution $(\Psi,\omega,h)$ of the twisted Hull-Strominger system on $Q$ \eqref{eq:twistedStromholredux}.  If $(\tilde \Psi,\tilde \omega, \tilde h)$ is another tuple as in Definition \ref{def:twistedStromQ}, with $\tilde \Psi = e^\phi \Psi$ for some smooth function $\phi$, the condition of being a solution of \eqref{eq:twistedStromholredux} is equivalent to
\begin{equation}\label{eq:twistedStromholredux2}
\begin{split}
   F_{\tilde h}\wedge \tilde \omega^{n-1} & = 0  ,\\
      d\phi - \theta_{\tilde \omega} + \theta_\omega & = 0,\\
      \|\Psi\|_{\tilde \omega} & = e^{-\phi}.
  \end{split}
\end{equation}
By \eqref{eq:Leeformdef}, the second equation can be written alternatively as
$$
d \tilde \omega^{n-1} = (\theta_\omega + d\phi) \wedge \tilde \omega^{n-1}
$$
and therefore the linearization of \eqref{eq:twistedStromholredux2} at $(\Psi,\omega,h)$ is
\begin{equation}\label{eq:twistedStromholreduxlinear}
\begin{split}
   -2 \dbar \partial^h (\delta h\, h^{-1}) \wedge \omega^{n-1} + (n-1) F_h \wedge \delta \omega \wedge \omega^{n-2} & = 0  ,\\
      (d - \theta_\omega)\Big{(}(n-1)  \delta \omega \wedge \omega^{n-2} - \delta \phi \omega^{n-1}\Big{)} & = 0,\\
     \Lambda_\omega(\delta \omega) - 2\delta \phi & = 0,
  \end{split}
\end{equation}
for $\delta \omega \in \Omega^{1,1}$, $\delta h\, h^{-1} \in \Omega^{0}(i\ad P_h)$ and $\delta \phi \in C^\infty(X)$, where $P_h$ denotes the principal $K$-bundle corresponding to the reduction $h$ and
$$
(d - \theta_\omega)\alpha = d \alpha - \theta_\omega \wedge \alpha, \qquad \textrm{ for } \alpha \in \Omega^k.
$$
As expected from Lemma \ref{lemma:holSUn}, we observe that the parameter $\delta \phi$ is redundant.

Being variations of a hermitian metric on $Q$, the parameters $\delta \omega$ and $\delta h$ are related by the linearization of \eqref{eq:anomaly} (see Definition \ref{def:metricQ}). %(and hence of \eqref{eq:metricequations}).
We want to study now the linearization \eqref{eq:twistedStromholreduxlinear} when $(\omega,h)$ varies in the Aeppli class $\sigma \in \Sigma_Q^+$ of the fixed solution, that is, for
$$
\delta \omega = -2ic(\delta h\, h^{-1},F_h) + 2\partial \xi^{0,1} + 2\overline{\partial \xi^{0,1}},
$$
where $\xi^{0,1} \in \Omega^{0,1}$ (see Lemma \ref{lem:tauhparamet}). It will be useful to use a real parametrization of $\delta \omega$. For this, notice that $d \xi + J d\xi = 2 (d \xi)^{1,1} = 2 (\partial \xi^{0,1} + \overline{\partial \xi^{0,1}})$ for any real $\xi \in \Omega^1$, and therefore from now on we write
$$
\delta \omega = d \xi + J d \xi - 2 ic(\delta h\, h^{-1},F_h).
$$
Motivated by the previous discussion, we define the operator
\begin{equation}\label{eq:lin full parameters}
 \begin{array}{cccc}
  \cL : &\Om^1 \times \Om^0(\ad P_h)  & \rightarrow & \Om^{2n-1}\times \Om^{2n}(\ad P_h) \\
         & ( \xi, s ) & \mapsto &  (\cL_1(\xi, s), \cL_2(\xi, s))\\
         \end{array}
 \end{equation}
  with
\begin{equation*}
  \begin{split}
  \cL_1(\xi,s)\: = &\: (d-\theta_\omega) \left( \left( T(d\xi + Jd \xi) +2 c(s, F_h) \right) \wedge \om^{n-2}\right),\\
  \cL_2(\xi, s) \: = &\: -2 i \dbar \partial^h(s)\wedge \om^{n-1} +(n-1)  F_h \wedge (d\xi +Jd\xi + 2 c(s,F_h) ) \wedge \om^{n-2} ,
 \end{split}
\end{equation*}
and where, for any $\alpha\in \Om^2$, we set
%$$
%d_h : \Om^\bullet(\fsu(E) ) \rightarrow \Om^{\bullet+1} (\fsu(E))
%$$
%the natural extension of the Chern connection of $(E,h)$,
%$$
%\hat \om =(n-1) e^{-f_\om} \om^{n-2}
%$$
\begin{equation*}
T(\alpha)=\alpha  -\frac{1}{2(n-1)} \Lambda_\om(\alpha) \om.
\end{equation*}
%In the sequel, we will omit the subscript $(\om,h)$ and simply denote $\cL_{\om,h}$ by $\cL$.

Consider the complex of differential operators
\begin{equation}
 \label{eq:complex L}
 \Om^0 \lra{\iota_1 \circ d } \Omega^1 \times \Omega^0(\ad P_h) \lra{\cL} \Om^{2n-1} \times \Omega^{2n}(\ad P_h) \lra{(d-\theta_\omega) \circ p_1} \Om^{2n},
\end{equation}
where $\iota_1 \colon \Omega^1 \to \Omega^1 \times \Omega^0(\ad P_h)$ and $p_1 \colon \Om^{2n-1} \times \Omega^0(\ad P_h) \to \Om^{2n-1}$ denote the inclusion and the projection, respectively.

\begin{lemma}\label{lem:Lelliptic}
The complex (\ref{eq:complex L}) is elliptic.
\end{lemma}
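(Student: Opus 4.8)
The plan is to verify that, for every point $x \in X$ and every nonzero real covector $\zeta \in T_x^*X$, the symbol sequence associated to \eqref{eq:complex L} is exact, which is what ellipticity of the complex means. First I would record the orders: $\iota_1 \circ d$ and the outgoing map $(d-\theta_\omega)\circ p_1$ are first order, while $\cL$ is of mixed (Douglis--Nirenberg) order — second order in $\xi$ through $\cL_1$ and in $s$ through the term $i\dbar\partial_h(s)\wedge\omega^{n-1}$ of $\cL_2$, but only first order in the two cross terms. I would therefore assign weights $3$ to $\Omega^0$, weight $2$ to both the $\xi$-slot and the $s$-slot, weight $0$ to the two target slots of $\cL$, and weight $-1$ to the final $\Omega^{2n}$. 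With this weighting the first and last maps keep their genuine principal symbols $\phi \mapsto (\zeta\phi,0)$ and $(\beta,\gamma)\mapsto \zeta\wedge\beta$, whereas the two first-order cross terms of $\cL$ (being of order strictly below the weight difference) drop out of the principal symbol. Hence the order-two principal symbol $\sigma(\cL)(\zeta)$ is block diagonal, and the symbol complex splits as a direct sum of a ``bundle'' part and a ``metric'' part.

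The bundle part is the two-term complex $0 \to \ad P_h \to \Omega^{2n}(\ad P_h) \to 0$ whose single map $s \mapsto i\,(\zeta^{1,0}\wedge\zeta^{0,1})\wedge\omega^{n-1}\otimes s$ (up to sign) is the principal symbol of $\cL_2$ in $s$. Since $\zeta$ is real, $i\,\zeta^{1,0}\wedge\zeta^{0,1}=i\,\zeta^{1,0}\wedge\overline{\zeta^{1,0}}$ is a nonzero semipositive $(1,1)$-form, and wedging it with $\omega^{n-1}$ produces a positive multiple of the volume form for $\zeta\neq 0$; thus this map is a fibrewise isomorphism and the bundle part is exact.

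The metric part is the four-term complex
\begin{equation*}
\Omega^0 \xrightarrow{\;\zeta\,\cdot\;} \Omega^1 \xrightarrow{\;\sigma_L(\zeta)\;} \Omega^{2n-1} \xrightarrow{\;\zeta\wedge\;} \Omega^{2n}, \qquad \sigma_L(\zeta)\xi = \zeta\wedge\big(T(\alpha)\wedge\omega^{n-2}\big),\quad \alpha := 2(\zeta\wedge\xi)^{1,1}.
\end{equation*}
Exactness at the two ends is immediate: $\phi\mapsto\phi\zeta$ is injective and $\zeta\wedge\colon\Omega^{2n-1}\to\Omega^{2n}$ is surjective for $\zeta\neq 0$, while the inclusions $\operatorname{Im}\subseteq\Ker$ at the interior nodes hold because $\zeta\wedge\zeta=0$. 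The whole statement then reduces to proving that $\Ker\sigma_L(\zeta)$ is exactly the line $\langle\zeta\rangle$. Granting this, exactness at $\Omega^1\oplus\Omega^0(\ad P_h)$ follows, since the image of the incoming symbol is precisely $\langle\zeta\rangle$ and the injectivity of the bundle symbol forces the $s$-component of any element of $\Ker\sigma(\cL)(\zeta)$ to vanish; and exactness at $\Omega^{2n-1}\oplus\Omega^{2n}(\ad P_h)$ follows from the dimension count $\dim\operatorname{Im}\sigma_L(\zeta)=2n-1=\dim\Ker(\zeta\wedge|_{\Lambda^{2n-1}})$ together with $\operatorname{Im}\subseteq\Ker$ and the surjectivity of the bundle symbol.

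To identify $\Ker\sigma_L(\zeta)$ I would argue pointwise. A computation in types shows that $\alpha=2(\zeta\wedge\xi)^{1,1}=0$ if and only if $\xi\in\langle\zeta\rangle$, so it remains to prove that for $\xi\notin\langle\zeta\rangle$ the nonzero $(1,1)$-form $\alpha$ survives the composition $\alpha\mapsto\zeta\wedge\big(T(\alpha)\wedge\omega^{n-2}\big)$. As all the maps are $\U(n)$-equivariant in $(\omega,J,\zeta)$, I would normalize a unitary coframe with $\omega$ standard and $\zeta^{1,0}$ proportional to a single frame form, and then check directly — using the splitting $T(\alpha)=\alpha_p+\tfrac{n-2}{2n(n-1)}(\Lambda_\omega\alpha)\,\omega$ into primitive and trace parts together with the Lefschetz isomorphism $\beta\mapsto\beta\wedge\omega^{n-2}$ on $(1,1)$-forms — that $\zeta\wedge T(\alpha)\wedge\omega^{n-2}\neq 0$. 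I expect this pointwise linear-algebra verification to be the main obstacle, in particular the bookkeeping of the primitive and trace components of $\alpha$ and the borderline behaviour of $T$ when $n=2$, where $T$ annihilates the $\omega$-direction so that the trace part must be controlled through the final wedge with $\zeta$ rather than through $T$. Once this is settled, the block splitting reduces everything else to the elementary facts about $\zeta\wedge$ recorded above.
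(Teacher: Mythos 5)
Your set-up reproduces the paper's strategy exactly: the decomposition of $\cL$ into a second-order block-diagonal part plus a first-order remainder is the paper's splitting $\cL = \cU + \cK$, your ``bundle part'' is $\cU_2$ (which the paper disposes of by citing \cite[Lemma 7.2.3]{lt}, while your direct positivity argument for $i\,\zeta^{1,0}\wedge\overline{\zeta^{1,0}}\wedge\omega^{n-1}$ is a perfectly good substitute), your ``metric part'' is the paper's Lemma \ref{lem:U1elliptic}, and your dimension count at the $\Omega^{2n-1}$ slot is the same one the paper uses, resting on the fact that $\sigma_{d-\theta_\omega}(\zeta)=\sigma_d(\zeta)$ is surjective. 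The reductions are all correct, including the observation that $2(\zeta\wedge\xi)^{1,1}=0$ iff $\xi\in\langle\zeta\rangle$.

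The gap is that the one substantive step --- showing that a nonzero $\alpha=2(\zeta\wedge\xi)^{1,1}$ survives the map $\alpha\mapsto\zeta\wedge T(\alpha)\wedge\omega^{n-2}$, equivalently that $\Ker\sigma_{\cU_1}(\zeta)$ is exactly the line $\langle\zeta\rangle$ --- is only announced as ``the main obstacle'' and never carried out. This computation is essentially the entire content of the paper's proof. It is not difficult, but it is the lemma: one completes $\zeta$ to a unitary coframe $\lbrace \zeta, J\zeta, e_2, Je_2,\dots\rbrace$ with $\omega=\zeta\wedge J\zeta+\sum_i e_i\wedge Je_i$, writes $J\xi=c_v\zeta+c_v'J\zeta+\sum_i(c_ie_i+c_i'Je_i)$, and observes that $\sigma_{\cU_1}(\zeta)(\xi)$ decomposes in the basis of $\Lambda^{2n-1}T^*_x$ as a combination of the monomials missing $Je_i$, the monomials missing $e_i$, and (with coefficient proportional to $\Lambda_\omega(\zeta\wedge\xi+J\zeta\wedge J\xi)$) the monomial missing $J\zeta$; since these are linearly independent, vanishing of the symbol forces $c_i=c_i'=0$ and $\Lambda_\omega(\zeta\wedge\xi+J\zeta\wedge J\xi)=0$, and the latter then gives $c_v=0$, i.e.\ $\xi\in\langle\zeta\rangle$. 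Note that this argument is uniform in $n\geq 2$ and the $n=2$ borderline you worry about causes no trouble, precisely because the trace part of $T(\alpha)$ reappears through the final wedge with $\zeta\wedge\omega^{n-1}$ rather than through $T$ itself. Until you supply this verification, the proposal identifies the right statement to prove but does not prove it.
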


For the proof, we decompose
\begin{equation}
\label{eq:L=UK}
\cL=\cU + \cK,
\end{equation}
with
\begin{equation*}
 \cU,\,\cK :  \Om^1\times \Om^0(\ad P_h)  \rightarrow   \Om^{2n-1} \times\Om^{2n}(\ad P_h),
\end{equation*}
where $\cU = \cU_1 \times \cU_2$ for
\begin{equation}
\label{eq:lin elliptic term}
\begin{split}
\cU_1 (\xi,s) & = (d-\theta_\omega)\left( \left(T(d\xi + J d \xi)\right) \wedge \om^{n-2}\right),\\
 \cU_2 (\xi,s) & =  -2 i \dbar \partial^h(s)\wedge \om^{n-1},
\end{split}
\end{equation}
and
\begin{equation*}
 \label{eq:lin compact term}
\cK(\xi, \delta h )    = ((d-\theta_\om)\left(2c(s, F_h) \wedge \om^{n-2}\right), \: (n-1)  F_h \wedge (d\xi +Jd\xi + 2c(s,F_h) ) \wedge \om^{n-2}  ) .
\end{equation*}
Note that while $\cU$ is of order $2$, the operator $\cK$ is only of order $1$, and hence the leading symbol of $\cL$ equals the leading symbol of $\cU$. The operator
\begin{equation*}
 \cU_2 \colon \Om^0(\ad P_h)  \rightarrow  \Om^{2n}(\ad P_h)
\end{equation*}
is elliptic \cite[Lemma 7.2.3]{lt}, and therefore the proof of Lemma \ref{lem:Lelliptic} is a direct consequence of our next result.

\begin{lemma}\label{lem:U1elliptic}
The following complex is elliptic
\begin{equation}
\label{eq:complex L om}
\xymatrix@R-2pc{
\Om^0 \ar[r]^{d} & \Omega^1 \ar[r]^{\cU_1} & \Omega^{2n-1} \ar[r]^{ d-\theta_\omega } & \Omega^{2n}.
}
\end{equation}
\end{lemma}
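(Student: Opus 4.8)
The plan is to prove ellipticity by checking that the symbol sequence of \eqref{eq:complex L om} is exact at each node, for every nonzero real covector $\zeta \in T^*_xX$. Since $\theta_\omega$ enters only through lower-order terms, and $T(\cdot)$ and $\wedge\,\om^{n-2}$ are algebraic, the operator $\cU_1$ is genuinely of order two with leading symbol coming from the two exterior derivatives. Up to nonzero constants (I drop the factors of $i$, which are irrelevant for exactness) the principal symbols are
$$
\sigma_\zeta(d)(f) = f\,\zeta, \qquad \sigma_\zeta(\cU_1)(\xi) = \zeta \wedge \big( T(\beta) \wedge \om^{n-2}\big), \qquad \sigma_\zeta(d-\theta_\omega)(\gamma) = \zeta \wedge \gamma,
$$
where $\beta := (\zeta \wedge \xi)^{1,1} = \tfrac12(\Id + J)(\zeta\wedge\xi)$. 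Both consecutive compositions vanish because $\zeta\wedge\zeta = 0$, so this is a complex; exactness at $\Om^0$ (injectivity of $f\mapsto f\zeta$) and at $\Om^{2n}$ (surjectivity of $\gamma\mapsto\zeta\wedge\gamma$ onto the line $\Lambda^{2n}T^*_xX$) are immediate. The content is therefore exactness at the two middle nodes.

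I would first reduce exactness at $\Om^{2n-1}$ to exactness at $\Om^1$ by a dimension count. Surjectivity of $\sigma_\zeta(d-\theta_\omega)$ onto the one-dimensional $\Lambda^{2n}$ gives $\dim\ker\sigma_\zeta(d-\theta_\omega) = 2n-1$; and once I show $\ker\sigma_\zeta(\cU_1) = \RR\zeta$, the rank of $\sigma_\zeta(\cU_1)$ is also $2n-1$. Since the sequence is a complex, the image of $\sigma_\zeta(\cU_1)$ sits inside $\ker\sigma_\zeta(d-\theta_\omega)$, and equality of dimensions forces them to coincide. Thus everything concentrates on the identity $\ker\sigma_\zeta(\cU_1) = \RR\zeta$, which I expect to be the main obstacle.

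To handle it, I normalise $|\zeta|=1$ and rewrite the symbol through the Hodge star $\star$ of $g$. Splitting $\beta$ into its primitive and trace parts and invoking Weil's formula for $\star$ on primitive $(1,1)$-forms together with $\star\om = \om^{n-1}/(n-1)!$, one obtains the clean identity
$$
T(\beta)\wedge\om^{n-2} = -(n-2)!\,\star\beta', \qquad \beta' := \beta - \tfrac12\,\Lambda_\om\beta\;\om .
$$
Because $\star\star = \Id$ on $2$-forms in real dimension $2n$ and $\star(\zeta\wedge\star\psi) = \pm\,\iota_{\zeta^\sharp}\psi$, the condition $\zeta\wedge(T(\beta)\wedge\om^{n-2}) = 0$ is equivalent to the single algebraic equation $\iota_{\zeta^\sharp}\beta' = 0$.

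Finally I would compute $\iota_{\zeta^\sharp}\beta'$ invariantly, using $\zeta \perp J\zeta$, the skew-adjointness of $J$, and the identities $\iota_{\zeta^\sharp}\om = J\zeta$ and $\Lambda_\om\beta = \langle\zeta\wedge\xi,\om\rangle = \langle \xi, J\zeta\rangle$. A short manipulation then yields
$$
\iota_{\zeta^\sharp}\beta' = \tfrac12\big(\xi - \langle\xi,\zeta\rangle\,\zeta\big),
$$
namely (half) the component of $\xi$ orthogonal to $\zeta$. Hence $\sigma_\zeta(\cU_1)(\xi) = 0$ if and only if $\xi \in \RR\zeta$, which is exactly the image of $\sigma_\zeta(d)$, giving exactness at $\Om^1$ and completing the argument. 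The delicate points are the rewriting of $T(\beta)\wedge\om^{n-2}$ as a Hodge dual and the passage to the contraction $\iota_{\zeta^\sharp}\beta'$; note that the degenerate case $n=2$, where $\om^{n-2}=1$ and $T$ collapses to the primitive projection, is subsumed because both displayed identities hold uniformly in $n$.
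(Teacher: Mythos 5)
Your proof is correct, and it follows the same overall skeleton as the paper's: both reduce everything to showing that the kernel of the principal symbol of $\cU_1$ at a nonzero covector is exactly the line spanned by that covector, and both then obtain exactness at $\Om^{2n-1}$ from surjectivity of $\sigma_\zeta(d-\theta_\omega)$ plus a dimension count. Where you genuinely diverge is in how that central kernel computation is carried out. The paper completes $\{v,Jv\}$ to an adapted orthonormal coframe, expands $v\wedge T(v\wedge\xi+Jv\wedge J\xi)\wedge\om^{n-2}$ in the induced basis of $\Lambda^{2n-1}T^*_x$, and reads off the vanishing of the coefficients of $J\xi$ one by one. You instead use Weil's formula for the Hodge star on primitive $(1,1)$-forms to rewrite $T(\beta)\wedge\om^{n-2}$ as $-(n-2)!\,*\beta'$ with $\beta'=\beta-\tfrac12\Lambda_\om\beta\,\om$ (I checked the constants: with $T(\beta)=\beta_0+\tfrac{n-2}{2n(n-1)}\Lambda_\om\beta\,\om$ on $(1,1)$-forms this identity is exact), convert $\zeta\wedge(\cdot)$ into a contraction via $*(\zeta\wedge *\psi)=\pm\,\iota_{\zeta^\sharp}\psi$, and compute $\iota_{\zeta^\sharp}\beta'=\tfrac12(\xi-\langle\xi,\zeta\rangle\zeta)$, which indeed follows from $\iota_{\zeta^\sharp}(J\zeta\wedge J\xi)=\langle J\zeta,\xi\rangle J\zeta$ and $\iota_{\zeta^\sharp}\om=J\zeta$. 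Your route is coordinate-free and makes transparent \emph{why} the kernel is one-dimensional (the symbol is, up to Hodge duality, the projection of $\xi$ onto $\zeta^\perp$), and it handles $n=2$ uniformly; the paper's basis computation is more elementary and avoids invoking Weil's formula, at the cost of a longer coefficient chase. Both are complete and correct.
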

\begin{proof}
 Let $x\in X$ and $v\in T^*_x$, $v\neq 0$. We set $\sigma_{\cU_1}(v)$ to be the symbol of $\cU_1$ evaluated at $v$. Given $\xi\in \Lambda^1T^*_x$, we have
\begin{equation*}
\label{eq:symbol of L om}
 \sigma_{\cU_1}(v)(\xi)= v \wedge T(v \wedge \xi + J v\wedge J\xi) \wedge \omega^{n-2}.
\end{equation*}
%\begin{equation}
% \label{eq:symbol of L om}
% \sigma_2(\xi)=v\wedge Jv\wedge \hat\om\wedge J\xi - \Lambda_\om(v\wedge\xi + Jv\wedge J%\xi) v\wedge \frac{\om}{2n-2}\wedge \hat \om.
%\end{equation}
Without loss of generality, we assume that $v$ is orthonormal and complete $\lbrace v, Jv \rbrace$ to an orthonormal basis $\lbrace v, Jv, e_2, J e_2, \ldots, e_{n}, Je_{n} \rbrace$ for $\om$ so that
\begin{equation*}
 \label{eq:basis decomposition hat om}
\om = v\wedge Jv +\sum_i e_i\wedge Je_i.
\end{equation*}
Then, in particular
\begin{equation}
\label{eq:first term symbol L om}
 v\wedge Jv\wedge \om^{n-2} = (n-2)! v\wedge Jv \wedge \sum_i \prod_{k\neq i } e_k\wedge J e_k
\end{equation}
and
\begin{equation}
 \label{eq:second term symbol L om}
 v \wedge \om^{n-1} = (n-1)! v\wedge \prod_i e_i\wedge Je_i.
\end{equation}
Assume that $\sigma_{\cU_1}(v)(\xi) = 0$ and decompose $J\xi$ in the chosen basis
\begin{equation*}
 J\xi = c_v v + c_v' Jv + \sum_i c_i e_i + c_i' J e_i.
\end{equation*}
From formulas (\ref{eq:first term symbol L om}) and (\ref{eq:second term symbol L om}) we deduce
\begin{equation*}
 \sum_i (c_i  e_i + c_i'J e_i) \wedge v\wedge Jv \wedge \prod_{k\neq i} e_k \wedge Je_k = \frac{1}{2} \Lambda_\om(v\wedge \xi+Jv\wedge J\xi) v\wedge \prod_i e_i\wedge J e_i.
\end{equation*}
The last equation gives a decomposition in a basis of $\Lambda^{2n-1}T^*_x$, and therefore $c_i=c_i'=0$ for any $i$, and also
$$\Lambda_\om (v\wedge \xi+Jv\wedge J\xi)=0 .
$$
From the last equation $c_v=0$ and hence
$$
\xi = c_v' v=\sigma_d(c_v')
$$
and the sequence (\ref{eq:complex L om}) is elliptic at $\Om^1$. To finish, note that $\sigma_{d-\theta_\omega}(v) = \sigma_d(v)$ is surjective, and hence the proof follows by dimension count.

%Assume now $u\in \Lambda^{2n-1}T^*_x$ is in the kernel of $\sigma_3$. Then there is $\beta \in \Lambda^2T_x^*$ such that
%$$
%u= v \wedge \beta \wedge \hat\om,
%where we used the fact that $\cdot\wedge \hat\om$
%is an isomorphism from $\Lambda^2T_x^*$ to $\Lambda^{2n-2}T_x^*$.
%By (\ref{eq:basis decomposition hat om}), the elements of $\Lambda^2T_x^*$ in the linear span of
%$\lbrace v\wedge Je_i, Je_i\wedge e_k, i\neq k \rbrace$
%are in the kernel of $v\wedge \hat\om\wedge \cdot$.
%Thus without loss we can assume that
%$$
%\beta = Jv\wedge J\tilde \xi + \sum_i c_i e_i\wedge Je_i
%$$
%for some constants $(c_i)$ and $\tilde \xi\in \Lambda^1T^*_x$. Note also that there is a %constant $c'$ depending on the $(c_i)$ such that
%\begin{equation*}
% v\wedge\hat\om\wedge( \sum_i c_i e_i\wedge Je_i )= c' v\wedge \om\wedge \hat \om.
%\end{equation*}
%Thus
%$$
%u=v\wedge\hat\om\wedge ( Jv\wedge J\tilde \xi + c' \om).
%$$
%By (\ref{eq:symbol of L om}), we can then find a constant $c''$ such that
%$$
%\sigma_2(\tilde\xi + c'' v) =v\wedge\hat\om\wedge ( Jv\wedge J\tilde \xi + c' \om)= u,
%$$
%and the sequence (\ref{eq:complex L om}) is elliptic at $\Om^{2n-1}$.
%This ends the proof.
\end{proof}

In the sequel, we will omit the injections and projections in the complex \eqref{eq:complex L}, and regard $\Omega^1$ and $\Omega^{2n-1}$ as subspaces of the domain and codomain of $\cL$, respectively. Our next goal is to show that $\cL$ induces a Fredholm operator between suitable Hilbert spaces. Using $\om$ and a choice of bi-invariant positive-definite bilinear form on $\mathfrak{g}$, we endow the spaces of differential forms and $\ad P_h$-valued differential forms with $L^2$ norms. Consider the orthogonal decompositions induced by Lemma \ref{lem:Lelliptic}
\begin{equation}\label{eq:orthogonaldecomp}
\begin{split}
\Omega^1 \times \Omega^0(\ad P_h) & =  \Im \; d \oplus \Im \; \cL^* \oplus \cH^1,\\
\Omega^{2n-1} \times \Omega^{2n}(\ad P_h) & = \Im \; \cL \oplus \Im \; (d-\theta_\omega)^* \oplus \cH^{2n-1},
\end{split}
\end{equation}
where
\begin{equation*}\label{eq:definition harmonics L}
 \begin{split}
 \cH^1  & := \ker \; d^*  \cap \ker \; \cL,\\
 \cH^{2n-1} & := \ker \; (d-\theta_\omega) \cap \ker \; \cL^*
 \end{split}
\end{equation*}
are finite dimensional. For the proof of our next result, we need the orthogonal decomposition of the space of $p$-forms (with respect to the $L^2$ inner product given by $\om$) induced by the de Rham complex
\begin{equation*}
\label{eq:complex L om2}
\xymatrix@R-2pc{
\ldots \ar[r]^{d} & \Omega^p \ar[r]^{d} & \Omega^{p+1} \ar[r]^{d} & \ldots,
}
\end{equation*}
and the Morse-Novikov complex
\begin{equation*}
\label{eq:complex L om3}
\xymatrix@R-2pc{
\ldots \ar[r]^{d - \theta_\omega} & \Omega^p \ar[r]^{d- \theta_\omega} & \Omega^{p+1} \ar[r]^{d- \theta_\omega} & \ldots,
}
\end{equation*}
given by
\begin{equation}\label{eq:orthogonaldeRham}
\begin{split}
\Om^p & = \Im \,d \oplus \Im\,d^*\oplus \cH^p_d,\\
\Om^p & = \Im \, (d - \theta_\omega) \oplus \Im\,(d - \theta_\omega)^* \oplus \cH^p_{d-\theta_\omega},\\
\end{split}
\end{equation}
where we use the notation
\begin{equation}\label{eq:harmonicsdeRham}
\begin{split}
\cH_d^p &:=\Om^p\cap\ker d\cap\ker d^*,\\
 \cH^p_{d-\theta_\omega} &:= \Om^p\cap\ker (d - \theta_\omega)\cap\ker (d- \theta_\omega)^*,
\end{split}
\end{equation}
for the spaces of $\omega$-harmonic and twisted $\omega$-harmonic $p$-forms, respectively. We consider the $L^{2,k}$ Sobolev completions of the spaces above, which we denote with a subscript $k$, and use the same notation for the unique extension of the differential operators to the completed spaces.

\begin{proposition}\label{prop:LFredholm}
The restriction of $\cL$ to $(\Im \; d^*)_k \times \Omega^0(\ad P_h)_k$ induces a Fredholm operator
\begin{equation}\label{eq:FredholmL}
  \cL \colon (\Im \; d^*)_k \times \Omega^0(\ad P_h)_k \rightarrow   (\Im \; d-\theta_\omega)_{k-2} \times \Omega^{2n}(\ad P_h)_{k-2}.
\end{equation}
\end{proposition}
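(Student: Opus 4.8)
The plan is to leverage the ellipticity of the complex \eqref{eq:complex L} from Lemma \ref{lem:Lelliptic}. After $L^{2,k}$ completion this gives, by standard Hodge theory for elliptic complexes, the orthogonal decompositions \eqref{eq:orthogonaldecomp} with $\cH^1$ and $\cH^{2n-1}$ finite dimensional and with all the images closed; since the leading symbol of $\cL$ agrees with that of the order-two operator $\cU$, we are in the setting $L^{2,k}\to L^{2,k-2}$. The point is that $\cL$ on the full spaces is \emph{not} Fredholm: its kernel contains the infinite-dimensional $\Im(\iota_1\circ d)$ and its cokernel contains $\Im(d-\theta_\omega)^*$. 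Restricting the domain to $(\Im\,d^*)_k\times\Om^0(\ad P_h)_k$ and the target to $(\Im\,(d-\theta_\omega))_{k-2}\times\Om^{2n}(\ad P_h)_{k-2}$ is precisely what excises these two directions, and I would prove Fredholmness by exhibiting a finite-dimensional kernel together with a closed image of finite codimension.

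Two elementary observations linking $\cL$ to the de Rham and Morse--Novikov complexes drive the argument. First, $\cL$ annihilates $(\ker d)\times\{0\}$: if $d\xi=0$ then $d\xi+Jd\xi=0$, so both $\cL_1(\xi,0)$ and $\cL_2(\xi,0)$ vanish. Combined with the orthogonal splitting $\Om^1=\ker d\oplus\Im\,d^*$ from \eqref{eq:orthogonaldeRham}, this yields $\cL(\xi,s)=\cL(\xi',s)$, where $\xi'$ is the $\Im\,d^*$-component of $\xi$, so that the image of the restricted operator equals the full $\Im\,\cL$. Second, because $\cL_1=(d-\theta_\omega)(\,\cdot\,)$ by construction, one has $\Im\,\cL\subset T:=\Im\,(d-\theta_\omega)\times\Om^{2n}(\ad P_h)$, so the prescribed target indeed contains the image.

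For the kernel, I would use that ellipticity gives $\ker\cL=\Im(\iota_1\circ d)\oplus\cH^1$, so every element is of the form $(df,0)+h$ with $h=(h_\xi,h_s)\in\cH^1$. If in addition $\xi:=df+h_\xi\in\Im\,d^*$, then, since $h_\xi\in\ker d^*$ (as $\cH^1$ is $d^*$-closed in the first factor) and $\Im\,d^*\subset\ker d^*$, we get $df\in\Im\,d\cap\ker d^*=\{0\}$; hence the restricted kernel is contained in the finite-dimensional $\cH^1$. For the cokernel, $\Im\,\cL$ is closed by \eqref{eq:orthogonaldecomp}, and its orthogonal complement in $\Om^{2n-1}\times\Om^{2n}(\ad P_h)$ is $[\Im\,(d-\theta_\omega)^*\times\{0\}]\oplus\cH^{2n-1}$. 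The cokernel of the restricted operator is isomorphic to $T\cap(\Im\,\cL)^\perp$; intersecting with $T$ annihilates the $\Im\,(d-\theta_\omega)^*$-summand, which is orthogonal to the first factor $\Im\,(d-\theta_\omega)$ of $T$ (using that the $\Om^{2n-1}$-components of $\cH^{2n-1}$ lie in $\ker(d-\theta_\omega)$), leaving a subspace of the finite-dimensional $\cH^{2n-1}$. Thus the cokernel is finite dimensional and $\cL$ is Fredholm.

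The main obstacle I anticipate is bookkeeping rather than conceptual: the domain is cut out by the de Rham codifferential $d^*$ and the target by the Morse--Novikov differential $d-\theta_\omega$, whereas the natural Hodge decomposition \eqref{eq:orthogonaldecomp} of the complex involves instead the intrinsic adjoints $\cL^*$ and $(d-\theta_\omega)^*$. The two observations above are exactly what reconcile these choices, and the care lies in verifying them together with confirming that closedness of $\Im\,\cL$ and finite dimensionality of $\cH^1$ and $\cH^{2n-1}$ persist under passage to the $L^{2,k}$ completions with the stated Sobolev indices.
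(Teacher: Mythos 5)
Your proposal is correct and follows essentially the same route as the paper: both arguments play the Hodge decomposition of the elliptic complex \eqref{eq:complex L} against the de Rham and Morse--Novikov decompositions \eqref{eq:orthogonaldeRham}, identifying the restricted kernel with a subspace of the finite-dimensional $\cH^1$ and the restricted cokernel with a subspace of $\cH^{2n-1}$. Your two bookkeeping observations (that $\Ker d \times \{0\} \subset \Ker \cL$ so the restricted image is all of $\Im\,\cL$, and that $\Im\,\cL$ lands in $\Im\,(d-\theta_\omega)\times\Omega^{2n}(\ad P_h)$) are exactly what the paper's two displayed identities encode.
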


\begin{proof}
Using the orthogonal decompositions of $\Omega^1$ given by \eqref{eq:orthogonaldecomp} and the first equation in \eqref{eq:orthogonaldeRham}, we obtain
$$
\Im \; \cL^* \oplus \cH^1 = \Im \; d^* \oplus \cH^1_d \oplus \Omega^0(\ad P_h).
$$
From this, the kernel of the restriction of $\cL$ to $(\Im \; d^*)_k \times \Omega^0(\ad P_h)_k$ is the intersection $\cH^1 \cap  ((\Im \; d^*)_{k} \oplus \Omega^0(\ad P_h)_k)$, which is finite-dimensional. On the other hand, using the orthogonal decomposition of $\Omega^{2n-1}$ induced by the Morse-Novikov complex we have
$$
\Im \; \cL \oplus \cH^{2n-1} = \Im \; (d-\theta_\omega) \oplus \cH^{2n-1}_{d - \theta_\omega}  \oplus \Omega^{2n}(\ad P_h).
$$
Thus, the cokernel of the restriction of $\cL$ to $(\Im \; d^*)_k \times \Omega^0(\ad P_h)_k$ is the intersection $\cH^{2n-1} \cap  (\Im \; (d-\theta_\omega)_{k-2} \oplus \Omega^{2n}(\ad P_h)_{k-2})$, which is finite-dimensional.
\end{proof}

To finish this section, we study the case $[\theta_\omega] = 0$. In this situation, one has $\theta_\omega = d f_\omega$ and \eqref{eq:FredholmL} induces a Fredholm operator (modifying $\mathcal{L}_1$ in \eqref{eq:lin full parameters} by $e^{-f_\omega} \cdot \mathcal{L}_1$)
\begin{equation}\label{eq:FredholmLsigma0}
  \cL \colon (\Im \; d^*)_k \times \Omega^0(\ad P_h)_k \rightarrow   (\Im \; d)_{k-2} \times \Omega^{2n}(\ad P_h)_{k-2},
\end{equation}
which we denote also by $\mathcal{L}$. We will prove that \eqref{eq:FredholmLsigma0} has index zero. Similarly as before, we decompose
\begin{equation*}
\label{eq:L=UKprime}
\cL=\cU' + \cK',
\end{equation*}
where $\cU' = \cU_1' \times \cU_2$ for
$$
\cU_1' (\xi,s) = d\left( e^{-f_\omega}\left(T(d\xi + J d \xi)\right) \wedge \om^{n-2}\right),
$$
and $\cK'$ is of order $1$. The proof follows by a detailed study of the operator $\cU_1'$. By the proof of Lemma \ref{lem:U1elliptic}, the complex
\begin{equation*}
\xymatrix@R-2pc{
\Om^0 \ar[r]^{d} & \Omega^1 \ar[r]^{\cU_1'} & \Omega^{2n-1} \ar[r]^{d} & \Omega^{2n}
}
\end{equation*}
is elliptic, and therefore we obtain finite-dimensional spaces
\begin{equation*}
 \label{eq:definition harmonics xi terms}
 \begin{array}{ccc}
 \cH_{\cU_1'}^1 & := & \ker d^* \cap \ker \cU_1',\\
 \cH_{\cU_1'}^{2n-1} & := & \ker d \cap \ker \cU_1^{'*}.
 \end{array}
\end{equation*}
By definition of $\cU_1'$, there are natural inclusions $\cH^1_d\subset \cH_{\cU_1'}^1$, $\cH_{d}^{2n-1} \subset \cH_{\cU_1'}^{2n-1}$ (see \eqref{eq:harmonicsdeRham}), and we consider the orthogonal decompositions
\begin{equation}
 \label{eq:orthogonalharmonicsdR}
\cH_{\cU_1'}^1 = \cH^1_d \oplus \cV, \qquad \cH_{\cU_1'}^{2n-1} = \cH_{d}^{2n-1} \oplus \cW.
\end{equation}

\begin{lemma}\label{lem:duality harmonics}
Assuming $[\theta_\omega] = 0$, there is an equality $\cU_1^{'*} = * \cU_1^{'} *$. Consequently, the Hodge $*$-operator induces an isomorphism between $\cV$ and $\cW$ and the restricted operator
$$
\cU_1' \colon (\Im \; d^*)_k \rightarrow   (\Im \; d)_{k-2}
$$
is Fredholm with index zero.
\end{lemma}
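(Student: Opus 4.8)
The plan is to establish the three assertions in turn, the self-adjointness identity $\cU_1^{*}=\ast\cU_1\ast$ being the crux. Since $X$ has real dimension $2n$ and $\cU_1\colon\Om^1\to\Om^{2n-1}$, the composite $\ast\cU_1\ast$ lands in $\Om^1$ and has the same degree as $\cU_1^{*}$. Using $\alpha\wedge\ast\beta=\langle\alpha,\beta\rangle\vol$ together with $\ast\ast=\pm\Id$, I would first check that the identity $\cU_1^{*}=\ast\cU_1\ast$ is equivalent to the symmetry (up to the universal sign produced by $\ast\ast$) of the bilinear form
\[
B(\xi,\eta)=\int_X\cU_1\xi\wedge\eta ,\qquad \xi,\eta\in\Om^1 ,
\]
so that the problem is reduced to proving $B(\xi,\eta)=B(\eta,\xi)$.

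Next I would analyze $B$ by integration by parts. Writing $\cU_1\xi=(d-\theta_\omega)(T(d\xi+Jd\xi)\wedge\om^{n-2})$ and using $d\theta_\omega=0$, Stokes' theorem gives
\[
B(\xi,\eta)=-\int_X T(d\xi+Jd\xi)\wedge\om^{n-2}\wedge(d+\theta_\omega\wedge)\eta .
\]
The engine of the argument is the elementary observation that the trilinear form $(\alpha,\beta)\mapsto\int_X T(\alpha)\wedge\om^{n-2}\wedge\beta$ on two-forms is symmetric --- the $T$-correction contributes a term proportional to $\int_X\Lambda_\om\alpha\,\Lambda_\om\beta\,\om^n$ --- and that $J$ is self-adjoint for it. Expanding $d\xi+Jd\xi$ and $(d+\theta_\omega\wedge)\eta$ and feeding in these two symmetries, all the pure $d$-terms and the $J$ cross-terms cancel, so that $B(\xi,\eta)-B(\eta,\xi)$ collapses to a remainder assembled from the terms containing $\theta_\omega$. \emph{The main obstacle is to show that this remainder vanishes}, and this is precisely where $\ell_X=0$ is used: writing $\theta_\omega=df$ (equivalently, $\om$ is conformally balanced, $d(e^{-f}\om^{n-1})=0$), I would integrate by parts once more to move the exact factor $df$, and combine this with the Lee-form identity $d\om^{n-1}=\theta_\omega\wedge\om^{n-1}$ to match the two halves of the remainder. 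I expect this cancellation to be the most delicate computation of the proof, and it is genuinely where the hypothesis enters, since for a non-exact closed $\theta_\omega$ the remainder does not vanish.

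Granting $\cU_1^{*}=\ast\cU_1\ast$, the duality $\cV\cong\cW$ follows formally. If $\zeta\in\cH^{2n-1}_{\cU_1}=\ker d\cap\ker\cU_1^{*}$, then from $\cU_1^{*}=\ast\cU_1\ast$ one gets $\cU_1\ast=\pm\ast\cU_1^{*}$, so $\cU_1(\ast\zeta)=\pm\ast\cU_1^{*}\zeta=0$, while $d^{*}(\ast\zeta)=\pm\ast d\zeta=0$; hence $\ast\zeta\in\ker d^{*}\cap\ker\cU_1=\cH^{1}_{\cU_1}$, and $\ast$ is an isomorphism $\cH^{2n-1}_{\cU_1}\cong\cH^{1}_{\cU_1}$. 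Since $\ast$ commutes with the Hodge Laplacian it restricts to the standard duality $\cH^{2n-1}_d\cong\cH^{1}_d$, and as it is an isometry preserving the orthogonal splittings \eqref{eq:orthogonalharmonicsdR} it descends to an isomorphism $\cW\cong\cV$, which is assertion (ii).

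Finally, Fredholmness of $\cU_1\colon(\Im\,d^{*})_k\to(\Im\,d)_{k-2}$ follows exactly as in Proposition \ref{prop:LFredholm}, by restricting the operator of the elliptic complex of Lemma \ref{lem:U1elliptic} to the co-closed representatives. For the index, I would use the Hodge decomposition $\Om^1=\cH^{1}_d\oplus\Im\,d\oplus\Im\,d^{*}$ together with $\ker\cU_1\cap\Om^1=\Im\,d\oplus\cH^{1}_{\cU_1}$ and $\cH^{1}_{\cU_1}=\cH^{1}_d\oplus\cV$; since $\cV\subset\ker d^{*}$ and $\cV\perp\cH^{1}_d$, we have $\cV\subset\Im\,d^{*}$, whence $\ker(\cU_1|_{(\Im\,d^{*})_k})=\cV$. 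The dual computation on $\Om^{2n-1}$ identifies the cokernel with $\cW$, so that
\[
\operatorname{ind}\cU_1=\dim\cV-\dim\cW=0
\]
by the isomorphism from (ii).
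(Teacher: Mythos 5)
Your formal scaffolding is correct: the reduction of $\cU_1^*=*\cU_1*$ to the symmetry of the bilinear form $B(\xi,\eta)=\int_X\cU_1\xi\wedge\eta$ works (the signs coming from $**$ do yield plain symmetry), the pairing $(\alpha,\beta)\mapsto\int_X T(\alpha)\wedge\om^{n-2}\wedge\beta$ on two-forms is indeed symmetric with $J$ self-adjoint for it, so the terms of $B(\xi,\eta)-B(\eta,\xi)$ not involving $\theta_\omega$ cancel; and, granting the identity, your derivation of $\cV\cong\cW$ via $*$ and of the index-zero statement (kernel $=\cV$, cokernel $=\cW$) is correct and agrees with the paper.

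The gap is that the entire analytic content of the lemma --- the vanishing of the remainder $Q((1+J)d\eta,\theta_\omega\wedge\xi)-Q((1+J)d\xi,\theta_\omega\wedge\eta)$, which is the only place where $\ell_X=0$ enters --- is asserted rather than proved, and the tools you name do not obviously suffice. Writing $\theta_\omega=df$ and integrating by parts generates terms in $dd^cf$, in $d(J\eta)$ versus $J(d\eta)$, and in $d\om^{n-2}$; the Lee-form identity $d\om^{n-1}=\theta_\omega\wedge\om^{n-1}$ controls none of these (it says nothing about $d\om^{n-2}$), and even for $n=2$, where $\om^{n-2}=1$, the cross terms $\int Jd\xi\wedge df\wedge\eta$ do not cancel against their mirrors by a one-line Stokes argument. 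The paper sidesteps all of this by arguing pointwise rather than globally: since $\theta_\omega=df_\omega$, one factors $\cU_1=d\circ B\circ\tilde T\circ(1+J)\circ d$ with $B(\alpha)=e^{-f_\om}\alpha\wedge\om^{n-2}$, so that $\cU_1^*$ is the reverse composition of $d^*=-*d*$, $J^*=*J*$ on two-forms, $\tilde T^*=\tilde T$ and $B^*=e^{-f_\om}\Lambda_\om^{n-2}$, and the identity reduces to the explicit Lefschetz formula for $*$ on two-forms --- linear algebra at each point, with no integration by parts beyond $d^*=-*d*$. If you wish to complete your route, the efficient observation is the pointwise identity $T((1+J)\gamma)\wedge\om^{n-2}=(n-2)!\,*\bigl(\Lambda_\om(\gamma)\,\om-(1+J)\gamma\bigr)$ for two-forms $\gamma$, which converts your wedge pairing into the $L^2$ inner product against a self-adjoint zeroth-order operator and makes the symmetry of $B$ (hence the vanishing of your remainder) manifest; as written, the proposal stops exactly where the lemma begins.
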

\begin{proof}
We decompose
\begin{equation*}
\cU_1= d\circ B \circ \tilde T \circ (1+J) \circ d,
\end{equation*}
by setting for any $\alpha\in\Om^2$,
\begin{align*}
B(\alpha) & =  e^{-f_\omega}\alpha \wedge \omega^{n-2},\\
\tilde T(\alpha) & = \alpha_0 + \frac{n-2}{2n(n-1)} \Lambda_\om\alpha\, \om,
\end{align*}
with $\alpha_0$ the trace-free part of $\alpha$. Thus
\begin{equation*}
 \cU_1^{'*} = d^* \circ (1+J)^* \circ \tilde T^*\circ B^* \circ d^*,
\end{equation*}
where we have $d^*= -*d*$ and, on two-forms, $J^*=*J*$. Note that $ \tilde T$ is self-adjoint and $B^*=e^{-f_\omega} \Lambda_\om^{n-2}$. Now, for any $\alpha\in\Om^2$, Lefschetz and type decompositions give
\begin{equation*}
(n-2)! *\alpha = -\alpha_0^{1,1}\wedge \om^{n-2} +\frac{\Lambda_\om\alpha}{n(n-1)} \om^{n-1} +\alpha_0^{2,0+0,2}\wedge \om^{n-2}.
\end{equation*}
Together with $(1+J)\alpha = 2\alpha^{1,1}$ on two-forms and $[\Lambda_\om , L_\om] = (n-p) \Id$ on $p$-forms
we obtain by a direct computation that $\cU_1^{'*}= *\cU_1' *$. To conclude the proof, note that $\xi\in \cH_d^1$ if and only if $*\xi\in \cH_{d}^{2n-1}$. Thus, the result follows from the decompositions \eqref{eq:orthogonalharmonicsdR}.
\end{proof}

\begin{corollary}
\label{cor:index zero linearization}
Assuming $[\theta_\omega] = 0$, the operator \eqref{eq:FredholmLsigma0} is Fredholm and has index zero.
\end{corollary}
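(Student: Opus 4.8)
The plan is to prove vanishing of the index, since the Fredholm property of \eqref{eq:FredholmLsigma0} is already supplied by Proposition \ref{prop:LFredholm}, specialized to the present case: when $\ell_X = 0$ we have $\theta_\omega = df_\omega$, so the codomain is genuinely $(\Im\, d)_{k-2}\times\Omega^{2n}(\ad P_h)_{k-2}$. Because the index is stable under compact perturbations, I would first reduce the computation to the leading-order part $\cU$ of $\cL$, using the decomposition $\cL = \cU + \cK$ of \eqref{eq:L=UK}. Here $\cK$ is of first order while $\cU$ is of second order; hence, between the Sobolev completions, $\cK$ maps $L^{2,k}$ boundedly into $L^{2,k-1}$ and then into $L^{2,k-2}$ through the compact Rellich embedding, so $\cK$ is compact and $\operatorname{ind}\cL = \operatorname{ind}\cU$. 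A small point to verify is that both $\cU$ and $\cK$ land in $(\Im\, d)_{k-2}\times\Omega^{2n}(\ad P_h)_{k-2}$, which holds precisely because $\ell_X = 0$.

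Next I would exploit the fact, visible from \eqref{eq:lin elliptic term}, that $\cU$ is decoupled: $\cU(\xi,s) = (\cU_1(\xi),\cU_2(s))$, with $\cU_1$ depending only on the form part $\xi$ and landing in $\Im\, d$, and $\cU_2$ depending only on $s$ and landing in $\Omega^{2n}(\ad P_h)$. This block-diagonal structure gives $\operatorname{ind}\cU = \operatorname{ind}\cU_1 + \operatorname{ind}\cU_2$, reducing the problem to the two summands separately.

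The first summand is handled directly: Lemma \ref{lem:duality harmonics} already asserts that $\cU_1\colon(\Im\, d^*)_k\to(\Im\, d)_{k-2}$ is Fredholm of index zero; the identity $\cU_1^* = *\cU_1*$ (which is where the hypothesis $\ell_X = 0$ is genuinely used) forces the isomorphism $\cV\cong\cW$ in \eqref{eq:orthogonalharmonicsdR}. For the second summand, $\cU_2$ is elliptic by \cite[Lemma 7.2.3]{lt}; after identifying $\Omega^{2n}(\ad P_h)\cong\Omega^0(\ad P_h)$ via the Hodge $*$-operator, its principal symbol is a positive multiple of $|v|^2\,\Id$, that is, the symbol of a generalized Laplacian on $\ad P_h$. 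I would then invoke homotopy invariance of the index: the straight-line homotopy of symbols to the $\dbar$-Laplacian $\Delta$ on $\Omega^0(\ad P_h)$ stays elliptic, since both endpoints have positive-definite leading symbol, so $\operatorname{ind}\cU_2 = \operatorname{ind}\Delta = 0$ by self-adjointness of $\Delta$. Adding the two contributions yields $\operatorname{ind}\cL = 0$.

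I expect the only genuinely substantive input to be the vanishing of $\operatorname{ind}\cU_1$, which is already packaged in Lemma \ref{lem:duality harmonics} and rests crucially on $\ell_X = 0$; the remaining work is the routine but necessary bookkeeping of the compact perturbation, the block decomposition, and the symbol-homotopy argument for $\cU_2$. The main thing to be careful about is keeping track of the correct Sobolev orders and confirming that the compact remainder $\cK$ maps into the prescribed image space $(\Im\, d)_{k-2}\times\Omega^{2n}(\ad P_h)_{k-2}$, which again hinges on the reduction $\theta_\omega = df_\omega$ available in the case $\ell_X = 0$.
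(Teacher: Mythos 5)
Your proposal is correct and follows essentially the same route as the paper: decompose $\cL = \cU + \cK$ as in \eqref{eq:L=UK}, note $\cK$ is compact by Rellich's Lemma since it is of lower order, and conclude $\operatorname{ind}\cL = \operatorname{ind}\cU_1 + \operatorname{ind}\cU_2 = 0$ using Lemma \ref{lem:duality harmonics} for $\cU_1$ and the ellipticity of $\cU_2$. The only cosmetic difference is that you supply a symbol-homotopy argument for $\operatorname{ind}\cU_2 = 0$ where the paper simply cites \cite[Lemma 7.2.3]{lt}.
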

\begin{proof}
From Lemma \ref{lem:duality harmonics}, and from the fact that $\cU_2$ in \eqref{eq:lin elliptic term} is elliptic of index zero (see \cite[Lemma 7.2.3]{lt}), we deduce that $\cU' = \cL - \cK'$ is Fredholm with index zero. Since $\cK'$ is of order one, it induces a compact operator by Rellich's Lemma, and hence the result follows.
\end{proof}
%%from Serge Lang's book "real and functional analysis", chapter 17, "compact and Fredholm operators", corollaries 2.6 and 2.7.

\subsection{Stability under deformations}\label{sec:local moduli}

Building on Corollary \ref{cor:index zero linearization}, in this section  we show that, under natural assumptions,
the existence of solutions to the Hull-Strominger system \eqref{eq:Stromhol} is stable under deformations of the Aeppli class, the Bott-Chern algebroid $Q$ and the complex manifold $X$. Note that the operator \eqref{eq:FredholmLsigma0} corresponds to the linearization of the more general Calabi system \eqref{eq:stromabstractredux} in a fixed Aeppli class, regardless of the choice of volume form $\mu$ on $X$. Thus, in this section we work in the generality of a compact complex manifold $X$ endowed with a smooth volume form $\mu$. % and a real and positive Bott-Chern algebroid $Q$.

We start by studying the deformations of pairs $(X,Q)$, given by a compact complex manifold $X$ and a Bott-Chern algebroid $Q$. We denote by $P$ the holomorphic principal bundle underlying $Q$. By a result of Ehresmann, a complex deformation of $X$ can be regarded as a smooth family of integrable almost complex structures on the smooth manifold underlying $X$. Furthermore, if we let $Q$ vary with the complex structure, we obtain a complex deformation of the holomorphic bundle $P$, which can be regarded as a smooth family of $(0,1)$-connections on the smooth $G$-bundle $\underline{P}$. Relying on \cite[Def. 4.1]{grt2} and Theorem \ref{lemma:deRhamC}, it is natural to give the following definition.

\begin{definition}
\label{def:deformation Q}
 A Bott-Chern deformation of $(X,Q)$ is a family $(X_t,Q_t)_{t\in B}$ of Bott-Chen algebroids $Q_t$ over complex manifolds $X_t$ parametrized by $B$, satisfying:
 \begin{itemize}
  \item[i)] $(X_0,Q_0)=(X,Q)$,
  \item[ii)] the map $t\mapsto J_t$ is smooth, where $J_t$ is the almost complex structure of $X_t$,
  \item[iii)] for each $t$ there exists a representative $(H_t, \theta_t)$ of the isomorphism class of $Q_t$ as in \eqref{eq:lescEind3}, such that the map $t\mapsto (H_t, \theta_t)$ is smooth.
\end{itemize}
\end{definition}

Given a Bott-Chern deformation of $(X,Q)$,
the Bott-Chern property for $Q_t$ implies that we can choose a more amenable representative of the isomorphism class of $Q_t$, after possibly twisting $Q_t$ by an Aeppli class.
We will denote by $h^{2,2}_{BC}(X)$ the dimension of $H^{2,2}_{BC}(X)$.

\begin{lemma}\label{lem:BCdeformation}
Let $(X_t,Q_t)_{t\in B}$ be a Bott-Chern deformation of $(X,Q)$ with $h^{2,2}_{BC}(X_t)$ constant, and $(H_t, \theta_t)_{t\in B}$ as in Definition \ref{def:deformation Q}.
Then there exists a smooth family $t \to (\tau_t,h_t)$ as in Definition \ref{def:BCtype}, such that $[(2i \partial_t \tau_t,\theta^{h_t})] = \tilde Q_t$ for all $t$, where $(X_t,\tilde Q_t)_{t\in B}$ is a Bott-Chern deformation of $(X,Q)$ related to $(X_t,Q_t)_{t\in B}$ by a family of twists. Furthermore, we can choose the family of reductions $h_t$ to be constant. If, in addition, $X$ satisfies the $\partial \dbar$-Lemma, then $\tilde Q_t \cong Q_t$ for all $t$.
\end{lemma}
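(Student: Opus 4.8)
The plan is to build the family $\tilde{Q}_t$ explicitly and only afterwards to match it with $Q_t$ through the classification in Proposition \ref{prop:BCclassification}, rather than manipulating the given representatives $(H_t,\theta_t)$ directly. Since the smooth bundle $\underline{P}$ underlying the family is fixed, I would first choose a single smooth reduction $h$ of $\underline{P}$ to a maximal compact subgroup and keep it constant in $t$; this furnishes the asserted constant family of reductions. By Definition \ref{def:deformation Q}, both the almost complex structures $J_t$ and the $(0,1)$-parts of $\theta_t$ (which induce the holomorphic bundles $P_t$) vary smoothly with $t$, so the Chern connections $\theta^h_t$ determined by $h$ and these Dolbeault operators form a smooth family, as do their curvatures $F_{\theta^h_t}$, of type $(1,1)$, and the closed forms $c(F_{\theta^h_t}\wedge F_{\theta^h_t})\in\Omega^{2,2}_{J_t}$.

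The core of the proof is to solve, \emph{smoothly} in $t$, the equation $dd^c_t\tau_t = c(F_{\theta^h_t}\wedge F_{\theta^h_t})$ for a real $(1,1)$-form $\tau_t$. For each fixed $t$ a solution exists because the Bott-Chern property of $Q_t$ forces $p_1(P_t)=0\in H^{2,2}_{BC}(X_t)$ (see the discussion following \eqref{eq:misteriouseqb}), so the right-hand side lies in the image of $dd^c_t$. To make the solution depend smoothly on $t$ I would pass to the fourth-order elliptic Bott-Chern Laplacian and invoke Kodaira--Spencer theory: the hypothesis that $h^{2,2}_{BC}(X_t)$ is constant means that this smooth family of elliptic operators has kernels of constant dimension, so the associated harmonic projections and Green's operators vary smoothly, and applying the Green's operator to $c(F_{\theta^h_t}\wedge F_{\theta^h_t})$ yields the desired smooth family $\tau_t$. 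I expect this to be the main obstacle of the argument, since everything rests on the smooth dependence of the Bott-Chern Green's operator on the parameter, which is exactly what the constancy of $h^{2,2}_{BC}$ is there to guarantee.

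With $\tau_t$ at hand I would set $\tilde{Q}_t := [(2i\partial_t\tau_t,\theta^h_t)]$. This is a legitimate Bott-Chern algebroid over $(X_t,P_t)$: the integrability condition of Theorem \ref{lemma:deRhamC} holds because $d(2i\partial_t\tau_t) = -dd^c_t\tau_t = -c(F_{\theta^h_t}\wedge F_{\theta^h_t})$, and $(\tau_t,h)$ is exactly of the shape prescribed by Definition \ref{def:BCtype}. Thus $(X_t,\tilde{Q}_t)$ is a Bott-Chern deformation with constant reduction and smooth real $(1,1)$-forms $\tau_t$, which settles the first two assertions. To see that $\tilde{Q}_t$ is a twist of $Q_t$, I would observe that $Q_t$ and $\tilde{Q}_t$ are two Bott-Chern algebroids over the \emph{same} $(X_t,P_t)$; by Proposition \ref{prop:BCclassification} their classes differ by an element of the image of the map $\partial$ in \eqref{eq:partialmap}, and since that affine structure is realized precisely by twisting (Definition \ref{def:twist}), the two families are related by a twist by a real Aeppli class $\beta_t$, which is smooth because both families are.

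Finally, for the last assertion, if $X=X_0$ satisfies the $\partial\dbar$-Lemma then, under the present constancy of the relevant cohomological dimensions, so does $X_t$ (shrinking $B$ if necessary). In that case Proposition \ref{prop:BCclassification} (or equivalently Lemma \ref{lem:ddbarisoQ}) shows that there is a unique isomorphism class of Bott-Chern algebroid over $(X_t,P_t)$, so the twist $\beta_t$ acts trivially and $\tilde{Q}_t\cong Q_t$ for all $t$.
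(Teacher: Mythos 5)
Your proposal is correct and follows essentially the same route as the paper's proof: fix a constant reduction $h$, reduce the problem to solving $dd^c_t\tau_t = c(F_{\theta^h_t}\wedge F_{\theta^h_t})$ smoothly in $t$ via the Bott--Chern Laplacian and its Green operator (with the constancy of $h^{2,2}_{BC}(X_t)$ guaranteeing smooth dependence on $t$), and use the twist/classification structure together with the openness of the $\partial\dbar$-Lemma for the final assertions. The only packaging difference is that the paper manipulates the given representatives $(H_t,\theta_t)$ directly through Lemma \ref{lem:CSRinvariant} and the Aeppli decomposition before smoothing, whereas you construct $\tilde Q_t$ from scratch and identify the twist a posteriori via Proposition \ref{prop:BCclassification}; both are valid, the one small imprecision being that the Green operator $G_{\Delta^{BC}_t}$ lands in $\Omega^{2,2}_t$, so the smooth $(1,1)$-form is obtained as $(\partial_t\dbar_t)^*G_{\Delta^{BC}_t}$ applied to the curvature term, exactly as in the paper's argument.
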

\begin{proof}
We fix a reduction $h$ of $\underline{P}$, and denote $\theta^h_t$ the Chern connection of $h$ in the holomorphic principal bundle $P_t$ underlying $Q_t$. Then, $(\tilde H_t,\theta^h_t) \sim (H_t,\theta_t)$, where
\begin{equation}\label{eq:lem:BCdeformation}
\tilde H_t = H_t + CS(\theta_t) - CS(\theta^h_t) - dc(\theta_t \wedge \theta^h_t).
\end{equation}
By the Bott-Chern property, there exists $(\tau_t,h_t)$, not necessarily smooth in $t$, such that $(2i \partial_t \tau_t,\theta^{h_t}) \sim (\tilde H_t,\theta_t^h)$ for all $t$. By Lemma \ref{lem:CSRinvariant}, we have
$$
2i \partial_t \tau_t = \tilde H_t - 2i\partial_t \tilde R(h_t,h) + dB_t,
$$
for $B_t \in \Omega^{2,0}_t$, not necessarily smooth in $t$. Define $\tilde \tau_t = \tau_t + \tilde R(h_t,h)$. It follows from \eqref{eq:lem:BCdeformation} that $(2i \partial_t \tilde \tau_t,\theta^{h}_t) \sim (H_t,\theta_t)$ for all $t$, and furthermore
$$
dd^c_t \tilde \tau_t = c(F_{\theta^{h}_t} \wedge F_{\theta^{h}_t}).
$$
Considering now for each $t$ a hermitian metric $g_t$ on $X_t$, such that $t\mapsto g_t$ is smooth,
we use the decomposition induced by the Aeppli laplacian \cite{Sch}:
$$
\Om^{1,1}_t=\cH^{1,1}_{\Delta^{Ae}_t}\oplus \Im (\del_t \oplus \delb_t) \oplus \Im(\del_t\delb_t)^*.
$$
From $\del_t(\Im (\del_t \oplus \delb_t))\subset d\Om^{2,0}_t$ ,
we can assume that $\tilde\tau_t\in \cH^{1,1}_{\Delta^{Ae}_t} \oplus \Im(\del_t\delb_t)^*$.
We write $\tilde\tau_t = \tau_t^{harm}+(\del_t\delb_t)^* x_t$, for $\tau_t^{harm}\in\cH^{1,1}_{\Delta^{Ae}_t}$ and $x_t\in \Om^{2,2}_t$. Using now the decomposition induced
by the Bott-Chern laplacian \cite{Sch},
$$
\Om^{2,2}_t=\cH^{2,2}_{\Delta^{BC}_t}\oplus \Im (\del_t \delb_t) \oplus \Im(\del_t^* \oplus \delb_t^*),
$$
we can assume $x_t\in \Im (\del_t\delb_t)$. In particular, $\del_t x_t = \delb_t x_t=0$ and thus
$\del_t\delb_t \circ (\del_t\delb_t)^* x_t = \Delta^{BC}_t x_t$.
Then, $c(F_{\theta^{h}_t} \wedge F_{\theta^{h}_t})=2i\del_t\delb_t \tilde\tau_t = 2i\Delta^{BC}_t x_t$,
and $\Delta^{BC}_t x_t$ is smooth. As $h^{2,2}_{BC}(X_t)$ is constant, $G_{\Delta^{BC}_t}\circ \Delta^{BC}_t x_t = x_t$ is smooth, where $G_{\Delta^{BC}_t}$ is the associated Green operator. The final part of the statement follows from Lemma \ref{lem:ddbarisoQ}.
\end{proof}

We are now ready to prove the main result of this section.  We assume that $Q$ is positive, and fix a positive Aeppli class $\sigma\in\Sigma_Q(\RR)$.  We identify the space of Aeppli classes $\Sigma_Q(\RR)$ in $Q$ with a subspace of $H^{1,1}_A(X,\RR)$. We will denote $h^{1,1}_A(X) = \dim H^{1,1}_{A}(X)$.
We fix a volume form $\mu$ on $X$ and consider a solution of the Calabi system \eqref{eq:stromabstractredux} with Aeppli class $\sigma$. Recall that there is an inclusion  $\Ker d \subset \Ker \cL$, where $d \colon \Omega^1 \to \Omega^2$ is the exterior differential acting on forms.
%We denote by $(\Ker d)^\perp$ the orthogonal complement of $\Ker d$ in the domain of $\cL$ (for the $L^2$ metric induced by the given solution of \eqref{eq:stromabstractredux}).

\begin{theorem}\label{theo:stability under deformations}
Assume that $(X,Q)$ admits a solution of the Calabi system with Aeppli class $\sigma$, such that $\Ker d = \Ker \cL$. Let $(X_t,Q_t)_{t\in B}$ be a Bott-Chern deformation of $(X,Q)$ such that $h^{1,1}_A(X_t)$ and $h^{2,2}_{BC}(X_t)$ are constant. Then, for any $t$ small enough, $(X_t,Q_t)$ admits a differentiable family of solutions, parametrized by an open set in $\Sigma_{Q_t}(\RR)$.
\end{theorem}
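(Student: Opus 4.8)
The plan is to produce the deformed solutions by the implicit function theorem, with the index-zero statement of Corollary \ref{cor:index zero linearization} supplying an invertible linearisation. First I would note that any solution of the Calabi system \eqref{eq:stromabstractredux} is conformally balanced, so $\theta_\omega=-df_\omega$ by \eqref{eq:confbalanced} and hence $\ell_X=0$; thus Corollary \ref{cor:index zero linearization} applies at the given solution and
$$\cL\colon (\Im d^*)_k\times\Omega^0(\ad P_h)_k\longrightarrow (\Im d)_{k-2}\times\Omega^{2n}(\ad P_h)_{k-2}$$
is Fredholm of index $0$. Since the domain $(\Im d^*)\times\Omega^0(\ad P_h)$ is exactly $(\Ker d)^\perp$ inside the domain of $\cL$, the hypothesis $(\Ker d)^\perp\cap\Ker\cL=0$ says precisely that this restriction is injective; being of index $0$, it is therefore an isomorphism. (Equivalently, together with $\Ker d\subset\Ker\cL$ from Remark \ref{rem:LieinKer}, the hypothesis forces $\Ker\cL=\Ker d$.)

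Next I would recast the deformed equations as a single smooth map between $t$-independent Banach spaces. Applying Lemma \ref{lem:BCdeformation} — where the constancy of $h^{2,2}_{BC}(X_t)$ enters — I replace $(X_t,Q_t)$ by a twist $(X_t,\tilde Q_t)$ carrying a smooth family of representatives $(\tau_t,h)$ with the reduction $h$ independent of $t$ and $dd^c_t\tau_t=c(F_h\wedge F_h)$; since twisting by real $dd^c$-closed forms induces an affine isomorphism $\Sigma_{\tilde Q_t}(\RR)\cong\Sigma_{Q_t}(\RR)$ carrying solutions to solutions, it suffices to work on $\tilde Q_t$. With $h$ fixed, $\Omega^0(\ad P_h)$ is a fixed space, while the gauge directions $\Ker d\subset\Omega^1$ and the target $\Im d\subset\Omega^{2n-1}$ are metric-independent, hence $t$-independent. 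Because the exact $2$-forms equal $d(\Im d_0^*)$, I parametrise metric perturbations by $\xi\in\Im d_0^*$ through $\delta\omega=d\xi+J_t\,d\xi$, which by Lemma \ref{lem:tauhparamet} faithfully covers the metrics of a given Aeppli class on $\tilde Q_t$. Using that $h^{1,1}_A(X_t)$ is constant, the Aeppli cohomology and the classes $\Sigma_{\tilde Q_t}(\RR)$ vary smoothly in $t$, and I would build a smooth family of positive reference metrics $\omega_t(a)$ of Aeppli class $a$, for $a$ near $\sigma$, with $\omega_0(\sigma)$ the given solution; positivity persists for small $(t,a-\sigma)$ by openness.

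With these choices, for $k$ large I define the smooth map
$$\Phi\colon B\times U_\sigma\times(\Im d_0^*)_k\times\Omega^0(\ad P_h)_k\longrightarrow (\Im d)_{k-2}\times\Omega^{2n}(\ad P_h)_{k-2},$$
$$\Phi(t,a,\xi,s)=\big(d(e^{-f_\omega}\omega^{n-1}),\,F_{e^{is}h}\wedge\omega^{n-1}\big),\quad \omega=\omega_t(a)+d\xi+J_t\,d\xi+\tilde R(e^{is}h,h),$$
whose zeros are exactly the solutions of the Calabi system on $\tilde Q_t$ with Aeppli class $a$; the first component automatically lies in $\Im d$. One has $\Phi(0,\sigma,0,0)=0$, and by construction $D_{(\xi,s)}\Phi|_{(0,\sigma,0,0)}$ is the operator $\cL$ of the first step, an isomorphism. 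The implicit function theorem then yields a differentiable map $(t,a)\mapsto(\xi(t,a),s(t,a))$ on a neighbourhood of $(0,\sigma)$ solving $\Phi=0$, and elliptic regularity makes the corresponding $(\omega,e^{is}h)$ smooth. Fixing $t$ small and letting $a$ range over a neighbourhood of $\sigma$ in $\Sigma_{\tilde Q_t}(\RR)\cong\Sigma_{Q_t}(\RR)$ gives the asserted differentiable family of solutions on $Q_t$.

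The principal difficulty I foresee is analytic rather than conceptual: one must arrange all $t$-dependent data — the complex structures $J_t$, a compatible smooth family of metrics $g_t$, the induced harmonic projections, the normalised representatives $(\tau_t,h)$ and the reference metrics $\omega_t(a)$ — to depend smoothly on their parameters, so that $\Phi$ is a genuinely smooth map into \emph{fixed} Banach spaces and its base-point linearisation is exactly $\cL$. The two constancy assumptions on $h^{1,1}_A$ and $h^{2,2}_{BC}$ are precisely what guarantee, through elliptic theory and Lemma \ref{lem:BCdeformation}, that the Aeppli classes and these representatives form smooth (affine) bundles over $B$; without them neither the parameter domain for $a$ nor the smoothness of $\Phi$ would be available, and the implicit function theorem could not be applied uniformly in $t$.
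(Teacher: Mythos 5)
Your proposal is correct and follows essentially the same route as the paper: after reducing to a fixed reduction $h$ via Lemma \ref{lem:BCdeformation}, the paper also sets up a single map $\cS(t,\gamma,\xi,s)$ (with the nearby Aeppli classes parametrized by harmonic representatives $\gamma\in\cH^{1,1}_{\Delta_A}(X)$ transported by the isomorphisms $\Pi_{A,t}$, playing the role of your $\omega_t(a)$) whose partial differential in $(\xi,s)$ at the base point is the operator $\cL$ of \eqref{eq:FredholmLsigma0}, invertible by Corollary \ref{cor:index zero linearization} together with the kernel hypothesis, and concludes by the implicit function theorem and elliptic regularity. The only cosmetic difference is that the paper inserts the projection $\Pi_h$ in the bundle component so that the target space is the fixed space $\Omega^{2n}(\ad P_h)$, a detail your construction should also include.
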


\begin{proof}%[Proof of Theorem \ref{theo:stability under deformations}]
Let $(X_t,Q_t)_{t\in B}$ be a Bott-Chern deformation of $(X,Q)$ and consider $(\tau_t,h)$ as in Lemma \ref{lem:BCdeformation}. We denote by $\sigma_t$ the smooth one-parameter family of compatible Aeppli classes $\sigma_t = [(\tau_t,h)]$ deforming $\sigma$. We can assume that $(\tau_0,h)$ is the given solution of \eqref{eq:stromabstractredux}, and therefore $\omega_t = \operatorname{Re}\tau_t$ is positive for $t$ small enough. Denote
$$
\Pi_h : \Om^{2n}(\ad \underline{P}) \rightarrow \Om^{2n}(\ad P_h)
$$
the natural projection. Consider $\cH^{1,1}_A(X_t,\omega_t)$ the space of $\omega_t$-harmonic Aeppli $(1,1)$ classes on $X_t$ \cite{Sch}, and denote by $\Delta_A^t$ the Aeppli laplacian. As $h^{1,1}_A(X_t)$ is constant, there is a differentiable family of isomorphisms:
$$
\Pi_{A,t} : \cH^{1,1}_{\Delta_{A}}(X) \rightarrow \cH^{1,1}_{\Delta_{A}^t}(X_t).
$$
Taking $k \gg 1$, we define the operator
 \begin{equation*}
  \label{eq:operator IFT strom}
  \begin{array}{cccc}
 \cS : & B \times \cH^{1,1}_{\Delta_{A}}(X)\times  (\Im \, d^*)_k\times \Om^0(\ad P_h)_k & \to & (\Im \, d)_{k-2}\times \Om^{2n}(\ad P_h)_{k-2}\\
       & (t , \gamma, \xi, s) & \mapsto & (\cS_1(t,\gamma,\xi,s),\cS_2(t,\gamma,\xi,s)),
  \end{array}
 \end{equation*}
by
\begin{equation*}
 \begin{array}{ccc}
  \cS_1(t,\gamma,\xi,s) & = &  d (e^{-f_{\tilde\omega_{\gamma,t}}}\tilde\omega_{\gamma,t}^{n-1}),\\
  \cS_2(t,\gamma,\xi,s) & = & \Pi_h(F_{\theta^{h'}_t}\wedge \tilde\om_{\gamma,t}^{n-1}),
 \end{array}
\end{equation*}
where $h'=\exp(is)\cdot h$ and $\theta^{h'}_t$ is the Chern connection of $h'$ on $P_t^c$. Further,
$$
\tilde\om_{\gamma,t}=\om_t+\Pi_{A,t}\gamma + (1+J_t)d\xi +\tilde R(h',h).
$$
Note that for $t$ and $\gamma$ small enough, zeros of $\cS(t,\gamma,\cdot,\cdot)$ are solutions of the system \eqref{eq:stromabstractredux} on $(X_t,\tilde Q_t)$ in the class $\sigma_t+\Pi_{A,t}\gamma$, for $\tilde Q_t = Q_t \otimes \Pi_{A,t}\gamma$.
The differential of $\cS$ at zero with respect to $(\xi, s)$ is \eqref{eq:FredholmLsigma0}. Using that $\Ker d \subset \Ker \cL$, the hypothesis $\Ker d = \Ker \cL$ is equivalent to 
$$
  (\Im \, d^*)_k\times \Om^0(\ad P_h)_k \cap \Ker \cL = \{0\}.
$$
Hence, from Corollary \ref{cor:index zero linearization} the operator \eqref{eq:FredholmLsigma0} is invertible and the implicit function theorem applies. By elliptic regularity the zeros of $\cS$ are smooth, and hence the result follows from Lemma \ref{lem:ddbarisoQ}.
\end{proof}

\begin{remark}\label{rem:LieinKer}
A holomorphic global sections of $Q$ can be realized as an infinitesimal automorphism of $Q$ (see Appendix \ref{sec:AeppliAut}), with action given by the Dorfman bracket. In the model provided by Example \ref{ex:algebroidH}, we can consider the subspace
$$
V_h = \{r + \xi  \; | \; r \in \Omega^0(\ad P_h), \; \xi \in \Omega^{1,0}, \;  \dbar_{\underline{Q}}(r + \xi) = 0  \} \subset H^0(X,Q)
$$
and define a Lie subalgebra (see \cite[Lemma 2.10]{grt2})
\begin{equation}\label{eq:Lie*}
(\Lie \Aut Q)^* = \{r + \partial \xi  \; | \; r + \xi \in V_h \} \subset \Lie \Aut Q.
\end{equation}
Then, 
%by Lemma \ref{lem:action}, the variation induced by $r + \xi \in V_h$ on the metric $(\omega,h)$ is such that
%\begin{equation}\label{eq:variationhomega}
%\delta(F_h) = [ir,F_{h_0}], \qquad \delta \omega = 0.
%\end{equation}
%and 
there is  an inclusion
\begin{equation}\label{eq:LieinKer}
V_h \hookrightarrow \Ker \cL: r + \xi \to \frac{1}{2} (\xi + \overline \xi) + 2 r.
\end{equation}
We expect that, under natural assumptions, $V_h$ provides an orthogonal complement of $\Ker d \subset \Ker \cL$. A confirmation of this expectation would reduce the problem of deformation of solutions of the Calabi system considered in Theorem \ref{theo:stability under deformations} to algebraic geometry.
\end{remark}

\begin{remark}
In complex dimension $3$ %, where the Hull-Strominger system was initially considered in mathematics \cite{LiYau},
the spaces $H^{1,1}_A(X)$ and $H^{2,2}_{BC}(X)$ are dual, and thus have the same dimension.
In that case, the statement of Theorem \ref{theo:stability under deformations} is slightly simplified.
\end{remark}

\begin{remark}\label{rem:uniquenessdef}
The proof of Theorem \ref{theo:stability under deformations} relies on the implicit function theorem and thus implies a local uniqueness result for \eqref{eq:stromabstractredux}. More precisely, there is a small neighbourhood $V$ of the solution $(\om,h)$:
\begin{equation*}
V=\lbrace (\om',e^{is}h) \; \vert \; \omega' \in \Omega^2, \;  s \in \Om^0(\ad P_h), \; \vert\vert (\om,h)-(\om',h')\vert\vert_{2,k}< \epsilon \rbrace,
 \end{equation*}
such that any solution of \eqref{eq:stromabstractredux} in $V$ lying on a small Bott-Chern deformation of $(X,Q)$ in a nearby Aeppli class $\sigma'$ comes from a unique differentiable family of solutions $(\om_t,h_t)$
induced by the deformation of $(X,Q,\sigma)$ as in Theorem \ref{theo:stability under deformations}. In particular, when the deformation is trivial, nearby solutions are parametrized by a small neighbourhood $U \subset \Sigma_Q(\RR)$ of $\sigma$.
\end{remark}

\begin{remark}\label{rem:prediction}
If we fix $(X,P)$ in Theorem \ref{theo:stability under deformations} and let $Q$ and $\sigma$ vary, the expected overall dimension of the space of deformed solutions is
\begin{equation}\label{eq:prediction2}
\dim \; \Im \; \partial + \dim \ker \partial = \dim H^{1,1}_A(X,\RR),
\end{equation}
where $\partial$ is as in \eqref{eq:partialmapintro}. The first contribution has to be understood as the number of deformations of $Q$, while the latter corresponds to $\dim \Sigma_Q(\RR)$. Remarkably, combining \cite[Table 1]{Latorre} with \cite[Prop. 2.3]{Ugarte}, equation \eqref{eq:prediction2} matches the number of solutions of the Hull-Strominger system in the nilmanifold $\mathfrak{h}_3$ (see Example \ref{example:Ugarte}) found in \cite{FIVU}, after the normalization \eqref{eq:normalizationnorm}. We thank L. Ugarte for this observation.
\end{remark}

To finish this section, we analyze the consequences of Theorem \ref{theo:stability under deformations} when the $\partial \dbar$-Lemma is satisfied. We need the following technical lemma, which is a consequence of Lemma \ref{lem:BCdeformation}. This result shall be compared with \cite[Prop. 4.2]{grt2}.
%Example \ref{example:ddbarBC} suggests that,  provided that the $\partial \dbar$-Lemma is satisfied,  the Bott-Chern deformations of $(X,Q)$ are in one-to-one correspondence with the deformations of the underlying pair $(X,P)$ of holomorphic principal $G$-bundle over a complex manifold.  This is the content of our next result.

\begin{lemma}\label{lem:BCdeformationddbar}
Assume that $X$ is a $\partial \dbar$-manifold.
Then any small complex deformation $(X_t,P_t)$ of $(X,P)$ induces a unique Bott-Chern deformation $(X_t,Q_t)$ of $(X,Q)$
such that the underlying principal bundle of $Q_t$ is $P_t$ for all $t$.
\end{lemma}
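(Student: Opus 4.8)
The plan is to build $Q_t$ fibrewise as the unique Bott-Chern algebroid with underlying bundle $P_t$ guaranteed by Proposition \ref{prop:BCclassification}, and then to promote this to a smooth family, thereby producing a genuine Bott-Chern deformation in the sense of Definition \ref{def:deformation Q}. First I would record two facts that make the fibrewise problem well posed for all small $t$. It is well known that the $\partial\dbar$-Lemma is open under small deformations, so $X_t$ is a $\partial\dbar$-manifold for $t$ near $0$; consequently its Dolbeault, Bott-Chern and Aeppli numbers are deformation invariant, and in particular $h^{2,2}_{BC}(X_t)$ and $h^{1,1}_A(X_t)$ are constant. Next, the class $p_1([P_t])$ is represented by $c(F_{\theta_t}\wedge F_{\theta_t})$ for any connection $\theta_t$ on the fixed smooth bundle $\underline{P}$; as a characteristic class of $\underline{P}$ it is independent of $t$, and it vanishes at $t=0$ since $Q$ exists on $P$. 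Hence $p_1([P_t]) = 0 \in H^4(X,\CC)$ for all small $t$, and by \cite[Cor. 3.6]{grt2} this is equivalent to $p_1([P_t]) = 0 \in H^2(\Omega^{\leqslant\bullet}_t)$. By Proposition \ref{prop:BCclassification}, each $P_t$ therefore carries a unique equivalence class of Bott-Chern algebroid.

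To realize this class by a smooth family I would fix a reduction $h$ of $\underline{P}$ to $K$ and let $\theta^h_t$ be the Chern connection of $h$ in $P_t$, which depends smoothly on $t$ because $t\mapsto(J_t,\dbar_{P_t})$ is smooth. Writing $\rho_t = c(F_{\theta^h_t}\wedge F_{\theta^h_t}) \in \Omega^{2,2}_t$, a smooth family of real, $d$-closed and (by the previous paragraph) $d$-exact $(2,2)$-forms, the problem reduces to solving $dd^c_t\tau_t = \rho_t$ (equivalently $2i\partial_t\dbar_t\tau_t = \rho_t$) for a real $(1,1)$-form $\tau_t$ smooth in $t$: then $(2i\partial_t\tau_t,\theta^h_t)\in\Omega^{\leqslant 1}_t\times\cA_{P_t}$ satisfies the anomaly equation $d(2i\partial_t\tau_t)+\rho_t = 0$, so by Theorem \ref{lemma:deRhamC} it defines a Bott-Chern algebroid $Q_t$ on $P_t$, with $Q_0\cong Q$ by the uniqueness in Proposition \ref{prop:BCclassification}. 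For the smoothness I would argue exactly as in the proof of Lemma \ref{lem:BCdeformation}: choosing a smooth family of Hermitian metrics $g_t$ with Bott-Chern Green operator $G_{\Delta^{BC}_t}$, set $x_t = \tfrac{1}{2i}G_{\Delta^{BC}_t}\rho_t$. Since $\rho_t$ is $d$-exact it is $\partial_t\dbar_t$-exact by the $\partial\dbar$-Lemma, hence orthogonal to the Bott-Chern harmonic space, so $\Delta^{BC}_t(2i\,x_t)=\rho_t$ and one may arrange $x_t\in\Im(\partial_t\dbar_t)$, whence $\partial_t x_t=\dbar_t x_t=0$ and $\partial_t\dbar_t(\partial_t\dbar_t)^* x_t = \Delta^{BC}_t x_t$; then $\tau_t = \Re\big((\partial_t\dbar_t)^* x_t\big)$ is a real $(1,1)$-form with $2i\partial_t\dbar_t\tau_t = \rho_t$. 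The constancy of $h^{2,2}_{BC}(X_t)$ ensures that $G_{\Delta^{BC}_t}$, and hence $x_t$ and $\tau_t$, vary smoothly with $t$. This last point is the one genuinely analytic step and is the main obstacle, being precisely where the $\partial\dbar$ hypothesis is used essentially; the rest is formal.

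Finally, for uniqueness I would invoke Proposition \ref{prop:BCclassification} once more: over the $\partial\dbar$-manifold $X_t$ there is a single isomorphism class of Bott-Chern algebroid with underlying bundle $P_t$, so any other Bott-Chern deformation $(X_t,Q'_t)$ with underlying bundle $P_t$ has $Q'_t\cong Q_t$ for every $t$. Moreover the choices made in constructing $\tau_t$ (its harmonic part, or a twist by a $dd^c_t$-closed $(1,1)$-form) change $Q_t$ only within its isomorphism class by Lemma \ref{lem:ddbarisoQ}, so the induced Bott-Chern deformation is well defined and unique up to isomorphism.
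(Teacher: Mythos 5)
Your proposal is correct and follows essentially the same route as the paper's proof: openness of the $\partial\dbar$-property to get constancy of $h^{2,2}_{BC}(X_t)$ and vanishing of $p_1(P_t)$ in Bott--Chern cohomology, then fixing a reduction $h$ of $\underline{P}$, solving $dd^c_t\tau_t = c(F_{\theta^h_t}\wedge F_{\theta^h_t})$ smoothly in $t$ via the Bott--Chern Green operator exactly as in Lemma \ref{lem:BCdeformation}, and deducing uniqueness from Proposition \ref{prop:BCclassification}. The only differences are expository (you spell out the Green-operator step and the real part of $\tau_t$ explicitly, where the paper refers back to Lemma \ref{lem:BCdeformation}).
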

\begin{proof}
First, note that $X$ satisfying the $\partial\dbar$-lemma, any small complex deformation $X_t$ of $X$ is a $\partial \dbar$-manifold.
For such deformations, $h^{2,2}_{BC}(X_t)$ is constant, and the natural maps $H^{2,2}_{BC}(X_t) \to H^4(X,\C)$ are injective.

Let $(X_t,P_t)$ be a complex deformation of the pair $(X,P)$, which we regard as a smooth family of $(0,1)_t$-connections on the smooth $G$-bundle $\underline{P}$,
together with a smooth family $(J_t)$ of almost complex structures on $X$.
By Chern-Weyl theory, $p_1(P_t)=0$ in De Rham cohomology for all $t$,
and thus by the previous remark $p_1(P_t)=0$ in $H^{2,2}_{BC}(X_t)$ for all $t$.
Taking a reduction $h$ of $\underline{P}$, we then have
$$
dd_t^c \tau_t = c(F_{\theta^h_t} \wedge F_{\theta^h_t}),
$$
where $\tau_t \in \Omega^{1,1}(X_t)$ and $\theta^h_t$ is the Chern connection of $h$ on $P_t$.
Taking now a smooth family of hermitian metrics on $X$ with respect to $(J_t)$ and considering the Bott-Chern laplacians,
it follows as in the proof of Lemma \ref{lem:BCdeformation} that we can choose $\tau_t$ varying smoothly with $t$.
The family $(2i\partial_t \tau_t, \theta^h_t)$ provides a required Bott-Chern deformation of $(X,Q)$.
Uniqueness for such deformations follows by Proposition \ref{prop:BCclassification}.
% For the second part, taking $(\tau_t,h)$ as in the statement of Lemma \ref{lem:BCdeformation}, we have that $\partial(\tau_t - \tau_0)$ is $\partial$-exact and $\dbar$-closed. Then, by the $\partial \dbar$-Lemma,
% $$
% 2i\partial \tau_t = 2i\partial \tau_0 + dB_t
% $$
% for $B_t \in \Omega^{2,0}$, and the statement follows.
\end{proof}

The following specialization of Theorem \ref{theo:stability under deformations} follows by a direct application of Lemma \ref{lem:BCdeformationddbar}. Recall that on a $\partial\dbar$-manifold the Aeppli and Bott-Chern cohomologies are isomorphic to the Dolbeault cohomology $H^{p,q}(X)$. Furthermore, $h^{p,q}(X)$ is constant along any complex deformation.

\begin{corollary}\label{cor:ddbardef}
Assume that $X$ is a $\partial\dbar$-manifold, and that $(X,Q)$ admits a solution of the Calabi system such that $\Ker d = \Ker \cL$.
Then any small complex deformation of $(X,P)$ induces a unique Bott-Chern deformation of $(X,Q)$ admitting a family of solutions of the Calabi system of real dimension $h^{1,1}(X)$.
\end{corollary}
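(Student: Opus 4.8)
The plan is to combine Lemma~\ref{lem:BCdeformationddbar} with Theorem~\ref{theo:stability under deformations}, so that the work reduces to checking that the hypotheses of the latter are met along the deformation and to identifying the dimension of the resulting parameter space. This is genuinely a corollary, so the argument is mostly bookkeeping once the $\partial\dbar$ hypothesis is exploited.

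First I would record that, since $X$ satisfies the $\partial\dbar$-Lemma, every small complex deformation $X_t$ is again a $\partial\dbar$-manifold; this is precisely the observation already used at the start of the proof of Lemma~\ref{lem:BCdeformationddbar}. Consequently the Aeppli, Bott-Chern and Dolbeault cohomologies of $X_t$ are all isomorphic, and by deformation invariance of the Hodge numbers $h^{p,q}$ on $\partial\dbar$-manifolds the dimensions $h^{1,1}_A(X_t)$ and $h^{2,2}_{BC}(X_t)$ are constant for $t$ small. Next I would invoke Lemma~\ref{lem:BCdeformationddbar}: a small complex deformation $(X_t,P_t)$ of $(X,P)$ produces a \emph{unique} Bott-Chern deformation $(X_t,Q_t)$ of $(X,Q)$ whose underlying principal bundle is $P_t$, so there is a canonical family $(X_t,Q_t)_{t\in B}$ to which the stability result can be applied.

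With these preliminaries, the hypotheses of Theorem~\ref{theo:stability under deformations} all hold: $(X,Q)$ carries a solution of the Calabi system with $(\Ker d)^\perp \cap \Ker \cL = 0$ by assumption, and the constancy of $h^{1,1}_A(X_t)$ and $h^{2,2}_{BC}(X_t)$ was just established. Applying the theorem yields, for every $t$ small enough, a differentiable family of solutions of the Calabi system on $(X_t,Q_t)$ parametrized by an open subset of $\Sigma_{Q_t}(\RR)$. It then remains to compute the dimension of this parameter space: by Lemma~\ref{lem:ddbarisoQ}, on a $\partial\dbar$-manifold $\Sigma_{Q_t}(\RR)$ is modelled on $H^{1,1}_A(X_t,\RR)$, so its dimension is $h^{1,1}_\RR(X_t)$, and since the Hodge numbers are constant along the deformation this equals $h^{1,1}_\RR(X)$, giving the asserted dimension.

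The only points that require any care—rather than a true obstacle—are the bookkeeping of the real structures, where one must ensure the open set sits inside the real locus $\Sigma_{Q_t}(\RR)$ rather than the full complex $\Sigma_{Q_t}$, and the verification that the unique Bott-Chern deformation furnished by Lemma~\ref{lem:BCdeformationddbar} is exactly the family over which Theorem~\ref{theo:stability under deformations} is run. Both are automatic once the $\partial\dbar$-Lemma is in force, since it is what simultaneously guarantees uniqueness of the deformed algebroid, the identification of $\Sigma_{Q_t}(\RR)$ with $H^{1,1}_A(X_t,\RR)$, and the constancy of the cohomological dimensions.
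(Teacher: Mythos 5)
Your proof is correct and follows essentially the same route as the paper: the paper likewise obtains the corollary by feeding the unique Bott-Chern deformation from Lemma~\ref{lem:BCdeformationddbar} into Theorem~\ref{theo:stability under deformations}, using that small deformations of a $\partial\dbar$-manifold remain $\partial\dbar$ so that $h^{1,1}_A(X_t)$ and $h^{2,2}_{BC}(X_t)$ are constant, and identifying $\Sigma_{Q_t}(\RR)$ with $H^{1,1}_A(X_t,\RR)\cong H^{1,1}(X_t)$ via Lemma~\ref{lem:ddbarisoQ} to get the dimension $h^{1,1}_\RR(X)$.
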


\subsection{Deformation of standard embedding solutions}\label{subsec:deformstemb}

In this section we address the deformation problem for special solutions of the Calabi system \eqref{eq:stromabstractredux}, that we will call \emph{standard embedding solutions} by analogy with a similar situation considered in the physics literature (see Remark \ref{rem:stembedding}).

Let $X$ be a compact complex manifold of dimension $n$ with compatible smooth volume form $\mu$. Let $P$ be a holomorphic principal $G$-bundle with $p_1(P) = 0 \in H^{2,2}_{BC}(X)$. For the next definition, we take the point of view of the principal bundle $P$, and therefore we consider the system \eqref{eq:stromabstract}.

\begin{definition}\label{def:stembsol}
A solution $(\omega,h)$ of the system \eqref{eq:stromabstract} is called a \emph{standard embedding solution} if $G = G' \times G'$ for a reductive Lie group $G'$, $c$ is (a multiple) of the difference of the Killing form $-\tr_{\mathfrak{g}'}$ on the two copies of $\mathfrak{g}'$, that is,
$$
c = \alpha (-\tr_{\mathfrak{g}'} + \tr_{\mathfrak{g}'})
$$
for $\alpha \in \RR$, $P = P_{G'} \times_X P_{G'}$ for a holomorphic $G'$-bundle $P_{G'}$ over $X$ and $h = h_K \times h_K$ for a reduction $h_K$ of $P_{G'}$ to a maximal compact $K \subset G'$.
\end{definition}

For a standard embedding solution $c(F_h \wedge F_h) = 0$ and hence $\omega$ satisfies the system \eqref{eq:stromabstractexact}. Thus, by Proposition \ref{prop:extremaexactcase}, $\omega$ is K\"ahler.
%IS THERE SOMETHING SPECIAL ABOUT THE HOLOMORPHIC EXTENSION FOR STD EMBEDDING SOLUTIONS? WHAT FOLLOWS SEEMS TO BE WRONG. It is easy to verify, from the construction at the end of Section \ref{subsec:holCourhermit}, that the holomorphic extension \eqref{eq:holext} defined by the Bott-Chern algebroid $Q$ associated to a standard embedding solution (as in Proposition \ref{prop:stromholcour}) breaks into
%$$
%Q = T^*X \oplus A_{P}.
%$$
Conversely, given a K\"ahler metric on $X$ satisfying \eqref{eq:volume} and a Hermite-Einstein reduction $h_K$ on $P_{G'}$, we can construct a standard embedding solution by setting $h = h_K \times h_K$. Furthermore, when $X$ is Calabi-Yau and $\mu$ is given by \eqref{eq:muOmega}, the metric has holonomy contained in $\SU(n)$.

In order to address the deformation problem for standard embedding solutions, we need first to understand the operator $\cU_1'$ in Lemma \ref{lem:duality harmonics} when $\omega$ is K\"ahler.

\begin{proposition}\label{prop:vanishing V W Kahler}
Assume that $\om$ is K\"ahler. Then $\cV$ and $\cW$ in \eqref{eq:orthogonalharmonicsdR} vanish, and therefore $\cU_1'$ in Lemma \ref{lem:duality harmonics} is invertible.
\end{proposition}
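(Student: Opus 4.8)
The plan is to reduce the statement to the vanishing of $\cV$, and to establish that by an integration-by-parts identity fed into the K\"ahler identities. Since $\om$ is K\"ahler, Proposition \ref{prop:extremaexactcase} gives that $f_\om$ is constant and $\theta_\om=0$, so that $\cU_1\xi=d\big(T((1+J)d\xi)\wedge\om^{n-2}\big)$. By Lemma \ref{lem:duality harmonics} the Hodge star maps $\cV$ isomorphically onto $\cW$, and the restricted operator $\cU_1\colon(\Im\,d^*)_k\to(\Im\,d)_{k-2}$ is Fredholm of index zero; its kernel is exactly $\cV=\cH^1_{\cU_1}\cap(\cH^1_d)^\perp$, because a form in $\Im\,d^*$ is automatically $d^*$-closed and $\om$-orthogonal to $\cH^1_d$. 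Hence it suffices to prove $\cV=0$, i.e. that every real $1$-form $\xi$ with $d^*\xi=0$ and $\cU_1\xi=0$ is automatically $d$-closed, so that $\cH^1_{\cU_1}=\cH^1_d$.

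Writing $\alpha=(1+J)d\xi=2(d\xi)^{1,1}$, the first step is to pair $\cU_1\xi$ with $\xi$. Integrating the (identically vanishing) top form $\cU_1\xi\wedge\xi$ and using Stokes together with $d\om=0$, only the $(1,1)$-part of $d\xi$ survives the wedge with the $(n-1,n-1)$-form $T(\alpha)\wedge\om^{n-2}$, and I obtain $\int_X T(\alpha)\wedge\alpha\wedge\om^{n-2}=0$. Using the Lefschetz splitting $\alpha=\alpha_0+\tfrac1n(\Lambda_\om\alpha)\om$ into primitive and trace parts, the Weil identity $*\alpha_0=-\tfrac{1}{(n-2)!}\alpha_0\wedge\om^{n-2}$, and $\alpha_0\wedge\om^{n-1}=0$, this integral relation becomes $\|\alpha_0\|^2=\tfrac{n-2}{2n}\|\Lambda_\om\alpha\|^2$.

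Next I would feed the K\"ahler identities $[\Lambda_\om,\del]=i\dbar^*$ and $[\Lambda_\om,\dbar]=-i\del^*$ into this relation. Setting $w=\del^*\xi^{1,0}$, reality of $\xi$ and $d^*\xi=\del^*\xi^{1,0}+\dbar^*\xi^{0,1}=0$ force $w$ to be purely imaginary, and a short computation gives $\Lambda_\om\alpha=4\,\Im(w)$, hence $\|\Lambda_\om\alpha\|^2=16\|w\|^2$, together with $\|\alpha\|^2=8\|\dbar\xi^{1,0}\|^2+8\|w\|^2$ (the cross term $\langle\del\xi^{0,1},\dbar\xi^{1,0}\rangle$ being evaluated through the anticommutation $\del^*\dbar+\dbar\del^*=0$). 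Substituting $\|\alpha_0\|^2=\|\alpha\|^2-\tfrac1n\|\Lambda_\om\alpha\|^2$ into the relation of the previous paragraph makes all $\|w\|^2$-terms cancel exactly and leaves $\|\dbar\xi^{1,0}\|^2=0$; re-inserting $\dbar\xi^{1,0}=0$ then forces $\|w\|^2=0$ as well. Finally the K\"ahler--Bochner identity $\|\del\xi^{1,0}\|^2=\|\dbar\xi^{1,0}\|^2-\|\del^*\xi^{1,0}\|^2=0$ yields $(d\xi)^{2,0}=\del\xi^{1,0}=0$, and with its conjugate and $(d\xi)^{1,1}=0$ this gives $d\xi=0$. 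Thus $\xi\in\cH^1_d$, so $\cV=0$, $\cW=*\cV=0$, and $\cU_1$, being index zero with trivial kernel, is invertible.

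The main obstacle is precisely that $\cU_1\xi$ depends only on the $(1,1)$-component of $d\xi$, so the equation $\cU_1\xi=0$ can never, by itself, control the $(2,0)$-part of $d\xi$. The idea that makes the argument work is to couple the co-closedness $d^*\xi=0$ with the K\"ahler identities to propagate the vanishing from $(d\xi)^{1,1}$ to all of $d\xi$; the delicate point is the exact cancellation of the trace terms $\|w\|^2$, which is what forces $\dbar\xi^{1,0}=0$ rather than merely a sign-indefinite inequality.
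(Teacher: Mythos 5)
Your proposal is correct, and every step checks out (the integral identity $\|\alpha_0\|^2=\tfrac{n-2}{2n}\|\Lambda_\om\alpha\|^2$, the computation $\Lambda_\om\alpha=4\,\Im(w)$ with $w$ purely imaginary from $d^*\xi=0$, the cross-term evaluation, and the final cancellation are all right), but it follows a genuinely different route from the paper. The paper does not pair $\cU_1\xi$ with $\xi$; instead it applies $d^c$ to the equation $\cU_1\xi=0$ to obtain $\Delta_\om(\Lambda_\om\gamma)=0$, so that $\Lambda_\om\gamma$ is constant, and then uses the K\"ahler identity $\Lambda_\om d\xi=-(d^c)^*\xi$ (a function of zero mean) to conclude that $d\xi$ is \emph{primitive}. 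Once primitivity is in hand, a single Stokes argument gives $\int_X J(d\xi)\wedge d\xi\wedge\om^{n-2}=0$, and the two Hodge--Riemann (Weil) relations $*(d\xi)^{1,1}=-(d\xi)^{1,1}\wedge\om^{n-2}/(n-2)!$ and $*(d\xi)^{2,0+0,2}=+(d\xi)^{2,0+0,2}\wedge\om^{n-2}/(n-2)!$ make this integral equal to $-(n-2)!\,\|d\xi\|^2_{L^2}$, killing \emph{all} of $d\xi$ at once. This is precisely the point you flagged as the main obstacle: you are right that $\cU_1\xi$ only sees $(d\xi)^{1,1}$, and you resolve this by invoking $d^*\xi=0$, the K\"ahler identities, and $\Delta_\partial=\Delta_{\dbar}$ to propagate the vanishing to $(d\xi)^{2,0}$; the paper instead resolves it by choosing a quadratic form ($Jd\xi\wedge d\xi\wedge\om^{n-2}$) whose sign is definite on the $(2,0)+(0,2)$ part as well, so no Bochner-type identity is needed. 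A notable difference in hypotheses: your argument genuinely uses the coclosedness $d^*\xi=0$ built into $\cV$, whereas the paper's argument, as written, never does --- it proves the slightly stronger statement that $\ker\cU_1\cap\Omega^1=\ker d\cap\Omega^1$. Your approach buys a self-contained $L^2$/Bochner computation in the familiar language of the K\"ahler identities; the paper's buys brevity at the final step and independence from the gauge-fixing condition.
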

\begin{proof}
From Lemma \ref{lem:duality harmonics}, it is enough to show that $\cV=0$.
Let $\xi\in\cV$. To simplify notations, set $\gamma=(1+J)d\xi=2(d\xi)^{1,1}$.
By definition of $\cU_1'$, and as $\om$ is K\"ahler (note that $e^{-f_\om}$ is constant), we have
\begin{equation*}
 \label{eq:equation d xi Kahler}
 d\gamma \wedge \om^{n-2} = \frac{1}{2(n-1)} (d \Lambda_\om \gamma) \wedge \om^{n-1}.
\end{equation*}
The left hand side in the above identity satisfies
\begin{equation*}
 d^c d\gamma\, \wedge \om^{n-2}= 0.
\end{equation*}
Thus the right hand side is also $d^c$-closed. Applying $d^c$, we obtain
\begin{eqnarray*}
 \Delta_\om (\Lambda_\om \gamma)=0,
\end{eqnarray*}
and the function $\Lambda_\om \gamma$ is constant so, using that
%Recall that $\gamma=(1+J)d\xi = 2 (d\xi)^{1,1}$ , and that
$\Lambda_\om \alpha=0$ for any $\alpha\in\Om^{2,0 + 0,2}$, we conclude that
$$
\Lambda_\om \gamma = \Lambda_\om(1+J)d\xi = 2 \Lambda_\om (d\xi).
$$
From the K\"ahler identities,
$$
\Lambda_\om d \xi = d\Lambda_\om \xi - (d^c)^*\xi=-(d^c)^* \xi
$$
and, as $\Lambda_\om d \xi$ is constant, it must vanish identically.
Thus, $d\xi$ is primitive. Using now
$$
d^c \gamma \wedge \om^{n-2}=0
$$
and $\gamma= (1+J)d\xi= d\xi + d^c(J\xi)$,
we have that $d\xi$ also satisfies
$$
d^cd\xi \wedge \om^{n-2}=0.
$$
We deduce by Stokes' theorem that
\begin{equation}
\label{eq:integral norm gamma is zero}
 \int_X  d^c(J\xi) \wedge  d\xi \wedge \om^{n-2} =\int_X J(d\xi)\wedge d\xi\wedge \om^{n-2}= 0.
\end{equation}
Note that $Jd\xi= (d\xi)^{1,1} - (d\xi)^{2,0+0,2}$.
Recall also that $d\xi$ being primitive implies that
$$
*(d\xi)^{1,1} = - (d\xi)^{1,1} \wedge \frac{\om^{n-2}}{(n-2)!},
$$
whereas
$$
*(d\xi)^{2,0+0,2} = (d\xi)^{2,0+0,2}\wedge \frac{\om^{n-2}}{(n-2)!}.
$$
From (\ref{eq:integral norm gamma is zero}), we deduce that the $L^2$-norm $\vert\vert d\xi \vert\vert_\om$ is zero, and $d\xi= 0$.
By definition of $\cV$ we conclude that %, $\xi$ is $d^*$-exact. Thus
$\xi$ must be zero, and the proof is complete.
\end{proof}

%\begin{corollary}
% \label{cor:vanishing V W Kahler}
%If $\om$ is K\"ahler, the operator $\cU_1$ is invertible.
%\end{corollary}

We fix a standard embedding solution $(\omega,h)$ of \eqref{eq:stromabstract} as in Definition \ref{def:stembsol}.
Note that for any constant $r>0$, the pair $(r \om,h)$ is also a standard embedding solution, and it defines a one-parameter family of hermitian metrics on the Bott-Chern algebroid $Q$ with holomorphic string class $[(0,\theta^h)]$. Denote by $\cL_{r\omega,h}$ the linearization of the Calabi system at $(r \om,h)$ with fixed Aeppli class $\sigma = [(r\omega,h)]$. Following an idea by Li and Yau \cite{LiYau}, we have the following.

\begin{lemma}\label{lem:rescaling trick}
Assume that $G'$ is semisimple and that the Chern connection of $h_K$ is irreducible. Then, there is a constant $r_0>0$ such that for any $r\geq r_0$, the operator \eqref{eq:FredholmLsigma0} induced by $\cL_{r\om,h}$  is invertible.
\end{lemma}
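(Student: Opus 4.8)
The plan is to reduce the statement to the triviality of the kernel, and then to exploit the different rates at which the constituent operators scale under $\om\mapsto r\om$. Since $(\om,h)$ is a standard embedding solution, $\om$ is K\"ahler by Proposition~\ref{prop:extremaexactcase}; hence $r\om$ is K\"ahler for every $r>0$, so $\theta_{r\om}=0$, the dilaton $f_{r\om}$ is constant, and $\ell_X=0$. By Corollary~\ref{cor:index zero linearization} the operator $\cL_{r\om,h}$ in \eqref{eq:FredholmLsigma0} is therefore Fredholm of index zero for all $r$, so invertibility is equivalent to $\ker\cL_{r\om,h}=0$, and it suffices to produce $r_0$ beyond which this kernel vanishes.

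Next I would track the scaling. Since $\Lambda_{r\om}=r^{-1}\Lambda_\om$, the operator $T$ is scale invariant, $T_{r\om}=T_\om$. Writing $\cU_1,\cU_2$ for the decoupled elliptic operators \eqref{eq:lin elliptic term} built from $\om$, a direct computation using $(r\om)^{n-2}=r^{n-2}\om^{n-2}$ and $(r\om)^{n-1}=r^{n-1}\om^{n-1}$ gives, after dividing by $r^{n-2}$, the block form
\begin{equation*}
r^{-(n-2)}\cL_{r\om,h}(\xi,s)=\bigl(\cU_1\xi+As,\; r\,\cU_2 s+B\xi+Cs\bigr),
\end{equation*}
where $As=-d(c(s,F_h)\wedge\om^{n-2})$, $B\xi=(n-1)F_h\wedge(d\xi+Jd\xi)\wedge\om^{n-2}$ and $Cs=-(n-1)F_h\wedge c(s,F_h)\wedge\om^{n-2}$ are the $F_h$-coupling terms. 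Crucially $A,B$ are of first order and $C$ of order zero, so all three are dominated by the second-order diagonal blocks, and the bundle block carries an extra factor of $r$.

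Then I would establish invertibility of the diagonal blocks. Since $\om$ is K\"ahler, $\cU_1\colon(\Im d^*)_k\to(\Im d)_{k-2}$ is invertible by Proposition~\ref{prop:vanishing V W Kahler}. For $\cU_2$, which is elliptic of index zero, the kernel consists of the covariantly constant sections of $\ad P_h$; the assumption that $G'$ is semisimple and that the Chern connection of $h_K$ is irreducible forces this kernel to vanish, whence $\cU_2$ is invertible as well. This is precisely the step where both hypotheses of the lemma enter.

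Finally I would run the perturbation argument. Given $(\xi,s)\in\ker\cL_{r\om,h}$, the first block equation $\cU_1\xi=-As$ together with invertibility of $\cU_1$ and the first-order nature of $A$ yields an elliptic estimate $\|\xi\|_k\leq c\,\|s\|_{k-1}$; substituting into the second block equation $r\,\cU_2 s=-(B\xi+Cs)$ and using invertibility of $\cU_2$ gives $\|s\|_k\leq (c'/r)\,\|s\|_k$. Choosing $r_0$ with $c'/r_0<1$ forces $s=0$, hence $\xi=0$, for all $r\geq r_0$, which proves the claim. The main obstacle I anticipate is the Sobolev bookkeeping: one must arrange the orders of $A,B,C$ against the single power of $r$ so that the resulting estimate genuinely contracts, and verify that the constants $c,c'$ (which depend only on $\om$, $F_h$ and the fixed inverses of $\cU_1$ and $\cU_2$) can be taken uniform in $r$, so that the contraction closes for $r$ large.
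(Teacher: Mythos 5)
Your proof is correct and follows essentially the same route as the paper's (the Li--Yau rescaling trick): both rest on the invertibility of $\cU_1$ via Proposition \ref{prop:vanishing V W Kahler} in the K\"ahler case, the identification of $\cU_2$ with the Chern Laplacian whose kernel dies by semisimplicity and irreducibility, and the observation that the $F_h$-coupling terms are suppressed by a factor of $r^{-1}$ relative to the diagonal blocks. The only cosmetic difference is that the paper rescales the argument $(\xi,s)\mapsto(r\xi,s)$ and concludes by openness of the set of invertible operators, whereas you keep the argument fixed and close the estimate via a kernel contraction combined with the index-zero statement of Corollary \ref{cor:index zero linearization}.
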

\begin{proof}
As in \cite{LiYau}, we evaluate $\cL_{r\om,h}$ at $(r\xi,s)$ :
\begin{equation*}
 \cL_{r\om,h}(r\xi, s) = r^{n-1} \tilde \cU(\xi, s) + r^{n-2} \tilde \cK( \xi,s)
\end{equation*}
with
\begin{eqnarray*}
  \tilde \cU(\xi, s) & = &  (\cU_1'(\xi), \: -2i \dbar \partial^h s \wedge \om^{n-1}+(n-1)F_h\wedge (1+J)d\xi\wedge \om^{n-2} ) \\
   \tilde \cK(\xi, s) & = & (d(2c(s, F_h)\wedge \om^{n-2}), \; (n-1) F_h\wedge c(2s, F_h)\wedge \om^{n-2} ).
\end{eqnarray*}
As $\om$ is K\"ahler, we have $2i \dbar \partial^h s \wedge \om^{n-1} = (\Delta_h s) \omega^n/n$, where $\Delta_h s$ is the laplacian of the Chern connection of $h$. Furthermore, as the Chern connection of $h_K$ is irreducible $\operatorname{Ker} \Delta_h s$ is identified with the centre of $\mathfrak{g}' \oplus \mathfrak{g}'$, which vanishes because $G'$ is semisimple, and therefore by Proposition \ref{prop:vanishing V W Kahler} the operator $\tilde \cU$ is invertible.
%Note that $\cU'$ is Fredholm of zero index.  by Proposition \ref{prop:vanishing V W Kahler} the operator $\cU'$ is invertible.
As
$$
r^{1-n}\cL_{r\om,h}(r\xi, s)= \tilde \cU (\xi, s) + r^{-1} \tilde \cK(\xi, s),
$$
for $r$ large enough, $r^{1-n}\cL_{r\om,h}$ is invertible.
\end{proof}

Combining Corollary \ref{cor:ddbardef} with Lemma \ref{lem:rescaling trick} we obtain the following result.
Recall that on a K\"ahler manifold the Aeppli cohomology is isomorphic to the Dolbeault cohomology $H^{p,q}(X)$.
Furthermore, $h^{p,q}(X)$ is constant along any complex deformation.

\begin{corollary}\label{cor:stability under deformations}
Assume that $(\omega,h)$  is a standard embedding solution of \eqref{eq:stromabstract}, with $G'$ semisimple and $h_K$ irreducible.
Then, up to scaling of $\om$,
any small complex deformation of $(X,P)$ admits an $h^{1,1}(X)$-dimensional differentiable family of solutions of
\eqref{eq:stromabstractredux}.
\end{corollary}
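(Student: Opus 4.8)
The plan is to reduce the statement to a direct application of Corollary \ref{cor:ddbardef}, whose only non-trivial hypothesis is the vanishing of $(\Ker d)^\perp \cap \Ker \cL$. The crucial observation, already recorded before the statement, is that for a standard embedding solution one has $c(F_h \wedge F_h) = 0$, so that $\omega$ satisfies the pluriclosed and conformally balanced system \eqref{eq:stromabstractexact} and hence, by Proposition \ref{prop:extremaexactcase}, is K\"ahler. In particular $X$ is a $\partial\dbar$-manifold, so the $\partial\dbar$-Lemma and the constancy of the relevant Hodge numbers along deformations are available, and Corollary \ref{cor:ddbardef} becomes applicable as soon as the kernel condition is verified at some solution.

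First I would use that $\omega$ K\"ahler forces $\partial\omega = 0$, so the associated Bott-Chern algebroid $Q$ has holomorphic string class $[(0,\theta^h)]$, and for every $r>0$ the rescaled pair $(r\omega,h)$ is again a standard embedding solution of \eqref{eq:stromabstract}, defining a hermitian metric on the same $Q$ in the Aeppli class $\sigma_r = [(r\omega,h)]$. The purpose of this rescaling is to invoke Lemma \ref{lem:rescaling trick}: under the hypotheses that $G'$ is semisimple and the Chern connection of $h_K$ is irreducible, there exists $r_0>0$ such that the linearization $\cL_{r\omega,h}$, that is the operator \eqref{eq:FredholmLsigma0}, is invertible for all $r\geq r_0$.

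The final step is then immediate: invertibility of $\cL_{r\omega,h}$ forces $\Ker \cL_{r\omega,h}=0$, and a fortiori $(\Ker d)^\perp \cap \Ker \cL_{r\omega,h}=0$. Thus, fixing any $r\geq r_0$, the rescaled standard embedding solution $(r\omega,h)$ meets all hypotheses of Corollary \ref{cor:ddbardef}. Applying that corollary to $(X,Q)$ with the solution $(r\omega,h)$ yields that any small complex deformation of $(X,P)$ induces a unique Bott-Chern deformation of $(X,Q)$ carrying an $h^{1,1}_\RR(X)$-dimensional differentiable family of solutions of \eqref{eq:stromabstractredux}, which is precisely the assertion, the passage from $\omega$ to $r\omega$ accounting for the phrase ``up to scaling of $\omega$''.

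The genuine difficulty is entirely encapsulated in Lemma \ref{lem:rescaling trick} and the linear theory it rests on: establishing invertibility of the full operator $\cL$ is delicate because the pairing $c$ may be indefinite on $\mathfrak{k}$, so one cannot appeal to a positivity or convexity argument directly. It is only after rescaling by $r\gg0$ that the leading (second order) elliptic part $\cU'$, whose invertibility is controlled by the vanishing of $\cV$ and $\cW$ for K\"ahler $\omega$ in Proposition \ref{prop:vanishing V W Kahler} together with the semisimplicity of $G'$ and the irreducibility of $h_K$, dominates the first order compact error $\cK'$. Once this invertibility is granted, the deformation argument via the implicit function theorem is already packaged inside Corollary \ref{cor:ddbardef}, so the remaining work is the routine verification that rescaling preserves the standard embedding structure and the K\"ahler hypotheses.
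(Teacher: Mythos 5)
Your proposal is correct and follows exactly the paper's intended argument: the paper derives this corollary precisely by combining Lemma \ref{lem:rescaling trick} (invertibility of $\cL_{r\omega,h}$ for $r\gg 0$, which gives the vanishing of $(\Ker d)^\perp\cap\Ker\cL$) with Corollary \ref{cor:ddbardef}, using that a standard embedding solution has $c(F_h\wedge F_h)=0$ and is therefore K\"ahler by Proposition \ref{prop:extremaexactcase}. Nothing is missing.
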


\begin{example}
Let $X$ be a projective complete intersection Calabi-Yau threefold.
Consider the standard embedding solution of the Hull-Strominger system in Remark \ref{rem:stembedding},
given by a Calabi-Yau metric $g$ on $X$ with K\"ahler form $\omega$ and induced hermitian metric $h$ on $TX \oplus TX$.  By \cite[Cor. 2.2]{Huyb}, the tangent bundle $TX$ has unobstructed deformations, parametrized by $ H^1(\End TX)$. By Lemma \ref{lem:BCdeformationddbar} the space
$$
 H^1(\End TX) \oplus H^1(\End TX).
$$
parametrizes the unobstructed Bott-Chern deformations of $Q$, with fixed $X$.
From Corollary \ref{cor:stability under deformations}, we obtain a family of solutions of the Hull-Strominger system obtained by deformation of the standard embedding solution of real dimension
$$
h^{1,1}(X) + 4 h^1(\End TX).
$$
Among these deformations there is a codimension $2 h^1(\End TX)$ family which corresponds to standard embedding solutions. Away from this locus, the deformed solutions are non-K\"ahler.
For the quintic hypersurface $h^1(\End TX) = 224$ and $h^{1,1}(X) = 1$,
and we obtain a family of non-K\"ahler solutions of real dimension $897$.
\end{example}

\appendix

\section{Aeppli classes and automorphisms}\label{sec:AeppliAut}

In this section we define the action of the group of automorphisms of a Bott-Chern algebroid $Q$ on its space of hermitian metrics $B_Q^+$, and study its effect on Aeppli classes. 

Let $Q$ be a Bott-Chern algebroid over a compact complex manifold $X$ with underlying principal bundle $P$. Denote by $\cG_P$ the group of automorphisms of $P$ projecting to the identity on $X$. Recall that an automorphism of $Q$ is given by a pair $(\varphi,g)$ as in \eqref{eq:defstringiso} with $g \in \cG_P$. By \cite[Prop. 2.12]{grt2}, there is a short exact sequence of groups
\begin{equation}\label{eq:AutW}
% \begin{array}{lll}
\xymatrix{
	0 \ar[r] & \Omega^{2,0}_{cl} \ar[r] & \Aut Q \ar[r] & \Ker \sigma_P \ar[r] & 1
}
% \end{array}
\end{equation}
where $\sigma_P \colon \cG_P \to H^1(\Omega^{\leqslant \bullet})$ is the homomorphism \eqref{eq:sigmaP}, $\Aut  Q$ is the group of automorphisms of $Q$, and $\Omega^{2,0}_{cl}$ denotes the space of closed $(2,0)$-forms on $X$. Here, the map from $\Aut Q$ to $\Ker \sigma_P$ corresponds to the projection $(\varphi,g) \mapsto g$. Our next result defines an $\Aut Q$-action on $B_Q$ preserving the set of hermitian metrics $B_Q^+ \subset B_Q$ (see Section \ref{subsec:holCourhermit}).

\begin{lemma}\label{lem:action}
Let $Q$ be a Bott-Chern algebroid. Then, there is a well-defined action of $\Aut Q$ on $B_Q$ given by
$$
(\varphi,g) \cdot (\tau,h) = (\tau,gh)
$$
for any $(\varphi,g) \in \Aut Q$ and $(\tau,h) \in B_Q$.
\end{lemma}

\begin{proof}
Given $g \in \cG_P$ one has that $(\tau,h)$ and $(\tau,gh)$ define the same holomorphic string class if and only if $g \in \Ker \sigma_P$. The statement follows now from the definition of $B_Q$ and the exact sequence \eqref{eq:AutW}. 
\end{proof}

\begin{remark}
The $\Aut Q$-action in Lemma \ref{lem:action} can be recovered from \cite[Lem. A.2]{grt3}. Essentially, it corresponds to the restriction of the natural action of the group of automorphisms of a smooth complex string algebroid on its space of \emph{compact forms}. 
\end{remark}

Our next goal is to clarify the effect of the $\Aut Q$-action on Aeppli classes (see Definition \ref{def:ApQ}). For this, we first lift the homomorphism $\sigma_P$ to the Aeppli cohomology of $X$ using the Bott-Chern secondary characteristic class $R$ defined in Proposition \ref{prop:Donaldson}.

\begin{lemma}\label{lemma:tildesigmaP}
Let $P$ be a holomorphic principal $G$-bundle over $X$. Then, there is a homomorphism of groups
\begin{equation}\label{eq:tildesigmaP}
\widetilde \sigma_P \colon \cG_P \to H^{1,1}_A(X,\RR)
\end{equation}
defined by 
$$
\widetilde \sigma_P(g) = [R(h,gh)] \in H^{1,1}_A(X,\RR)
$$
for any choice of reduction $h \in \Omega^0(P/K)$, inducing a commutative diagram 
	\begin{equation}\label{eq:tildesigmaPdiagram}
	\xymatrix{
		& H^{1,1}_A(X,\RR) \ar[d]^{2i\partial}\\
		\cG_P \ar[r]^{\sigma_P \;\; } \ar[ru]^{\widetilde \sigma_P \;\; }  & H^1(\Omega^{\leqslant \bullet}) .
	}
	\end{equation}
\end{lemma}

\begin{proof}
Property $(3)$ in Proposition \ref{prop:Donaldson} implies that $R(h,gh)$ induces a well-defined class $[R(h,gh)] \in H^{1,1}_A(X,\RR)$, for any choice of reduction $h$ and $g \in \cG_P$. Property $(2)$ in Proposition \ref{prop:Donaldson} implies that
\begin{equation}\label{eq:RautP}
R(g h_1,g h_0) = R(h_1,h_0),
\end{equation}
and therefore, given another reduction $h'$, property $(1)$ in Proposition \ref{prop:Donaldson} implies
	\begin{align*}
	[R(h',gh')] & =  [R(h',gh)] + [R(gh,gh')]\\
	& = [R(h',h)] + [R(h,gh)] + [R(h,h')] = [R(h,gh)],
\end{align*}
and hence \eqref{eq:tildesigmaP} is independent of choices. Using Lemma \ref{lem:CSRinvariant}, we also have
\begin{align*}
[2i \partial R(h,gh)] & = [CS(\theta^{gh}) - CS(\theta^h) - d \la \theta^{gh} \wedge \theta^{h} \ra]\\
& = [CS(g\theta^{h}) - CS(\theta^h) - d \la g\theta^{h} \wedge \theta^{h} \ra] \in H^1(\Omega^{\leqslant \bullet}),
\end{align*}
which proves the commutativity of the diagram \eqref{eq:tildesigmaPdiagram}.  Finally, using again \eqref{eq:RautP} and Proposition \ref{prop:Donaldson}
\begin{align*}
[R(h,gg'h)] & =  [R(h,gh)] + [R(gh,gg'h)] = [R(h,gh)] + [R(h,g'h)],
\end{align*}
which proves that \eqref{eq:tildesigmaP} is a homomorphism, as claimed.
\end{proof}

Observe that, from the commutative diagram \eqref{eq:tildesigmaPdiagram}, we have an equality 
$$
\widetilde{\sigma}(\Ker \sigma_P) = \Im \; \widetilde \sigma_P \cap \Ker \partial.
$$
We are ready to give our formula for the change of Aeppli class under the action of an element in $\Aut Q$.

\begin{proposition}\label{Prop:AeppliAut}
Let $Q$ be a Bott-Chern algebroid. Then, for any $(\varphi,g) \in \Aut Q$ and $(\tau,h) \in B_Q$, we have
$$
[(\varphi,g) \cdot (\tau,h)] = [(\tau,h)] + \widetilde \sigma_P(g) \in \Sigma_Q(\RR).
$$
\end{proposition}

\begin{proof}
The proof follows from property $(1)$ in Proposition \ref{prop:Donaldson} and the definition of $\widetilde \sigma_P$. For instance,
$$
[(\varphi,g) \cdot (\tau,h)] - [(\tau,h)]  = Ap(\tau,gh,\tau,h) = - [R(gh,h)] = \widetilde \sigma_P(g) \in H^{1,1}_A(X,\RR).
$$
\end{proof}

The previous result creates a potential issue regarding the proposed picture for the uniqueness problem for the twisted Hull-Strominger system \eqref{eq:twistedStromholredux}, since the $\Aut Q$-action on $B_Q$ does not necessarily preserve Aeppli classes. A possible solution to this caveat is to consider a notion of \emph{big Aeppli classes} on $Q$ replacing $H^{1,1}_A(X,\RR)$ in Proposition \ref{propo:Aepplicone} by the natural quotient
$$
H^{1,1}_A(X,\RR)/\Im \; \widetilde \sigma_P.
$$
In our final result we identify constant directions in $\Sigma_Q(\RR)$ under the $\Aut Q$-action, via the duality isomorphism $H^{1,1}_A(X) \cong H^{n-1,n-1}_{BC}(X)^*$. This is based on a remarkable relation between $\widetilde \sigma_P$ and the \emph{Futaki Invariant} of the holomorphic principal bundle $P$. Similarly as in the theory of K\"ahler-Einstein metrics \cite{Futaki1}, this invariant is given by a character %of the Lie algebra of the holomorphic gauge group
\begin{equation}\label{eq:Futaki}
    \xymatrix{
 \mathcal{F} \colon \Lie \; \cG_P \ar[r] & H^{1,1}_A(X,\CC).
 }
\end{equation}
defined by
$$
 \mathcal{F} (s) = [c(s,F_h)] \in H^{1,1}_A(X,\CC)
$$
for any choice of reduction $h$ on $P$. %It can be easily checked that $ \mathcal{F} (s)$ is independent of the choice of $h$. 
We can regard $\mathcal{F}$ as an obstruction to the existence of solutions of the Hermite-Einstein equation for a given choice of balanced metric $\tilde \omega$. Indeed, considering the balanced class $\mathfrak{b} = [\tilde \omega^{n-1}] \in H^{n-1,n-1}_{BC}(X,\RR)$, the Lie algebra character
$$
\langle \mathcal{F},\mathfrak{b} \rangle \colon \Lie \; \cG_P \to \mathbb{C}
$$
%given by
%$$
%\langle \mathcal{F}(s),\mathfrak{b} \rangle = \int_X c(s,F_H) \wedge \tilde \omega^{n-1},
%$$
vanishes provided that there exists a solution $h$ to $F_h \wedge \tilde \omega^{n-1} = 0$. As we will see next, $\widetilde \sigma_P$ can be regarded as an integral (real) version of $\cF$.

\begin{proposition}\label{prop:Futaki}
Let $P$ be a holomorphic principal $G$-bundle over $X$. Let $(\cG_P)_0 \subset \cG_P$ denote the identity component of $\cG_P$. Let $\mathfrak{b} \in H^{n-1,n-1}_{BC}(X,\RR)$ such that there exists a balanced metric $\tilde \omega$ with $\mathfrak{b} = [\tilde \omega^{n-1}]$ and a reduction $h$ in $P$ satisfying $F_h \wedge \tilde \omega^{n-1}  = 0$. Then, for any $g \in (\cG_P)_0$ one has
$$
\widetilde \sigma_P(g) \cdot \mathfrak{b} = 0.
$$
\end{proposition}

\begin{proof}%
Let $g_t$ be a curve joining $g \in (\cG_P)_0$ to $1 \in \cG_P$. Setting $h_t = g_t h$ and using property $(2)$ in Proposition \ref{prop:Donaldson}, we calculate
$$
\frac{d}{dt}\widetilde \sigma_P (g_t) \cdot \mathfrak{b} = \frac{d}{dt} \int_X R(h,h_t) \wedge \tilde \omega^{n-1}= 2i \int_X c(\dot h_t h_t^{-1},F_{h_t}) \wedge \tilde \omega^{n-1} = 0,
$$
where we have used that $F_{h_t} = g_t F_h$ and the invariance of the pairing $c$. The claim follows from $\widetilde \sigma_P (1) = 0$.
\end{proof}

\vspace{-.6cm}


\begin{thebibliography}\frenchspacing\smallbreak

%\bibitem{AXu}
%		A. Alekseev and P. Xu, \emph{Derived brackets and Courant algebroids}, unpublished, available at \texttt{http://www.math.psu.edu/ping/anton-final.pdf} (2001).

%\bibitem{AGF1} B. Andreas and M. Garcia-Fernandez, \emph{Solutions of
%    the Strominger system via stable bundles on Calabi-Yau
%    threefolds}, Comm. Math. Phys. \textbf{315} (2012), no. 1,
%  153--168.

\bibitem{AngellaBC} D. Angella, G. Dloussky, and A. Tomassini, \emph{On Bott-Chern cohomology of compact complex surfaces}, Ann. Mat. Pura Appl. \textbf{195} (2016) 199--217.

\bibitem{ALM} L. {\'A}lvarez-C{\'o}nsul, A. {De Arriba de La Hera}, and M. {Garcia-Fernandez}, \emph{(0,2) Mirror Symmetry on homogeneous Hopf surfaces}, arXiv:2012.01851.

\bibitem{Belgun} F. A. Belgun, \emph{On the metric structure of non-K\"ahler complex surfaces}, Math. Ann. {\bf 317} (2000) 1--40.

\bibitem{BGS} J.-M. Bismut, H. Gillet, and C. Soul\'e, \emph{Analytic torsion and holomorphic determinant bundles I: Bott-Chern forms and analytic torsion}, Comm. Math. Phys. {\bf B 115} (1988) 49--78.
		
%\bibitem{BerlineGetzler} N. Berline, E. Getzler and M. Vergne,
%  \emph{Heat Kernels and Dirac Operators}, Grundlehren Text Editions,
%  Springer, 1992.

\bibitem{BottChern} R. Bott and S. S. Chern, \emph{Hermitian vector bundles and the equidistribution of the zeros of their holomorphic cross-sections}, Acta Math., {\bf 114} (1968) 71--112.

\bibitem{Bressler} P. Bressler, \emph{The first Pontryagin class}, Compos. Math., {\bf 143} (2007) 1127--1163.

\bibitem{Buchdahl} N.P. Buchdahl, \emph{Hermitian-Einstein connections and stable vector bundles over compact complex surfaces}, Math. Ann. {\bf 280} (1988) 625--684.
 		
\bibitem{Calabi} E. Calabi, \emph{The space of K\"ahler metrics}, Proc. Int. Congr. Math. Amsterdam {\bf 2} (1954) 206--207.

\bibitem{CHZ} J. Chu, L. Huang, and X. Zhu, \emph{The Fu-Yau equation in higher dimensions},  Peking Math. J. \textbf{2} (2019), no. 1, 71--97. %
arXiv:1801.09351.

%\bibitem{DalIv} P. Dalakov and S. Ivanov, \emph{Harmonic spinors of Dirac operator of connection with torsion in dimension 4}, Class. Quantum Grav. {\bf 18} (2001) 253--265.

\bibitem{Dabrowski} K. Dabrowski, \emph{Moduli spaces for Hopf surfaces}, Math. Ann. {\bf 259} (1982) 201--225.

\bibitem{Don} S.K.~Donaldson, \emph{Anti-self-dual Yang--Mills connections on a complex algebraic surface and stable vector bundles}, Proc. London Math. Soc. {\bf 50} (1985) 1--26.

\bibitem{D6} \bysame, \emph{Symmetric spaces, K\"ahler geometry and Hamiltonian Dynamics}, in `Northern California Symplectic Geometry Seminar' (Y. Eliashberg et al. eds.), Amer. Math. Soc., 1999, 13--33.

\bibitem{Fei} T. Fei, \emph{Generalized Calabi-Gray Geometry and Heterotic Superstrings}, arXiv:1807.08737.

\bibitem{FHZ} T. Fei, Z.-J. Huang, and S. Picard, \emph{A construction of infinitely many solutions to the Strominger system},  J. Differential Geom. {\bf 117} (2021), no. 1, 23--39. % arXiv:1703.10067.

\bibitem{FIVU} M. Fern\'andez, S. Ivanov, L. Ugarte, R. Villacampa, \emph{Non-K\"ahler heterotic-string compactifications with non-zero fluxes and constant dilaton}, Commun. Math. Phys. {\bf 288} (2009) 677--697.

\bibitem{Fine} J. Fine, \emph{The Hamiltonian geometry of the space of unitary connections with symplectic curvature}, J. Symplectic Geom. {\bf 12} (2014) 105--123.

\bibitem{Futaki1}
        A. Futaki,
        \emph{An obstruction to the existence of Einstein K\"ahler metrics}, Invent. Math. \textbf{73} (1983) 437--443.

\bibitem{FuXiao} J. Fu, J. Xiao, \emph{Relations between the K\"ahler cone and the balanced cone of a K\"ahler manifold}, Adv. in Math. {\bf 263} (2014) 230--252.

\bibitem{FuYau}
   J.-X. Fu and S.-T.~Yau, \emph{The theory of superstring with flux on non-K\"ahler manifolds and the complex Monge-Amp\`ere}, J. Diff. Geom. {\bf 78} (2008) 369--428. %0604063 [hep-th].

\bibitem{GF2} M. Garcia-Fernandez, \emph{Lectures on the Strominger system}, Travaux Math\'ematiques, Special Issue: School GEOQUANT at the ICMAT, Vol. XXIV (2016) 7--61.

\bibitem{GF3} \bysame, \emph{Ricci flow, Killing spinors, and T-duality in generalized geometry},  Adv. Math. {\bf 350} (2019), 1059--1108. % arXiv:1611.08926 (2016).

\bibitem{GF4} \bysame, \emph{T-dual solutions of the Hull-Strominger system on non-K\"ahler threefolds},  J. Reine Angew. Math. \textbf{766} (2020), 137--150.
% arXiv:1810.04740 (2018).

\bibitem{grt} M. Garcia-Fernandez, R. Rubio and C. Tipler, \emph{Infinitesimal moduli for the Strominger system and Killing spinors in generalized geometry}, Math. Ann. {\bf 369} (2017) 539--595.

\bibitem{grt2} \bysame, \emph{Holomorphic string algebroids}, Trans. Amer. Math. Soc. {\bf 373} (2020), 7347--7382.

\bibitem{grt3} \bysame, \emph{Gauge theory for string algebroids}, to appear in J. Differential Geom.,  arXiv:2004.11399.

\bibitem{grst} M. Garcia-Fernandez, R. Rubio, C. Shahbazi, and C. Tipler, \emph{Heterotic supergravity and moduli stabilization}, in progress.

\bibitem{Gau77} P. Gauduchon, \emph{Fibr\'es hermitiens \`a endomorphisme de Ricci non n\' egatif}, Bull. SMF, {\bf 105} (1977) 113--140.

\bibitem{Gau84} \bysame, \emph{La $1$-forme de torsion d'une vari\'et\'e hermitienne compacte}, Math. Ann. {\bf 267} (1984) 495--518.

\bibitem{GauHopf} \bysame, \emph{Structures de Weyl-Einstein, espaces de twisteurs et vari\'et\'es de type $S^1 \times S^3$}, J. Reine Angew. Math. {\bf 469} (1995) 1--50.

\bibitem{Gau} \bysame, \emph{Hermitian connections and Dirac operators}, Bollettino U.M.I. (7) {\bf 11-B} (1997) 257--288.

\bibitem{GauIvanov} P. Gauduchon and S. Ivanov, \emph{Einstein-Hermitian surfaces and Hermitian Einstein-Weyl structures in dimension 4}, Math. Z. {\bf 226} (1997) 317--326.

\bibitem{GauOrnea} P. Gauduchon and L. Ornea, \emph{Locally conformally K\"ahler metrics on Hopf surfaces}, Ann. Inst. Fourier {\bf 48} (1998) 1107--1127.

\bibitem{GrSt} M. Grutzmann and M. Sti\'enon, \emph{Matched pairs of Courant algebroids}, Indag. Math. (N.S.) {\bf 25} (2014), no. 5, 977--991.

\bibitem{G2} M. Gualtieri, \emph{Generalized K\"ahler Geometry}, Comm. Math. Phys. (1) {\bf 331} (2014) 297--331.%, arXiv:1007.3485.

\bibitem{Hasegawa} K. Hasegawa and Y. Kamishima, \emph{Compact homogeneous locally conformally K\"ahler manifolds}, Osaka J. Math. {\bf 53} (2016) 683--703.

\bibitem{Hit1} N. Hitchin, \emph{Generalized Calabi-Yau manifolds}, Q. J. Math {\bf 54} (2003) 281--308.

\bibitem{HullTurin} C. Hull, \emph{Superstring compactifications with torsion and space-time supersymmetry}, In Turin 1985 Proceedings ``Superunification and Extra Dimensions'' (1986) 347--375.

\bibitem{Huyb} D.~Huybrechts, \emph{The tangent bundle of a Calabi-Yau manifold - deformations and restriction to rational curves}, Comm. Math. Phys. {\bf 171} (1995) 139--158.

\bibitem{IvanovIvanov} P. Ivanov and S. Ivanov, \emph{$SU(3)$-Instantons and $G_2$, $Spin(7)$-Heterotic String Solitons}, Commun. Math. Phys. {\bf 259} (2005) 79--102.

\bibitem{IvanovPapadopoulos} S.~Ivanov and G.~Papadopoulos, \emph{Vanishing theorems and string backgrounds}, Class. Quant. Grav. {\bf 18} (2001) 1089--1110.

\bibitem{IvanovPapado} \bysame, \emph{Vanishing theorems on $(l|k)-$strong K\"ahler manifolds with torsion}, Adv. Math. {\bf 237} (2013) 147--164.

\bibitem{Kato} M. Kato, \emph{Compact Differentiable $4$-Folds with Quaternionic Structures}, Math. Ann. {\bf 248} (1980) 79--96.

\bibitem{Katoerratum} \bysame, \emph{Erratum. Compact Differentiable $4$-Folds with Quaternionic Structures}, Math. Ann. {\bf 283} (1989) 352.

\bibitem{Latorre} A. Latorre, L. Ugarte, and R. Villacampa, \emph{On the Bott-Chern cohomology and balanced Hermitian nilmanifolds}, Internat. J. Math. {\bf 25} (2014) 1450057.

\bibitem{LLMP} M. de Le\'on, B. L\'opez, J. C. Marrero, and E. Padr\'on, \emph{On the computation of the Lichnerowicz-Jacobi cohomology}, J. Geom. Phys. {\bf 44} (2003) 507--522.

\bibitem{LiYauHYM} J.~Li and S.-T.~Yau, \emph{Hermitian-Yang-Mills connections on non-K\"ahler manifolds}, Mathematical aspects of string theory (San Diego, Calif., 1986) 560-573,  Adv. Ser. Math. Phys., 1, World Sci. Publishing, Singapore (1987).

\bibitem{LiYau} \bysame, \emph{The existence of supersymmetric string theory with torsion}, J. Diff. Geom. {\bf 70} (2005) 143--181. %, arXiv:0411136 [hep-th].

\bibitem{lt} M. L\"ubcke and A. Teleman, {\it The Kobayashi-Hitchin correspondence}, World Scientific Publishing Co. Inc. (1995)

\bibitem{Michel} M. L. Michelsohn, \emph{On the existence of special metrics in complex geometry}, Acta Math. {\bf 149} (1) (1982), 261--295.

\bibitem{MorVer} R. Moraru and M. Verbitsky, \emph{Stable bundles on hypercomplex surfaces}, Cent. Eur. J. Math. {\bf 8} (2010), 327--337.

\bibitem{OrneaVer} L. Ornea and M. Verbitsky, \emph{Morse-Novikov cohomology of locally conformally K\"ahler manifolds}, J. Geom. Phys. {\bf 59} (2009), 295--305.

\bibitem{OUVi} A. Otal, L. Ugarte, R. Villacampa, \emph{Invariant solutions to the Strominger system and the heterotic equations of motion}, Nuclear Phys. B {\bf 920} (2017), 442--474.

\bibitem{Phong}
        D.-H. Phong, S. Picard, and X. Zhang,
        \emph{Geometric flows and Strominger systems}, Math. Z. (2017), \texttt{https://doi.org/10.1007/s00209-017-1879-y}.

\bibitem{PPZ}
        \bysame,
        \emph{Fu-Yau Hessian equations},  J. Differential Geom. \textbf{118} (2021), no. 1, 147--187.% arXiv:1801.09842.

\bibitem{PPZ2}
        \bysame,
        \emph{New curvature flows in complex geometry} in Surveys in differential geometry 2017. Celebrating the 50th anniversary of the Journal of Differential Geometry, 331--364,
Surv. Differ. Geom., 22, Int. Press, Somerville, MA, 2018. % arXiv:1806.11235.

\bibitem{Pym}
        B. Pym and P. Safronov,
        \emph{Shifted symplectic Lie algebroids},  Int. Math. Res. Not. IMRN 2020, no. \textbf{21}, 7489--7557.
% arXiv:1612.09446 (2016).

\bibitem{Redden}
        C. Redden,
        \emph{String structures and canonical $3$-forms}, Pac. J. Math. {\bf 249} (2011) 447--484.

\bibitem{Sasaki} T. Sasaki, \emph{Classification of left invariant complex structures on $\GL(2,\RR)$ and $\U(2)$}, Kumamoto J. Sci. (Math.) {\bf 14} (1980) 115--123.

\bibitem{Sch} M. Schweitzer, \emph{Autour de la cohomologie de Bott-Chern}, arXiv:0709.3528.

%\bibitem{Severa} P.~\v Severa, \emph{Letter to Alan Weinstein about Courant algebroids}, arXiv:1707.00265.% \texttt{http://sophia.dtp.fmph.uniba.sk/severa/}.

\bibitem{Strom} A.~Strominger, \emph{Superstrings with torsion}, Nucl. Phys. B {\bf 274} (2) (1986) 253--284.

\bibitem{TsengYau0} L.-S.~Tseng and S.-T.~Yau, \emph{Generalized cohomologies and supersymmetry}, Comm. Math. Phys. {\bf 326} (2014) 875--885.

\bibitem{Ugarte} L. Ugarte and R. Villacampa, \emph{Balanced Hermitian geometry on 6-dimensional nilmanifolds}, Forum Mathematicum {\bf 27} (2015) 1025--1070.

\bibitem{UY}
        K. K. Uhlenbeck and S.-T. Yau,
        \emph{On the existence of Hermitian--Yang--Mills connections on stable bundles over compact K\"ahler manifolds}, Comm. Pure and Appl. Math. \textbf{39-S} (1986) 257--293; \textbf{42} (1989) 703--707.

\bibitem{Bismutflat} Q. Wang, B. Yang, and F. Zheng, \emph{On Bismut flat manifolds},  Trans. Amer. Math. Soc. \textbf{373} (2020), no. 8, 5747--5772. % arXiv:1603.07058.

\bibitem{Yau0} S.-T. Yau, \emph{Calabi's conjecture and some new results in algebraic geometry}, Proc. Natl. Acad. Sci. USA {\bf 74} (1977) 1798--1799.

\bibitem{Yau2} \bysame, \emph{Metrics on complex manifolds}, Sci. China Math. 53 (2010), no. 3, 565-572.
\end{thebibliography}
\end{document}